\tikzset{math mode/.style = {execute at begin node=$, execute at end node=$}}%makes all nodes math mode
\tikzset{arrow/.style={postaction={decorate,thick,decoration={markings,mark = at position #1 with {\arrow{>}}}}},arrow/.default=0.5}
\tikzset{invarrow/.style={postaction={decorate,thick,decoration={markings,mark = at position #1 with {\arrow{<}}}}},invarrow/.default=0.5}
\newcommand\bbull[3]{\filldraw[fill=black!35!blue, draw=black] (#1,#2) circle (#3cm);}
\newcommand\ebull[3]{\filldraw[fill=white,draw=black] (#1,#2) circle (#3cm);}
\tikzset{fermionic/.style={thin}}
\tikzset{bosonic/.style={ultra thick}}
\tikzset{gcol/.style={Bo}}
\tikzset{gccol/.style={red}}
\tikzset{Gcol/.style={blue}}
\DeclarePairedDelimiter\autobracket{(}{)}
\newcommand{\br}[1]{\autobracket*{#1}}
\definecolor{Bo}{rgb}{1.0, 0.49, 0.0}
\definecolor{Bs}{rgb}{1.0, 0.44, 0.37}
\definecolor{aj}{rgb}{0.8, 0.8, 1.0}
\newcommand{\sslash}{\mathbin{/\mkern-6mu/}}
\newcommand{\bra}[1]{\left\langle #1\right|}
\newcommand{\ket}[1]{\left|#1\right\rangle}
\renewcommand\ss{\scriptstyle}
\newcommand\sss{\scriptscriptstyle}
\def\phid{\phi^\dagger}
\newcommand\g{{g^{(\alpha,\beta)}_{\lambda}}}
\newcommand\G{{G^{(\alpha,\beta)}_{\lambda}}}
\newcommand\ZZ{{\mathbb Z}}
\newcommand\CC{{\mathbb C}}
\newcommand\jl{{\mathfrak l}}
\newcommand\jt{{\mathfrak t}}
\theoremstyle{plain}
\newtheorem{thm}{Theorem}
\newtheorem{prop}{Proposition}
\newtheorem{cor}{Corollary}
\theoremstyle{remark}
\newtheorem*{rmk*}{Remark}
\newtheorem*{ex*}{Example}
\newdimen{\cellsize}
\newcommand\medboxes{\setlength{\cellsize}{14.22pt}\def\boxformat{}}%exactly 0.5cm
\tikzset{tableaubox/.style={draw=black,thin,sharp corners,solid,minimum size=\cellsize,inner sep=0pt}}
\tikzset{tableau/.style={matrix,name=tab,matrix anchor=tab-1-1.south west,inner sep=1pt,matrix of math nodes,cells={anchor=center,draw=black,thin,solid,arrows=-},nodes={tableaubox,execute at begin node=\boxformat},nodes in empty cells,row sep={\cellsize,between origins},column sep={\cellsize,between origins}}}
\newcommand\graycell{|[fill=gray]|}
\newcommand\cellextra[1]{#1\expandafter\tikz@lib@matrix@start@cell}%trick: after doing the extra stuff we restart the node reading macro. hopefully expandafter is ok
\def\activate#1{\begingroup
  \lccode`\~=`#1%
  \lowercase{\endgroup \let~#1}%
  \catcode`#1=13\relax}
\newcommand\tableau[1]{\tikz[baseline=0]
\node[tableau]{#1};}
\long\def\junk#1{}
\title{Vertex models for Canonical Grothendieck polynomials and their duals}
\author{Ajeeth Gunna}
\author{Paul Zinn-Justin}
\address{Ajeeth Gunna, Paul Zinn-Justin, School of Mathematics and Statistics, University of Melbourne, Parkville, Victoria 3010, Australia.}
\email{agunna@student.unimelb.edu.au}
\email{pzinn@unimelb.edu.au}
\thanks{}
\date{\today}
\begin{document}
\begin{abstract}
We study solvable lattice models associated to canonical Grothendieck polynomials and their duals. We derive inversion relations and Cauchy identities.
\end{abstract}

\maketitle

\tableofcontents

\junk{
\section{Summary for meeting}
\begin{tabular}{cccccccc}
 partitions / auxillary line  &&&  fermionic &&& bosonic \\
 \\
  row   & && $\G \supset G$;  \quad $g^{(1,0)}_{\lambda}=j_{\lambda'}$ &&& $G_{\lambda'}^{(\alpha,\beta)}\supset J_\lambda$; \quad $g^{(\alpha,\beta)}_{\lambda}$ \\
  \\
  column&&& $G_{\lambda'}^{(\alpha,\beta)}\supset J_\lambda;\quad j_\lambda$ &&& $G_{\lambda}^{(\alpha,\beta)}\supset G_\lambda$; \quad $g^{(\alpha,\beta)}_{\lambda}$\\
\end{tabular}

\bigskip
If we require difference property (with possible reparameterization), all $G$s are OK; for $g$,
for row model only $g^{(1,0)}$ which is actually fermionic,
for column model only bosonic $g_\lambda^{(0,1)}$ (and of course $j_\lambda=g^{(1,0)}_{\lambda'}$)

\begin{tabular}{cccccccc}
 polynomials / partitions  &&&  row &&& column \\
 \\
  $\G$   &&& $L$,$T$,$R$  &&& $\widetilde{L}$,$\widetilde{T}$,$\widetilde{R}$  \\
  \\
  $\g$ &&& $l$,$t$,$r$ &&& $\tilde{l},\tilde{t},\tilde{r}$\\
  \\
  $j_{\lambda'}=g^{(1,0)}_{\lambda}$&&& $\jl$,$\jt$&&&\\
\end{tabular}

\begin{itemize}
\item Effectively the $G^{(\alpha,\beta)}$ are just a reparameterization of $G$ itself.
Explicitly, the transformation is:
\[
G_\lambda^{(\alpha,\beta)}=(-(\alpha+\beta))^{-|\lambda|} G_\lambda\left(1-\frac{1+\beta x}{1-\alpha x}\right)
\]
(where $X=\frac{1+\beta x}{1-\alpha x}$ is the natural parameter from the point of view of $K$-theory; unfortunately, not the one in which the YBE satisfies the difference property in either fermionic or bosonic models)

\end{itemize}

\subsection{Extensions.}
\begin{itemize}
\item Are there bosonic models for dual Grothendieck polynomials, in the other sense of dual? the fact that there are honeycomb suggests so.
\end{itemize}

\subsection{rewriting}
\begin{itemize}
    \item somehow distinguish kets based on row-encoding of Young diagrams vs column-encoding.
    otherwise a statement like lemma 2 is very ambiguous.
\end{itemize}

\subsection{To do}

\begin{itemize}
    \item Examples for all polynomials
\end{itemize}
\newpage
}
\section{Introduction}
Grothendieck polynomials were introduced by Lascoux and Schutzenberger in \cite{LS-groth} as representatives of $K$-theoretic Schubert classes in flag varieties. Their connection to quantum integrability was noticed as early as \cite{FK-Groth}, though it took some time to reformulate Grothendieck polynomials in the context of exactly solvable lattice models \cite{hdr}, where quantum integrability is most explicit. Recently, a large literature has developed around these ideas \cite{Ms-bosfer-ktheory,Ms-grothmodel,artic71,artic68,Bs-coloreddoublegroth}. In this work we focus on {\em symmetric}\/ Grothendieck polynomials, i.e., the ones that are related to the $K$-theory of Grassmannians, though we expect many of our ideas to be applicable to more general (partial) flag varieties. We also consider their duals, in the sense of product/coproduct duality\footnote{These should not be confused with the dual Grothendieck polynomials that are e.g.\ considered in \cite{artic68}. The latter are dual w.r.t.\ the natural scalar product of $K$-theory. In contrast, ours are dual w.r.t.\ the Hall inner product.}. We propose some new formulations of both Grothendieck and dual Grothendieck in terms of certain ``bosonic'' exactly solvable lattice models.

Let $\Lambda$ be the ring of symmetric functions. Even though the elements of ${\Lambda}$ are not polynomials, by abuse of language we shall refer to them as polynomials, identifying a symmetric function $F$ with the corresponding symmetric polynomials $F(x_1,\ldots,x_n)$. Schur polynomials $s_{\lambda}$ (where $\lambda$ runs over all partitions) form an orthogonal basis of $\Lambda$ under the Hall inner product. The involution map $\omega$, which sends $e_{k}$ (\emph{elementary symmetric polynomials}) to $h_{k}$ (\emph{complete homogeneous symmetric polynomials}), maps $s_{\lambda}$ to $s_{\lambda'}$. Let $\tilde{\Lambda}$ be the completion of $\Lambda$, which is obtained by allowing infinite linear combinations of $s_{\lambda}$.

Grothendieck polynomials $G_\lambda$ are inhomogeneous symmetric polynomials; with the
appropriate choice of variables, $G_\lambda=s_\lambda+\text{higher order terms}$. When the number of variables grows, their degree grows, so they must be considered as elements of $\tilde \Lambda$.
The structure constants $c^{\nu}_{\lambda,\mu}$ defined by
\[
G_{\lambda}G_{\mu}=\sum_{\nu}c^{\nu}_{\lambda,\mu}G_{\nu},
\]
 satisfy $c^{\nu'}_{\lambda',\mu'}=c^\nu_{\lambda,\mu}$ where $\lambda'$ is the transpose of $\lambda$. However the image of $G_{\lambda}$ under $\omega$ is {\em not} $G_{\lambda'}$. This implies that the family of polynomials $\omega(G_{\lambda'})$ has the same structure constants as Grothendieck polynomials. We shall not be dealing with structure constants in this paper, reserving them for subsequent work \cite{Vortex-honeycombs} and only mention them as motivation for what follows.

In  \cite{Lp-dualgroth}, Lam and Pylyavskyy defined \emph{dual Grothendieck polynomials} $(g_{\lambda})$ as certain generating functions of \emph{reverse plane partitions}. These polynomials are dual to $G_{\lambda}$ under the Hall inner product, and are of the form $g_\lambda=s_\lambda+\text{lower order terms}$. Similarly to Grothendieck polynomials, the image of $g_{\lambda}$ under the involution map is not $g_{\lambda'}$.

In \cite{canon-groth}, Yeliussizov introduced a two parameter version of Grothendieck polynomials and their dual, which he called \emph{canonical Grothendieck polynomials} and \emph{dual canonical  Grothendieck polynomials}. For more detailed combinatorial properties and definitions we refer the reader to \cite{canon-groth}. Canonical Grothendieck polynomials and their dual satisfy the following relation,
\[
\omega(\G)=G^{(\beta,\alpha)}_{\lambda'}\qquad  \omega(\g)=g^{(\beta,\alpha)}_{\lambda'}
\]

In this paper, we shall study two types of vertex models, based on the way partitions are encoded, for each $\G$ and $\g$. We call a vertex model \emph{row model} (resp.\ \emph{column model}) when the partitions are encoded by row (resp.\ column) multiplicities. Section 2 is devoted to the former, section 3 to the latter.

We then introduce (section 4) \emph{generalised Grothendieck  polynomials} %(whenever possible)
which are obtained by attaching additional variables to the vertical lines of the underlying lattice model. Along the process, we recover the \emph{generalised dual Grothendieck polynomials} defined by Yeliussizov \cite{DY-dualcauchy}.

In section 5, we show that the transfer matrices of these lattice models satisfy remarkable {\em inversion relations}. These show a deep connection between row and column lattice models, thus embodying the involution $\omega$ at the level of transfer matrices. This should be reminiscent of similar relations satisfied by the usual free fermionic {\em vertex operators} related to Schur functions (see e.g.~\cite{cslectures}, or \cite{hdr} and references therein); indeed, our transfer matrices can be thought of as deformations of these vertex operators.

Finally, in section 6, we show how ``quantum integrability'' under the form of RLL relations immediately implies the \emph{Cauchy} identities
\begin{align}
\label{cauchyidentity1}
\sum_{\lambda}G^{(-\alpha,-\beta)}_{\lambda}(x_1,x_2,\dots,x_m) g^{(\alpha,\beta)}_{\lambda}(y_1,y_2,\dots,y_n)&=\prod_{1\le i \le m,1 \le j \le n} \dfrac{1}{1-x_i y_j}
\\
\label{cauchyidentity2}
\sum_{\lambda}G^{(-\beta,-\alpha)}_{\lambda'}(x_1,x_2,\dots,x_m) g^{(\alpha,\beta)}_{\lambda}(y_1,y_2,\dots,y_n)&=\prod_{1\le i \le m,1 \le j \le n} {(1+x_i y_j)}
\end{align}
for (generalised) Grothendieck polynomials and their duals.
By specializing $\alpha=0$ and $\beta=1$, we recover the \emph{Cauchy identity} of  Grothendieck polynomials and its dual \cite{las2014,cauchyid-groth}.

The appendix contains proofs of the RLL relations.

\junk{
This paper is organized as follows. In \cref{rowmodels}, we study vertex models for $\G$ and $\g$ where the partitions are encoded by the multiplicity of the rows. In \cref{columnmodels}, we do the same but with partitions recorded by their column multiplicities. Then in \cref{generalisedpolynomials}, we generalise the polynomials by introducing another set of variables, which are attached to the vertical lines in the lattice models. In \cref{dualitybetweenmodels}, we study the inversion relation between the transfer matrices from row and column models. Finally, in \cref{cauchyidentities} we prove the \emph{Cauchy } identities between $\G$ and $\g$.}

\section{Row Vertex Models}
\label{rowmodels}
\subsection{Definition of Physical space.}

Let $V^r$ be an infinite dimensional vector space with basis
indexed by collections of nonnegative integers $(m_i)_{i\in \ZZ_{>0}}$ such that only a finite number of $m_i$s are nonzero; we view it as a subspace of $\bigotimes_{i=1}^\infty V_i$ where each $V_i=\text{Span}(\ket{0},\ket{1},\ldots)$ has a basis indexed by a single nonnegative integer:
\begin{align}
    \label{rowPhysicalspacedefn}
    V^{r}=\text{Span}
    \left\{
    \ket{m_1} \otimes \ket{m_2} \otimes \ket{m_3} \cdots
    \right\}
    \qquad
    m_i \geq0,\ i\geq 1. 
    \end{align}
    
We shall identify partitions with basis elements of $V^{r}$. Given
a partition $\lambda,$ which we view as a Young diagram, let $\ket{\lambda}$ be the basis vector with integers 
\[
m_i(\lambda) = \text{number of rows of size $i$ of $\lambda$}
\]
(hence, the superscript $r$). For example, we identify the partition $\lambda=(4,4,4,3,1)$ with the basis element $\ket{1}\otimes\ket{0}\otimes \ket{1}\otimes\ket{3}\otimes\ket{0}\ldots$ of $V^{r}$:
\begin{center}
\label{rowphysicalspacepic}
\begin{tikzpicture}[scale=0.6]
%
%\node at (3.5,-4.3) {\tiny $\ket{1}_0\ \otimes\ \ket{1}_1 \otimes \ket{0}_2 \otimes 
%\ket{2}_3 \otimes \ket{0}_4 \otimes \ket{1}_5 \otimes \ket{0}_6 \otimes \cdots $};
%Young horizontal lines
\draw (0,4) -- (8,4);
\draw (0,3) -- (4,3);
\draw (0,2) -- (4,2);
\draw (0,1) -- (4,1);
\draw (0,0) -- (3,0);
\draw (0,-1) -- (1,-1);
% Young vertical lines
\draw (0,-3) -- (0,4);
\draw (1,-1) -- (1,4);
\draw (2,0) -- (2,4);
\draw (3,0) -- (3,4);
\draw (4,1) -- (4,4);
%Young particles
\bbull{1}{-0.5}{0.09};
\bbull{3}{0.5}{0.09};
\bbull{4}{1.5}{0.09};
\bbull{4}{2.5}{0.09};
\bbull{4}{3.5}{0.09};
%projecting lines
\draw[dotted,arrow=0.5] (1,-1) -- (1,-3.5);
\draw[dotted,arrow=0.5] (3,0) -- (3,-3.5);
\draw[dotted,arrow=0.5] (4,1) -- (4,-3.5);
%half line
\draw[thick,arrow=1] (0.5,-4) -- (9,-4);
\foreach\x in {0,1,...,7}{
\draw[thick] (0.5+\x,-4) -- (0.5+\x,-3.7);
}
%half line particles
\bbull{1}{-4}{0.09};
\bbull{3}{-4}{0.09};
\bbull{4}{-3.4}{0.09};\bbull{4}{-3.7}{0.09};\bbull{4}{-4}{0.09};
%half line labels
\node at (1,-4.5) {$\sss m_1$};
\node at (2,-4.5) {$\sss m_2$};
\node at (3,-4.5) {$\sss m_3$};
\node at (4,-4.5) {$\sss m_4$};
\node at (5,-4.5) {$\sss m_5$};
\node at (6,-4.5) {$\sss m_6$};
\node at (7,-4.5) {$\hdots$};
\end{tikzpicture}
\end{center}

All the vertex models studied in this paper follow a general template. In order to not repeat ourselves, we shall study this model in detail and then skip the general arguments in other models.

\subsection{Row vertex model for canonical Grothendieck polynomials.}
\label{rowcGmodel}
\subsubsection{Conventions}
We use the standard diagrammatic formalism to interpret lattice models in terms of linear operators. We briefly review it here, and fix conventions.

All our lattice models are defined on some domain of the plane which consists of edges and vertices of valency 4. Edges traverse vertices to form lines, which are given a certain orientation: in all that follows, the domain is a (rectangular) region of the square lattice, so that lines can be either horizontal (also called ``auxiliary'' lines), in which case they are oriented left to right, or vertical (also called ``physical'' lines), in which case they are oriented bottom to top.

To each line is associated a vector space, and juxtaposition of lines corresponds to tensor product (the order of the factors is the order of the incoming external lines). These vector spaces come equipped with a basis labelled by the various states that edges of the lattice model carry.
In our case, vertical lines are numbered $1,2,\ldots$ from left to right, and vertical edges carry a nonnegative integer, so that to vertical line numbered $i$ we assign the vector space $V_i$ (and collectively they form the ``physical space'' $V^r$). Horizontal edges can carry either labels $0,1$, in which case we call the horizontal line fermionic and assign to it a space $F\cong \CC^2$ (possibly adding a subscript to distinguish the various horizontal lines), or it can carry a nonnegative integer (bosonic line), in which case we call the corresponding vector space $W$.
Graphically, when the auxiliary line is fermionic, we draw thin lines. When they are bosonic, we draw thick lines.
%We follow these conventions throughout the paper.

Finally, an important convention is that we transpose all linear operators in order to facilitate reading expressions from left to right; this means that if incoming lines at a vertex form $A\otimes B$ and outgoing lines form $C\otimes D$, then to the vertex is associated a linear operator from $C\otimes D$ to $A\otimes B$. We hope that this does not cause any confusion.

\subsubsection{Definition of the \texorpdfstring{$L$}{L} matrix.}
In this subsection, the auxiliary line is fermionic. 
%Every intersection of a horizontal line and a vertical line constitutes a vertex. 
To every vertex we assign a (Boltzmann) weight that depends on the local configuration (i.e., states of the edges) around it. The weights are given as follows:
\begin{align}
\label{canGboltz}
w^{}_{x}
\left(
\begin{gathered}
\begin{tikzpicture}[scale=0.4,baseline=-2pt]
\draw[fermionic,Gcol,arrow=0.25] (-1,0) node[left,black] {$a$} -- (1,0) node[right,black] {$c$};
\draw[bosonic,arrow=0.25] (0,-1) node[below] {$b$} -- (0,1) node[above] {$d$};
\end{tikzpicture}
\end{gathered}
\right)
\equiv
w^{}_{x}(a,b;c,d)= \delta_{a+b,c+d}
\begin{cases}
\frac{ x}{1- \alpha x}  & \text{when } a=1,\\
\frac{1+\beta x}{1-\alpha x}  & \text{when } a=0,\\
1 & a,b,c,d =0,
\end{cases}
\quad
\end{align}
where $a,c \in \{ 0,1\}, \text{ and } b,d \in \mathbb{Z}_{\geq 0}$.

Let us now represent the vertices graphically with their Boltzmann weights written below them.
\begin{equation}
\label{canGtiles}
\begin{tabular}{ccccc}

\begin{tikzpicture}[scale=0.4,baseline=-2pt]
\draw[fermionic,Gcol,arrow=0.25] (-1,0) node[left,black] {$\ss {0}$} -- (1,0) node[right,black] {$\ss {0}$};
\draw[bosonic,arrow=0.25] (0,-1) node[below] {$\ss {0}$} -- (0,1) node[above] {$\ss {0}$};
\node[text width=1cm] at (1,-3.5){1};
\end{tikzpicture}
\qquad
\begin{tikzpicture}[scale=0.4,baseline=-2pt]
\draw[fermionic,Gcol,arrow=0.25] (-1,0) node[left,black] {$\ss {0}$} -- (1,0) node[right,black] {$\ss {0}$};
\draw[bosonic,arrow=0.25] (0,-1) node[below] {$\ss {m}$} -- (0,1) node[above] {$\ss {m}$};
\node[text width=1cm] at (0.4,-3.5){$ {\frac{1+\beta x}{1-\alpha x}}$};
\end{tikzpicture}
\qquad
\begin{tikzpicture}[scale=0.4,baseline=-2pt]
\draw[fermionic,Gcol,arrow=0.25] (-1,0) node[left,black] {$\ss {0}$} -- (1,0) node[right,black] {$\ss {1}$};
\draw[bosonic,arrow=0.25] (0,-1) node[below] {$\ss {m}$} -- (0,1) node[above] {$\ss{m-1}$};
\node[text width=1cm] at (0.4,-3.5){$ {\frac{1+\beta x}{1-\alpha x}}$};
\end{tikzpicture}
\qquad
\begin{tikzpicture}[scale=0.4,baseline=-2pt]
\draw[fermionic,Gcol,arrow=0.25] (-1,0) node[left,black] {$\ss {1}$} -- (1,0) node[right,black] {$\ss {0}$};
\draw[bosonic,arrow=0.25] (0,-1) node[below] {$\ss {m}$} -- (0,1) node[above] {$\ss {m
+1}$};
\node[text width=1cm] at (0.4,-3.5){$\frac{x}{1-\alpha x}$};
\end{tikzpicture}
\qquad
\begin{tikzpicture}[scale=0.4,baseline=-2pt]
\draw[fermionic,Gcol,arrow=0.25] (-1,0) node[left,black] {$\ss {1}$} -- (1,0) node[right,black] {$\ss {1}$};
\draw[bosonic,arrow=0.25] (0,-1) node[below] {$\ss {m}$} -- (0,1) node[above] {$\ss {m}$};
\node[text width=1cm] at (0.4,-3.5){$\frac{x}{1-\alpha x}$};
\end{tikzpicture}\\
\end{tabular}
\end{equation}

The corresponding linear operator is the so-called $L$ matrix; it acts on $F_i \otimes V_j$, where $F_i= \text{span}\{ \ket{0},\ket{1}\}$. Let us first define annihilation $\phi_j$ and creation $\phid_j$ operators acting on the $j^{th}$ factor $V_j$ of $V^{r}$:
\begin{align*}
\phi_j \ket{m}
&=
\ket{m-1}
\qquad
\phid_j \ket{m}
= \ket{m+1}\\
\phi_j \ket{0}
&=
\ket{0}
\end{align*}
Then
\begin{align}
\label{canLmat}
{{L}}_{i,j}(x)
=
{\frac{1}{1-\alpha x}}
\begin{pmatrix}
\delta_{0,m}(1-\alpha x)+(1-\delta_{0m})(1+\beta x)  & (1+\beta x)\phi_j \\
x \phid_j & x
\end{pmatrix}_{i,j}
\end{align}

We shall now define dual $L$ matrices, $L^{*}$. We obtain $L^{*}$ by flipping the vertices upside down and replacing $0's$ and $1's$.
\begin{align}
\label{dcanGtiles}
\begin{tabular}{ccccc}
\begin{tikzpicture}[scale=0.4,baseline=-2pt]
\draw[fermionic,Gcol,arrow=0.25]  (-1,0) node[left,black] {$\ss {1}$}--(1,0) node[right,black] {$\ss {1}$};
\draw[bosonic,arrow=0.25] (0,-1) node[below] {$\ss {0}$}--(0,1) node[above] {$\ss {0}$};
\node[text width=1cm] at (1,-3.5){1};
\end{tikzpicture}
\qquad
\begin{tikzpicture}[scale=0.4,baseline=-2pt]
\draw[fermionic,Gcol,arrow=0.25] (-1,0) node[left,black] {$\ss {1}$}--(1,0) node[right,black] {$\ss {1}$};
\draw[bosonic,arrow=0.25] (0,-1) node[below] {$\ss {m}$}--(0,1) node[above] {$\ss {m}$};
\node[text width=1cm] at (0.4,-3.5){$ {\frac{1+\beta x}{1-\alpha x}}$};
\end{tikzpicture}
\qquad
\begin{tikzpicture}[scale=0.4,baseline=-2pt]
\draw[fermionic,Gcol,arrow=0.25] (-1,0) node[left,black] {$\ss {1}$} -- (1,0) node[right,black] {$\ss {0}$};
\draw[bosonic,arrow=0.25]  (0,-1) node[below] {$\ss {m-1}$}--(0,1) node[above] {$\ss {m}$};
\node[text width=1cm] at (0.4,-3.5){$ {\frac{1+\beta x}{1-\alpha x}}$};
\end{tikzpicture}
\qquad
\begin{tikzpicture}[scale=0.4,baseline=-2pt]
\draw[fermionic,Gcol,arrow=0.25](-1,0) node[left,black] {$\ss {0}$}-- (1,0) node[right,black] {$\ss {1}$};
\draw[bosonic,arrow=0.25] (0,-1) node[below] {$\ss {m+1}$}--(0,1) node[above] {$\ss {m}$};
\node[text width=1cm] at (0.4,-3.5){$\frac{x}{1-\alpha x}$};
\end{tikzpicture}
\qquad
\begin{tikzpicture}[scale=0.4,baseline=-2pt]
\draw[fermionic,Gcol,arrow=0.25]  (-1,0) node[left,black] {$\ss {0}$}--(1,0) node[right,black] {$\ss {0}$} ;
\draw[bosonic,arrow=0.25]  (0,-1) node[below] {$\ss {m}$}--(0,1) node[above] {$\ss {m}$};
\node[text width=1cm] at (0.4,-3.5){$\frac{x}{1-\alpha x}$};
\end{tikzpicture}\\
\end{tabular}
\end{align}

Then define $L^{*}$ acting on $F_i\otimes V_j$ as follows:
\begin{align}
\label{dcanGLmat}
L^{*}_{i,j}(x)
=
{\frac{1}{1-\alpha x}}
\begin{pmatrix}
x & x \phid_i \\
(1+\beta x)\phi_i & \delta_{0,m}(1-\alpha x)+(1-\delta_{0m})(1+\beta x) 
\end{pmatrix}_{i,j}
\end{align}

\subsubsection{\texorpdfstring{$R$}{R}-matrix and Yang--Baxter relations.}
Consider the vector spaces $F_i, F_j$ where $i<j$. Then we define a ${R}$-matrix which acts linearly on $F_i\otimes F_j$ as follows,
$$
{R}_{i,j}(x_i,x_j):\ket{a}\otimes \ket{b}
\mapsto  \sum_{c,d \hspace{1mm} \text{where} \hspace{1mm}  a+b=c+d}
R^{a,d}_{b,c}(x_i,x_j)
\ket{c}\otimes \ket{d}.
$$
Graphically, we represent the entry $R^{a,d}_{b,c}$ as
\begin{tikzpicture}[baseline=-3pt,scale=0.9]
\draw[fermionic,arrow=0.35,Gcol] (-2,0.5) node[left,black] {$a$} -- (-1,-0.5) node[right,black] {$c$};
\draw[fermionic,arrow=0.35,Gcol] (-2,-0.5) node[left,black] {$b$} -- (-1,0.5) node[right,black] {$d$};
\end{tikzpicture}.
We now give the $R$ matrix that underpins the integrability of the vertex model presented above. 
For convenience, let us represent $\ket{0}$ and $\ket{1}$ of $F$ as empty or occupied:
\begin{align}
\label{GRmatrix}
 R(x,y)=
 \begin{pmatrix}
\begin{tikzpicture}[scale=0.6,baseline=4pt]
\draw[Gcol] (0,0)--(1,1);
\draw[Gcol] (0,1)--(1,0);
\ebull{0}{1}{0.1};
\ebull{1}{0}{0.1};
\ebull{1}{1}{0.1};
\ebull{0}{0}{0.1};
\end{tikzpicture}& 0 & 0 & 0 \\[1ex]
0 & \begin{tikzpicture}[scale=0.6,baseline=4pt]
\draw[Gcol] (0,0)--(1,1);
\draw[Gcol] (0,1)--(1,0);
\bbull{0}{0}{0.1};
\ebull{0}{1}{0.1};
\ebull{1}{0}{0.1};
\bbull{1}{1}{0.1};
\end{tikzpicture} & \begin{tikzpicture}[scale=0.6,baseline=4pt]
\draw[Gcol] (0,0)--(1,1);
\draw[Gcol] (0,1)--(1,0);
\ebull{0}{1}{0.1};
\ebull{1}{1}{0.1};
\bbull{1}{0}{0.1};
\bbull{0}{0}{0.1};
\end{tikzpicture} & 0 \\[1ex]
0 &\begin{tikzpicture}[scale=0.6,baseline=4pt]
\draw[Gcol] (0,0)--(1,1);
\draw[Gcol] (0,1)--(1,0);
\bbull{0}{1}{0.1};
\ebull{0}{0}{0.1};
\ebull{1}{0}{0.1};
\bbull{1}{1}{0.1};
\end{tikzpicture} & \begin{tikzpicture}[scale=0.6,baseline=4pt]
\draw[Gcol] (0,0)--(1,1);
\draw[Gcol] (0,1)--(1,0);
\bbull{1}{0}{0.1};
\bbull{0}{1}{0.1};
\ebull{1}{1}{0.1};
\ebull{0}{0}{0.1};
\end{tikzpicture} & 0 \\[1ex]
0 & 0 & 0 & \begin{tikzpicture}[scale=0.6,baseline=4pt]
\draw[Gcol] (0,0)--(1,1);
\draw[Gcol] (0,1)--(1,0);
\bbull{1}{0}{0.1};
\bbull{0}{1}{0.1};
\bbull{1}{1}{0.1};
\bbull{0}{0}{0.1};
\end{tikzpicture} \\
\end{pmatrix}_{ij}
=
\begin{pmatrix}
1 & 0 & 0 & 0 \\[1ex]
0 & 0 & { {\frac{(1+\beta x)}{(1+\beta y)} \frac{y}{x}}} & 0 \\[1ex]
0 & 1 & {1}- \frac{(1+\beta x)}{(1+\beta y)} \frac{y}{x} & 0 \\[1ex]
0 & 0 & 0 & 1 \\
\end{pmatrix}_{ij}
\in 
{\rm End}(F_i \otimes F_j).
\end{align}
One recognizes this as the $R$-matrix of the five-vertex model \cite{HWKK-5v} with spectral parameter $\frac{x}{1-\beta x}$.
It can be obtained as a limit of the $R$ matrix of the stochastic six-vertex model where the quantum parameter is sent to $0$.

Together with $L_i$ and $L_j$ matrices, $R_{ij}$ satisfies the $\mathrm {RLL}$ relation  \text{in End} $(F_i \otimes F_j \otimes V_n)$:
\begin{equation}
\label{rll}
R_{ij}(x,y)L_i(x)L_j(y)=L_j(y)L_i(x)R_{ij}(x,y)
\quad
\left(
\begin{tikzpicture}[scale=0.6,baseline=7pt]
\draw[bosonic](1,-1)--(1,2);
\draw[fermionic,arrow=0.15,Gcol,rounded corners] (-1,1) node[left,black]{$\ss x$}--(0,0)--(2,0);
\draw[fermionic,arrow=0.15,Gcol,rounded corners] (-1,0) node[left,black]{$\ss y$}--(0,1)--(2,1);
\end{tikzpicture}
=
\begin{tikzpicture}[scale=0.6,baseline=7pt]
\draw[bosonic] (1,2)--(1,-1);
\draw[fermionic,arrow=0.15,Gcol,rounded corners] (0,0) node[left,black]{$\ss y$}--(2,0)--(3,1);
\draw[fermionic,arrow=0.15,Gcol,rounded corners](0,1) node[left,black]{$\ss x$}-- (2,1)--(3,0);
\end{tikzpicture}
\right)
\end{equation}

\subsubsection{ Transfer matrices.}
We shall now build a vertex model based on the $L$-matrix above. It is convenient to define a single row of the model as on the following picture:
\[
w(\{i_1,i_2,\dots\};\{k_1,k_2,\dots\})=
\begin{tikzpicture}[scale=0.5,baseline=-1pt]
\draw[arrow=0.05,fermionic,Gcol] (-1,0)node[left,black]{$*$} -- (7,0) node[right,black] {$\ss 0$};
\foreach\x in {0,1,...,6}{
\draw[arrow=0.25,bosonic] (\x,-1) -- (\x,1);
}
\node[below] at (0,-0.8) {\tiny $i_1$};\node[above] at (0,0.8) {\tiny $k_1$};
\node[below] at (1,-0.8) {\tiny $i_2$};\node[above] at (1,0.8) {\tiny $k_2$};
\node[below] at (2,-0.8) {\tiny $i_3$};\node[above] at (2,0.8) {\tiny $k_3$};
\node[below] at (6,-0.8) {$\cdots$};\node[above] at (6,0.8) {$\cdots$};
\end{tikzpicture}
\]
where the $*$ on the left means that we are allowing for arbitrary edge states.
Even though we are considering infinitely large row of vertices, the weight is uniquely defined. To see this, fix the labels on the top and bottom. Since there are only finitely many non zero labels on top and bottom, sufficiently far to the right the horizontal labels are constant, and we choose them to be $0$s. Graphically, we show this by assigning $0$ to the horizontal edge on the far right. Then, when the bottom and top labels are fixed, there is a unique configuration because of the local conservation around every vertex. 

We now define the corresponding \emph{transfer matrix}\/ $T$ which acts linearly on $V^r$ as follows,
\begin{align}
\label{transferG}
T(x):\ket{i_1}\otimes \ket{i_2}\otimes {\cdots}
\mapsto  \sum_{k_1,k_2\ldots \ge 0}
w(\{i_1,i_2,\dots\};\{k_1,k_2,\dots\})
\ket{k_1}\otimes \ket{k_2}\otimes {\cdots}
\end{align}
One can rewrite in terms of the $L$-matrix as
\begin{equation}
    T(x) =\lim_{n\to\infty} \bra{*} L_{01}(x)L_{02}(x)\ldots L_{0n}(x) \ket{0}
\end{equation}
where the vector space attached to the horizontal line is labelled $0$, whereas the vertical lines are labelled $1,2,\ldots$. Here $\ket{0}$ is the basis vector of the horizontal space, whereas $\bra{*}$ is the sum of basis vectors of the dual of the horizontal space.
The limit is entry-wise and is well-defined because of the aforementioned uniqueness of the configuration.

Similarly, we can define the dual transfer matrices $T^*$:
\begin{equation}
\label{dualtransferG}
{T^{*}}(x):\ket{i_1}\otimes \ket{i_2}\otimes {\cdots}
\mapsto  \sum_{k_1,k_2,\cdots\geq 0}
w^{*}(\{i_1,i_2,\dots\};\{k_1,k_2,\dots\})
\ket{k_1}\otimes \ket{k_2}\otimes {\cdots}
\end{equation}
where the right boundary is fixed to be $1$:
\[
w^{*}(\{i_1,i_2,\dots\};\{k_1,k_2,\dots\})=
\begin{tikzpicture}[scale=0.5,baseline=-1pt]
\draw[arrow=0.05,fermionic,Gcol] (-1,0)node[left,black]{$*$} -- (7,0) node[right,black] {$\ss 1$};
\node[left] at (-0.8,0) {\tiny };\node[right] at (6.8,0) {\tiny };
\foreach\x in {0,1,...,6}{
\draw[arrow=0.25,bosonic] (\x,-1) -- (\x,1);
}
\node[below] at (0,-0.8) {\tiny $i_1$};\node[above] at (0,0.8) {\tiny $k_1$};
\node[below] at (1,-0.8) {\tiny $i_2$};\node[above] at (1,0.8) {\tiny $k_2$};
\node[below] at (2,-0.8) {\tiny $i_3$};\node[above] at (2,0.8) {\tiny $k_3$};
\node[below] at (6,-0.8) {$\cdots$};\node[above] at (6,0.8) {$\cdots$};
\end{tikzpicture}
\]
Equivalently,
\begin{equation}
    T^*(x) =\lim_{n\to\infty} \bra{*} L^*_{01}(x)L_{02}(x)\ldots L^*_{0n}(x) \ket{1}
\end{equation}

Throughout this paper we use the same conventions to define transfer matrices. 

\subsubsection{Commutation relation of the transfer matrices.}
In order to prove that the transfer matrices commute, we need an eigenvector property of the $R$ matrix. Observe that the sum of the entries in a column of the $R$ matrix is always $1$. This means that the state which is the sum of all possible states is an eigenvector of the $R$ matrix with eigenvalue $1$. This property can reinterpreted as the fact that the partition function of a single vertex with fixed boundaries on the right is always $1$:
\[
\begin{tikzpicture}[scale=0.8,baseline=5pt]
\draw[fermionic,Gcol] (0,1) node[left,black]{$*$}--(1,0);
\draw[fermionic,Gcol] (0,0) node[left,black]{$*$}--(1,1);
\node at (-0.8,0) {$\ss y$};
\node at (-0.8,1) {$\ss x$};
\end{tikzpicture}
=
\begin{tikzpicture}[scale=0.8,baseline=5pt]
\draw[fermionic,Gcol] (0,1) node[left,black]{$*$}--(1,1);
\draw[fermionic,Gcol] (0,0) node[left,black]{$*$}--(1,0);
\node at (-0.8,0) {$\ss y$};
\node at (-0.8,1) {$\ss x$};
\end{tikzpicture}
\]
Consider the product of two transfer matrices, $T(x)$ and $T(y)$. Graphically, taking the product amounts to stacking the two row to row transfer matrices one upon the other. Finally, observe that the boundary on the left is free and for sufficiently large $n$, the boundary on the right is fixed. Recall that an edge with $*$ is a free boundary. Thus
$T(x)T(y)$ is
\[
\begin{tikzpicture}[scale=0.6]
\draw[fermionic,Gcol] (0,0) node[left,black]{$ *$}--(7,0) node[right,black]{$\ss 0$};
\draw[fermionic,Gcol] (0,1) node[left,black]{$ *$}--(7,1) node[right,black]{$\ss 0$};
\foreach \x in {1,2,3,4,5}{
\draw[bosonic] (\x,-1)--(\x,2);
\node at (\x,2.2) {$\ss i_{\x}$};
\node at (\x,-1.2) {$\ss k_{\x}$};
}
\draw[bosonic] (6,-1)--(6,2);
\node at (6,2.2) {$\ss \dots$};
\node at (6,-1.2) {$\ss \dots$};
\node at (-1,0) {$\ss x$};
\node at (-1,1) {$\ss y$};
\end{tikzpicture}
\]

Now multiply $T(x)T(y)$ on the left by $R(x,y)$, and apply the $\mathrm{RLL}$ relation finitely many times:
\[
\begin{tikzpicture}[scale=0.6,baseline=3pt]
\draw[fermionic,Gcol,rounded corners] (-1,1)node[black,left]{$*$}--(0,0) node[below,black] {}--(7,0) node[black,right]{$\ss 0$};
\draw[fermionic,Gcol,rounded corners](-1,0)node[black,left]{$*$}-- (0,1)node[above,black] {}--(7,1)  node[black,right]{$\ss 0$};
\foreach \x in {1,2,3,4,5}{
\draw[bosonic] (\x,-1)--(\x,2);
\node at (\x,2.2) {$\ss i_{\x}$};
\node at (\x,-1.2) {$\ss k_{\x}$};
}
\draw[bosonic] (6,-1)--(6,2);
\node at (6,2.2) {$\ss \dots$};
\node at (6,-1.2) {$\ss \dots$};
\node at (-2,1) {$\ss x$};
\node at (-2,0) {$\ss y$};
\end{tikzpicture}
=
\begin{tikzpicture}[scale=0.6,baseline=3pt]
\draw[bosonic] (0,-1)node[below]{$\ss k_1$}--(0,2)node[above]{$\ss i_1$};
\draw[fermionic,Gcol,rounded corners] (-1,0) node[left,black]{$*$}--(1,0)--(2,1)--(7,1)node[right,black]{$\ss 0$};
\draw[fermionic,Gcol,rounded corners] (-1,1)node[left,black]{$*$}--(1,1)--(2,0)--(7,0)node[right,black]{$\ss 0$};
\foreach\x in {3,4,5}{
\draw[bosonic](\x,-1) node[below] {$\ss k_{\x}$}--(\x,2) node[above]{$\ss i_{\x}$ };}
\draw[bosonic] (6,-1)node[below]{$\ss \dots$}--(6,2)node[above]{$\ss \dots$};
\end{tikzpicture}
=
\]

\[
\begin{tikzpicture}[scale=0.6,baseline=3pt]
\draw[fermionic,Gcol,rounded corners] (0,0) node[left,black]{$*$}--(7,0) node[below,black]{$\ss 0$}--(8,1) node[above,black]{$\ss 0$}--(10,1) node[right,black]{$\ss 0$};
\draw[fermionic,Gcol,rounded corners] (0,1)node[left,black]{$*$}--(7,1)node[above,black]{$\ss 0$}--(8,0) node[below,black]{$\ss 0$}--(10,0) node[right,black]{$\ss 0$};
\foreach \x in {1,2,3,4,5}{
\draw[bosonic] (\x,-1)--(\x,2);
\node at (\x,2.2)  {$\ss i_{\x}$};
\node at (\x,-1.2) {$\ss k_{\x}$};
}
\draw[bosonic] (9,-1) node[below]{$\ss \dots$}--(9,2) node[above]{$\ss \dots$};
\draw[bosonic] (6,-1)--(6,2);
\node at (6,2.2)  {$\ss \dots$};
\node at (6,-1.2) {$\ss \dots$};
\node at (-1,0) {$\ss y$};
\node at (-1,1) {$\ss x$};
\end{tikzpicture}
=
\begin{tikzpicture}[scale=0.6,baseline=3pt]
\draw[fermionic,Gcol,rounded corners] (0,0) node[left,black]{$*$}--(7,0) node[below,black]{$\ss 0$}--(8,1) node[right,black]{$\ss 0$};
\draw[fermionic,Gcol,rounded corners] (0,1)node[left,black]{$*$}--(7,1)node[above,black]{$\ss 0$}--(8,0) node[right,black]{$\ss 0$};
\foreach \x in {1,2,3,4,5}{
\draw[bosonic] (\x,-1)--(\x,2);
\node at (\x,2.2)  {$\ss i_{\x}$};
\node at (\x,-1.2) {$\ss k_{\x}$};
}
\draw[bosonic] (6,-1)--(6,2);
\node at (6,2.2)  {$\ss \dots$};
\node at (6,-1.2) {$\ss \dots$};
\node at (-1,0) {$\ss y$};
\node at (-1,1) {$\ss x$};
\end{tikzpicture}
\]
Sufficiently far to the right, we are left with a cross where all nodes $0$:
\[
\begin{tikzpicture}[scale=0.8,baseline=5pt]
\draw[fermionic,Gcol] (0,1) node[left,black]{$\ss 0$} --(1,0)node[right,black]{$\ss 0$};
\draw[fermionic,Gcol] (0,0) node[left,black]{$\ss 0$} --(1,1)node[right,black]{$\ss 0$};
\node at (-0.8,0) {$\ss y$};
\node at (-0.8,1) {$\ss x$};
\end{tikzpicture}
=
\begin{tikzpicture}[scale=0.8,baseline=5pt]
\draw[fermionic,Gcol] (0,1) node[left,black]{$\ss 0$} --(1,1) node[right,black]{$\ss 0$};
\draw[fermionic,Gcol] (0,0) node[left,black]{$\ss 0$} --(1,0)node[right,black]{$\ss 0$};
\node at (-0.8,0) {$\ss y$};
\node at (-0.8,1) {$\ss x$};
\end{tikzpicture}
\]

Since, the entry of the $R$ matrix where all the nodes are $0$ is $1$, we get $T(y)T(x)$. 
\[
\begin{tikzpicture}[scale=0.6,baseline=3pt]
\draw[fermionic,Gcol,rounded corners] (0,0)node[left,black]{$*$}--(7,0) node[right,black]{$\ss 0$};
\draw[fermionic,Gcol,rounded corners] (0,1) node[left,black]{$*$}--(7,1)node[right,black]{$\ss 0$};
\foreach \x in {1,2,3,4,5}{
\draw[bosonic] (\x,-1)--(\x,2);
\node at (\x,2.2)  {$\ss i_{\x}$};
\node at (\x,-1.2) {$\ss k_{\x}$};
}
\draw[bosonic] (6,-1)--(6,2);
\node at (6,2.2)  {$\ss \dots$};
\node at (6,-1.2) {$\ss \dots$};
\node at (-1,0) {$\ss y$};
\node at (-1,1) {$\ss x$};
\end{tikzpicture}
\]
\subsubsection{Canonical Grothendieck polynomials.}
Given that the transfer matrices commute, the polynomials defined using them are invariant under permutation of the variables. It is also not hard to see that $T(0)=1$, so that these polynomials satisfy the stability property which makes them collectively an element of $\tilde\Lambda$. We now prove that the polynomials defined using $T$ are \emph{canonical Grothendieck polynomials}.

Before we prove it, let us recall the branching formula for $\G$ from  \cite[Proposition~8.8]{canon-groth}. For a partition $\nu=(\nu_1,\nu_2,\nu_3,\dots)$, denote $\Bar{\nu}=(\nu_2,\nu_3,\dots)$.

We have
\begin{align}
\label{branchG}
\G(x_1,\dots,x_n,x_{n+1})= \sum_{\lambda/\mu  \text{ hor. strip}}  G^{(\alpha,\beta)}_{\mu}(x_1,\dots,x_n) G^{(\alpha,\beta)}_{\lambda\sslash\mu}(x_{n+1}),
\end{align}
and
\begin{align}
\label{singleG}
G^{(\alpha,\beta)}_{\lambda\sslash\mu}(x)= 
\left(
\frac{x}{1-\alpha x}
\right)^{|\lambda/\mu|}
\left(
\frac{1+\beta x}{1-\alpha x}
\right)^{r(\mu/\Bar{\lambda})},
\end{align}
where $r(\lambda/\mu)$ is number of non zero rows of $\lambda/\mu$.

\begin{rmk*}
In order to dispel any confusion, we point out that $G_{\lambda/\sslash\mu}(x_1,\dots,x_n)$ polynomials are not the same as skew Grothendieck polynomials $G_{\lambda/\mu}$. For a simple counter example, observe that for any partition $\lambda$, we have
$G^{(\alpha,\beta)}_{\lambda\sslash \lambda}(x)=\br{\dfrac{1+\beta x}{1-\alpha x}}^{r(\lambda/\tilde{\lambda})}\neq G^{(\alpha,\beta)}_{\lambda/\lambda}(x)=1$. 
\end{rmk*}
Let us now look at an example to understand $r(\mu/\Bar{\lambda})$. Consider partitions $\lambda=(4,3,2,1)$  and $\mu=(3,2,2,1)$. Then, $\Bar{\lambda}=(3,2,1)$ and $r(\mu/\Bar{\lambda})=2$.
\[
\begin{tikzpicture}[scale=0.6,baseline=-7pt]
\node at (-4,0) {$\mu/\Bar{\lambda}=
\tableau{\graycell&\graycell&\graycell\\
\graycell&\graycell\\
\graycell&\\
\\}$};
\draw[fermionic,Gcol] (0,0) -- (6,0);
\foreach\x in {1,2,3,4,5}{
\draw[bosonic] (\x,-1) -- (\x,1);
}
%\mu
\node at (1,-1.5) {$\ss 1$};
\node at (2,-1.5) {$\ss 2$};
\node at (3,-1.5) {$\ss 1$};
\node at (4,-1.5) {$\ss 0$};
\node at (5,-1.5) {$\ss 0$};
%\lambda
\node at (1,1.5) {$\ss 1$};
\node at (2,1.5) {$\ss 1$};
\node at (3,1.5) {$\ss 1$};
\node at (4,1.5) {$\ss 1$};
\node at (5,1.5) {$\ss 0$};
%horizontal
\node at (0.5,0) {$\ss 0$};
\node at (1.5,0) {$\ss 0$};
\node at (2.5,0) {$\ss 1$};
\node at (3.5,0) {$\ss 1$};
\node at (4.5,0) {$\ss 0$};
\node at (5.5,0) {$\ss 0$};
%labels
\node at (2.5,-2.5) {$\ss \mu$};
\node at (2.5,2.5) {$\ss \lambda$};
\node at (-11,0) {$\lambda/\mu=
\tableau{\graycell &\graycell& \graycell&\\
\graycell&\graycell&\\
\graycell&\graycell\\
\graycell\\}$};
\end{tikzpicture} 
\]
We can alternatively formulate $r(\mu/\tilde{\lambda})$ as the number of removable boxes of $\mu$ that do not lie in the same column with any box of $\lambda/\mu$.

As a consequence of recording partitions with row multiplicities, every vertex with a non zero bottom node corresponds to a removable box of $\mu$. If a box is added to $i^{th}$ column of $\mu$, then the removable box corresponding to that vertex at site $i$ will be in the same column as the new box. So, $r(\mu/\tilde{\lambda})$ is precisely the number of vertices with zero label on the left node and a non zero label on the bottom node.

\begin{thm}
\label{thm:rowG}
The canonical Grothendieck polynomials $\G(x)$  are given by
\begin{align}
\label{J}
\G(x_1,\dots,x_n)
&=
\bra{0}
T(x_1)
\dots
T(x_n)
\ket{\lambda}
\\
\label{dJ}
\G(x_1,\dots,x_n)
&=
\bra{\lambda}
T^{*}(x_n)
\dots
T^{*}(x_1)
\ket{0}
\end{align}
where $\ket{\lambda} = \bigotimes_{i=1}^{\infty} \ket{m_i(\lambda)}$, and similarly for the dual state $\bra{\lambda}$.
\end{thm}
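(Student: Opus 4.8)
The plan is to prove \eqref{J} by showing that the right-hand side satisfies the same branching recursion \eqref{branchG}–\eqref{singleG} as $\G$, together with the correct initial condition; the statement \eqref{dJ} will then follow by a dual/transpose argument on the $L$-matrices. First I would establish the base case: for $n=0$ (no variables), $\bra{0}\lambda\rangle=\delta_{\lambda,\varnothing}$, matching $G^{(\alpha,\beta)}_\varnothing=1$ and $G^{(\alpha,\beta)}_\lambda(\,)=0$ for $\lambda\neq\varnothing$. Next, the heart of the argument: I would compute the single-row matrix element $\bra{\mu}T(x)\ket{\lambda}$ and show it equals $G^{(\alpha,\beta)}_{\lambda\sslash\mu}(x)$ when $\lambda/\mu$ is a horizontal strip and vanishes otherwise. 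Concretely, fix the bottom labels $m_i(\lambda)$ and top labels $m_i(\mu)$; by the uniqueness of configurations noted in the text, the horizontal edge state to the right of site $i$ is determined, and a short combinatorial analysis of which of the five tiles in \eqref{canGtiles} can occur shows that a nonzero weight forces $\lambda/\mu$ to be a horizontal strip (the fermionic line can carry at most one ``particle'' at a time, and a $1$ on a horizontal edge corresponds to a column in which a box has just been removed going right-to-left). Then the product of the tile weights factorizes: each of the $|\lambda/\mu|$ boxes contributes a factor $\frac{x}{1-\alpha x}$ (tiles with $a=1$ incoming), and—using the reinterpretation of $r(\mu/\bar\lambda)$ given just before the theorem as the number of sites with $a=0$ on the left and nonzero bottom label—each such site contributes $\frac{1+\beta x}{1-\alpha x}$, while all remaining sites (eventually all sites far to the right) contribute $1$. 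This reproduces \eqref{singleG} exactly.

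Having identified the one-row transfer matrix with the branching rule, I would then iterate: inserting a resolution of the identity $\sum_\mu \ket{\mu}\bra{\mu}$ between consecutive $T$'s gives
\begin{align*}
\bra{0}T(x_1)\cdots T(x_n)\ket{\lambda}
=\sum_{\varnothing=\mu^{(0)}\subseteq\cdots\subseteq\mu^{(n)}=\lambda}\ \prod_{k=1}^{n} \bra{\mu^{(k-1)}}T(x_k)\ket{\mu^{(k)}},
\end{align*}
and the sum is effectively over chains in which each $\mu^{(k)}/\mu^{(k-1)}$ is a horizontal strip, with each factor equal to $G^{(\alpha,\beta)}_{\mu^{(k)}\sslash\mu^{(k-1)}}(x_k)$. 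Comparing with the $n$-fold iterate of \eqref{branchG}, which expresses $\G(x_1,\dots,x_n)$ as exactly this sum, yields \eqref{J}. (The permutation-invariance and stability already observed—commutativity of the $T(x)$ and $T(0)=1$—are consistent with this but not logically needed for the identification, since the branching formula already determines $\G$ uniquely.)

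For \eqref{dJ}, I would observe that the tiles \eqref{dcanGtiles} defining $L^*$ are obtained from those of $L$ by the up–down flip together with $0\leftrightarrow 1$ on horizontal edges, and that this is precisely the operation that, at the level of the transfer matrix acting on $V^r$, transposes $T(x)$ in the basis $\{\ket\lambda\}$ while exchanging the roles of the top/bottom boundary and of the left boundary state $\ket0$ versus $\ket1$. Thus $\bra{\lambda}T^*(x_n)\cdots T^*(x_1)\ket0$ equals $\bra0 T(x_1)\cdots T(x_n)\ket\lambda$ by this symmetry, and \eqref{dJ} reduces to \eqref{J}. Alternatively one can run the same branching argument directly for $T^*$, reading the tiles of \eqref{dcanGtiles} bottom-to-top.

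The main obstacle I anticipate is the combinatorial bookkeeping in the single-row computation: verifying carefully that the only configurations with nonzero weight are those with $\lambda/\mu$ a horizontal strip, and—more delicately—matching the exponent of $\frac{1+\beta x}{1-\alpha x}$ with $r(\mu/\bar\lambda)$ rather than with some naive box-count. The text's remark reinterpreting $r(\mu/\bar\lambda)$ as ``removable boxes of $\mu$ not sharing a column with a box of $\lambda/\mu$'' and then as ``sites with left label $0$ and nonzero bottom label'' is exactly the bridge needed, so the work is to make that correspondence between vertices and removable boxes fully precise, being careful about the site where a horizontal $1$ is finally absorbed (turning an incoming $a=1$ into $c=0$) versus sites the $1$ merely passes through.
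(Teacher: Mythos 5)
Your proposal is correct and follows essentially the same route as the paper: identify the single-row matrix element $\bra{\mu}T(x)\ket{\lambda}$ with the branching coefficient $G^{(\alpha,\beta)}_{\lambda\sslash\mu}(x)$ (horizontal-strip support, $\frac{x}{1-\alpha x}$ per added box, $\frac{1+\beta x}{1-\alpha x}$ per site with left label $0$ and nonzero bottom label matching $r(\mu/\bar\lambda)$), iterate the branching formula, and obtain \eqref{dJ} from the flip symmetry defining the dual tiles. Your version merely spells out the base case and the resolution-of-identity step that the paper leaves implicit.
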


\begin{proof}
We shall prove \eqref{J}, and \eqref{dJ} follows immediately as a consequence of the way we obtained the dual tiles. Fix $\lambda$, then we can just consider the finite transfer matrix of size $\lambda_1$. After inserting the partition states, we have the branching formula,

\begin{align*}
    \G(x_1,\dots,x_n,x)
&=
\sum_{\mu}
\bra{0}
T(x_1)
\dots
T(x_n)
\ket{\mu}
\bra{\mu}
T(x)
\ket{\lambda}.
\end{align*}

On comparing the branching formula for $\G$, 
\begin{align*}
\G(x_1,\dots,x_n,x)= \sum_{\lambda/\mu  \text{ hor. strip}} G^{(\alpha,\beta)}_{\mu}(x_1,\dots,x_n) G^{(\alpha,\beta)}_{\lambda\sslash \mu}(x)
\end{align*}
it is enough to show $G^{(\alpha,\beta)}_{\lambda\sslash\mu}(x)=\bra{\mu}T(x)\ket{\lambda}$.
For a horizontal strip $\lambda/\mu$, we have
\begin{align*}
G^{(\alpha,\beta)}_{\lambda\sslash\mu}(x)= 
\left(
\frac{x}{1-\alpha x}
\right)^{|\lambda/\mu|}
\left(
\frac{1+\beta x}{1-\alpha x}
\right)^{r(\mu/\Bar{\lambda})}.
\end{align*}

Based on the tiles one easily observes that $\bra{\mu}T(x) \ket{\lambda}\neq 0$ if and only if $\lambda/\mu$ is a horizontal strip. The label $1$ on the left edge at site $i$ amounts to adding a box in $i^{th}$ column from left. For every such vertex, we get $\frac{x}{1-\alpha x}$. From our previous analysis, we see that $r(\mu/\Bar{\lambda})$ is exactly the number of vertices with the label $0$ on the left edge and a non zero label on the bottom node.
\end{proof}

\begin{ex*} For partition $\lambda=(1,0)$, we have
\[
\begin{tikzpicture}[scale=1]
\draw[fermionic,Gcol] (0.5,0) node[left,black]{$\ss 1$}--(1.5,0) node[above,black]{$\ss 0$} --(2.5,0) node[above,black]{$\ss 0$}--(3.5,0) node[right,black]{$\ss 0$};
\draw[fermionic,Gcol] (0.5,1) node[left,black]{$\ss 0$}--(1.5,1) node[above,black]{$\ss 0$}--(2.5,1)node[above,black]{$\ss 0$}--(3.5,1)node[right,black]{$\ss 0$};
\draw[bosonic]  (1,-0.5) node[below] {$\ss 0$}--(1,0.5) node [right] {$\ss 1$}--(1,1.5) node[above] {$\ss 1$};
\draw[bosonic]  (2,-0.5)node[below] {$\ss 0$}--(2,0.5) node[right]{$\ss 0$}--(2,1.5) node[above] {$\ss 0$};
\draw[bosonic]  (3,-0.5)node[below] {$\ss 0$}--(3,0.5) node[right]{$\ss 0$}--(3,1.5)node[above] {$\ss 0$};
\node at (-0.5,0) {$x_1$};
\node at (-0.5,1) {$x_2$};
\end{tikzpicture}
\qquad
\begin{tikzpicture}[scale=1]
\draw[fermionic,Gcol] (0.5,0) node[left,black]{$\ss 0$}--(1.5,0) node[above,black]{$\ss 0$} --(2.5,0) node[above,black]{$\ss 0$}--(3.5,0) node[right,black]{$\ss 0$};
\draw[fermionic,Gcol] (0.5,1) node[left,black]{$\ss 1$}--(1.5,1) node[above,black]{$\ss 0$}--(2.5,1)node[above,black]{$\ss 0$}--(3.5,1)node[right,black]{$\ss 0$};
\draw[bosonic]  (1,-0.5) node[below] {$\ss 0$}--(1,0.5) node [right] {$\ss 0$}--(1,1.5) node[above] {$\ss 1$};
\draw[bosonic]  (2,-0.5)node[below] {$\ss 0$}--(2,0.5) node[right]{$\ss 0$}--(2,1.5) node[above] {$\ss 0$};
\draw[bosonic]  (3,-0.5)node[below] {$\ss 0$}--(3,0.5) node[right]{$\ss 0$}--(3,1.5)node[above] {$\ss 0$};
\end{tikzpicture}
\]
\[
G^{(\alpha,\beta)}_{\lambda}(x_1,x_2)=\br{\frac{x_1}{1-\alpha x_1}}\br{\frac{1+\beta x_2}{1-\alpha x_2}}+\br{\frac{ x_2}{1-\alpha x_2}}
\]
\end{ex*}

\begin{ex*} For partition $\lambda=(2,0)$, we have
\[
\begin{tikzpicture}[scale=1]
\draw[fermionic,Gcol] (0.5,0) node[left,black]{$\ss 1$}--(1.5,0) node[above,black]{$\ss 1$} --(2.5,0) node[above,black]{$\ss 0$}--(3.5,0) node[right,black]{$\ss 0$};
\draw[fermionic,Gcol] (0.5,1) node[left,black]{$\ss 0$}--(1.5,1) node[above,black]{$\ss 0$}--(2.5,1)node[above,black]{$\ss 0$}--(3.5,1)node[right,black]{$\ss 0$};
\draw[bosonic]  (1,-0.5) node[below] {$\ss 0$}--(1,0.5) node [right] {$\ss 0$}--(1,1.5) node[above] {$\ss 0$};
\draw[bosonic]  (2,-0.5)node[below] {$\ss 0$}--(2,0.5) node[right]{$\ss 1$}--(2,1.5) node[above] {$\ss 1$};
\draw[bosonic]  (3,-0.5)node[below] {$\ss 0$}--(3,0.5) node[right]{$\ss 0$}--(3,1.5)node[above] {$\ss 0$};
\node at (-0.5,0) {$x_1$};
\node at (-0.5,1) {$x_2$};
\end{tikzpicture}
\qquad
\begin{tikzpicture}[scale=1]
\draw[fermionic,Gcol] (0.5,0) node[left,black]{$\ss 1$}--(1.5,0) node[above,black]{$\ss 0$} --(2.5,0) node[above,black]{$\ss 0$}--(3.5,0) node[right,black]{$\ss 0$};
\draw[fermionic,Gcol] (0.5,1) node[left,black]{$\ss 0$}--(1.5,1) node[above,black]{$\ss 1$}--(2.5,1)node[above,black]{$\ss 0$}--(3.5,1)node[right,black]{$\ss 0$};
\draw[bosonic]  (1,-0.5) node[below] {$\ss 0$}--(1,0.5) node [right] {$\ss 1$}--(1,1.5) node[above] {$\ss 0$};
\draw[bosonic]  (2,-0.5)node[below] {$\ss 0$}--(2,0.5) node[right]{$\ss 0$}--(2,1.5) node[above] {$\ss 1$};
\draw[bosonic]  (3,-0.5)node[below] {$\ss 0$}--(3,0.5) node[right]{$\ss 0$}--(3,1.5)node[above] {$\ss 0$};
\end{tikzpicture}
\qquad
\begin{tikzpicture}[scale=1]
\draw[fermionic,Gcol] (0.5,0) node[left,black]{$\ss 0$}--(1.5,0) node[above,black]{$\ss 0$} --(2.5,0) node[above,black]{$\ss 0$}--(3.5,0) node[right,black]{$\ss 0$};
\draw[fermionic,Gcol] (0.5,1) node[left,black]{$\ss 1$}--(1.5,1) node[above,black]{$\ss 1$}--(2.5,1)node[above,black]{$\ss 0$}--(3.5,1)node[right,black]{$\ss 0$};
\draw[bosonic]  (1,-0.5) node[below] {$\ss 0$}--(1,0.5) node [right] {$\ss 0$}--(1,1.5) node[above] {$\ss 0$};
\draw[bosonic]  (2,-0.5)node[below] {$\ss 0$}--(2,0.5) node[right]{$\ss 0$}--(2,1.5) node[above] {$\ss 1$};
\draw[bosonic]  (3,-0.5)node[below] {$\ss 0$}--(3,0.5) node[right]{$\ss 0$}--(3,1.5)node[above] {$\ss 0$};
\end{tikzpicture}
\]
\[
G^{(\alpha,\beta)}_{\lambda}(x_1,x_2)=\br{\frac{x_1}{1-\alpha x_1}}^{2}\br{\frac{1+\beta x_2}{1-\alpha x_2}}+\br{\frac{x_1}{1-\alpha x_1}}\br{\frac{x_2}{1-\alpha x_2}}\br{\frac{1+\beta x_2}{1-\alpha x_2}}+\br{\frac{ x_2}{1-\alpha x_2}}^{2}
\]
\end{ex*}
\begin{ex*} For partition $\lambda=(1,1)$, we have
\[
\begin{tikzpicture}[scale=1]
\draw[fermionic,Gcol] (0.5,0) node[left,black]{$\ss 1$}--(1.5,0) node[above,black]{$\ss 0$} --(2.5,0) node[above,black]{$\ss 0$}--(3.5,0) node[right,black]{$\ss 0$};
\draw[fermionic,Gcol] (0.5,1) node[left,black]{$\ss 1$}--(1.5,1) node[above,black]{$\ss 0$}--(2.5,1)node[above,black]{$\ss 0$}--(3.5,1)node[right,black]{$\ss 0$};
\draw[bosonic]  (1,-0.5) node[below] {$\ss 0$}--(1,0.5) node [right] {$\ss 1$}--(1,1.5) node[above] {$\ss 2$};
\draw[bosonic]  (2,-0.5)node[below] {$\ss 0$}--(2,0.5) node[right]{$\ss 0$}--(2,1.5) node[above] {$\ss 0$};
\draw[bosonic]  (3,-0.5)node[below] {$\ss 0$}--(3,0.5) node[right]{$\ss 0$}--(3,1.5)node[above] {$\ss 0$};
\node at (-0.5,0) {$x_1$};
\node at (-0.5,1) {$x_2$};
\end{tikzpicture}
\]
\[
G^{(\alpha,\beta)}_{\lambda}(x_1,x_2)=\br{\frac{x_1}{1-\alpha x_1}}\br{\frac{x_2}{1-\alpha x_2}}
\]
\end{ex*}

\subsection{Row vertex model for dual canonical Grothendieck polynomials.}
\label{rowmodeldualg}
In this section, we consider a similar vertex model as the one introduced in \cref{rowcGmodel}, but with a bosonic auxiliary line. This means that that we shall associate an infinite dimensional vector space to the values a horizontal line can carry. The Boltzmann weights of the vertices are the following:
\begin{align}
\label{dGboltz}
w^{}_{x}
\left(
\begin{gathered}
\begin{tikzpicture}[scale=0.4,baseline=-2pt]
\draw[bosonic,gcol,arrow=0.25] (-1,0) node[left,black] {$a$} -- (1,0) node[right,black] {$c$};
\draw[bosonic,arrow=0.25] (0,-1) node[below] {$b$} -- (0,1) node[above] {$d$};
\end{tikzpicture}
\end{gathered}
\right)
\equiv
w^{}_{x}(a,b;c,d)
= \delta_{a+b,c+d} 
\begin{cases}
(\alpha+\beta)^{a-d-1} (x+\alpha) \beta^{d}& a>d\\
\beta^{a-1} x & 0<a\leq d\\
1& a=0,
\end{cases}
\end{align}
where 
$a,b,c,d$ $\in \mathbb{Z}_{\geq 0}$.

Let $W$ = Span$\{| j \rangle\}_{j \in \mathbb{Z}_{\geq 0}}$ be an infinite dimensional vector space, and for $1\leq i \leq n $, let $W_i$ be a copy of $W$. Then we define a $l$ matrix which acts linearly on $W_i\otimes V_j$ as follows,
\begin{align}
{l}_{i,j}\left(x_i\right):\ket{a}\otimes \ket{b}
\mapsto  \sum_{c,d \hspace{1mm} \text{where} \hspace{1mm} a+b=c+d}
w_{{x_i}}
\Big(a,b;c,d\Big)
\ket{c}\otimes \ket{d}.
\end{align}

Let $w(\{i_1,i_2,\dots\};\{k_1,k_2,\dots\})$ be the weight of single row of vertices.
\[
w(\{i_1,i_2,\dots\};\{k_1,k_2,\dots\})=
\begin{tikzpicture}[scale=0.5,baseline=-1pt]
\draw[arrow=0.05,bosonic,gcol] (-1,0)node[left,black]{$*$}-- (7,0) node[right,black]{$\ss 0$};
\node[left] at (-0.8,0) {\tiny };\node[right] at (6.8,0) {\tiny };
\foreach\x in {0,1,...,6}{
\draw[arrow=0.25,bosonic] (\x,-1) -- (\x,1);
}
\node[below] at (0,-0.8) {\tiny $i_1$};\node[above] at (0,0.8) {\tiny $k_1$};
\node[below] at (1,-0.8) {\tiny $i_2$};\node[above] at (1,0.8) {\tiny $k_2$};
\node[below] at (2,-0.8) {\tiny $i_3$};\node[above] at (2,0.8) {\tiny $k_3$};
\node[below] at (6,-0.8) {$\cdots$};\node[above] at (6,0.8) {$\cdots$};
\end{tikzpicture}
\]

We now define the transfer matrix $t$ which acts linearly on $V^c$ as follows,

\begin{align}
\label{rowtransferg}
t(x):\ket{i_1}\otimes \ket{i_2}\otimes {\cdots}
\mapsto  \sum_{k_1,k_2\ldots \ge 0}
w(\{i_1,i_2,\dots\};\{k_1,k_2,\dots\})
\ket{k_1}\otimes \ket{k_2}\otimes {\cdots}
\end{align}

As the horizontal lines are bosonic, we now represent $r$ matrix as a cross of thick lines $\left(
\begin{gathered}
\begin{tikzpicture}[baseline=-3pt,scale=0.9]
\draw[bosonic,arrow=0.35,gcol] (-2,0.5) node[left] {} -- (-1,-0.5) node[below] {};
\draw[bosonic,arrow=0.35,gcol] (-2,-0.5) node[left] {} -- (-1,0.5) node[above] {};
\end{tikzpicture}
\end{gathered}
\right).$
Consider the vector spaces $W_i, W_j$ where $i<j$. Define a ${r}$-matrix which acts linearly on $W_i\otimes W_j$ as follows,
\begin{align}
\label{rowdrmatrix}  
{r}_{i,j}(x_i,x_j):\ket{a}\otimes \ket{b}
\mapsto  \sum_{c,d \hspace{1mm} \text{where} \hspace{1mm}  a+b=c+d}
r^{a,d}_{b,c}(x_i,x_j)
\ket{c}\otimes \ket{d}.
\end{align}
where the entries of $r$ matrix here are the following:

\begin{align}
r^{a,d}_{b,c} (x,y)=
\begin{gathered}
\begin{tikzpicture}[baseline=-3pt,scale=0.9]
\draw[bosonic,arrow=0.35,gcol] (-2,0.5) node[left,black] {$a$} -- (-1,-0.5) node[below] {};
\node[text width=1cm] at (-0.4,-0.55){$c$};
\draw[bosonic,arrow=0.35,gcol] (-2,-0.5) node[left,black] {$b$} -- (-1,0.5) node[above] {};
\node[text width=1cm] at (-0.4,0.5){$d$};
\end{tikzpicture}
\end{gathered}=
\begin{cases}
0 & b>c\\
1 & b=c=0\\
\dfrac{y}{x} & b=c>0\\
\left(1- \dfrac{y}{x}\right) \left(1-\dfrac{y}{\beta} \right)^{a-d-1} & b=0\\
\left(1- \dfrac{y}{x} \right) \left(1-\dfrac{y}{\beta} \right)^{a-d-1} \left(\dfrac{y}{\beta}\right) & b>0\\
\end{cases}
\end{align}

Together with $l_i$ and $l_j$ matrices, $r_{ij}$ satisfies the $\mathrm {RLL}$ relation \text{in End} $(W_i \otimes W_j \otimes V_n)$.
\begin{equation}
\label{rowdrll}
r_{ij}(x,y)l_i(x)l_j(y)=l_j(y)l_i(x)r_{ij}(x,y)
\quad
\left(
\begin{tikzpicture}[scale=0.6,baseline=7pt]
\draw[bosonic](1,-1)--(1,2);
\draw[bosonic,gcol,arrow=0.15,rounded corners] (-1,1) node[left,black]{$\ss x$}--(0,0)--(2,0);
\draw[bosonic,gcol,arrow=0.15,rounded corners] (-1,0) node[left,black]{$\ss y$}--(0,1)--(2,1);
\end{tikzpicture}
=
\begin{tikzpicture}[scale=0.6,baseline=7pt]
\draw[bosonic,gcol,arrow=0.15,rounded corners] (0,0) node[left,black]{$\ss y$}--(2,0)--(3,1);
\draw[bosonic,gcol,arrow=0.15,rounded corners] (0,1)node[left,black]{$\ss x$}--(2,1)--(3,0);
\draw[bosonic] (1,2)--(1,-1);
\end{tikzpicture}
\right)
\end{equation}

\begin{rmk*}
Observe that the $r$ matrix is not defined for $\beta =0$. So, we shall study a different model for for $g^{(\alpha,0)}_{\lambda}$ polynomials in \cref{vmodelforj}.
\end{rmk*}

\subsubsection{Eigenvector property of the $r$-matrix.}
We proceed as in the previous section, showing an eigenvector property for the $r$-matrix in order to prove the commutation of transfer matrices. Formulated differently, we show that the partition function with a single vertex and fixed right boundary:
\[
\begin{tikzpicture}[baseline=-3pt,rounded corners]
\node at (-3.5,0) {$ \mathcal{Z}(c,d) =$};
\draw[bosonic,arrow=0.35,gcol] (-2,0.5) node[left,black] {$*$} -- (-1,-0.5) node[below] {};
\node[text width=1cm] at (-0.4,-0.55){$c$};
\draw[bosonic,arrow=0.35,gcol] (-2,-0.5) node[left,black] {$*$} -- (-1,0.5) node[above] {};
\node[text width=1cm] at (-0.4,0.5){$d$};
\end{tikzpicture}
\]
(where $c$, $d$ are non-negative integers) is constant and equal to $1$. 
%Recall that we denote the free boundary condition with $*$.

%We now prove that $\mathcal{Z}(c,d)=1$ for any fixed non negative $c,d$.
We compute:
\begin{align*}
    \mathcal{Z}(c,d)&= \sum^{c}_{i=0} \begin{tikzpicture}[baseline=-3pt,rounded corners]
\draw[bosonic,arrow=0.35,gcol] (-2,0.5) node[left,black] {$c+d-i$} -- (-1,-0.5) node[below] {};
\node[text width=1cm] at (-0.4,-0.55){$c$};
\draw[bosonic,arrow=0.35,gcol] (-2,-0.5) node[left,black] {$i$} -- (-1,0.5) node[above] {};
\node[text width=1cm] at (-0.4,0.5){$d$};
\end{tikzpicture}\\
&= \begin{tikzpicture}[baseline=-3pt,rounded corners]
\draw[bosonic,arrow=0.35,gcol] (-2,0.5) node[left,black] {$c+d$} -- (-1,-0.5) node[below] {};
\node[text width=1cm] at (-0.4,-0.55){$c$};
\draw[bosonic,arrow=0.35,gcol] (-2,-0.5) node[left,black] {$0$} -- (-1,0.5) node[above] {};
\node[text width=1cm] at (-0.4,0.5){$d$};
\end{tikzpicture}
+\sum^{c-1}_{i=1}
\begin{tikzpicture}[baseline=-3pt,rounded corners]
\draw[bosonic,arrow=0.35,gcol] (-2,0.5) node[left,black] {$c+d-i$} -- (-1,-0.5) node[below] {};
\node[text width=1cm] at (-0.4,-0.55){$c$};
\draw[bosonic,arrow=0.35,gcol] (-2,-0.5) node[left,black] {$i$} -- (-1,0.5) node[above] {};
\node[text width=1cm] at (-0.4,0.5){$d$};
\end{tikzpicture}
+
\begin{tikzpicture}[baseline=-3pt,rounded corners]
\draw[bosonic,arrow=0.35,gcol] (-2,0.5) node[left,black] {$d$} -- (-1,-0.5) node[below] {};
\node[text width=1cm] at (-0.4,-0.55){$c$};
\draw[bosonic,arrow=0.35,gcol] (-2,-0.5) node[left,black] {$c$} -- (-1,0.5) node[above] {};
\node[text width=1cm] at (-0.4,0.5){$d$};
\end{tikzpicture}\\
&= \left(1- \dfrac{y}{x}\right) \left(1-\dfrac{y}{\beta} \right)^{c-1}+
\sum^{c-1}_{i=1}
\left(1- \dfrac{y}{x} \right) \left(1-\dfrac{y}{\beta} \right)^{c-i-1} \left(\dfrac{y}{\beta}\right)+
\dfrac{y}{x}\\
&= \left(1- \dfrac{y}{x}\right) \left(1-\dfrac{y}{\beta} \right)^{c-1} +
\dfrac{y}{\beta}\left(\dfrac{1-\dfrac{y}{x}}{1-\dfrac{y}{\beta}}\right)\sum^{c-1}_{i=1}\left(1-\dfrac{y}{\beta} \right)^{i}+
\dfrac{y}{x}\\
&= \left(1- \dfrac{y}{x}\right) \left(1-\dfrac{y}{\beta} \right)^{c-1} +
\dfrac{y}{\beta}\left(\dfrac{1-\dfrac{y}{x}}{1-\dfrac{y}{\beta}}\right)  \dfrac{\left(1-\dfrac{y}{\beta}\right) \left(  1-\left(1-\dfrac{y}{\beta}\right)^{c-1}\right)}{1-\left(1-\dfrac{y}{\beta} \right)}
+\dfrac{y}{x}\\
&= \left(1- \dfrac{y}{x}\right) \left(1-\dfrac{y}{\beta} \right)^{c-1} +\left(
1-\dfrac{y}{x}\right)  \left(  1-\left(1-\dfrac{y}{\beta}\right)^{c-1}\right)
+\dfrac{y}{x}\\
&= \left(1- \dfrac{y}{x}\right) \left(1-\dfrac{y}{\beta} \right)^{c-1} +\left(
1-\dfrac{y}{x}\right) - \left(
1-\dfrac{y}{x}\right)\left(1-\dfrac{y}{\beta}\right)^{c-1}
+\dfrac{y}{x}\\
&=1
\end{align*}

The $r$-matrix has the eigenvector property implies the commutation relation
\[
t(x)t(y)=t(y)t(x).
\]
Therefore, the polynomials defined using $t$ are invariant under permutation of variables.
\subsubsection{Canonical dual Grothendieck polynomials.}

In order to formulate the branching formula for $\g$, we need to establish some statistics on partitions.

For a skew-partition $\lambda/\mu$, define 
\begin{align*}
r(\lambda/\mu)&= \text{number of non zero rows,}\\
c(\lambda/\mu)&= \text{number of non zero columns,}\\
b(\lambda/\mu)&= \text{number of connected components.}\\
\end{align*}

Let us now recall the branching formula of $\g$ from \cite[Theorem~8.6]{canon-groth}.
For $\lambda,\mu$, we have
\begin{align}
\label{branchg}
\g(x_1,\dots,x_n,x_{n+1})= \sum_{\mu\subseteq \lambda}  g^{(\alpha,\beta)}_{\mu}(x_1,\dots,x_n) g^{(\alpha,\beta)}_{\lambda/\mu}(x_{n+1}),
\end{align}
where
\begin{multline*}
g_{\lambda/\mu}^{(\alpha,\beta)}(x)=\\
\begin{cases}
\beta^{r(\lambda/\mu)-b(\lambda/\mu)}(\alpha+\beta)^{\lambda/\mu - r(\lambda/\mu)-c(\lambda/\mu)+b(\lambda/\mu)} x^{b(\lambda/\mu)}{(\alpha+x)}^{c(\lambda/\mu)-b(\lambda/\mu)}& \mu \subseteq \lambda\\
0& \text{otherwise.}
\end{cases}
\end{multline*}

Let us observe some examples to understand the above statistics.

\begin{tikzpicture}[scale=0.6]
\node at (2,-1) {$
\tableau{\graycell &\graycell& &\\
\graycell&\graycell&\\
\graycell&\graycell\\
&\\
\\}$};
\node at (11,-1) {$
\tableau{\graycell &\graycell& &\\
\graycell&\graycell&\\
\graycell&\graycell&\\
&\\
\\}$};
\node at (20,-1) {$
\tableau{\graycell &\graycell& &\\
\graycell&\graycell&\\
\graycell&\graycell&\\
& &\\
\\}$};
%labels
\node at (2,2) {$\lambda/\mu= (4,3,2,2,1)/(2,2,2)$};
\node at (11,2) {$\lambda/\mu= (4,3,3,2,1)/(2,2,2)$};
\node at (20,2) {$\lambda/\mu= (4,3,3,3,1)/(2,2,2)$};
%statistics
\node at (1.5,-5.5) {
$\begin{aligned}
    r(\lambda/\mu)&=4\\
    c(\lambda/\mu)&=4\\
    b(\lambda/\mu)&=2\\
\end{aligned}$
};
\node at (10.5,-5.5) {
$\begin{aligned}
    r(\lambda/\mu)&=5\\
    c(\lambda/\mu)&=4\\
    b(\lambda/\mu)&=2\\
\end{aligned}$
};
\node at (20.5,-5.5) {
$\begin{aligned}
    r(\lambda/\mu)&=5\\
    c(\lambda/\mu)&=4\\
    b(\lambda/\mu)&=1\\
\end{aligned}$
};
%tranfermatricesandlabels
%example1
\draw[bosonic,gcol] (-1,-11)-- (4,-11);
\foreach\x in{0,1,2,3} {
\draw[bosonic] (\x,-12)--(\x,-10);
};
%mu
\node at (0,-12.5) {$\ss 0$};
\node at (1,-12.5) {$\ss 3$};
\node at (2,-12.5) {$\ss 0$};
\node at (3,-12.5) {$\ss 0$};
%lambda
\node at (0,-9.5) {$\ss 1$};
\node at (1,-9.5) {$\ss 2$};
\node at (2,-9.5) {$\ss 1$};
\node at (3,-9.5) {$\ss 1$};
%horizontal labels
\node at (-0.5,-11) {$\ss 2$};
\node at (0.5,-11) {$\ss 1$};
\node at (1.5,-11) {$\ss 2$};
\node at (2.5,-11) {$\ss 1$};
\node at (3.5,-11) {$\ss 0$};
\node at (1.5,-13.5) {$\ss \mu$};
\node at (1.5,-8.5) {$\ss \lambda$};
%example2
\draw[bosonic,gcol] (8,-11)-- (13,-11);
\foreach\x in{0,1,2,3} {
\draw[bosonic] (9+\x,-12)--(9+\x,-10);
};
%mu
\node at (9,-12.5) {$\ss 0$};
\node at (10,-12.5) {$\ss 3$};
\node at (11,-12.5) {$\ss 0$};
\node at (12,-12.5) {$\ss 0$};
%lambda
\node at (9,-9.5) {$\ss 1$};
\node at (10,-9.5) {$\ss 1$};
\node at (11,-9.5) {$\ss 2$};
\node at (12,-9.5) {$\ss 1$};
%horizontal labels
\node at (8.5,-11) {$\ss 2$};
\node at (9.5,-11) {$\ss 1$};
\node at (10.5,-11) {$\ss 3$};
\node at (11.5,-11) {$\ss 1$};
\node at (12.5,-11) {$\ss 0$};
\node at (10.5,-13.5) {$\ss \mu$};
\node at (10.5,-8.5) {$\ss \lambda$};
%example3
\draw[bosonic,gcol] (17,-11)-- (22,-11);
\foreach\x in{0,1,2,3} {
\draw[bosonic] (18+\x,-12)--(18+\x,-10);
};
%mu
\node at (18,-12.5) {$\ss 0$};
\node at (19,-12.5) {$\ss 3$};
\node at (20,-12.5) {$\ss 0$};
\node at (21,-12.5) {$\ss 0$};
%lambda
\node at (18,-9.5) {$\ss 1$};
\node at (19,-9.5) {$\ss 0$};
\node at (20,-9.5) {$\ss 3$};
\node at (21,-9.5) {$\ss 1$};
%horizontal labels
\node at (17.5,-11) {$\ss 2$};
\node at (18.5,-11) {$\ss 1$};
\node at (19.5,-11) {$\ss 4$};
\node at (20.5,-11) {$\ss 1$};
\node at (21.5,-11) {$\ss 0$};
%labels
\node at (19.5,-13.5) {$\ss \mu$};
\node at (19.5,-8.5) {$\ss \lambda$};
\end{tikzpicture}

We now unpack the information contained at a vertex. Consider a vertex $\left(
\begin{tikzpicture}[scale=0.4,baseline=-2pt]
\draw[bosonic,gcol,arrow=0.25] (-1,0) node[left,black] {$a$} -- (1,0) node[right,black] {$c$};
\draw[bosonic,arrow=0.25] (0,-1) node[below] {$b$} -- (0,1) node[above] {$d$};
\end{tikzpicture}\right)$ at site $i$. The left node $a$, corresponds to adding $a$ boxes to $i^{th}$ column of $\mu$. The node $d$, is the number rows of $\lambda$ with size $i$. We now want to understand number of row of size $i$ in $\lambda/\mu$. There are three types of vertices, $b<c$, $b>d$, and $b=c$. Let us look at the nodes on $i^{th}$ column of $\lambda/\mu$ interms of the Young diagram.
\[
\begin{tikzpicture}[scale=0.4]
\node at (0,5) {$\text{case: } b<c$};
%b
\draw (0,0)--(0,4);
\draw (1,0)--(1,4);
\foreach\x in {0,1,2,3,4}{
\draw (0,\x)--(1,\x);}
\draw [decorate,decoration={brace,amplitude=13pt},xshift=-4pt,yshift=0pt]
(-0.3,0.2) -- (-0.3,4) node [black,midway,xshift=-0.6cm] 
{ $\ss b$};
%c
\draw[orange] (1,4) grid (2,-1);
\draw [decorate,decoration={brace,amplitude=13pt,mirror,raise=4pt},yshift=0pt]
(2,-1) -- (2,4) node [black,midway,xshift=0.8cm] 
{ $\ss c$};
%a
\draw[orange] (0,0) grid (1,-3);
\draw [decorate,decoration={brace,amplitude=13pt},xshift=-4pt,yshift=0pt]
(-0.3,-3) -- (-0.3,0) node [black,midway,xshift=-0.6cm] 
{ $\ss a$};
\draw [decorate,decoration={brace,amplitude=13pt,mirror,raise=4pt},yshift=0pt]
(1,-3) -- (1,-1.2) node [black,midway,xshift=0.8cm] 
{ $\ss d$};
\end{tikzpicture}
\qquad
\begin{tikzpicture}[scale=0.4]
\node at (0,5) {$\text{case: } b>c$};
%b
\draw (0,0)--(0,4);
\draw (1,0)--(1,4);
\foreach\x in {0,1,2,3,4}{
\draw (0,\x)--(1,\x);}
\draw [decorate,decoration={brace,amplitude=13pt},xshift=-2pt,yshift=0pt]
(-0.3,0.2) -- (-0.3,4) node [black,midway,xshift=-0.6cm] 
{ $\ss b$};
%c
\draw[orange] (1,4) grid (2,2);
\draw [decorate,decoration={brace,amplitude=13pt,mirror,raise=4pt},yshift=0pt]
(2.,2) -- (2.,4) node [black,midway,xshift=0.8cm] 
{ $\ss c$};
%a
\draw[orange] (0,0) grid (1,-3);
\draw [decorate,decoration={brace,amplitude=13pt},xshift=-4pt,yshift=0pt]
(-0.3,-3) -- (-0.3,0) node [black,midway,xshift=-0.6cm] 
{ $\ss a$};
\draw [decorate,decoration={brace,amplitude=13pt,mirror,raise=4pt},yshift=0pt]
(1,-3) -- (1,1.8) node [black,midway,xshift=0.8cm] 
{ $\ss d$};
\end{tikzpicture}
\qquad
\begin{tikzpicture}[scale=0.4]
\node at (0,5) {$\text{case: } b=c$};
%b
\draw (0,0)--(0,4);
\draw (1,0)--(1,4);
\foreach\x in {0,1,2,3,4}{
\draw (0,\x)--(1,\x);}
\draw [decorate,decoration={brace,amplitude=13pt},xshift=-4pt,yshift=0pt]
(-0.3,0.2) -- (-0.3,4) node [black,midway,xshift=-0.6cm] 
{ $\ss b$};
%c
\draw[orange] (1,4) grid (2,0);
\draw [decorate,decoration={brace,amplitude=13pt,mirror,raise=4pt},yshift=0pt]
(2,0) -- (2,4) node [black,midway,xshift=0.8cm] 
{ $\ss c$};
%a
\draw[orange] (0,0) grid (1,-3);
\draw [decorate,decoration={brace,amplitude=13pt},xshift=-4pt,yshift=0pt]
(-0.3,-3) -- (-0.3,0) node [black,midway,xshift=-0.6cm] 
{ $\ss a$};
\draw [decorate,decoration={brace,amplitude=13pt,mirror,raise=4pt},yshift=0pt]
(1,-3) -- (1,-0.2) node [black,midway,xshift=0.8cm] 
{ $\ss d$};
\end{tikzpicture}
\]

From the above pictures, it is evident that the number of non zero rows of size $i$ in $\lambda/\mu$ is $\min(a,d)$. Also observe that, in the last two cases, the skew diagram is disjoint. Therefore, the number of connected components is the number of vertices where $b>c$ or $b=c$. Finally, the $i^{th}$ column of $\lambda/\mu$ is non zero if and only if some boxes are added to it i.e.,  when $a\neq 0$. 

\begin{thm}
\label{thm:rowg}
The dual Canonical Grothendieck polynomials $\g(x)$  are given by
\begin{align}
\label{rowg}
\g(x_1,\dots,x_n)
&=
\bra{0}
t(x_1)
\dots
t(x_n)
\ket{\lambda}
\end{align}
where $\ket{\lambda} = \bigotimes_{i=1}^{\infty} \ket{m_i(\lambda)}$.
\end{thm}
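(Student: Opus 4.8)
The plan is to mirror the proof of \cref{thm:rowG}: reduce the $n$‑variable identity to a single‑row computation by means of the branching formula \eqref{branchg}, and then evaluate the single‑row matrix element combinatorially, using the vertex‑to‑Young‑diagram dictionary already recorded just above the theorem statement. First, the transfer matrices $t(x)$ commute, by the eigenvector property of the $r$‑matrix established above, so the right‑hand side of \eqref{rowg} is symmetric in $x_1,\dots,x_n$; and $t(0)=\mathrm{id}$, since in any configuration contributing at $x=0$ the horizontal edge fixed to $0$ on the right forces, via local conservation working leftward, every horizontal edge to carry $0$ — a vertex with a positive left label $a$ and right label $0$ would have $0<a\le d$ and hence weight $\beta^{a-1}x$, vanishing at $x=0$ — and the one surviving configuration is the identity. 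Now insert $\sum_{\mu}\ket{\mu}\bra{\mu}=\mathrm{id}_{V^r}$ between $t(x_n)$ and $t(x_{n+1})$:
\[
\bra{0}t(x_1)\cdots t(x_{n+1})\ket{\lambda}=\sum_{\mu}\bra{0}t(x_1)\cdots t(x_n)\ket{\mu}\,\bra{\mu}t(x_{n+1})\ket{\lambda}.
\]
Comparing with \eqref{branchg}, a straightforward induction on $n$ (base case $n=0$ immediate) reduces the theorem to the single‑row identity $\bra{\mu}t(x)\ket{\lambda}=g^{(\alpha,\beta)}_{\lambda/\mu}(x)$; in particular the sum over $\mu$ above is finite because, as shown next, its terms vanish unless $\mu\subseteq\lambda$.

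To prove the single‑row identity, fix $\mu,\lambda$. Here $\ket{\lambda}$ sits on the top vertical edges and $\bra{\mu}$ on the bottom ones, so the vertex $w_x(a_i,b_i;c_i,d_i)$ of \eqref{dGboltz} at site $i$ has $b_i=m_i(\mu)$, $d_i=m_i(\lambda)$ and $c_i=a_{i+1}$. Local conservation $a_i+b_i=c_i+d_i$ together with the right boundary $0$ forces $a_i=\lambda'_i-\mu'_i$ on every horizontal edge, so there is a unique admissible configuration if $\mu\subseteq\lambda$ and none otherwise, matching $g^{(\alpha,\beta)}_{\lambda/\mu}=0$ for $\mu\not\subseteq\lambda$. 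Assume $\mu\subseteq\lambda$. By the analysis preceding the theorem, $a_i$ is the number of boxes of $\lambda/\mu$ in column $i$; $\min(a_i,d_i)$ is the number of non‑empty rows of size $i$ in $\lambda/\mu$; column $i$ of $\lambda/\mu$ is non‑empty iff $a_i>0$; and, using the conservation equivalence $a_i\le d_i\Leftrightarrow b_i\ge c_i$, the vertices with $0<a_i\le d_i$ are precisely the non‑empty columns at which a connected component of $\lambda/\mu$ closes. Hence
\[
\textstyle\sum_i a_i=|\lambda/\mu|,\quad \sum_i\min(a_i,d_i)=r(\lambda/\mu),\quad \#\{i:a_i>0\}=c(\lambda/\mu),\quad \#\{i:0<a_i\le d_i\}=b(\lambda/\mu).
\]
Multiplying the weights of \eqref{dGboltz} — a vertex with $a_i=0$ contributes $1$, one with $a_i>d_i$ contributes $(\alpha+\beta)^{a_i-d_i-1}(\alpha+x)\beta^{d_i}$, one with $0<a_i\le d_i$ contributes $\beta^{a_i-1}x$ — and collecting the powers of $\alpha+x$, $x$, $\beta$, $\alpha+\beta$ by means of the four identities above yields exactly $\beta^{r-b}(\alpha+\beta)^{|\lambda/\mu|-r-c+b}x^{b}(\alpha+x)^{c-b}=g^{(\alpha,\beta)}_{\lambda/\mu}(x)$ (abbreviating $r=r(\lambda/\mu)$, etc.), which completes the proof.

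The content is concentrated in this last computation; everything else — commutation, $t(0)=\mathrm{id}$, the branching reduction — is the verbatim analogue of \cref{thm:rowG}. The delicate point is the count for $b(\lambda/\mu)$: empty‑column vertices ($a_i=0$) also satisfy $b_i\ge c_i$, so one cannot simply count vertices with $b_i\ge c_i$, and it is the conservation equivalence $a_i\le d_i\Leftrightarrow b_i\ge c_i$ that translates the column/row geometry of the pictures drawn above into the $a$‑versus‑$d$ case split of the Boltzmann weights. One must then verify that this three‑way split of the weight product lines up with the per‑site decomposition of $|\lambda/\mu|$, $r$, $c$, $b$; given the displayed identities this is a short exponent count. (The $r$‑matrix, hence this construction, requires $\beta\neq0$; the resulting polynomial identity extends to $\beta=0$ by continuity, and an independent model for $g^{(\alpha,0)}_{\lambda}$ is given in \cref{vmodelforj}.)
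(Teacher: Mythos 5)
Your proposal is correct and follows essentially the same route as the paper: reduce to the single-row identity $\bra{\mu}t(x)\ket{\lambda}=g^{(\alpha,\beta)}_{\lambda/\mu}(x)$ via the branching formula as in \cref{thm:rowG}, then match the Boltzmann weights against the statistics $|\lambda/\mu|$, $r$, $c$, $b$ using the vertex dictionary established before the theorem. Your exponent bookkeeping is just a more explicit version of the paper's argument (and your observation that $a_i=0$ vertices must be excluded from the component count, via $a_i\le d_i\Leftrightarrow b_i\ge c_i$ with $a_i>0$, correctly sharpens the paper's phrasing).
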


\begin{proof}
Following the similar reasoning as in \cref{thm:rowG}, it enough to show that for $\mu\subseteq\lambda$,

\[
g^{(\alpha,\beta)}_{\lambda/\mu}(x)=\bra{\mu}t(x)\ket{\lambda}.
\]
where
\begin{multline*}
g_{\lambda/\mu}^{(\alpha,\beta)}(x)=\\
\begin{cases}
\beta^{r(\lambda/\mu)-b(\lambda/\mu)}(\alpha+\beta)^{\lambda/\mu - r(\lambda/\mu)-c(\lambda/\mu)+b(\lambda/\mu)} x^{b(\lambda/\mu)}{(\alpha+x)}^{c(\lambda/\mu)-b(\lambda/\mu)}& \mu \subseteq \lambda\\
0& \text{otherwise.}
\end{cases}
\end{multline*}

Let us study the exponent of $\beta$. Recall that $r(\lambda/\mu)$ of size $i$, is $\min(a,d)$. The connected components are recorded by vertices where $b\geq c$. When $b<c$, assign $\beta^{d}$ and otherwise $\beta^{a-1}$. Then we get that the overall weight as $\beta^{r(\lambda/\mu)-b(\lambda/\mu)}$. Observe that this is precisely the $\beta$ factor of the Boltzmann weights. Similarly, by doing the same to each factor, one recovers the Boltzmann weights.
\end{proof}

\begin{ex*} For partition $\lambda=(1,0)$, we have
\[
\begin{tikzpicture}[scale=1]
\draw[bosonic,gcol] (0.5,0) node[left,black]{$\ss 1$}--(1.5,0) node[above,black]{$\ss 0$} --(2.5,0) node[above,black]{$\ss 0$}--(3.5,0) node[right,black]{$\ss 0$};
\draw[bosonic,gcol] (0.5,1) node[left,black]{$\ss 0$}--(1.5,1) node[above,black]{$\ss 0$}--(2.5,1)node[above,black]{$\ss 0$}--(3.5,1)node[right,black]{$\ss 0$};
\draw[bosonic]  (1,-0.5) node[below] {$\ss 0$}--(1,0.5) node [right] {$\ss 1$}--(1,1.5) node[above] {$\ss 1$};
\draw[bosonic]  (2,-0.5)node[below] {$\ss 0$}--(2,0.5) node[right]{$\ss 0$}--(2,1.5) node[above] {$\ss 0$};
\draw[bosonic]  (3,-0.5)node[below] {$\ss 0$}--(3,0.5) node[right]{$\ss 0$}--(3,1.5)node[above] {$\ss 0$};
\node at (-0.5,0) {$x_1$};
\node at (-0.5,1) {$x_2$};
\end{tikzpicture}
\qquad
\begin{tikzpicture}[scale=1]
\draw[bosonic,gcol] (0.5,0) node[left,black]{$\ss 0$}--(1.5,0) node[above,black]{$\ss 0$} --(2.5,0) node[above,black]{$\ss 0$}--(3.5,0) node[right,black]{$\ss 0$};
\draw[bosonic,gcol] (0.5,1) node[left,black]{$\ss 1$}--(1.5,1) node[above,black]{$\ss 0$}--(2.5,1)node[above,black]{$\ss 0$}--(3.5,1)node[right,black]{$\ss 0$};
\draw[bosonic]  (1,-0.5) node[below] {$\ss 0$}--(1,0.5) node [right] {$\ss 0$}--(1,1.5) node[above] {$\ss 1$};
\draw[bosonic]  (2,-0.5)node[below] {$\ss 0$}--(2,0.5) node[right]{$\ss 0$}--(2,1.5) node[above] {$\ss 0$};
\draw[bosonic]  (3,-0.5)node[below] {$\ss 0$}--(3,0.5) node[right]{$\ss 0$}--(3,1.5)node[above] {$\ss 0$};
\end{tikzpicture}
\]
\[
g^{(\alpha,\beta)}_{\lambda}(x_1,x_2)=x_1+x_2
\]
\end{ex*}
\begin{ex*} For partition $\lambda=(2,0)$, we have
\[
\begin{tikzpicture}[scale=1]
\draw[bosonic,gcol] (0.5,0) node[left,black]{$\ss 1$}--(1.5,0) node[above,black]{$\ss 1$} --(2.5,0) node[above,black]{$\ss 0$}--(3.5,0) node[right,black]{$\ss 0$};
\draw[bosonic,gcol] (0.5,1) node[left,black]{$\ss 0$}--(1.5,1) node[above,black]{$\ss 0$}--(2.5,1)node[above,black]{$\ss 0$}--(3.5,1)node[right,black]{$\ss 0$};
\draw[bosonic]  (1,-0.5) node[below] {$\ss 0$}--(1,0.5) node [right] {$\ss 0$}--(1,1.5) node[above] {$\ss 0$};
\draw[bosonic]  (2,-0.5)node[below] {$\ss 0$}--(2,0.5) node[right]{$\ss 1$}--(2,1.5) node[above] {$\ss 1$};
\draw[bosonic]  (3,-0.5)node[below] {$\ss 0$}--(3,0.5) node[right]{$\ss 0$}--(3,1.5)node[above] {$\ss 0$};
\node at (-0.5,0) {$x_1$};
\node at (-0.5,1) {$x_2$};
\end{tikzpicture}
\qquad
\begin{tikzpicture}[scale=1]
\draw[bosonic,gcol] (0.5,0) node[left,black]{$\ss 1$}--(1.5,0) node[above,black]{$\ss 0$} --(2.5,0) node[above,black]{$\ss 0$}--(3.5,0) node[right,black]{$\ss 0$};
\draw[bosonic,gcol] (0.5,1) node[left,black]{$\ss 0$}--(1.5,1) node[above,black]{$\ss 1$}--(2.5,1)node[above,black]{$\ss 0$}--(3.5,1)node[right,black]{$\ss 0$};
\draw[bosonic]  (1,-0.5) node[below] {$\ss 0$}--(1,0.5) node [right] {$\ss 1$}--(1,1.5) node[above] {$\ss 0$};
\draw[bosonic]  (2,-0.5)node[below] {$\ss 0$}--(2,0.5) node[right]{$\ss 0$}--(2,1.5) node[above] {$\ss 1$};
\draw[bosonic]  (3,-0.5)node[below] {$\ss 0$}--(3,0.5) node[right]{$\ss 0$}--(3,1.5)node[above] {$\ss 0$};
\end{tikzpicture}
\qquad
\begin{tikzpicture}[scale=1]
\draw[bosonic,gcol] (0.5,0) node[left,black]{$\ss 0$}--(1.5,0) node[above,black]{$\ss 0$} --(2.5,0) node[above,black]{$\ss 0$}--(3.5,0) node[right,black]{$\ss 0$};
\draw[bosonic,gcol] (0.5,1) node[left,black]{$\ss 1$}--(1.5,1) node[above,black]{$\ss 1$}--(2.5,1)node[above,black]{$\ss 0$}--(3.5,1)node[right,black]{$\ss 0$};
\draw[bosonic]  (1,-0.5) node[below] {$\ss 0$}--(1,0.5) node [right] {$\ss 0$}--(1,1.5) node[above] {$\ss 0$};
\draw[bosonic]  (2,-0.5)node[below] {$\ss 0$}--(2,0.5) node[right]{$\ss 0$}--(2,1.5) node[above] {$\ss 1$};
\draw[bosonic]  (3,-0.5)node[below] {$\ss 0$}--(3,0.5) node[right]{$\ss 0$}--(3,1.5)node[above] {$\ss 0$};
\end{tikzpicture}
\]
\[
g^{(\alpha,\beta)}_{\lambda}(x_1,x_2)=(x_1+\alpha)x_1+x_1x_2+(x_2+\alpha)x_2=x^{2}_1+x_1x_2+x^{2}_2+\alpha(x_1+x_2)
\]
\end{ex*}
\begin{ex*} For partition $\lambda=(1,1)$, we have
\[
\begin{tikzpicture}[scale=1]
\draw[bosonic,gcol] (0.5,0) node[left,black]{$\ss 2$}--(1.5,0) node[above,black]{$\ss 0$} --(2.5,0) node[above,black]{$\ss 0$}--(3.5,0) node[right,black]{$\ss 0$};
\draw[bosonic,gcol] (0.5,1) node[left,black]{$\ss 0$}--(1.5,1) node[above,black]{$\ss 0$}--(2.5,1)node[above,black]{$\ss 0$}--(3.5,1)node[right,black]{$\ss 0$};
\draw[bosonic]  (1,-0.5) node[below] {$\ss 0$}--(1,0.5) node [right] {$\ss 2$}--(1,1.5) node[above] {$\ss 2$};
\draw[bosonic]  (2,-0.5)node[below] {$\ss 0$}--(2,0.5) node[right]{$\ss 0$}--(2,1.5) node[above] {$\ss 0$};
\draw[bosonic]  (3,-0.5)node[below] {$\ss 0$}--(3,0.5) node[right]{$\ss 0$}--(3,1.5)node[above] {$\ss 0$};
\node at (-0.5,0) {$x_1$};
\node at (-0.5,1) {$x_2$};
\end{tikzpicture}
\qquad
\begin{tikzpicture}[scale=1]
\draw[bosonic,gcol] (0.5,0) node[left,black]{$\ss 1$}--(1.5,0) node[above,black]{$\ss 0$} --(2.5,0) node[above,black]{$\ss 0$}--(3.5,0) node[right,black]{$\ss 0$};
\draw[bosonic,gcol] (0.5,1) node[left,black]{$\ss 1$}--(1.5,1) node[above,black]{$\ss 0$}--(2.5,1)node[above,black]{$\ss 0$}--(3.5,1)node[right,black]{$\ss 0$};
\draw[bosonic]  (1,-0.5) node[below] {$\ss 0$}--(1,0.5) node [right] {$\ss 1$}--(1,1.5) node[above] {$\ss 2$};
\draw[bosonic]  (2,-0.5)node[below] {$\ss 0$}--(2,0.5) node[right]{$\ss 0$}--(2,1.5) node[above] {$\ss 0$};
\draw[bosonic]  (3,-0.5)node[below] {$\ss 0$}--(3,0.5) node[right]{$\ss 0$}--(3,1.5)node[above] {$\ss 0$};
\end{tikzpicture}
\qquad
\begin{tikzpicture}[scale=1]
\draw[bosonic,gcol] (0.5,0) node[left,black]{$\ss 0$}--(1.5,0) node[above,black]{$\ss 0$} --(2.5,0) node[above,black]{$\ss 0$}--(3.5,0) node[right,black]{$\ss 0$};
\draw[bosonic,gcol] (0.5,1) node[left,black]{$\ss 2$}--(1.5,1) node[above,black]{$\ss 0$}--(2.5,1)node[above,black]{$\ss 0$}--(3.5,1)node[right,black]{$\ss 0$};
\draw[bosonic]  (1,-0.5) node[below] {$\ss 0$}--(1,0.5) node [right] {$\ss 0$}--(1,1.5) node[above] {$\ss 2$};
\draw[bosonic]  (2,-0.5)node[below] {$\ss 0$}--(2,0.5) node[right]{$\ss 0$}--(2,1.5) node[above] {$\ss 0$};
\draw[bosonic]  (3,-0.5)node[below] {$\ss 0$}--(3,0.5) node[right]{$\ss 0$}--(3,1.5)node[above] {$\ss 0$};
\end{tikzpicture}
\]
\[
g^{(\alpha,\beta)}_{\lambda}(x_1,x_2)=\beta x_1+x_1x_2+\beta x_2=x_1x_2+\beta(x_1+x_2)
\]
\end{ex*}

\section{Column Vertex Models}
\label{columnmodels}

\subsection{Definition of Physical space.}
Recall that we identify partitions with basis elements of $V^{r}$ by recording row multiplicities. In this section, for the physical space, we use the same vector space $(V^{r})$ that we used in the earlier section. We shall denote it by $V^{c}$. Even though $V^{c}$ and $V^{r}$ are identical, we distinguish them by the way we identify the partitions with the basis elements.
\begin{align}
    \label{colPhysicalspacedefn}
    V^{c}=\text{Span}
    \left\{
    \ket{m^c_1} \otimes \ket{m^c_2} \otimes \ket{m^c_3} \cdots
    \right\}
    \qquad
    m^c_i \geq0,\ i\geq 1. 
    \end{align}
    
 Given a partition, which we view as Young diagram, let $\ket{\lambda^c}$ be the basis vector with integers
\[
m^{c}_i(\lambda) = \text{number of columns of size $i$ of $\lambda$}
\] 
For example, we identify the partition $\lambda=(5,4,4,3)$ with the basis element $\ket{1}\otimes\ket{0}\otimes\ket{1}\otimes\ket{3}\otimes\ket{0}\dots$ of $V^{c}$:

\begin{center}
\label{columnphysicalspacepic}
\begin{tikzpicture}[scale=0.6]
%Young horizontal lines
\draw (0,4) -- (6,4);
\draw (0,3) -- (5,3);
\draw (0,2) -- (4,2);
\draw (0,1) -- (4,1);
\draw (0,0) -- (3,0);
% Young vertical lines
\draw (0,-3) -- (0,4);
\draw (1,0) -- (1,4);
\draw (2,0) -- (2,4);
\draw (3,0) -- (3,4);
\draw (4,1) -- (4,4);
\draw (5,3) -- (5,4);
%Young particles
\bbull{0.5}{0}{0.09};
\bbull{1.5}{0}{0.09};
\bbull{2.5}{0}{0.09};
\bbull{3.5}{1}{0.09};
\bbull{4.5}{3}{0.09};
%projecting lines
\draw[dotted,arrow=0.5] (5,3) -- (8.5,3);
\draw[dotted,arrow=0.5] (4,1) -- (8.5,1);
\draw[dotted,arrow=0.5] (3,0) -- (8.5,0);
%half line
\draw[thick,arrow=1] (9,3.5) -- (9,-3);
\foreach\x in {0,1,...,6}{
\draw[thick] (8.7,3.5-\x) -- (9,3.5-\x);
}
%half line particles
\bbull{9}{3}{0.09};
\bbull{9}{1}{0.09};
\bbull{8.4}{0}{0.09};\bbull{8.7}{0}{0.09};\bbull{9}{0}{0.09};
%half line labels
\node at (9.5,3) {$\sss m_1$};
\node at (9.5,2) {$\sss m_2$};
\node at (9.5,1) {$\sss m_3$};
\node at (9.5,0) {$\sss m_4$};
\node at (9.5,-1) {$\sss m_5$};
\node at (9.5,-2) {$\sss m_6$};
\node at (9.5,-3) {$\vdots$};
\end{tikzpicture}
\end{center}

\subsection{Column vertex model for canonical Grothendieck polynomials.}
\label{Gmodel}

\subsubsection{Definition of $\widetilde{L}$-matrix and $\widetilde{R}$ matrix.}
\label{cGColumnmodel}
The main difference of the model considered in this section from the row model of $\G$ is that the horizontal line can now carry any non negative integer. For every vertex, we assign the Boltzmann weights in the following way:
\begin{equation}
\label{Gboltz}
w^{}_{x}
\left(
\begin{gathered}
\begin{tikzpicture}[scale=0.4,baseline=-2pt]
\draw[bosonic,Gcol,arrow=0.25] (-1,0) node[left,black] {$a$} -- (1,0) node[right,black] {$c$};
\draw[bosonic,arrow=0.25] (0,-1) node[below] {$b$} -- (0,1) node[above] {$d$};
\end{tikzpicture}
\end{gathered}
\right)
\equiv
w^{}_{x}(a,b;c,d)= \delta_{a+b,c+d}
\begin{cases}
\left(\frac{x} {1-\alpha x} \right)^{a}&  b=c\\
\left(\frac{x} {1-\alpha x} \right)^{a}\left(\frac{1+\beta x} {1-\alpha x} \right) & b>c\\
0& b<c\\
\end{cases}
\quad
a,b,c,d \in \mathbb{Z}_{\geq 0},
\quad
\end{equation}

Let $W$ = Span$\{| j \rangle\}_{j \in \mathbb{Z}_{\geq 0}}$ be an infinite dimensional vector space, and for $1\leq i \leq n $, let $W_i$ be a copy of $W$. Let $V_j \cong W$ be another copy. Then we define a $\widetilde{L}$ matrix which acts linearly on $W_i\otimes V_j$ as follows:
\begin{equation}
\label{collmatG}
\widetilde{L}_{i,j}\left(x_i\right):\ket{a}\otimes \ket{b}
\mapsto  \sum_{c,d \hspace{1mm} \text{where} \hspace{1mm} a+b=c+d}
w_{{x_i}}
\Big(a,b;c,d\Big)
\ket{c}\otimes \ket{d}.
\end{equation}

Let $w(\{i_1,i_2,\dots\};\{k_1,k_2,\dots\})$ be the weight of single row of vertices.
\[
w(\{i_1,i_2,\dots\};\{k_1,k_2,\dots\})=
\begin{tikzpicture}[scale=0.5,baseline=-1pt]
\draw[bosonic,Gcol,arrow=0.05] (-1,0) node[left,black]{$*$}  -- (7,0) node[right,black]{$\ss 0$};
\node[left] at (-0.8,0) {\tiny };\node[right] at (6.8,0) {\tiny };
\foreach\x in {0,1,...,6}{
\draw[arrow=0.25,bosonic] (\x,-1) -- (\x,1);
}
\node[below] at (0,-0.8) {\tiny $i_1$};\node[above] at (0,0.8) {\tiny $k_1$};
\node[below] at (1,-0.8) {\tiny $i_2$};\node[above] at (1,0.8) {\tiny $k_2$};
\node[below] at (2,-0.8) {\tiny $i_3$};\node[above] at (2,0.8) {\tiny $k_3$};
\node[below] at (6,-0.8) {$\cdots$};\node[above] at (6,0.8) {$\cdots$};
\end{tikzpicture}
\]

We now define the transfer matrix $\widetilde{T}$ which acts linearly on $V^c$ as follows,

\begin{align}
\label{coltransferG}
\widetilde{T}(x):\ket{i_1}\otimes \ket{i_2}\otimes {\cdots}
\mapsto  \sum_{k_1,k_2\ldots \ge 0}
w(\{i_1,i_2,\dots\};\{k_1,k_2,\dots\})
\ket{k_1}\otimes \ket{k_2}\otimes {\cdots}
\end{align}

Consider the vector spaces $W_i, W_j$ where $i<j$. Then we define a ${R}$-matrix which acts linearly on $W_i\otimes W_j$ as follows,
\begin{align}
\label{BosonicGRmatrix}
\widetilde{R}_{i,j}(x_i,x_j):\ket{a}\otimes \ket{b}
\mapsto  \sum_{c,d \hspace{1mm} \text{where} \hspace{1mm}  a+b=c+d}
\widetilde{R}^{ad}_{bc}(x_i,x_j)
\ket{c}\otimes \ket{d}.
\end{align}
where the entries are:
\begin{align*}
\widetilde{R}^{a\,d}_{b\,c}\left(x,y\right) =\begin{gathered}
\begin{tikzpicture}[baseline=-3pt,scale=0.9]
\draw[bosonic,arrow=0.35,Gcol] (-2,0.5) node[left,black] {$a$} -- (-1,-0.5) node[right,black] {$c$};
\draw[bosonic,arrow=0.35,Gcol] (-2,-0.5) node[left,black] {$b$} -- (-1,0.5) node[right,black] {$d$};
\end{tikzpicture}
\end{gathered}= \left({\dfrac{\dfrac{x}{1-\alpha x}}{\dfrac{y}{1-\alpha y}}}\right)^{a}
  \begin{cases}
    0 & \text{when } b<c\\
   1 & \text{when } b=c \\
  \dfrac{1}{1-\alpha x}-\dfrac{x}{(1-\alpha x)y} & \text{otherwise } 
  \end{cases}
\end{align*}

Together with $\widetilde{L}_i$ and $\widetilde{L}_j$ matrices, $\widetilde{R}_{ij}$ satisfies the $\mathrm {RLL}$ relation \text{in End} $(W_i \otimes W_j \otimes V_n)$:
\begin{equation}
\label{colGrll}
\widetilde{R}_{ij}(x,y)\widetilde{L}_i(x)\widetilde{L}_j(y)=\widetilde{L}_j(y)\widetilde{L}_i(x)\widetilde{R}_{ij}(x,y)
\quad
\left(
\begin{tikzpicture}[scale=0.6,baseline=7pt]
\draw[bosonic](1,-1)--(1,2);
\draw[bosonic,Gcol,rounded corners,arrow=0.15] (-1,1) node[left,black]{$\ss x$}--(0,0)--(2,0);
\draw[bosonic,Gcol,rounded corners,arrow=0.15] (-1,0) node[left,black]{$\ss y$}--(0,1)--(2,1);
\end{tikzpicture}
=
\begin{tikzpicture}[scale=0.6,baseline=7pt]
\draw[bosonic] (1,2)--(1,-1);
\draw[bosonic,Gcol,rounded corners,arrow=0.15] (0,0)node[left,black]{$\ss y$}--(2,0)--(3,1);
\draw[bosonic,Gcol,rounded corners,arrow=0.15] (0,1)node[left,black]{$\ss x$}--(2,1)--(3,0);
\end{tikzpicture}
\right)
\end{equation}

\subsubsection{Eigenvector property of the $\widetilde{R}$ matrix.}
We proceed as in the previous section, showing an eigenvector property for the  $\widetilde{R}$ matrix in order to prove that the commutation of transfer matrices. Recall that an edge with $*$ is a free boundary. As in previous section, we show that the partition function function with single vertex and fixed right boundary
\[
\begin{tikzpicture}[baseline=-3pt,rounded corners]
\node at (-3.5,0) {$ \mathcal{Z}(c,d) =$};
\draw[bosonic,arrow=0.35,Gcol] (-2,0.5) node[left,black] {$*$} -- (-1,-0.5) node[below] {};
\node[text width=1cm] at (-0.4,-0.55){$c$};
\draw[bosonic,arrow=0.35,Gcol] (-2,-0.5) node[left,black] {$*$} -- (-1,0.5) node[above] {};
\node[text width=1cm] at (-0.4,0.5){$d$};
\end{tikzpicture}.
\]

(where $c,d$ are nonnegative integers) is constant and equal to 1.
We compute:
\begin{align*}
    \mathcal{Z}(c,d)&= \sum^{d}_{i=0} \begin{tikzpicture}[baseline=-3pt,rounded corners]
\draw[bosonic,arrow=0.35,Gcol] (-2,0.5) node[left,black] {$i$} -- (-1,-0.5) node[below] {};
\node[text width=1cm] at (-0.4,-0.55){$c$};
\draw[bosonic,arrow=0.35,Gcol] (-2,-0.5) node[left,black] {$c+d-i$} -- (-1,0.5) node[above] {};
\node[text width=1cm] at (-0.4,0.5){$d$};
\end{tikzpicture}\\
&= \left(\dfrac{1}{1-\alpha x}-\dfrac{x}{(1-\alpha x)y}\right) \sum^{d-1}_{i=0} \left( \dfrac{x (1-\alpha y)}{y(1-\alpha x)} \right)^{i} +\left( \dfrac{x (1-\alpha y)}{y(1-\alpha x)} \right)^{d}\\
&= \left(\dfrac{y-x}{y(1-\alpha x)}\right) \left( \dfrac{1-\left( \dfrac{x (1-\alpha y)}{y(1-\alpha x)} \right)^{d}}{1-\left( \dfrac{x (1-\alpha y)}{y(1-\alpha x)} \right)} \right)+\left( \dfrac{x (1-\alpha y)}{y(1-\alpha x)} \right)^{d}\\
%&=  \left(\dfrac{y-x}{y(1-\alpha x)}\right) \left( \dfrac{1-\left( \dfrac{x (1-\alpha y)}{y(1-\alpha x)} \right)^{d}}{\left(\dfrac{y-x}{y(1-\alpha x)}\right)} \right)+\left( \dfrac{x (1-\alpha y)}{y(1-\alpha x)} \right)^{d}\\
&=1.
\end{align*}

The $\widetilde{R}$-matrix has the eigenvector property implies the commutation relation,
\[
\widetilde{T}(x)\widetilde{T}(y)=\widetilde{T}(y)\widetilde{T}(x).
\]

\subsubsection{Canonical Grothendieck polynomials.}
Given that the transfer matrices commute, the polynomials defined using $\widetilde{T}$ are invariant under permutation of the variables. We now prove that the polynomials defined using $T$ are \emph{canonical Grothendieck polynomials}.

\begin{thm}
\label{thm:colG}
The canonical Grothendieck polynomials $\G(x)$  are given by
\begin{align}
\label{colG}
\G(x_1,\dots,x_n)
&=
\bra{0}
\widetilde{T}(x_1)
\dots
\widetilde{T}(x_n)
\ket{\lambda^c}
\end{align}
where $\ket{\lambda^c} = \bigotimes_{i=1}^{\infty} \ket{m^c_i(\lambda)}$.
\end{thm}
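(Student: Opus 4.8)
The plan is to follow the template established in the proof of Theorem~\ref{thm:rowG}. The commutation $\widetilde{T}(x)\widetilde{T}(y)=\widetilde{T}(y)\widetilde{T}(x)$, already proved via the eigenvector property of $\widetilde{R}$, makes the right-hand side of \eqref{colG} symmetric in $x_1,\dots,x_n$. One also checks at once that $\widetilde{T}(0)=\mathrm{id}$: at $x=0$ the factor $\bigl(\tfrac{x}{1-\alpha x}\bigr)^{a}$ in \eqref{Gboltz} forces every horizontal edge $a$ to vanish, hence the top and bottom vertical states to agree, with total weight $1$. This yields the stability property, so the polynomials collectively lie in $\tilde\Lambda$. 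Fixing $\lambda$, only the first $\ell(\lambda)=\lambda_1'$ vertical lines carry nonzero labels, so all partition functions below are finite products of Boltzmann weights.

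Next I would insert a complete set $\sum_{\mu}\ket{\mu^c}\bra{\mu^c}=1$ into $\bra{0}\widetilde{T}(x_1)\cdots\widetilde{T}(x_n)\widetilde{T}(x)\ket{\lambda^c}$, obtaining
\[
\G(x_1,\dots,x_n,x)=\sum_{\mu}\bra{0}\widetilde{T}(x_1)\cdots\widetilde{T}(x_n)\ket{\mu^c}\ \bra{\mu^c}\widetilde{T}(x)\ket{\lambda^c}.
\]
Comparing with the branching rule \eqref{branchG}--\eqref{singleG} for $\G$ and inducting on the number of variables, the theorem reduces to the single-row identity $\bra{\mu^c}\widetilde{T}(x)\ket{\lambda^c}=G^{(\alpha,\beta)}_{\lambda\sslash\mu}(x)$ for all partitions $\mu,\lambda$, with both sides vanishing unless $\lambda/\mu$ is a horizontal strip.

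To evaluate the single row I would note that, with the bottom edges carrying $m^c_i(\mu)$, the top edges carrying $m^c_i(\lambda)$, and the right boundary fixed to $0$, local conservation determines the horizontal edge entering site $i$ to be $a_i=\lambda_i'-\mu_i'$. Nonnegativity of the $a_i$ is exactly $\mu\subseteq\lambda$, while the vanishing clause $b<c$ of \eqref{Gboltz} at each site says $m^c_i(\mu)\ge a_{i+1}$, i.e.\ $\mu_i'\ge\lambda_{i+1}'$; together these say precisely that $\lambda'$ and $\mu'$ interlace, i.e.\ that $\lambda/\mu$ is a horizontal strip. On such a configuration the weight is
\[
\prod_i\Bigl(\tfrac{x}{1-\alpha x}\Bigr)^{a_i}\Bigl(\tfrac{1+\beta x}{1-\alpha x}\Bigr)^{[\,m^c_i(\mu)>a_{i+1}\,]}
=\Bigl(\tfrac{x}{1-\alpha x}\Bigr)^{\sum_i a_i}\Bigl(\tfrac{1+\beta x}{1-\alpha x}\Bigr)^{\#\{i\,:\,\mu_i'>\lambda_{i+1}'\}},
\]
and since $\sum_i a_i=|\lambda|-|\mu|=|\lambda/\mu|$ this matches the first exponent of \eqref{singleG}.

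The remaining — and most delicate — point is the combinatorial identity $\#\{i:\mu_i'>\lambda_{i+1}'\}=r(\mu/\bar\lambda)$ for a horizontal strip $\lambda/\mu$. I expect to prove it exactly as in the discussion preceding Theorem~\ref{thm:rowG}: using the reformulation of $r(\mu/\bar\lambda)$ as the number of removable boxes of $\mu$ lying in no column occupied by a box of $\lambda/\mu$, and observing that in the column encoding such a removable box in column $i$ is exactly a site where $m^c_i(\mu)>a_{i+1}$ — that is, a strict descent $\mu_i'>\mu_{i+1}'$ of $\mu'$ not "filled in'' by $\lambda$. Matching this column-multiplicity data with the skew statistics of $\mu/\bar\lambda$ is the only bookkeeping that requires genuine care; everything else is a direct transcription of the weights \eqref{Gboltz}.
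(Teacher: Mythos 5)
Your overall route is the paper's: reduce, via commutativity of the $\widetilde T$'s and the branching rule \eqref{branchG}--\eqref{singleG}, to the single-row identity $\bra{\mu^c}\widetilde{T}(x)\ket{\lambda^c}=G^{(\alpha,\beta)}_{\lambda\sslash\mu}(x)$, then read the horizontal-strip condition and the two exponents directly off the weights \eqref{Gboltz}. But there is a concrete error in your identification of the edge labels, and it propagates into the step you single out as delicate. In the column encoding, partial sums of column multiplicities give \emph{row} lengths, not column lengths: $\sum_{j\ge i}m^c_j(\lambda)=\lambda_i$. Hence the horizontal edge entering site $i$ carries $a_i=\lambda_i-\mu_i$ (the number of boxes added to row $i$, which is how the paper phrases the local analysis), not $\lambda'_i-\mu'_i$; indeed a horizontal strip can add several boxes to one row, so $a_i=\lambda'_i-\mu'_i$ cannot be correct. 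Correspondingly, the nonvanishing conditions are $\lambda_i\ge\mu_i\ge\lambda_{i+1}$, i.e.\ $\lambda$ and $\mu$ interlace, which is exactly the horizontal-strip condition; interlacing of $\lambda'$ and $\mu'$, as you wrote, would instead characterize vertical strips.

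Once the primes are removed, the identity you flag as the most delicate point is immediate and needs no removable-box bookkeeping: the sites with $b>c$ are those with $\mu_i>\lambda_{i+1}$, and since $\bar\lambda_i=\lambda_{i+1}$, row $i$ of $\mu/\bar\lambda$ contains exactly $\mu_i-\lambda_{i+1}$ boxes, so $\#\{i:\mu_i>\lambda_{i+1}\}=r(\mu/\bar\lambda)$ by the very definition of $r$. As literally stated, your identity $\#\{i:\mu'_i>\lambda'_{i+1}\}=r(\mu/\bar\lambda)$ is false: for $\lambda=(2)$, $\mu=(1)$ the left side is $0$ while $r(\mu/\bar\lambda)=1$. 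So your proof goes through after replacing conjugate parts by ordinary parts throughout; with that repair it coincides with the paper's argument, which interprets the left label $a$ at site $i$ as adding $a$ boxes to the $i$th row of $\mu$ and identifies the $b>c$ vertices with the rows counted by $r(\mu/\bar\lambda)$ (optionally via the removable-box reformulation used for the row model).
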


\begin{proof}

Let us now understand the local configuration of vertices of this model.

\begin{tikzpicture}[scale=0.6]
\node at (2,-1) {$
\tableau{\graycell &\graycell&\graycell &\graycell&\\
\graycell&\graycell & \\
\\}$};
\node at (11,-1) {$
\tableau{\graycell &\graycell&\graycell &\graycell&\\
\graycell&\graycell & &\\
\\}$};
\node at (20,-1) {$
\tableau{\graycell &\graycell&\graycell &\graycell&\\
\graycell&\graycell & &\\
&\\}$};
%labels
\node at (2,2) {$\lambda/\mu= (5,3,1)/(4,2)$};
\node at (11,2) {$\lambda/\mu= (5,4,1)/(4,2)$};
\node at (20,2) {$\lambda/\mu= (5,4,2)/(4,2)$};
%statistics
\node at (1.5,-3) {
$r(\mu/\Bar{\lambda})=2$};
\node at (10.5,-3) {
$r(\mu/\Bar{\lambda})=1$};
\node at (20.5,-3) {
$r(\mu/\Bar{\lambda})=0$};
%tranfermatricesandlabels
%example1
\draw[bosonic,Gcol] (-1,-6)-- (4,-6);
\foreach\x in{0,1,2,3} {
\draw[bosonic] (\x,-7)--(\x,-5);
};
%mu
\node at (0,-7.5) {$\ss 2$};
\node at (1,-7.5) {$\ss 2$};
\node at (2,-7.5) {$\ss 0$};
\node at (3,-7.5) {$\ss 0$};
%lambda
\node at (0,-4.5) {$\ss 2$};
\node at (1,-4.5) {$\ss 2$};
\node at (2,-4.5) {$\ss 1$};
\node at (3,-4.5) {$\ss 0$};
%horizontal labels
\node at (-0.5,-6) {$\ss 1$};
\node at (0.5,-6) {$\ss 1$};
\node at (1.5,-6) {$\ss 1$};
\node at (2.5,-6) {$\ss 0$};
\node at (3.5,-6) {$\ss 0$};
\node at (1.5,-8) {$\ss \mu^c$};
\node at (1.5,-4) {$\ss \lambda^c$};
%example2
\draw[bosonic,Gcol] (8,-6)-- (13,-6);
\foreach\x in{0,1,2,3} {
\draw[bosonic] (9+\x,-7)--(9+\x,-5);
};
%mu
\node at (9,-7.5) {$\ss 2$};
\node at (10,-7.5) {$\ss 2$};
\node at (11,-7.5) {$\ss 0$};
\node at (12,-7.5) {$\ss 0$};
%lambda
\node at (9,-4.5) {$\ss 1$};
\node at (10,-4.5) {$\ss 3$};
\node at (11,-4.5) {$\ss 1$};
\node at (12,-4.5) {$\ss 0$};
%horizontal labels
\node at (8.5,-6) {$\ss 1$};
\node at (9.5,-6) {$\ss 2$};
\node at (10.5,-6) {$\ss 1$};
\node at (11.5,-6) {$\ss 0$};
\node at (12.5,-6) {$\ss 0$};
\node at (10.5,-8) {$\ss \mu^c$};
\node at (10.5,-4) {$\ss \lambda^c$};
%example3
\draw[bosonic,Gcol] (17,-6)-- (22,-6);
\foreach\x in{0,1,2,3} {
\draw[bosonic] (18+\x,-7)--(18+\x,-5);
};
%mu
\node at (18,-7.5) {$\ss 2$};
\node at (19,-7.5) {$\ss 2$};
\node at (20,-7.5) {$\ss 0$};
\node at (21,-7.5) {$\ss 0$};
%lambda
\node at (18,-4.5) {$\ss 1$};
\node at (19,-4.5) {$\ss 2$};
\node at (20,-4.5) {$\ss 2$};
\node at (21,-4.5) {$\ss 0$};
%horizontal labels
\node at (17.5,-6) {$\ss 1$};
\node at (18.5,-6) {$\ss 2$};
\node at (19.5,-6) {$\ss 2$};
\node at (20.5,-6) {$\ss 0$};
\node at (21.5,-6) {$\ss 0$};
%labels
\node at (19.5,-8) {$\ss \mu^c$};
\node at (19.5,-4) {$\ss \lambda^c$};
\end{tikzpicture}

Consider a vertex, $\left(  \begin{tikzpicture}[scale=0.4,baseline=-2pt]
\draw[bosonic,Gcol,arrow=0.25] (-1,0) node[left,black] {$a$} -- (1,0) node[right,black] {$c$};
\draw[bosonic,arrow=0.25] (0,-1) node[below] {$b$} -- (0,1) node[above] {$d$};
\end{tikzpicture}\right)$,
at site $i$. The label $a$ corresponds to adding $a$ boxes to $i^{th}$ row of $\mu$. By recording the left nodes, we get $\lambda/\mu$. Then, in-order to get a horizontal strip, the number of boxes that can be added to $i^{th}$ row should be at most $b$. When $c<b$, we have a removable box in $i^{th}$ row that is not in the same column with any box of $\lambda/\mu$. Therefore, $r(\mu/\tilde{\lambda})$ is precisely the number of vertices where $c<b$.

Following the reasoning in \cref{thm:rowG}, it is enough to show that  $\bra{u^c}\widetilde{T}(x)\ket{\lambda^c}=G^{(\alpha,\beta)}_{\lambda\sslash\mu}$
for a horizontal strip $\lambda/\mu$. Recall that for a horizontal strip, we have
\begin{align*}
G^{(\alpha,\beta)}_{\lambda\sslash\mu}(x)= 
\left(
\frac{x}{1-\alpha x}
\right)^{|\lambda/\mu|}
\left(
\frac{1+\beta x}{1-\alpha x}
\right)^{r(\mu/\Bar{\lambda})}.
\end{align*}
Observe that $\bra{u^c}\widetilde{T}(x)\ket{\lambda^c}\neq 0$ if and only if $\lambda/\mu$ is a horizontal strip. From the above analysis and the way Boltzmann weights are defined, the proof is now immediate.
\end{proof}

\begin{ex*} For partition $\lambda=(2,0)$, we have
\[
\begin{tikzpicture}[scale=1]
\draw[bosonic,Gcol] (0.5,0) node[left,black]{$\ss 2$}--(1.5,0) node[above,black]{$\ss 0$} --(2.5,0) node[above,black]{$\ss 0$}--(3.5,0) node[right,black]{$\ss 0$};
\draw[bosonic,Gcol] (0.5,1) node[left,black]{$\ss 0$}--(1.5,1) node[above,black]{$\ss 0$}--(2.5,1)node[above,black]{$\ss 0$}--(3.5,1)node[right,black]{$\ss 0$};
\draw[bosonic]  (1,-0.5) node[below] {$\ss 0$}--(1,0.5) node [right] {$\ss 2$}--(1,1.5) node[above] {$\ss 2$};
\draw[bosonic]  (2,-0.5)node[below] {$\ss 0$}--(2,0.5) node[right]{$\ss 0$}--(2,1.5) node[above] {$\ss 0$};
\draw[bosonic]  (3,-0.5)node[below] {$\ss 0$}--(3,0.5) node[right]{$\ss 0$}--(3,1.5)node[above] {$\ss 0$};
\node at (-0.5,0) {$x_1$};
\node at (-0.5,1) {$x_2$};
\end{tikzpicture}
\qquad
\begin{tikzpicture}[scale=1]
\draw[bosonic,Gcol] (0.5,0) node[left,black]{$\ss 1$}--(1.5,0) node[above,black]{$\ss 0$} --(2.5,0) node[above,black]{$\ss 0$}--(3.5,0) node[right,black]{$\ss 0$};
\draw[bosonic,Gcol] (0.5,1) node[left,black]{$\ss 1$}--(1.5,1) node[above,black]{$\ss 0$}--(2.5,1)node[above,black]{$\ss 0$}--(3.5,1)node[right,black]{$\ss 0$};
\draw[bosonic]  (1,-0.5) node[below] {$\ss 0$}--(1,0.5) node [right] {$\ss 1$}--(1,1.5) node[above] {$\ss 2$};
\draw[bosonic]  (2,-0.5)node[below] {$\ss 0$}--(2,0.5) node[right]{$\ss 0$}--(2,1.5) node[above] {$\ss 0$};
\draw[bosonic]  (3,-0.5)node[below] {$\ss 0$}--(3,0.5) node[right]{$\ss 0$}--(3,1.5)node[above] {$\ss 0$};
\end{tikzpicture}
\qquad
\begin{tikzpicture}[scale=1]
\draw[bosonic,Gcol] (0.5,0) node[left,black]{$\ss 0$}--(1.5,0) node[above,black]{$\ss 0$} --(2.5,0) node[above,black]{$\ss 0$}--(3.5,0) node[right,black]{$\ss 0$};
\draw[bosonic,Gcol] (0.5,1) node[left,black]{$\ss 2$}--(1.5,1) node[above,black]{$\ss 0$}--(2.5,1)node[above,black]{$\ss 0$}--(3.5,1)node[right,black]{$\ss 0$};
\draw[bosonic]  (1,-0.5) node[below] {$\ss 0$}--(1,0.5) node [right] {$\ss 0$}--(1,1.5) node[above] {$\ss 2$};
\draw[bosonic]  (2,-0.5)node[below] {$\ss 0$}--(2,0.5) node[right]{$\ss 0$}--(2,1.5) node[above] {$\ss 0$};
\draw[bosonic]  (3,-0.5)node[below] {$\ss 0$}--(3,0.5) node[right]{$\ss 0$}--(3,1.5)node[above] {$\ss 0$};
\end{tikzpicture}
\]
\[
G^{(\alpha,\beta)}_{\lambda}(x_1,x_2)=\br{\frac{x_1}{1-\alpha x_1}}^{2}\br{\frac{1+\beta x_2}{1-\alpha x_2}}+\br{\frac{x_1}{1-\alpha x_1}}\br{\frac{ x_2}{1-\alpha x_2}}\br{\frac{1+\beta x_2}{1-\alpha x_2}}+\br{\frac{ x_2}{1-\alpha x_2}}^{2}
\]
\end{ex*}
\begin{ex*} For partition $\lambda=(1,1)$, we have
\[
\begin{tikzpicture}[scale=1]
\draw[bosonic,Gcol] (0.5,0) node[left,black]{$\ss 1$}--(1.5,0) node[above,black]{$\ss 0$} --(2.5,0) node[above,black]{$\ss 0$}--(3.5,0) node[right,black]{$\ss 0$};
\draw[bosonic,Gcol] (0.5,1) node[left,black]{$\ss 0$}--(1.5,1) node[above,black]{$\ss 1$}--(2.5,1)node[above,black]{$\ss 0$}--(3.5,1)node[right,black]{$\ss 0$};
\draw[bosonic]  (1,-0.5) node[below] {$\ss 0$}--(1,0.5) node [right] {$\ss 1$}--(1,1.5) node[above] {$\ss 0$};
\draw[bosonic]  (2,-0.5)node[below] {$\ss 0$}--(2,0.5) node[right]{$\ss 0$}--(2,1.5) node[above] {$\ss 1$};
\draw[bosonic]  (3,-0.5)node[below] {$\ss 0$}--(3,0.5) node[right]{$\ss 0$}--(3,1.5)node[above] {$\ss 0$};
\node at (-0.5,0) {$x_1$};
\node at (-0.5,1) {$x_2$};
\end{tikzpicture}
\]
\[
G^{(\alpha,\beta)}_{\lambda}(x_1,x_2)=\br{\frac{x_1}{1-\alpha x_1}}\br{\frac{x_2}{1-\alpha x_2}}
\]
\end{ex*}
\subsection{Column vertex model dual canonical Grothendieck polynomials.}

\subsubsection{Definition of $\widetilde{l}$-matrix and $\widetilde{r}$ matrix.}
\label{dualcolumnmodel}
We consider the same vertex model as row model of $\g$, but with different Boltzmann weights. For every vertex, we assign the Boltzmann weights in the following way:

\begin{align}
\label{columdGboltz}
w^{}_{x}
\left(
\begin{gathered}
\begin{tikzpicture}[scale=0.4,baseline=-2pt]
\draw[bosonic,gccol,arrow=0.25] (-1,0) node[left,black] {$a$} -- (1,0) node[right,black] {$c$};
\draw[bosonic,arrow=0.25] (0,-1) node[below] {$b$} -- (0,1) node[above] {$d$};
\end{tikzpicture}
\end{gathered}
\right)
\equiv
w^{}_{x}(a,b;c,d)
= \delta_{a+b,c+d} 
\begin{cases}
(\alpha + \beta)^{a-d-1}\beta (x+\alpha)^{d} & 0<a>d\\
 x (x+\alpha)^{a-1} & 0<a\leq d\\
1& a=0,
\end{cases}
\end{align}
where 
$a,b,c,d$ $\in \mathbb{Z}_{\geq 0}$.
Let $W$ = Span$\{| j \rangle\}_{j \in \mathbb{Z}_{\geq 0}}$ be an infinite dimensional vector space, and for $1\leq i \leq n $, let $W_i$ be a copy of $W$. Let $V_j \cong W_i$ be a vector space. Then we define a $\widetilde{l}$ matrix which acts linearly on $W_i\otimes V_j$ as follows,
\begin{align}
\label{collmatg}
\widetilde{l}_{i,j}\left(x_i\right):\ket{a}\otimes \ket{b}
\mapsto  \sum_{c,d \hspace{1mm} \text{where} \hspace{1mm} a+b=c+d}
w_{{x_i}}
\Big(a,b;c,d\Big)
\ket{c}\otimes \ket{d}.
\end{align}

As usual, let $w(\{i_1,i_2,\dots\};\{k_1,k_2,\dots\})$ be the weight of single row of vertices.
\[
w(\{i_1,i_2,\dots\};\{k_1,k_2,\dots\})=
\begin{tikzpicture}[scale=0.5,baseline=-1pt]
\draw[bosonic,gccol,arrow=0.05] (-1,0) node[left,black]{$*$}  -- (7,0) node[right,black]{$\ss 0$};
\node[left] at (-0.8,0) {\tiny };\node[right] at (6.8,0) {\tiny };
\foreach\x in {0,1,...,6}{
\draw[arrow=0.25,bosonic] (\x,-1) -- (\x,1);
}
\node[below] at (0,-0.8) {\tiny $i_1$};\node[above] at (0,0.8) {\tiny $k_1$};
\node[below] at (1,-0.8) {\tiny $i_2$};\node[above] at (1,0.8) {\tiny $k_2$};
\node[below] at (2,-0.8) {\tiny $i_3$};\node[above] at (2,0.8) {\tiny $k_3$};
\node[below] at (6,-0.8) {$\cdots$};\node[above] at (6,0.8) {$\cdots$};
\end{tikzpicture}
\]

We now define the transfer matrix $t$ which acts linearly on $V^c$ as follows,

\begin{align}
\label{coltransferg}
\tilde{t}(x):\ket{i_1}\otimes \ket{i_2}\otimes {\cdots}
\mapsto  \sum_{k_1,k_2\ldots \ge 0}
w(\{i_1,i_2,\dots\};\{k_1,k_2,\dots\})
\ket{k_1}\otimes \ket{k_2}\otimes {\cdots}
\end{align}

Consider the vector spaces $W_i, W_j$ where $i<j$. Then we define a $\widetilde{r}$-matrix which acts linearly on $W_i\otimes W_j$ as follows,
\begin{align}
\label{colrmatdg}
\widetilde{r}_{i,j}(x_i,x_j):\ket{a}\otimes \ket{b}
\mapsto  \sum_{c,d \hspace{1mm} \text{where} \hspace{1mm}  a+b=c+d}
\widetilde{r}^{a,d}_{b,c}(x_i,x_j)
\ket{c}\otimes \ket{d}.
\end{align}
where the entries are the following:

\begin{align}
\widetilde{r}^{k,l}_{i,j} (x,y)=\begin{gathered}
\begin{tikzpicture}[baseline=-3pt,scale=0.9]
\draw[bosonic,arrow=0.35,gccol] (-2,0.5) node[left,black] {$k$} -- (-1,-0.5) node[below] {};
\node[text width=1cm] at (-0.4,-0.55){$j$};
\draw[bosonic,arrow=0.35,gccol] (-2,-0.5) node[left,black] {$i$} -- (-1,0.5) node[above] {};
\node[text width=1cm] at (-0.4,0.5){$l$};
\end{tikzpicture}
\end{gathered}=
\begin{cases}
0 & i<j\\
1& k=l=0\\
\dfrac{x}{y} \left(\dfrac{y+\alpha}{x+\alpha}\right)^{1-k} & k=l>0\\
 \left(1-\dfrac{x}{y}\right)  & k=0\\
\dfrac{x}{y}  \left(\dfrac{y+\alpha}{x+\alpha} -1\right) \left(\dfrac{y+\alpha}{x+\alpha}\right)^{-k} & k>0\\
\end{cases}
\end{align}

Together with $\widetilde{l}_i$ and $\widetilde{l}_j$ matrices, $\widetilde{r}_{ij}$ satisfies the $\mathrm {RLL}$ relation\text{in End} $(W_i \otimes W_j \otimes V_n)$.
\begin{equation}
\label{colgrll}
\widetilde{r}_{ij}(x,y)\widetilde{l}_i(x)\widetilde{l}_j(y)=\widetilde{l}_j(y)\widetilde{l}_i(x)\widetilde{r}_{ij}(x,y)
\quad
\left(
\begin{tikzpicture}[scale=0.6,baseline=7pt]
\draw[bosonic](1,-1)--(1,2);
\draw[bosonic,arrow=0.15,gccol,rounded corners] (-1,1) node[left,black]{$\ss x$}--(0,0)--(2,0);
\draw[bosonic,arrow=0.15,gccol,rounded corners] (-1,0)node[left,black]{$\ss y$}--(0,1)--(2,1);
\end{tikzpicture}
=
\begin{tikzpicture}[scale=0.6,baseline=7pt]
\draw[bosonic] (1,2)--(1,-1);
\draw[bosonic,arrow=0.15,gccol,rounded corners] (0,0)node[left,black]{$\ss y$}--(2,0)--(3,1);
\draw[bosonic,arrow=0.15,gccol,rounded corners] (0,1)node[left,black]{$\ss x$}--(2,1)--(3,0);
\end{tikzpicture}
\right)
\end{equation}

\subsubsection{Eigenvector property of the $\widetilde{r}$ matrix.}
We now discuss an eigenvector property for the $\widetilde{r}$ matrix. We proceed as in previous sections, computing the partition function of a single vertex with fixed right boundary to show an eigenvector property of the $\widetilde{r}$ matrix. We claim that for any nonnegative integers $c,d$,

\[
\begin{tikzpicture}[baseline=-3pt,rounded corners]
\node at (-3.5,0) {$ \mathcal{Z}(c,d) =$};
\draw[bosonic,arrow=0.35,gccol] (-2,0.5) node[left,black] {$*$} -- (-1,-0.5) node[below] {};
\node[text width=1cm] at (-0.4,-0.55){$c$};
\draw[bosonic,arrow=0.35,gccol] (-2,-0.5) node[left,black] {$*$} -- (-1,0.5) node[above] {};
\node[text width=1cm] at (-0.4,0.5){$d$};
\end{tikzpicture}
\]
the partition function is constant and is equal to $1$. Let us first consider the case where $d=0$. Then there is a unique vertex as the bottom left entry should be greater than or equal to $c$ and also should satisfy the conservation. The weight of the unique configuration is $1$.
\[
\begin{tikzpicture}[baseline=-3pt,rounded corners]
\node at (-3.5,0) {$ \mathcal{Z}(c,0) =$};
\draw[bosonic,arrow=0.35,gccol] (-2,0.5) node[left,black] {$0$} -- (-1,-0.5) node[below] {};
\node[text width=1cm] at (-0.4,-0.55){$c$};
\draw[bosonic,arrow=0.35,gccol] (-2,-0.5) node[left,black] {$c$} -- (-1,0.5) node[above] {};
\node[text width=1cm] at (-0.4,0.5){$0$};
\end{tikzpicture}
\]

We now compute for the case where $d>0$: 
\begin{align*}
    \mathcal{Z}(c,d)&= \sum^{d}_{i=0} \begin{tikzpicture}[baseline=-3pt,rounded corners]
\draw[bosonic,arrow=0.35,gccol] (-2,0.5) node[left,black] {$i$} -- (-1,-0.5) node[below] {};
\node[text width=1cm] at (-0.4,-0.55){$c$};
\draw[bosonic,arrow=0.35,gccol] (-2,-0.5) node[left,black] {$c+d-i$} -- (-1,0.5) node[above] {};
\node[text width=1cm] at (-0.4,0.5){$d$};
\end{tikzpicture}\\
&= \begin{tikzpicture}[baseline=-3pt,rounded corners]
\draw[bosonic,arrow=0.35,gccol] (-2,0.5) node[left,black] {$0$} -- (-1,-0.5) node[below] {};
\node[text width=1cm] at (-0.4,-0.55){$c$};
\draw[bosonic,arrow=0.35,gccol] (-2,-0.5) node[left,black] {$c+d$} -- (-1,0.5) node[above] {};
\node[text width=1cm] at (-0.4,0.5){$d$};
\end{tikzpicture}+
\sum^{d-1}_{i=1}
 \begin{tikzpicture}[baseline=-3pt,rounded corners]
\draw[bosonic,arrow=0.35,gccol] (-2,0.5) node[left,black] {$i$} -- (-1,-0.5) node[below] {};
\node[text width=1cm] at (-0.4,-0.55){$c$};
\draw[bosonic,arrow=0.35,gccol] (-2,-0.5) node[left,black] {$c+d-i$} -- (-1,0.5) node[above] {};
\node[text width=1cm] at (-0.4,0.5){$d$};
\end{tikzpicture}+
\begin{tikzpicture}[baseline=-3pt,rounded corners]
\draw[bosonic,arrow=0.35,gccol] (-2,0.5) node[left,black] {$d$} -- (-1,-0.5) node[below] {};
\node[text width=1cm] at (-0.4,-0.55){$c$};
\draw[bosonic,arrow=0.35,gccol] (-2,-0.5) node[left,black] {$c$} -- (-1,0.5) node[above] {};
\node[text width=1cm] at (-0.4,0.5){$d$};
\end{tikzpicture}\\
&= \left(1-\dfrac{x}{y}\right)+ \dfrac{x}{y}\left(\dfrac{y+\alpha}{x+\alpha} -1 \right) \sum^{d-1}_{i=1}\left(\dfrac{x+\alpha}{y+\alpha}\right)^{i}+ \dfrac{x}{y} \left( \dfrac{x+\alpha}{y+\alpha}\right)^{d-1}\\
&= \left( 1-\dfrac{x}{y}\right)+ \dfrac{x}{y}\left(\dfrac{y-x}{x+\alpha}\right) \left(\dfrac{x+\alpha}{y+\alpha} \right) \left( \dfrac{\left(1-\left(\dfrac{x+\alpha}{y+\alpha}\right)^{d-1}\right)}{1-\dfrac{x+\alpha}{y+\alpha}}\right)+ \dfrac{x}{y} \left( \dfrac{x+\alpha}{y+\alpha}\right)^{d-1}\\
&= \left( 1-\dfrac{x}{y}\right)+ \dfrac{x}{y} \left(1-\left(\dfrac{x+\alpha}{y+\alpha}\right)^{d-1}\right)+ \dfrac{x}{y} \left( \dfrac{x+\alpha}{y+\alpha}\right)^{d-1}\\
&=1.
\end{align*}

The $\widetilde{r}$ matrix has the eigenvector property implies the commutation relation,
\[
\widetilde{t}(x)\widetilde{t}(y)=\widetilde{t}(y)\widetilde{t}(x).
\]
Therefore, the polynomials defined using $\widetilde{t}$ are invariant under permutation of variables.

\subsubsection{Dual canonical Grothendieck polynomials.}
Recall that for a skew-partition $\lambda/\mu$, we have

\begin{align*}
r(\lambda/\mu)&= \text{number of non zero rows,}\\
c(\lambda/\mu)&= \text{number of non zero columns,}\\
b(\lambda/\mu)&= \text{number of connected components.}\\
\end{align*}

We shall now unpack the information contained at a vertex like we did in the case of row model of $\g$. Consider a vertex
$\left(\begin{tikzpicture}[scale=0.4,baseline=-2pt]
\draw[bosonic,gccol,arrow=0.25] (-1,0) node[left,black] {$a$} -- (1,0) node[right,black] {$c$};
\draw[bosonic,arrow=0.25] (0,-1) node[below] {$b$} -- (0,1) node[above] {$d$};
\end{tikzpicture}
\right)$
at site $i$. The left node $a$, corresponds to adding $a$ boxes to $i^{th}$ row of $\mu$. The node $d$, is the number columns of $\lambda$ with size $i$. We now want to understand number of columns of size $i$ in $\lambda/\mu$. There are three types of vertices, $b<c$, $b>d$, and $b=c$. Let us look at the nodes on $i^{th}$ row of $\lambda/\mu$.
\[
\begin{tikzpicture}[scale=0.4]
\node at (3.5,4.5) {$\text{case: } b<c$};
\draw (0,0) grid (4,1);
\draw[gccol] (4,0) grid (7,1);
\draw[gccol] (0,0) grid (5,-1);
%b
\draw [decorate,decoration={brace,amplitude=13pt}]
(0,1.2) -- (3.8,1.2) node [black,midway,yshift=0.6cm] 
{ $\ss b$};
%a
\draw [decorate,decoration={brace,amplitude=13pt}]
(4.2,1.2) -- (7,1.2) node [black,midway,yshift=0.6cm] 
{ $\ss a$};
%c
\draw [decorate,decoration={brace,amplitude=13pt,mirror}]
(0,-1.2) -- (5,-1.2) node [black,midway,yshift=-0.6cm] 
{ $\ss c$};
%d
\draw [decorate,decoration={brace,amplitude=13pt,mirror}]
(5.2,-0.2) -- (7,-0.2) node [black,midway,yshift=-0.6cm] 
{ $\ss d$};
\end{tikzpicture}
\qquad
\begin{tikzpicture}[scale=0.4]
\node at (3.5,4.5) {$\text{case: } b>c$};
\draw (0,0) grid (4,1);
\draw[red] (4,0) grid (7,1);
\draw[red] (0,0) grid (2,-1);
%b
\draw [decorate,decoration={brace,amplitude=13pt}]
(0,1.2) -- (3.8,1.2) node [black,midway,yshift=0.6cm] 
{ $\ss b$};
%a
\draw [decorate,decoration={brace,amplitude=13pt}]
(4.2,1.2) -- (7,1.2) node [black,midway,yshift=0.6cm] 
{ $\ss a$};
%c
\draw [decorate,decoration={brace,amplitude=13pt,mirror}]
(0,-1.2) -- (2,-1.2) node [black,midway,yshift=-0.6cm] 
{ $\ss c$};
%d
\draw [decorate,decoration={brace,amplitude=13pt,mirror}]
(2.2,-0.2) -- (7,-0.2) node [black,midway,yshift=-0.6cm] 
{ $\ss d$};
\end{tikzpicture}
\qquad
\begin{tikzpicture}[scale=0.4]
\node at (3.5,4.5) {$\text{case: } b=c$};
\draw (0,0) grid (4,1);
\draw[red] (4,0) grid (7,1);
\draw[red] (0,0) grid (4,-1);
%b
\draw [decorate,decoration={brace,amplitude=13pt}]
(0,1.2) -- (3.8,1.2) node [black,midway,yshift=0.6cm] 
{ $\ss b$};
%a
\draw [decorate,decoration={brace,amplitude=13pt}]
(4.2,1.2) -- (7,1.2) node [black,midway,yshift=0.6cm] 
{ $\ss a$};
%c
\draw [decorate,decoration={brace,amplitude=13pt,mirror}]
(0,-1.2) -- (4,-1.2) node [black,midway,yshift=-0.6cm] 
{ $\ss c$};
%d
\draw [decorate,decoration={brace,amplitude=13pt,mirror}]
(4.2,-0.2) -- (7,-0.2) node [black,midway,yshift=-0.6cm] 
{ $\ss d$};
\end{tikzpicture}
\]

It is evident from the pictures that the number of non zero columns of size $i$in $\lambda/\mu$ is $\min(a,d)$. Also, observe that the vertices where $b\leq c$ detect the number of connected components. Finally, the node on the left edge correspond to the number of boxes added in $i^th$ row. So, the number of rows is equal to the number of vertices where $a\neq 0$.

\begin{thm}
\label{thm:colg}
The dual Canonical Grothendieck polynomials $\g(x)$  are given by
\begin{align}
\label{colg}
\g(x_1,\dots,x_n)
&=
\bra{0}
\tilde{t}(x_1)
\dots
\tilde{t}(x_n)
\ket{\lambda^c}
\end{align}
where $\ket{\lambda^c} = \bigotimes_{i=1}^{\infty} \ket{m^c_i(\lambda)}$.
\end{thm}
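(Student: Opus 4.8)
The plan is to argue exactly as for Theorems~\ref{thm:rowG}, \ref{thm:rowg} and~\ref{thm:colG}. Since the transfer matrices $\tilde t(x)$ commute (by the eigenvector property of $\widetilde r$ just proved), the right-hand side of~\eqref{colg} is symmetric in $x_1,\dots,x_n$. Moreover $\{\ket{\mu^c}\}_\mu$ is precisely the standard basis of $V^c$, since every finitely supported sequence of nonnegative integers is the column-multiplicity vector of a unique partition; inserting $\sum_\mu\ket{\mu^c}\bra{\mu^c}=\mathrm{id}$ between the last two transfer matrices gives
\[
\bra{0}\tilde t(x_1)\cdots\tilde t(x_n)\,\tilde t(x)\ket{\lambda^c}
=\sum_{\mu}\bra{0}\tilde t(x_1)\cdots\tilde t(x_n)\ket{\mu^c}\;\bra{\mu^c}\tilde t(x)\ket{\lambda^c}.
\]
Comparing with the branching formula~\eqref{branchg}, an induction on the number of variables reduces the theorem to the single-row identity
\[
\bra{\mu^c}\tilde t(x)\ket{\lambda^c}=g^{(\alpha,\beta)}_{\lambda/\mu}(x)\qquad\text{for all }\mu,\lambda,
\]
both sides vanishing unless $\mu\subseteq\lambda$. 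The base case $n=1$ is this identity with $\mu=\emptyset$ (using $\ket{0}=\ket{\emptyset^c}$ and $g^{(\alpha,\beta)}_{\lambda/\emptyset}=g^{(\alpha,\beta)}_\lambda$), and the stability that makes the left side of~\eqref{colg} lie in $\tilde\Lambda$ follows from the identity together with $g^{(\alpha,\beta)}_{\lambda/\mu}(0)=\delta_{\lambda\mu}$ (forced by the factor $x^{b(\lambda/\mu)}$), i.e.\ $\tilde t(0)=\mathrm{id}$.

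To prove the single-row identity, fix the bottom and top states of the row to $(m^c_i(\mu))_i$ and $(m^c_i(\lambda))_i$ and the right end of the auxiliary line to~$0$. Scanning the auxiliary line from right to left and imposing the conservation rule $\delta_{a+b,c+d}$ at each vertex determines the horizontal states, and hence the whole configuration, uniquely; so $\bra{\mu^c}\tilde t(x)\ket{\lambda^c}$ is either~$0$ — precisely when the forced configuration is inadmissible at some site, which happens exactly when $\mu\not\subseteq\lambda$ — or the single product $\prod_i w_x(a_i,b_i;c_i,d_i)$ of Boltzmann weights. It then remains to check that this product equals
\[
g^{(\alpha,\beta)}_{\lambda/\mu}(x)=\beta^{\,r(\lambda/\mu)-b(\lambda/\mu)}(\alpha+\beta)^{\,|\lambda/\mu|-r(\lambda/\mu)-c(\lambda/\mu)+b(\lambda/\mu)}\,x^{\,b(\lambda/\mu)}(\alpha+x)^{\,c(\lambda/\mu)-b(\lambda/\mu)}.
\]
This is done exponent by exponent, just as for Theorem~\ref{thm:rowg}, via the local dictionary set up in the discussion preceding the theorem: at site~$i$ the vertex accounts for $\min(a_i,d_i)$ of the nonzero columns (of size~$i$) of $\lambda/\mu$, it accounts for exactly one connected component of $\lambda/\mu$ precisely in the case $0<a_i\le d_i$, and it lies in a nonzero row of $\lambda/\mu$ exactly when $a_i\ne0$. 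Summing these realizes $|\lambda/\mu|$, $c(\lambda/\mu)$, $b(\lambda/\mu)$ and $r(\lambda/\mu)$ as sums over sites, after which the exponent of each of $\beta$, $\alpha+\beta$, $x$ and $\alpha+x$ produced by $\prod_i w_x$ can be read off and matched with the displayed formula.

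The main obstacle is this last bookkeeping step. One has to treat carefully the edge cases of the three-way split $a_i>d_i$ / $0<a_i\le d_i$ / $a_i=0$ of the Boltzmann weights — in particular distinguishing vertices that genuinely open a new component of $\lambda/\mu$ from those that merely extend one, and excluding the weight-$1$ vertices ($a_i=0$, of which there are infinitely many) from the component and row counts — and one must also verify separately that the unique forced horizontal configuration fails to be admissible exactly on the complement of $\{\mu\subseteq\lambda\}$. None of this is conceptually hard, but it is the place where the argument genuinely diverges in detail from that of Theorem~\ref{thm:rowg}, because passing from the row model to the column model interchanges the roles of rows and columns and, correspondingly, the inequalities relating $b$ and $c$.
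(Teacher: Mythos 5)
Your proposal is correct and takes essentially the same route as the paper: reduce, via the branching formula \eqref{branchg} and commutation/stability of $\tilde{t}$, to the single-row identity $\bra{\mu^c}\tilde{t}(x)\ket{\lambda^c}=g^{(\alpha,\beta)}_{\lambda/\mu}(x)$, and then match the factors $\beta$, $\alpha+\beta$, $x$, $\alpha+x$ of the unique forced configuration using the local dictionary ($\min(a_i,d_i)$ nonzero columns of height $i$, a new component exactly when $0<a_i\le d_i$, a nonzero row exactly when $a_i\neq0$), which is precisely the paper's argument (stated there only for the $\beta$ factor, the rest being left as "similarly"). One minor correction to your closing remark: the component criterion is the same inequality in the row and column models ($0<a\le d$, equivalently $b\ge c$, in both); what gets interchanged is which statistics ($r$ versus $c$) the factors $\beta$ and $\alpha+x$ record, not the inequality between $b$ and $c$.
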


\begin{proof}
Following the reasoning in \cref{thm:rowG}, it enough to show that for $\mu\subseteq\lambda$,

\[
g^{(\alpha,\beta)}_{\lambda/\mu}(x)=\bra{\mu^c}t(x)\ket{\lambda^c}.
\]

Recall that for $\mu\subseteq\lambda$, we have
\begin{multline*}
g_{\lambda/\mu}^{(\alpha,\beta)}(x)=\\
\begin{cases}
\beta^{r(\lambda/\mu)-b(\lambda/\mu)}(\alpha+\beta)^{\lambda/\mu - r(\lambda/\mu)-c(\lambda/\mu)+b(\lambda/\mu)} x^{b(\lambda/\mu)}{(\alpha+x)}^{c(\lambda/\mu)-b(\lambda/\mu)}& \mu \subseteq \lambda\\
0& \text{otherwise.}
\end{cases}
\end{multline*}

Let us deal the $\beta$ factor in the branching formula. Observe that the $\beta$ factor appears in a Boltzmann weight of a vertex only when $a\neq0$ and $b<c$. From our previous analysis, we see that such vertices precisely count $r(\lambda/\mu)-b(\lambda/\nu)$. Similarly, one can check for all the other factors in the branching formula.
\end{proof}

\begin{ex*} For partition $\lambda=(2,0)$, we have
\[
\begin{tikzpicture}[scale=1]
\draw[bosonic,gccol] (0.5,0) node[left,black]{$\ss 2$}--(1.5,0) node[above,black]{$\ss 0$} --(2.5,0) node[above,black]{$\ss 0$}--(3.5,0) node[right,black]{$\ss 0$};
\draw[bosonic,gccol] (0.5,1) node[left,black]{$\ss 0$}--(1.5,1) node[above,black]{$\ss 0$}--(2.5,1)node[above,black]{$\ss 0$}--(3.5,1)node[right,black]{$\ss 0$};
\draw[bosonic]  (1,-0.5) node[below] {$\ss 0$}--(1,0.5) node [right] {$\ss 2$}--(1,1.5) node[above] {$\ss 2$};
\draw[bosonic]  (2,-0.5)node[below] {$\ss 0$}--(2,0.5) node[right]{$\ss 0$}--(2,1.5) node[above] {$\ss 0$};
\draw[bosonic]  (3,-0.5)node[below] {$\ss 0$}--(3,0.5) node[right]{$\ss 0$}--(3,1.5)node[above] {$\ss 0$};
\node at (-0.5,0) {$x_1$};
\node at (-0.5,1) {$x_2$};
\end{tikzpicture}
\qquad
\begin{tikzpicture}[scale=1]
\draw[bosonic,gccol] (0.5,0) node[left,black]{$\ss 1$}--(1.5,0) node[above,black]{$\ss 0$} --(2.5,0) node[above,black]{$\ss 0$}--(3.5,0) node[right,black]{$\ss 0$};
\draw[bosonic,gccol] (0.5,1) node[left,black]{$\ss 1$}--(1.5,1) node[above,black]{$\ss 0$}--(2.5,1)node[above,black]{$\ss 0$}--(3.5,1)node[right,black]{$\ss 0$};
\draw[bosonic]  (1,-0.5) node[below] {$\ss 0$}--(1,0.5) node [right] {$\ss 1$}--(1,1.5) node[above] {$\ss 2$};
\draw[bosonic]  (2,-0.5)node[below] {$\ss 0$}--(2,0.5) node[right]{$\ss 0$}--(2,1.5) node[above] {$\ss 0$};
\draw[bosonic]  (3,-0.5)node[below] {$\ss 0$}--(3,0.5) node[right]{$\ss 0$}--(3,1.5)node[above] {$\ss 0$};
\end{tikzpicture}
\qquad
\begin{tikzpicture}[scale=1]
\draw[bosonic,gccol] (0.5,0) node[left,black]{$\ss 0$}--(1.5,0) node[above,black]{$\ss 0$} --(2.5,0) node[above,black]{$\ss 0$}--(3.5,0) node[right,black]{$\ss 0$};
\draw[bosonic,gccol] (0.5,1) node[left,black]{$\ss 2$}--(1.5,1) node[above,black]{$\ss 0$}--(2.5,1)node[above,black]{$\ss 0$}--(3.5,1)node[right,black]{$\ss 0$};
\draw[bosonic]  (1,-0.5) node[below] {$\ss 0$}--(1,0.5) node [right] {$\ss 0$}--(1,1.5) node[above] {$\ss 2$};
\draw[bosonic]  (2,-0.5)node[below] {$\ss 0$}--(2,0.5) node[right]{$\ss 0$}--(2,1.5) node[above] {$\ss 0$};
\draw[bosonic]  (3,-0.5)node[below] {$\ss 0$}--(3,0.5) node[right]{$\ss 0$}--(3,1.5)node[above] {$\ss 0$};
\end{tikzpicture}
\]
\[
g^{(\alpha,\beta)}_{\lambda}(x_1,x_2)=x_1(x+\alpha)+x_1x_2+x_2(x_2+\alpha)
\]
\end{ex*}

\begin{ex*} For partition $\lambda=(1,1)$, we have
\[
\begin{tikzpicture}[scale=1]
\draw[bosonic,gccol] (0.5,0) node[left,black]{$\ss 1$}--(1.5,0) node[above,black]{$\ss 1$} --(2.5,0) node[above,black]{$\ss 0$}--(3.5,0) node[right,black]{$\ss 0$};
\draw[bosonic,gccol] (0.5,1) node[left,black]{$\ss 0$}--(1.5,1) node[above,black]{$\ss 0$}--(2.5,1)node[above,black]{$\ss 0$}--(3.5,1)node[right,black]{$\ss 0$};
\draw[bosonic]  (1,-0.5) node[below] {$\ss 0$}--(1,0.5) node [right] {$\ss 0$}--(1,1.5) node[above] {$\ss 0$};
\draw[bosonic]  (2,-0.5)node[below] {$\ss 0$}--(2,0.5) node[right]{$\ss 1$}--(2,1.5) node[above] {$\ss 1$};
\draw[bosonic]  (3,-0.5)node[below] {$\ss 0$}--(3,0.5) node[right]{$\ss 0$}--(3,1.5)node[above] {$\ss 0$};
\node at (-0.5,0) {$x_1$};
\node at (-0.5,1) {$x_2$};
\end{tikzpicture}
\qquad
\begin{tikzpicture}[scale=1]
\draw[bosonic,gccol] (0.5,0) node[left,black]{$\ss 1$}--(1.5,0) node[above,black]{$\ss 0$} --(2.5,0) node[above,black]{$\ss 0$}--(3.5,0) node[right,black]{$\ss 0$};
\draw[bosonic,gccol] (0.5,1) node[left,black]{$\ss 0$}--(1.5,1) node[above,black]{$\ss 1$}--(2.5,1)node[above,black]{$\ss 0$}--(3.5,1)node[right,black]{$\ss 0$};
\draw[bosonic]  (1,-0.5) node[below] {$\ss 0$}--(1,0.5) node [right] {$\ss 1$}--(1,1.5) node[above] {$\ss 0$};
\draw[bosonic]  (2,-0.5)node[below] {$\ss 0$}--(2,0.5) node[right]{$\ss 0$}--(2,1.5) node[above] {$\ss 1$};
\draw[bosonic]  (3,-0.5)node[below] {$\ss 0$}--(3,0.5) node[right]{$\ss 0$}--(3,1.5)node[above] {$\ss 0$};
\end{tikzpicture}
\qquad
\begin{tikzpicture}[scale=1]
\draw[bosonic,gccol] (0.5,0) node[left,black]{$\ss 0$}--(1.5,0) node[above,black]{$\ss 0$} --(2.5,0) node[above,black]{$\ss 0$}--(3.5,0) node[right,black]{$\ss 0$};
\draw[bosonic,gccol] (0.5,1) node[left,black]{$\ss 1$}--(1.5,1) node[above,black]{$\ss 1$}--(2.5,1)node[above,black]{$\ss 0$}--(3.5,1)node[right,black]{$\ss 0$};
\draw[bosonic]  (1,-0.5) node[below] {$\ss 0$}--(1,0.5) node [right] {$\ss 0$}--(1,1.5) node[above] {$\ss 0$};
\draw[bosonic]  (2,-0.5)node[below] {$\ss 0$}--(2,0.5) node[right]{$\ss 0$}--(2,1.5) node[above] {$\ss 1$};
\draw[bosonic]  (3,-0.5)node[below] {$\ss 0$}--(3,0.5) node[right]{$\ss 0$}--(3,1.5)node[above] {$\ss 0$};
\end{tikzpicture}
\]
\[
g^{(\alpha,\beta)}_{\lambda}(x_1,x_2)=\beta x_1+x_1 x_2+\beta x_2
\]
\end{ex*}

\subsection{Vertex model for \texorpdfstring{$j$}{j} polynomials.}
\label{vmodelforj}
\subsubsection{Definition of \texorpdfstring{$\jl$}{j} matrix.}
In this subsection, the auxiliary line is fermionic. 
Let
\begin{align}
\label{jLmat}
{{\jl}}_{i,j}(x)
=
\begin{pmatrix}
1 & \phi_i \\
x \phid_i & x+\delta_{0,m}
\end{pmatrix}_{i,j}
\end{align}
be the $\jl$-matrix acting on $F_i \otimes V_j$. Below we represent the entries of $\jl$ graphically:
\begin{equation}
\label{jtiles}
\begin{tabular}{ccccc}
\qquad
\begin{tikzpicture}[scale=0.4,baseline=-2pt]
\draw[fermionic,gcol,arrow=0.25] (-1,0) node[left,black] {$\ss {0}$} -- (1,0) node[right,black] {$\ss {0}$};
\draw[bosonic,arrow=0.25] (0,-1) node[below] {$\ss {m}$} -- (0,1) node[above] {$\ss {m}$};
\node[text width=1cm] at (0.4,-3.5) {$1$};
\end{tikzpicture}
\qquad
\begin{tikzpicture}[scale=0.4,baseline=-2pt]
\draw[fermionic,gcol,arrow=0.25] (-1,0) node[left,black] {$\ss {0}$} -- (1,0) node[right,black] {$\ss {1}$};
\draw[bosonic,arrow=0.25] (0,-1) node[below] {$\ss {m}$} -- (0,1) node[above] {$\ss{m-1}$};
\node[text width=1cm] at (0.4,-3.5){$1$};
\end{tikzpicture}
\qquad
\begin{tikzpicture}[scale=0.4,baseline=-2pt]
\draw[fermionic,gcol,arrow=0.25] (-1,0) node[left,black] {$\ss {1}$} -- (1,0) node[right,black] {$\ss {0}$};
\draw[bosonic,arrow=0.25] (0,-1) node[below] {$\ss {m}$} -- (0,1) node[above] {$\ss {m
+1}$};
\node[text width=1cm] at (0.4,-3.5){$x$};
\end{tikzpicture}
\qquad
\begin{tikzpicture}[scale=0.4,baseline=-2pt]
\draw[fermionic,gcol,arrow=0.25] (-1,0) node[left,black] {$\ss {1}$} -- (1,0) node[right,black] {$\ss {1}$};
\draw[bosonic,arrow=0.25] (0,-1) node[below] {$\ss {m}$} -- (0,1) node[above] {$\ss {m}$};
\node[text width=1cm] at (0.4,-3.5){$x$};
\end{tikzpicture}
\qquad
\begin{tikzpicture}[scale=0.4,baseline=-2pt]
\draw[fermionic,gcol,arrow=0.25] (-1,0) node[left,black] {$\ss {1}$} -- (1,0) node[right,black] {$\ss {1}$};
\draw[bosonic,arrow=0.25] (0,-1) node[below] {$\ss {0}$} -- (0,1) node[above] {$\ss {0}$};
\node[text width=1cm] at (1,-3.5){ ${x+1}$};
\end{tikzpicture}\\
\end{tabular}
\end{equation}
Similarly, we have the dual $\jl^*$ matrices,

 \begin{align}
\label{jdLmat}
\jl^{*}_{i,j}(x)
=
\begin{pmatrix}
x+\delta_{0,m} & x \phi \\
 \phid_i & 1
\end{pmatrix}_{i,j}
\end{align}

\begin{equation}
\label{dGtiles}
\begin{tabular}{ccccc}

\begin{tikzpicture}[scale=0.4,baseline=-2pt]
\draw[fermionic,gcol,arrow=0.25]  (-1,0) node[left,black] {$\ss {1}$}--(1,0) node[right,black] {$\ss {1}$};
\draw[bosonic,arrow=0.25]  (0,-1) node[below] {$\ss {m}$}--(0,1) node[above] {$\ss {m}$};
\node[text width=1cm] at (1,-3.5){1};
\end{tikzpicture}
\qquad
\begin{tikzpicture}[scale=0.4,baseline=-2pt]
\draw[fermionic,gcol,arrow=0.25] (-1,0) node[left,black] {$\ss {1}$}-- (1,0) node[right,black] {$\ss {0}$};
\draw[bosonic,arrow=0.25]  (0,-1) node[below] {$\ss {m}$}--(0,1) node[above] {$\ss {m+1}$} ;
\node[text width=1cm] at (1,-3.5){$ 1$};
\end{tikzpicture}
\qquad
\begin{tikzpicture}[scale=0.4,baseline=-2pt]
\draw[fermionic,gcol,arrow=0.25]  (-1,0) node[left,black] {$\ss {0}$}--(1,0) node[right,black] {$\ss {1}$} ;
\draw[bosonic,arrow=0.25]  (0,-1) node[below] {$\ss {m}$}--(0,1) node[above] {$\ss {m-1}$};
\node[text width=1cm] at (1,-3.5) {$x$};
\end{tikzpicture}
\qquad
\begin{tikzpicture}[scale=0.4,baseline=-2pt]
\draw[fermionic,gcol,arrow=0.25] (-1,0) node[left,black] {$\ss {0}$}--(1,0) node[right,black] {$\ss {0}$};
\draw[bosonic,arrow=0.25]  (0,-1) node[below] {$\ss {m}$}--(0,1) node[above] {$\ss {m}$};
\node[text width=1cm] at (1,-3.5){$x$};
\end{tikzpicture}
\qquad
\begin{tikzpicture}[scale=0.4,baseline=-2pt]
\draw[fermionic,gcol,arrow=0.25]  (-1,0) node[left,black] {$\ss {0}$}--(1,0) node[right,black] {$\ss {0}$};
\draw[bosonic,arrow=0.25]  (0,-1) node[below] {$\ss {0}$}--(0,1) node[above] {$\ss {0}$};
\node[text width=1cm] at (1,-3.5){$x+1$};
\end{tikzpicture}\\
\end{tabular}
\end{equation}

The $\rm R$ matrix that makes this model integrable is same as \cref{GRmatrix} with $\beta=0$. When $\beta=0$ we denote this matrix as $R(y/x)$.

\begin{align}
\label{rmatj}
R_{ij}(y/x)
=
\begin{pmatrix}
1 & 0 & 0 & 0 \\
0 & 0 &\dfrac{y}{x} & 0 \\
0 & 1 & 1-\dfrac{y}{x} & 0 \\
0 & 0 & 0 & 1 
\end{pmatrix}_{ij}
\in 
{\rm End}(F_i \otimes F_j).
%\qquad
%z \equiv x/y.
\end{align}
The $R$ matrix together with $\jl_i$ and $ \jl_j$ matrices satisfy the $\mathrm{RLL}$ relation with \text{in End} $(F_i \otimes F_j \otimes V_n)$:
\begin{equation}
\label{rllj}
R^{}_{ij}(y/x)\jl_i(x)\jl_j(y)=\jl_j(y)\jl_i(x)R^{}_{ij}(y/x)
\quad
\left(
\begin{tikzpicture}[scale=0.6,baseline=7pt]
\draw[bosonic](1,-1)--(1,2);
\draw[fermionic,gcol,rounded corners,arrow=0.15] (-1,1) node[left,black]{$\ss x$}--(0,0)--(2,0);
\draw[fermionic,gcol,rounded corners,arrow=0.15] (-1,0)  node[left,black]{$\ss y$}--(0,1)--(2,1);
\end{tikzpicture}
=
\begin{tikzpicture}[scale=0.6,baseline=7pt]
\draw[bosonic] (1,2)--(1,-1);
\draw[fermionic,gcol,arrow=0.15,rounded corners] (0,0) node[left,black]{$\ss y$}--(2,0)--(3,1);
\draw[fermionic,gcol,arrow=0.15,rounded corners] (0,1)node[left,black]{$\ss x$}--(2,1)--(3,0);
\end{tikzpicture}
\right)
\end{equation}

\subsubsection{ Row-row transfer matrices.}
 We now define the transfer matrix $\jt$ which acts linearly on $V^{r}$ as follows,
\begin{align}
\label{transferj}
\jt(x):\ket{i_1}\otimes \ket{i_2}\otimes {\cdots}
\mapsto  \sum_{k_1,k_2\ldots \ge 0}
w(\{i_1,i_2,\dots\};\{k_1,k_2,\dots\})
\ket{k_1}\otimes \ket{k_2}\otimes {\cdots}
\end{align}

where $w(\{i_1,i_2,\dots\};\{k_1,k_2,\dots\})$ is the weight of the single of row of vertices.
\[
w(\{i_1,i_2,\dots\};\{k_1,k_2,\dots\})=
\begin{tikzpicture}[scale=0.5,baseline=-1pt]
\draw[fermionic,gcol,arrow=0.05] (-1,0) node[left,black]{$*$} -- (7,0) node[right,black]{$\ss 0$};
\node[left] at (-0.8,0) {\tiny };\node[right] at (6.8,0) {\tiny };
\foreach\x in {0,1,...,6}{
\draw[arrow=0.25,bosonic] (\x,-1) -- (\x,1);
}
\node[below] at (0,-0.8) {\tiny $i_1$};\node[above] at (0,0.8) {\tiny $k_1$};
\node[below] at (1,-0.8) {\tiny $i_2$};\node[above] at (1,0.8) {\tiny $k_2$};
\node[below] at (2,-0.8) {\tiny $i_3$};\node[above] at (2,0.8) {\tiny $k_3$};
\node[below] at (6,-0.8) {$\cdots$};\node[above] at (6,0.8) {$\cdots$};
\end{tikzpicture}
\]

\begin{rmk*}
Observe that the transfer matrix $t$ from row model of $g^{(1,0)}_{\lambda}$, and $\mathfrak{t}$ are the same.
\end{rmk*}

Similarly, we define the dual transfer matrix $\jt^{*}$ which acts linearly on $V^{r}$ as follows,
\begin{align}
\label{dualtransferj}
\jt^{*}(x):\ket{i_1}\otimes \ket{i_2}\otimes {\cdots}
\mapsto  \sum_{k_1,k_2\ldots \ge 0}
w^{*}(\{i_1,i_2,\dots\};\{k_1,k_2,\dots\})
\ket{k_1}\otimes \ket{k_2}\otimes {\cdots}
\end{align}

where $w^{*}(\{i_1,i_2,\dots\};\{k_1,k_2,\dots\})$ is ths weight of the single row of vertices made of the vertices of $\jl^{*}$

\[
w^{*}(\{i_1,i_2,\dots\};\{k_1,k_2,\dots\})=
\begin{tikzpicture}[scale=0.5,,baseline=-1pt]
\draw[fermionic,gcol,arrow=0.05] (-1,0) node[left,black]{$*$}-- (7,0) node[right,black]{$\ss 1$};
\node[left] at (-0.8,0) {\tiny };\node[right] at (6.8,0) {\tiny };
\foreach\x in {0,1,...,6}{
\draw[arrow=0.25,bosonic] (\x,-1) -- (\x,1);
}
\node[below] at (0,-0.8) {\tiny $k_1$};\node[above] at (0,0.8) {\tiny $i_1$};
\node[below] at (1,-0.8) {\tiny $k_2$};\node[above] at (1,0.8) {\tiny $i_2$};
\node[below] at (2,-0.8) {\tiny $k_3$};\node[above] at (2,0.8) {\tiny $i_3$};
\node[below] at (6,-0.8) {$\cdots$};\node[above] at (6,0.8) {$\cdots$};
\end{tikzpicture}
\]

\subsubsection{\texorpdfstring{$j$}{j} polynomials.}
Recall that we denote dual Grothendieck polynomials by $g_{\lambda}$, which is the $\alpha=0$ and $\beta=1$ specialization of $\g$. Then the $\omega(g_{\lambda})$ polynomials are called \emph{weak dual Grothendieck polynomials} and we shall denote them by $j_{\lambda}$: \[
j_{\lambda}=\omega(g_{\lambda})=\omega(g^{(0,1)}_{\lambda})=g^{(1,0)}_{\lambda'}
\]

When $\beta=0$, the branching formula of $\g$ reduces to the following \cite{cauchyid-groth}:
For a partition $\lambda$, we have
\[
\label{singlej}
j_{\lambda}(x_1,\dots,x_n,x_{n+1})= \sum_{\mu } j_{\lambda/\mu}(x_{n+1}) j_{\mu}(x_1,\dots,x_n),
\]
where $j_{\lambda/\mu}(x)$ is defined as follows,
\[
j_{\lambda/\mu}(x)= 
\begin{cases}
x^{c(\lambda/\mu)}(1+x)^{|\lambda/\mu|-c(\lambda/\mu)}&  \lambda/\mu \text{ vert. strip,}\\
0& \text{otherwise.}
\end{cases}
\]

\begin{thm}
\label{thm:j}
The dual weak Grothendieck polynomials $j_{\lambda}(x)$  are given by
\begin{align}
\label{j}
j_{\lambda}(x_1,\dots,x_n)
&=
\bra{0}
\jt(x_1)
\dots
\jt(x_n)
\ket{\lambda^c}
\\
\label{dj}
j_{\lambda}(x_1,\dots,x_n)
&=
\bra{\lambda^c}
\jt^{*}(x_n)
\dots
\jt^{*}(x_1)
\ket{0}
\end{align}
where $\ket{\lambda^c} = \bigotimes_{i=1}^{\infty} \ket{m^c_i(\lambda)}$, and similarly for the dual state $\bra{\lambda^c}$.
\end{thm}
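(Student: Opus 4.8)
The plan is to follow the template used for \cref{thm:rowG,thm:rowg}. First I would establish that $\jt(x)$ and $\jt(y)$ commute and that $\jt(0)=1$, so that the functions defined by \eqref{j} are symmetric and form a stable family, hence lie in $\tilde\Lambda$. Then I would reduce the statement to the single-variable identity $\bra{\mu^c}\jt(x)\ket{\lambda^c}=j_{\lambda/\mu}(x)$: inserting $\sum_\mu\ket{\mu^c}\bra{\mu^c}=\mathrm{id}$ before the last transfer matrix in \eqref{j} and inducting on the number of variables turns \eqref{j} into the branching recursion \eqref{singlej} (the base case $\bra{0}\jt(x)\ket{\lambda^c}=j_{\lambda/\emptyset}(x)=j_\lambda(x)$ being the identity in one variable). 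Finally, \eqref{dj} follows from \eqref{j} exactly as \eqref{dJ} followed from \eqref{J}, because the tiles \eqref{dGtiles} of $\jl^{*}$ are those of $\jl$ flipped upside down with $0\leftrightarrow1$ swapped.

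For commutativity I would argue as in \cref{rowcGmodel}: the $R$-matrix \eqref{rmatj} has all column sums equal to $1$ (equivalently, a single crossing with fixed right boundary has partition function $1$), so pushing $R(y/x)$ through two stacked rows with \eqref{rllj} and discarding the all-zero crossing that remains on the far right yields $\jt(x)\jt(y)=\jt(y)\jt(x)$. For $\jt(0)=1$: at $x=0$ the weight-$x$ tiles in \eqref{jtiles} vanish, and since the right boundary of a row is $0$, local conservation forces every horizontal edge, and hence every vertex, to be trivial.

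To prove the single-variable identity I would run an analysis parallel to that in the proof of \cref{thm:rowg}, now in the column encoding and with the $\jl$-weights. Local conservation along the (fermionic) auxiliary line, together with the boundary value $0$ on the far right, forces the horizontal edge to the right of site $i$ to equal $\sum_{j>i}\bigl(m^c_j(\lambda)-m^c_j(\mu)\bigr)=\lambda_{i+1}-\mu_{i+1}$. As the line is fermionic this number must lie in $\{0,1\}$, so $\bra{\mu^c}\jt(x)\ket{\lambda^c}\ne0$ exactly when $0\le\lambda_k-\mu_k\le1$ for all $k$, i.e.\ when $\lambda/\mu$ is a vertical strip; in that case the configuration is unique.

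When $\lambda/\mu$ is a vertical strip I would read the weight off box by box against \eqref{jtiles}. The left edge at site $i$ is $1$ precisely when row $i$ of $\lambda/\mu$ carries a box; for such a site, the vertex is tile~3 or tile~4 (weight $x$) when that box is the bottom of its column of $\lambda/\mu$ — either nothing lies below it, or the box below it lies in a strictly earlier column, which is the condition $m^c_i(\mu)=\mu_i-\mu_{i+1}\ge1$ — and it is tile~5 (weight $1+x$) when the box directly below it lies in the same column, which forces $m^c_i(\mu)=0$; all remaining vertices have weight $1$. Multiplying over all boxes gives $x^{c(\lambda/\mu)}(1+x)^{|\lambda/\mu|-c(\lambda/\mu)}=j_{\lambda/\mu}(x)$, as required. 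The one step I expect to take real care is this last bookkeeping: since the $r$-matrix of the row $\g$-model is undefined at $\beta=0$, one cannot simply specialise \cref{thm:rowg}, so the correspondence between the $\jl$-tiles and the ``bottom of a column of the skew shape'' statistic must be set up directly, even though it is entirely elementary.
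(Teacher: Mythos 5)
Your proposal is correct and follows essentially the same route as the paper: reduce via the branching formula \eqref{singlej} to the one-row computation $\bra{\mu^c}\jt(x)\ket{\lambda^c}=j_{\lambda/\mu}(x)$, observe that conservation with the fermionic auxiliary line forces $\lambda/\mu$ to be a vertical strip with a unique configuration, and then count weight-$x$ versus weight-$(x+1)$ vertices exactly as the paper does (a left edge $1$ at site $i$ is a box in row $i$, and the $(1,0;1,0)$ tile records two boxes of $\lambda/\mu$ stacked in the same column). Your treatment is simply more explicit than the paper's (which argues by example and by analogy with Theorem~\ref{thm:rowG}), including the commutation, $\jt(0)=1$, and the derivation of \eqref{dj} from the flipped tiles, all of which are consistent with the paper's conventions.
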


\begin{proof}
Before we prove, let us observe the tiles in an example.

For $\mu=(5,4,4,3)$ and $\lambda=(5,5,5,3,1,1)$,

\begin{center}
\label{Exampleforjproof}
\begin{tikzpicture}[scale=0.6]
%Young horizontal lines
\draw (0,4) -- (5,4);
\draw (0,3) -- (5,3);
\draw (0,2) -- (4,2);
\draw (0,1) -- (4,1);
\draw (0,0) -- (2,0);
% Young vertical lines
\draw (0,-3) -- (0,4);
\draw (1,0) -- (1,4);
\draw (2,0) -- (2,4);
\draw (3,1) -- (3,4);
\draw (4,1) -- (4,4);
\draw (5,3) -- (5,4);
%Young particles
\bbull{0.5}{0}{0.09};
\bbull{1.5}{0}{0.09};
\bbull{3.5}{1}{0.09};
\bbull{2.5}{1}{0.09};
\bbull{4.5}{3}{0.09};
%projecting lines
\draw[dotted,arrow=0.5] (5,3) -- (8.5,3);
\draw[dotted,arrow=0.5] (4,1) -- (8.5,1);
\draw[dotted,arrow=0.5] (2,0) -- (8.5,0);
%half line
\draw[thick,arrow=1] (9,3.5) -- (9,-3);
\foreach\x in {1,...,7}{
\draw[thick] (8.7,4.5-\x) -- (9,4.5-\x);
}
%half line particles
\bbull{9}{3}{0.09};
\bbull{9}{1}{0.09};\bbull{8.7}{1}{0.09};
\bbull{8.7}{0}{0.09};\bbull{9}{0}{0.09};
%half line labels
\node at (9.5,3) {$\sss m_1$};
\node at (9.5,2) {$\sss m_2$};
\node at (9.5,1) {$\sss m_3$};
\node at (9.5,0) {$\sss m_4$};
\node at (9.5,-1) {$\sss m_5$};
\node at (9.5,-2) {$\sss m_6$};
\node at (9.5,-3) {$\vdots$};
\end{tikzpicture}
\qquad
\begin{tikzpicture}[scale=0.6]
%Young horizontal lines
\draw (0,4) -- (5,4);
\draw (0,3) -- (5,3);
\draw (0,2) -- (5,2);
\draw (0,1) -- (5,1);
\draw (0,0) -- (3,0);
\draw (0,-1) -- (1,-1);
\draw (0,-2) -- (1,-2);
% Young vertical lines
\draw (0,-3) -- (0,4);
\draw (1,-2) -- (1,4);
\draw (2,0) -- (2,4);
\draw (3,0) -- (3,4);
\draw (4,1) -- (4,4);
\draw (5,1) -- (5,4);
%Young particles
\bbull{0.5}{-2}{0.09};
\bbull{1.5}{0}{0.09};
\bbull{2.5}{0}{0.09};
\bbull{3.5}{1}{0.09};
\bbull{4.5}{1}{0.09};
%projecting lines
\draw[dotted,arrow=0.5] (4,1) -- (8.5,1);
\draw[dotted,arrow=0.5] (3,0) -- (8.5,0);
\draw[dotted,arrow=0.5] (1,-2) -- (8.5,-2);
%half line
\draw[thick,arrow=1] (9,3.5) -- (9,-3);
\foreach\x in {1,...,7}{
\draw[thick] (8.7,4.5-\x) -- (9,4.5-\x);
}
%half line particles
\bbull{8.7}{1}{0.09};\bbull{9}{1}{0.09};
\bbull{8.7}{0}{0.09};\bbull{9}{0}{0.09};
\bbull{9}{-2}{0.09};
%half line labels
\node at (9.5,3) {$\sss m_1$};
\node at (9.5,2) {$\sss m_2$};
\node at (9.5,1) {$\sss m_3$};
\node at (9.5,0) {$\sss m_4$};
\node at (9.5,-1) {$\sss m_5$};
\node at (9.5,-2) {$\sss m_6$};
\node at (9.5,-3) {$\vdots$};
\end{tikzpicture}
\end{center}

\[
\begin{tikzpicture}[scale=0.7]
\draw[fermionic,gcol] (0,0) -- (7,0);
\foreach\x in {1,2,3,4,5,6}{
\draw[bosonic] (\x,-1) -- (\x,1);
}
%\mu
\node at (1,-1.5) {$\ss 1$};
\node at (2,-1.5) {$\ss 0$};
\node at (3,-1.5) {$\ss 2$};
\node at (4,-1.5) {$\ss 2$};
\node at (5,-1.5) {$\ss 0$};
\node at (6,-1.5) {$\ss 0$};
%\lambda
\node at (1,1.5) {$\ss 0$};
\node at (2,1.5) {$\ss 0$};
\node at (3,1.5) {$\ss 2$};
\node at (4,1.5) {$\ss 2$};
\node at (5,1.5) {$\ss 0$};
\node at (6,1.5) {$\ss 1$};
%horizontal
\node at (0.5,0) {$\ss 0$};
\node at (1.5,0) {$\ss 1$};
\node at (2.5,0) {$\ss 1$};
\node at (3.5,0) {$\ss 1$};
\node at (4.5,0) {$\ss 1$};
\node at (5.5,0) {$\ss 1$};
\node at (6.5,0) {$\ss 0$};
%labels

\end{tikzpicture}
\]

From the example above, observe that having $\ss 1$ on the left horizontal edge at site $i$ amounts to adding a box in row $i$. The number of such vertices amounts to the number of boxes added. The vertex \begin{tikzpicture}[scale=0.4,baseline=-2pt]
\draw[fermionic,gcol,arrow=0.25] (-1,0) node[left,black] {$\ss {1}$} -- (1,0) node[right,black] {$\ss {1}$};
\draw[bosonic,arrow=0.25] (0,-1) node[below] {$\ss {0}$} -- (0,1) node[above] {$\ss {0}$};

\end{tikzpicture} at site $i$ can be read as adding a box in two successive rows in the same column. So every such vertex amounts to $r(\lambda/\mu)-c(\lambda/\mu)$. Using similar reasoning in (\ref{thm:rowG}) and with the above analysis, the proof is immediate.
\end{proof}

\section{Generalised polynomials}
\label{generalisedpolynomials}
In this section, we shall generalise the polynomials by introducing additional variables which are attached to the vertical lines of the underlying lattice model. In order to do that, we need the $\rm R$ matrix that underpins the integrability of the lattice model to satisfy the so-called {\em difference property}. Usually the difference property refers to entries of $\rm R$ matrix being invariant under translation of the spectral variables. In this paper, we say that a $\rm R$ matrix satisfies the difference property when the entries are invariant under scaling of the spectral parameters i.e., the non constant entries are polynomials in ratio of the spectral variables.

\subsection{Difference property of the \texorpdfstring{$\mathrm R$}{R} matrices.}
In this subsection, we study the difference property of the various $\mathrm R$ matrices studied in this paper. 

\subsubsection{\texorpdfstring{$\mathrm R$}{R} matrices of Row models.}
Consider the $R$ matrix for canonical Grothendieck polynomials. Observe that when $\beta=0$, the $R$ matrix satisfies the difference property. More generally, it satisfies the difference property when we consider the spectral variables to be $\dfrac{x}{1-\beta x}$ instead of $x$.

\begin{align}
R(y/x)=R(x,y)=
\begin{pmatrix}
1 & 0 & 0 & 0 \\[1ex]
0 & 0 &  \dfrac{y}{x} & 0 \\[1ex]
0 & 1 & {1}- \dfrac{y}{x} & 0 \\[1ex]
0 & 0 & 0 & 1 \\
\end{pmatrix}
\end{align}

In the case of $\g$, the $r$-matrix does not satisfy the difference property. It is also not defined for $\beta=0$. Hence, we studied a different model for the case where $\beta=0$. Recall that the $R$ matrix for vertex model of $j_{\lambda}=g^{(1,0)}_{\lambda'}$, is same as the $R$.

\subsubsection{\texorpdfstring{$\mathrm R$}{R} matrices of Column models.}

In the case of column models, the $\widetilde{R}$ matrix does satisfy the difference property in general, when we consider the spectral variables to be $\frac{x}{1+\alpha x}$ and $\frac{y}{1+\alpha y}$ instead of $x$ and $y$.
For $G^{(\alpha,\beta)}_{\lambda}$, the $\widetilde{R}$ matrix is given below:
\begin{align*}
\widetilde{R}^{a\,d}_{b\,c}\left(y,x\right) = \left({\dfrac{\dfrac{x}{1-\alpha x}}{\dfrac{y}{1-\alpha y}}}\right)^{a}
  \begin{cases}
    0 & \text{when } b<c,\\
   1 & \text{when } b=c ,\\
  \dfrac{1}{1-\alpha x}-\dfrac{x}{(1-\alpha x)y} & \text{when } b>c .
  \end{cases}
\end{align*}

For the polynomials $G^{(0,\beta)}$, the $\widetilde{R}$ matrix satisfies the difference property.

The $\widetilde{r}$-matrix of column vertex model of $\g$,
\begin{align}
\widetilde{r}^{k,l}_{i,j} (x,y)=
\begin{cases}
0 & i<j\\
1& k=l=0\\
\dfrac{x}{y} \left(\dfrac{y+\alpha}{x+\alpha}\right)^{1-k} & k=l>0\\
 \left(1-\dfrac{x}{y}\right)  & k=0\\
\dfrac{x}{y}  \left(\dfrac{y+\alpha}{x+\alpha} -1\right) \left(\dfrac{y+\alpha}{x+\alpha}\right)^{-k} & k>0\\
\end{cases}
\end{align}
satisfies the difference property when $\alpha=0$.

\subsection{Generalised polynomials.}
To summarize, in the case of row models we can only generalise $G^{(\alpha,0)}_{\lambda}$ and $g^{(\alpha,0)}_{\lambda}$. Similarly, in the case of column models, we can generalise $G^{(0,\beta)}_{\lambda}$ and $g^{(0,\beta)}_{\lambda}$.

We generalise the polynomials by assigning a variable to the vertical lines. Let us assign the variable $z_i$ to the $i^{th}$ vertical line from the left. In any model, the weight of a vertex formed with the intersection of  $i^{th}$ horizontal line and $j^{th}$ vertical line is defined as follows,

\begin{align*}
\label{}
\widetilde{w}^{}_{(x_i,z_j)}
\left(
\begin{gathered}
\begin{tikzpicture}[scale=0.4,baseline=-2pt]
\draw[arrow=0.25,bosonic] (-1,0) node[left] {$a$} -- (1,0) node[right] {$c$};
\draw[arrow=0.25,bosonic] (0,-1) node[below] {$b$} -- (0,1) node[above] {$d$};
\end{tikzpicture}
\end{gathered}
\right)
=w_{\left(\frac{x_i}{z_j}\right)}(a,b;c,d)
\end{align*}

Let us name these polynomials.
\begin{itemize}
\item[(i)] We call $G^{\alpha}_{\lambda}=G^{(0,-\alpha)}_{\lambda}$
\emph{generalised Grothendieck polynomials}
\item[(ii)] We call $g^{\alpha}_{\lambda}=g^{(0,\alpha)}_{\lambda}$ 
\emph{generalised dual Grothendieck polynomials }
\item[(iii)] We call $J^{\alpha}_{\lambda}=G^{(-\alpha,0)}_{\lambda'}$  \emph{generalised weak Grothendieck polynomials }
\item[(iv)] We call $j^{\alpha}_{\lambda}=g^{(\alpha,0)}_{\lambda'}$  \emph{generalised weak dual Grothendieck polynomials }
\end{itemize}

When $\alpha=1$, we shall drop the superscript.
\begin{ex*}
For the partition $(1)$, the \emph{generalised Grothendieck polynomial} $G_{\lambda}(x_1,z_1)$ is $\dfrac{x_1}{z_1}$.
For comparison, the \emph{Double Grothendieck polynomial} is $x_1+y_1 (1-x_1)$ \cite{McN-Groth}. We observe that these two generalisations of Grothendieck polynomials are not the same.
\end{ex*}

\begin{ex*} Let us look at a non trivial example.
\[G^{(0,-1)}_{(3,1)}(x_1,x_2)=\br{\frac{x^{2}_1}{z_1 z_2}}\br{1-\frac{x_1}{z_1}}\br{\frac{x_2}{z_1}}^{2}+ \br{\frac{x^{3}_1}{z^{2}_1 z_2}} \br{\frac{x^{}_2}{z^{}_1}}+ \br{1-\frac{x_1}{z_1}}\br{\frac{x_1}{z_2}}\br{\frac{x_2}{z_1}}^{3}
\]
\[
\begin{tikzpicture}[scale=0.5]
\draw[bosonic,Gcol,rounded corners] (0,0) node[left,black]{$\ss 2$}--(1,0)--(1,1) node[right,black]{$\ss 2$}--(1,3) node[above,black]{$\ss 2$};
\draw[bosonic,Gcol,rounded corners] (0,2) node[left,black]{$\ss 1$}--(2,2)  node[above,black]{$\ss 1$}--(3,2)--(3,3)node[above,black]{$\ss 1$};
\node at (-1.5,0) {$ x_2$};
\node at (-1.5,2) {$ x_1$};
\end{tikzpicture}
\qquad
\begin{tikzpicture}[scale=0.5]
\draw[bosonic,Gcol,rounded corners] (0,0) node[left,black]{$\ss 1$}--(1,0)--(1,1) node[right,black]{$\ss 1$}--(1,2)--(2,2)node[above,black]{$\ss 1$}--(3,2)--(3,3)node[above,black]{$\ss 1$};
\draw[bosonic,Gcol,rounded corners] (0,2) node[left,black]{$\ss 2$}--(1,2)  --(1,3)node[above,black]{$\ss 2$};
\end{tikzpicture}
\qquad
\begin{tikzpicture}[scale=0.5]
\draw[bosonic,Gcol,rounded corners] (0,0) node[left,black]{$\ss 3$}--(1,0)--(1,1) node[right,black]{$\ss 3$}--(1,3) node[above,black]{$\ss 2$};
\draw[bosonic,Gcol,rounded corners] (0,2) node[left,black]{$\ss 0$}--(2,2)  node[above,black]{$\ss 1$}--(3,2)--(3,3)node[above,black]{$\ss 1$};
\end{tikzpicture}
\]
\end{ex*}

\begin{rmk*}
Observe that by setting both $\alpha$ and $\beta$ to $0$, we get a generalised version of \emph{Schur polynomials}. \emph{Generalised Schur polynomials} from row model and column model are not the same. Let us denote the generalised Schur from row (column) model with $s^{r}$ $(s^{c})$.
\begin{ex*}
For partition $\lambda=(3,1)$, we have
\[
s^{r}_{(3,1)}(x_1,x_2;z_1,z_2,\dots)= \br{\frac{x^{3}_1}{z_1 z_2 z_3}}\br{\frac{x_2}{z_1}}+\br{\frac{x^{2}_1}{z_1 z_2}}\br{\frac{x^{2}_2}{z_1 z_3}}+ \br{\frac{x_1}{z_1}}\br{\frac{x^{3}_2}{z_1 z_2 z_3}}
\]
\[s^{c}_{(3,1)}(x_1,x_2;z_1,z_2,\dots)=\br{\frac{x^{2}_1}{z_1 z_2}}\br{\frac{x_2}{z_1}}^{2}+ \br{\frac{x^{3}_1}{z^{2}_1 z_2}} \br{\frac{x^{}_2}{z^{}_1}}+\br{\frac{x_1}{z_2}}\br{\frac{x_2}{z_1}}^{3}
\]
\end{ex*}
\end{rmk*}
$s^{r}_{\lambda}$ is a monomial multiple of $s_{\lambda}$, where the monomial is obtained by recording the columns of the Young diagram. Similarly, in the case of $s^{c}_{\lambda}$ the monomial is obtained by recording the rows of the Young diagram.

\section{Duality between Column and Row models}
\label{dualitybetweenmodels}

In this section, we shall study a relation between the transfer matrix of the row and column model of $\G$. Let us recall the necessary notation from various sections. The transfer matrices of the row model for $\G$ are denoted by $T$, and of the column model are denoted by $\widetilde{T}$.

\begin{prop}[Inversion relation]
\label{inversionGrothprop}
The transfer matrices $\widetilde{T}$ and $T$ satisfy the following identity:
\begin{align}
\label{inversionGroth}
{T}\br{-x}\widetilde{{T}}\left(\frac{x}{1+(\alpha-\beta) x} \right) =1
\qquad
\left(
\begin{tikzpicture}[scale=0.6,baseline=-2pt]
\draw[fermionic,Gcol] (0,0) node[left,black]{$*$} --(7,0)node[right,black]{$\ss 0$};
\draw[bosonic,Gcol]  (0,1)node[left,black]{$*$}--(7,1)node[right,black]{$\ss 0$};
\draw[bosonic] (6,-1)--(6,2);
\foreach\x in {1,2,3,4}{
\draw[bosonic] (\x,-1)--(\x,2);
\node at (\x,2.2) {$\ss u_{\x}$};
\node at (\x,-1.3) {$\ss v_{\x}$};
};
\node at (-2.5,0) {$\ss {T\br{\frac{-x}{z}}}$};
\node at (-2.5,1) {$\ss \widetilde{T}\left(\frac{\frac{x}{z}}{1+(\alpha-\beta) \frac{x}{z}}\right)$};
\node at (5,2.2) {$\ss \dots \dots$};
\node at (5,-1.2) {$\ss \dots  \dots$};
\end{tikzpicture}
\right)
\end{align}
\end{prop}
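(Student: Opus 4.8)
The plan is to prove \eqref{inversionGroth} by computing every matrix element of the operator $T(-x)\,\widetilde T\br{\frac{x}{1+(\alpha-\beta)x}}$ in the partition basis and checking it equals $\delta_{\mu\lambda}$. This is legitimate: $V^r$ and $V^c$ are literally the same vector space, only the dictionary with partitions differs, so the composition on the left of \eqref{inversionGroth} makes sense; moreover both $T$ and $\widetilde T$ are triangular for inclusion of Young diagrams with invertible diagonal (the diagonal entry of $T(x)$ on $\ket{\lambda^r}$ is $G^{(\alpha,\beta)}_{\lambda\sslash\lambda}(x)=\br{\frac{1+\beta x}{1-\alpha x}}^{r(\lambda/\bar\lambda)}$), so \eqref{inversionGroth} is equivalent to the single assertion $\widetilde T\br{\frac{x}{1+(\alpha-\beta)x}}=T(-x)^{-1}$, and the sum over intermediate states below is finite.

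First I reduce the statement to a branching identity. Recording a partition by columns rather than rows is transposition, so $\ket{\sigma^c}=\ket{(\sigma')^r}$ for every $\sigma$. From the proof of \cref{thm:rowG}, $\bra{\mu^r}T(y)\ket{\nu^r}=G^{(\alpha,\beta)}_{\nu\sslash\mu}(y)$, nonzero only when $\nu/\mu$ is a horizontal strip; from the proof of \cref{thm:colG}, $\bra{\nu^r}\widetilde T(y)\ket{\lambda^r}=\bra{(\nu')^c}\widetilde T(y)\ket{(\lambda')^c}=G^{(\alpha,\beta)}_{\lambda'\sslash\nu'}(y)$, nonzero only when $\lambda'/\nu'$ is a horizontal strip, i.e.\ $\lambda/\nu$ is a vertical strip. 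Writing $\tilde x=\frac{x}{1+(\alpha-\beta)x}$ and inserting a complete set of intermediate states,
\[
\bra{\mu^r}T(-x)\,\widetilde T(\tilde x)\ket{\lambda^r}=\sum_{\nu}G^{(\alpha,\beta)}_{\nu\sslash\mu}(-x)\;G^{(\alpha,\beta)}_{\lambda'\sslash\nu'}(\tilde x),
\]
the finite sum running over all $\nu$ with $\mu\subseteq\nu\subseteq\lambda$ such that $\nu/\mu$ is a horizontal strip and $\lambda/\nu$ is a vertical strip. So everything comes down to this combinatorial identity for the one‑variable branching functions \eqref{singleG}.

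Next I simplify. The point of the reparametrization is the four elementary identities $\frac{-x}{1-\alpha(-x)}=\frac{-x}{1+\alpha x}$, $\frac{1+\beta(-x)}{1-\alpha(-x)}=\frac{1-\beta x}{1+\alpha x}$ and $\frac{\tilde x}{1-\alpha\tilde x}=\frac{x}{1-\beta x}$, $\frac{1+\beta\tilde x}{1-\alpha\tilde x}=\frac{1+\alpha x}{1-\beta x}$; together with $|\nu/\mu|+|\lambda/\nu|=|\lambda/\mu|$ and $|\lambda'/\nu'|=|\lambda/\nu|$, substituting \eqref{singleG} collapses the summand to $\br{\frac{-x}{1-\beta x}}^{|\lambda/\mu|}$ times $(-1)^{|\lambda/\nu|}\,t^{A(\nu)}$, where $t:=\frac{1+\alpha x}{1-\beta x}$ and $A(\nu):=r(\nu'/\bar{\lambda'})-|\nu/\mu|-r(\mu/\bar\nu)$ — notably the exponents of $1-\beta x$ and of $1+\alpha x$ come out opposite, which is why a single $t$ survives. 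When $\mu=\lambda$ the only admissible $\nu$ is $\lambda$ itself, and $A(\lambda)=r(\lambda'/\bar{\lambda'})-r(\lambda/\bar\lambda)$ is the difference of the numbers of distinct nonzero parts of $\lambda'$ and of $\lambda$; since both count the corners (removable cells) of the Young diagram of $\lambda$, $A(\lambda)=0$ and the matrix element is $1$, as required.

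The remaining, and essentially the only substantial, step is to show that for $\mu\subsetneq\lambda$ one has $\sum_{\nu}(-1)^{|\lambda/\nu|}t^{A(\nu)}=0$ identically in $t$; equivalently, for each integer $e$ the admissible $\nu$'s with $A(\nu)=e$ are equinumerous according to the parity of $|\lambda/\nu|$. I intend to exhibit a fixed‑point‑free sign‑reversing involution $\iota$ on the set of admissible $\nu$'s which changes $|\nu|$ by exactly one and preserves $A(\nu)$: one distinguishes a canonical cell of $\lambda/\mu$ — for instance the first, in a fixed reading order of the diagram, at which a single box of $\nu$ may be toggled without breaking the horizontal‑strip condition on $\nu/\mu$ or the vertical‑strip condition on $\lambda/\nu$ — and checks by a short local computation that this toggle keeps the three ingredients $r(\nu'/\bar{\lambda'})$, $|\nu/\mu|$, $r(\mu/\bar\nu)$ in balance. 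The hypothesis $\mu\subsetneq\lambda$ is exactly what guarantees such a cell exists, so $\iota$ has no fixed point and the sum vanishes. This is where all the difficulty lies, since $A(\nu)$ intertwines a row count on $\nu'/\bar{\lambda'}$, the size of $\nu/\mu$, and a row count on $\mu/\bar\nu$, and one must choose the cell so that the toggle is admissibility‑preserving, involutive and $A$‑neutral at once; the small cases $\lambda\in\{(1),(2),(1,1)\}$, $\mu=\emptyset$ already display the pairing and confirm the cancellation. Alternatively this identity can be read off from the duality relations for skew canonical Grothendieck polynomials. Finally, the generalised form of \eqref{inversionGroth} carrying a variable $z_j$ on the $j$-th vertical line follows verbatim from the same computation, using the generalised branching rules wherever the relevant difference property of Section~\ref{generalisedpolynomials} is available.
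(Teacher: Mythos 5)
Your reduction is correct as far as it goes: identifying $\ket{\sigma^c}=\ket{(\sigma')^r}$, writing $\bra{\mu^r}T(-x)\widetilde T\br{\tfrac{x}{1+(\alpha-\beta)x}}\ket{\lambda^r}=\sum_{\nu}G^{(\alpha,\beta)}_{\nu\sslash\mu}(-x)\,G^{(\alpha,\beta)}_{\lambda'\sslash\nu'}\br{\tfrac{x}{1+(\alpha-\beta)x}}$ over $\nu$ with $\nu/\mu$ a horizontal strip and $\lambda/\nu$ a vertical strip, and collapsing the summand via \eqref{singleG} to $\br{\tfrac{-x}{1-\beta x}}^{|\lambda/\mu|}(-1)^{|\lambda/\nu|}t^{A(\nu)}$ with $t=\tfrac{1+\alpha x}{1-\beta x}$ all check out, as does the diagonal case ($r(\lambda'/\bar{\lambda'})$ and $r(\lambda/\bar\lambda)$ both count the corners of $\lambda$). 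But the proof has a genuine gap exactly where all the content of the inversion relation sits: the off-diagonal vanishing $\sum_\nu(-1)^{|\lambda/\nu|}t^{A(\nu)}=0$ for $\mu\subsetneq\lambda$ is only asserted. You do not construct the claimed sign-reversing involution, do not show that a ``canonical cell'' can be chosen consistently on both sides of the toggle (needed for involutivity), and do not carry out the ``short local computation'' that $A(\nu)=r(\nu'/\bar{\lambda'})-|\nu/\mu|-r(\mu/\bar\nu)$ is preserved --- which is delicate, since both $r(\nu'/\bar{\lambda'})$ and $r(\mu/\bar\nu)$ can jump when a single box is added or removed. Note also that a cell chosen independently of $\nu$ cannot work: for $\lambda=(2)$, $\mu=\emptyset$ the admissible $\nu$ are $(1)$ and $(2)$, so the toggled cell is forced to be $(1,2)$, while toggling $(1,1)$ either leaves admissibility or does not produce a partition; hence the choice must depend on $\nu$, and stability of that choice under the toggle is a real constraint you have not addressed. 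The fallback ``read off from duality relations for skew canonical Grothendieck polynomials'' is likewise unsubstantiated, so as written the argument proves a correct reduction but not the proposition.

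For comparison, the paper avoids this global combinatorial identity entirely by working locally on the lattice: for a single vertical line, the two possible intermediate states of the fermionic edge between the $T(-x)$ row and the $\widetilde T\br{\tfrac{x}{1+(\alpha-\beta)x}}$ row cancel unless the bottom and top labels agree, in which case the weight is $1$ (a two-term computation with the weights \eqref{canGboltz} and \eqref{Gboltz} after the substitutions), and then one inducts on the number of columns after sliding the free left boundary past the first site. If you want to keep your matrix-element formulation, the cleanest repair is to perform the cancellation column by column in this way (equivalently, prove the vanishing by induction on the number of columns of $\lambda$), rather than attempting a global $A$-preserving involution on the set of intermediate partitions $\nu$.
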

\begin{proof}
We shall prove the proposition for transfer matrices of size $1$ and then apply induction to the size of the transfer matrices. Assign weights of $T\br{-x}$  for the vertex at the bottom and the weights of $\widetilde{T}\left( \frac{x}{1+(\alpha-\beta)x}\right)$ for the other vertex. Write $z$ for the vertical spectral parameter.

Observe that when $b=d$, there is a unique configuration with total weight $1$. Now assume $b\neq d$

\begin{align*}
\begin{tikzpicture}[scale=0.6,baseline=7pt]
\draw[bosonic,Gcol] (0,1) node[left,black]{$\ss d-b$}--(2,1) node[right,black] {$0$};
\draw[fermionic,Gcol] (0,0) node[left,black]{$ 0$}--(2,0) node[right,black] {$0$};
\draw[bosonic] (1,-1) node[below,black] {$b$}--(1,0.5) node[right,black]{$\ss b$}-- (1,2) node[above,black]{$d$};
\end{tikzpicture}+
\begin{tikzpicture}[scale=0.6,baseline=7pt]
\draw[bosonic,Gcol] (0,1) node[left,black]{$\ss d-b-1$}--(2,1) node[right,black] {$0$};
\draw[fermionic,Gcol] (0,0) node[left,black]{$ 1$}--(2,0) node[right,black] {$0$};
\draw[bosonic] (1,-1) node[below,black] {$b$}--(1,0.5) node[right,black]{$\ss b+1$}-- (1,2) node[above,black]{$d$};
\end{tikzpicture}
\end{align*}
When $b>0$,
\begin{align*}
=&\text{ }\br{\frac{1-\beta \br{\frac{x}{z}}}{1+\alpha \br{\frac{x}{z}}}}\br{\frac{{\frac{x}{z}}}{1-\beta \br{\frac{x}{z}}}}^{d-b}\br{\frac{1+\alpha \br{\frac{x}{z}}}{1-\beta \br{\frac{x}{z}}}}+\\
&\text{ }\br{\frac{-{\frac{x}{z}}}{1+\alpha \br{\frac{x}{z}}}}\br{\frac{{\frac{x}{z}}}{1-\beta \br{\frac{x}{z}}}}^{d-b-1}\br{\frac{1+\alpha \br{\frac{x}{z}}}{1-\beta \br{\frac{x}{z}}}}\\
=&\text{ }0
\end{align*}

When $b=0$,
\begin{align*}
=&\text{ }\br{\frac{{\frac{x}{z}}}{1-\beta \br{\frac{x}{z}}}}^{d}+ \br{\frac{-{\frac{x}{z}}}{1+\alpha \br{\frac{x}{z}}}}\br{\frac{{\frac{x}{z}}}{1-\beta \br{\frac{x}{z}}}}^{d-1}\br{\frac{1+\alpha \br{\frac{x}{z}}}{1-\beta \br{\frac{x}{z}}}}\\
=&\text{ }0
\end{align*}

In order to apply the induction argument, we need to show that the transfer matrix of size $n+1$ can be written as a multiple of the transfer matrix of size $n$. Consider a transfer matrix of size $n+1$ where top and bottom labels are fixed.

\[
\begin{tikzpicture}[scale=0.6,baseline=-2pt]
\draw[fermionic,Gcol] (0,0) node[left,black]{$*$} --(5,0)node[right,black]{$\ss 0$};
\draw[bosonic,Gcol]  (0,1)node[left,black]{$*$}--(5,1)node[right,black]{$\ss 0$};
\foreach\x in {1,2,3,4}{
\draw[bosonic] (\x,-1)--(\x,2);
\node at (\x,2.2) {$\ss u_{\x}$};
\node at (\x,-1.3) {$\ss v_{\x}$};
};
\end{tikzpicture}
\]
\[
\begin{tikzpicture}[scale=0.6,baseline=-2pt]
\draw[bosonic](-1,-1)node[below]{$\ss v_1$}--(-1,2)node[above]{$\ss u_1$};
\draw[fermionic,Gcol] (-2,0)node[left,black]{$*$}--(0,0) ;
\draw[bosonic,Gcol] (-2,1)node[left,black]{$*$}--(0,1);
\draw[fermionic,Gcol] (1,0)--(5,0)node[right,black]{$\ss 0$};
\draw[bosonic,Gcol]  (1,1)--(5,1)node[right,black]{$\ss 0$};
\foreach\x in {2,3,4}{
\draw[bosonic] (\x,-1) node[below]{$\ss u_{\x}$}--(\x,2)node[above]{$\ss v_{\x}$};
};
\node at (7,0.5) {$=$};
\end{tikzpicture}
\begin{tikzpicture}[scale=0.6,baseline=-2pt]
\draw[bosonic](-1,-1)node[below]{$\ss v_1$}--(-1,2)node[above]{$\ss u_1$};
\draw[fermionic,Gcol] (-2,0)--(0,0) ;
\draw[bosonic,Gcol] (-2,1)--(0,1);
\draw[fermionic,Gcol] (1,0)node[left,black]{$*$}--(5,0)node[right,black]{$\ss 0$};
\draw[bosonic,Gcol]  (1,1)node[left,black]{$*$}--(5,1)node[right,black]{$\ss 0$};
\foreach\x in {2,3,4}{
\draw[bosonic] (\x,-1) node[below]{$\ss u_{\x}$}--(\x,2)node[above]{$\ss v_{\x}$};
};
\end{tikzpicture}
\]

Observe that when the left most boundary is fixed, then the contribution from the first site is fixed. Therefore, we can move the free boundary condition across the physical line at site $1$. We then apply induction.
\end{proof}

Define $\widetilde{T}^{*}(x)\in \mathrm {End}(V^{c})$ as the adjoint of $\widetilde{T}$(x):
\[
\bra{\lambda^{c}}\widetilde{T}^{*}(x)\ket{\mu^{c}}=\bra{\mu^{c}}\widetilde{T}(x)\ket{\lambda^{c}}.
\]

Then the inversion relation between $T^{*}$ and $\widetilde{T}^{*}$ follows from the definition of $\widetilde{T}^*$ and the inversion relation of $T$ and $\widetilde{T}$.
\begin{align}
\label{inversionGrothT*}
T^{*}\br{-x}\widetilde{T}^{*}\br{\frac{x}{1+(\alpha-\beta)x}}=1
\end{align}

 In the case of $\g$, such an inversion relation does not exist for general $\alpha$ and $\beta$. But there is an inversion relation in the case where $\alpha=1$ and $\beta=0$. Since we are concerned with $g^{(0,1)}_{\lambda}$, it is convenient to specialize the Boltzmann weights from the column model of $\g$.
\begin{align*}
\label{}
{w}^{}_{(x,z)}
\left(
\begin{gathered}
\begin{tikzpicture}[scale=0.4,baseline=-2pt]
\draw[arrow=0.25,bosonic,gccol] (-1,0) node[left,black] {$a$} -- (1,0) node[right,black] {$c$};
\draw[arrow=0.25,bosonic] (0,-1) node[below,black] {$b$} -- (0,1) node[above,black] {$d$};
\end{tikzpicture}
\end{gathered}
\right)
=w_{\left(\frac{x}{z}\right)}(a,b;c,d)=\br{\frac{x}{z}}^{\min(a,d)}
\end{align*}

\begin{prop}
\label{inversionDualGroth}
The transfer matrices of $g^{(1,0)}_{\lambda}$ from the row model and transfer matrices of $g^{(0,1)}_{\lambda}$ from the column model satisfy the following relation:
\[
  \jt\br{-x} \tilde{t}\br{x}=1
\qquad
\left(
\begin{tikzpicture}[scale=0.6,baseline=-2pt]
\draw[fermionic,gcol] (0,0) node[left,black]{$*$} --(7,0)node[right,black]{$\ss 0$};
\draw[bosonic,gccol]  (0,1) node[left,black]{$*$}--(7,1)node[right,black]{$\ss 0$};
\draw[bosonic] (6,-1)--(6,2);
\foreach\x in {1,2,3,4}{
\draw[bosonic] (\x,-1)--(\x,2);
\node at (\x,2.2) {$\ss u_{\x}$};
\node at (\x,-1.3) {$\ss v_{\x}$};
};
\node at (-2.5,1) {$\ss {\tilde{t}\br{x}} $};
\node at (-2.5,0) {$\ss {\jt\br{-x}} $};
\node at (5,2.2) {$\ss \dots \dots$};
\node at (5,-1.2) {$\ss \dots  \dots$};
\end{tikzpicture}
\right)
\]
\end{prop}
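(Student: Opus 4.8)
The plan is to adapt the proof of Proposition~\ref{inversionGrothprop}. As there, I would first note that for fixed physical states only finitely many vertical edges are nonzero, so it is enough to establish the relation for the composite of two transfer matrices of finite width and then induct on the width; this reduces the whole statement to a single ``width-one'' local computation together with the same boundary-sliding argument used in Proposition~\ref{inversionGrothprop}.

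For the width-one case I would place a single $\jt(-x)$-vertex (fermionic auxiliary edge, with the weights of \eqref{jtiles} evaluated at $-x$) below a single $\tilde t(x)$-vertex (bosonic auxiliary edge, with weights $w_{x}(a,b;c,d)=x^{\min(a,d)}$, as recalled just before the proposition), fix the right ends of both auxiliary lines to $0$ and leave the left ends free, and compute the resulting partition function as a function of the bottom state $b$ and the top state $d$ of the shared vertical line. Conservation at the lower vertex forces the internal vertical edge to be $b+\ell$, where $\ell\in\{0,1\}$ is the left fermionic state; conservation at the upper vertex then forces its left bosonic state to be $d-b-\ell$, which is automatically $\le d$, so that vertex contributes exactly $x^{\,d-b-\ell}$. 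Hence only two configurations occur: one with $\ell=0$ and total weight $x^{\,d-b}$ (present when $d\ge b$), and one with $\ell=1$ and total weight $(-x)\cdot x^{\,d-b-1}$ (present when $d\ge b+1$). Therefore the partition function is $0$ when $d<b$, equals $1$ when $d=b$, and vanishes by cancellation when $d>b$; that is, the width-one composite is the identity.

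For the inductive step I would argue exactly as in the proof of Proposition~\ref{inversionGrothprop}: in the width-$(n+1)$ diagram the left ends of both auxiliary lines are free, and once this left boundary is fixed the configuration at the first column is pinned down, so the free-boundary condition may be transported across the first vertical line; combined with the width-one computation this writes the width-$(n+1)$ composite as the first-column factor (equal to $1$) times the width-$n$ composite, and the induction closes. The passage to infinitely many vertical lines is then routine, as in the definition of the transfer matrices.

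I expect the genuinely delicate point to be the inductive/boundary-sliding step rather than the local computation, just as in the companion proposition: because the two auxiliary lines are of different types (one fermionic, one bosonic), one must check with some care that detaching the first column with the left boundary free really leaves the remainder of the diagram intact, and that the tiles which newly become available once the internal fermionic edge is allowed to be $1$ (such as the last tile of \eqref{jtiles}) do not spoil the reduction. The local cancellation itself is a clean two-term identity, but it is worth emphasizing that the substitution $x\mapsto -x$ coming from $\jt(-x)$ is precisely what supplies the minus sign that makes the two surviving terms cancel.
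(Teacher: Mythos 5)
Your overall strategy coincides with the paper's (reduce to a width-one cancellation, then peel off the first column with the left boundary free), and your width-one computation is correct: with the right ends fixed to $0$ the only configurations are $\ell=0$ with weight $x^{\,d-b}$ and $\ell=1$ with weight $(-x)\,x^{\,d-b-1}$, giving $\delta_{b,d}$.

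The gap is in the inductive step, which you assert rather than prove, and whose mechanism is misstated. Fixing the free left boundary does \emph{not} pin down the configuration of the first column: the conserved charge may exit the column through either the fermionic or the bosonic right edge, so several configurations with the same left edges survive, and the right edges of the first column are summed over (they are the left edges of the remaining block), not fixed to $0$. Hence the ``first-column factor'' is not $1$, and the width-one identity cannot be applied verbatim to it. What the paper actually uses is that for $v_1>0$ the column weight depends only on its left and vertical edges -- in \eqref{jtiles} the tiles $(0,b;0,b)$ and $(0,b;1,b-1)$ both have weight $1$, the tiles $(1,b;0,b+1)$ and $(1,b;1,b)$ both have weight $x$, and the $\tilde t$ weight $x^{\min(a,d)}$ never sees the right edge -- so the column weight is insensitive to how the charge exits, and only then can the free boundary be slid. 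This is precisely what breaks at $v_1=0$, because of the exceptional tile of \eqref{jtiles} with weight $x+1$ (i.e.\ $1-x$ after $x\mapsto-x$): the column weight then genuinely depends on the right fermionic edge. The core of the paper's proof is the check that the two $v_1=0$ configurations with right fermionic edge $0$ sum to $\br{1-\frac{x}{z}}\br{\frac{x}{z}}^{u_1}$, matching the single configuration with right fermionic edge $1$, so the slid boundary again produces a factor independent of the exit channel; that factor collapses to $\delta_{u_1,v_1}$ only after the induction hypothesis forces zero net charge across the intermediate boundary. You flag exactly this point as delicate but leave it unverified, and your stated reduction (``configuration pinned down'', ``first-column factor equal to $1$'') would not survive it as written.
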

\begin{proof}
The proof is similar to \cref{inversionGrothprop}. We shall prove the statement for transfer matrices of size $1$. When $b=d$, there is a unique configuration with weight $1$ and when $b\neq d$, we have two configurations which add upto $0$.
\[
\begin{tikzpicture}[scale=0.6,baseline=7pt]
\draw[fermionic,gcol] (0,0) node[left,black]{$\ss 0$}--(2,0) node[right,black] {$\ss 0$};
\draw[bosonic,gccol] (0,1) node[left,black]{$\ss d-b$}--(2,1) node[right,black] {$\ss 0$};
\draw[bosonic] (1,-1) node[below,black] {$\ss b$}--(1,0.5) node[right,black]{$\ss b$}-- (1,2) node[above,black]{$\ss d$};
\end{tikzpicture}+
\begin{tikzpicture}[scale=0.6,baseline=7pt]
\draw[fermionic,gcol] (0,0) node[left,black]{$\ss 1$}--(2,0) node[right,black] {$\ss 0$};
\draw[bosonic,gccol] (0,1) node[left,black]{$\ss d-1-b$}--(2,1) node[right,black] {$\ss 0$};
\draw[bosonic] (1,-1) node[below,black] {$\ss b$}--(1,0.5) node[right,black]{$\ss b+1$}-- (1,2) node[above,black]{$\ss d$};
\end{tikzpicture}
\qquad
\begin{aligned}
\text{ }\br{\dfrac{x}{z}}^{d-b}+\br{\dfrac{-x}{z}}\br{\dfrac{x}{z}}^{d-1-b}=0
\end{aligned}
\]
Assume $b>0$, then for any fixed $v,u,b,d$ the Boltzmann weights are fixed irrespective of the right boundary. 
\begin{align*}
\begin{tikzpicture}[scale=0.6,baseline=7pt]
\draw[fermionic,gcol] (0,0) node[left,black]{$\ss v$}--(2,0) node[right,black] {$*$};
\draw[bosonic,gccol] (0,1) node[left,black]{$\ss u$}--(2,1) node[right,black] {$*$};
\draw[bosonic] (1,-1) node[below,black] {$\ss b$}--(1,0.5) node[right,black]{}-- (1,2) node[above,black]{$\ss d$};
\end{tikzpicture}
\end{align*}

We can then simply slide the free boundary condition. Special care needs to be taken when $b=0$. When $b=0$, we have the following configurations:
\begin{align*}
\begin{tikzpicture}[scale=0.6,baseline=7pt]
\draw[fermionic,gcol] (0,0) node[left,black]{$\ss 0$}--(2,0) node[right,black] {$\ss 0$};
\draw[bosonic,gccol] (0,1) node[left,black]{$\ss  d+c+1$}--(2,1) node[right,black] {$\ss c+1$};
\draw[bosonic] (1,-1) node[below,black] {$\ss 0$}--(1,0.5) node[right,black]{$\ss 0$}-- (1,2) node[above,black]{$d$};
\end{tikzpicture}= \br{\frac{x}{z}}^{d}
\quad
\begin{tikzpicture}[scale=0.6,baseline=7pt]
\draw[fermionic,gcol] (0,0) node[left,black]{$\ss 1$}--(2,0) node[right,black] {$\ss 0$};
\draw[bosonic,gccol] (0,1) node[left,black]{$\ss  d+c$}--(2,1) node[right,black] {$\ss c+1$};
\draw[bosonic] (1,-1) node[below,black] {$\ss 0$}--(1,0.5) node[right,black]{$\ss 1$}-- (1,2) node[above,black]{$\ss d$};
\end{tikzpicture}=\br{-\frac{x}{z}}^{d+1}
\quad
\begin{tikzpicture}[scale=0.6,baseline=7pt]
\draw[fermionic,gcol] (0,0) node[left,black]{$\ss 1$}--(2,0) node[right,black] {$\ss 1$};
\draw[bosonic,gccol] (0,1) node[left,black]{$\ss  d+c$}--(2,1) node[right,black] {$\ss c$};
\draw[bosonic] (1,-1) node[below,black] {$\ss 0$}--(1,0.5) node[right,black]{$\ss 0$}-- (1,2) node[above,black]{$\ss d$};
\end{tikzpicture}={ \br{1-\frac{x}{z}}\br{\frac{x}{z}}^{d}}
\end{align*}

Observe that in the case of the first configuration, there is a unique configuration suggesting that the right boundary is not free. But we can get away with it by adding the weight of first configuration with the weight of the second configuration. Then we obtain $\br{1-\frac{x}{z}}\br{\frac{x}{z}}^{d}$ times the transfer matrix of size $n$.
\end{proof}
\section{Cauchy identities}
\label{cauchyidentities}

In this section, we shall prove Cauchy identities involving $G^{(-\alpha,-\beta)}_{\lambda}$ and $\g$. We shall prove these identities by using the commutation relations between various combinations of the transfer matrices.

Before we prove the \emph{Cauchy identity}, we shall derive the commutation relation between the appropriate transfer matrices:
\begin{prop}\label{prop:commcauchy}
Let $T^*$ be the transfer matrix for the row model of 
$G^{(-\alpha,-\beta)}_{\lambda}$,
and $t$ the transfer matrix for the row model of $\g$.
Then
\begin{equation}
\label{cauchytransferrel}
{t}(y) {T^{*}}(x)= \frac{1}{1-xy} {T^{*}}(x) {t}(y)
\end{equation}
\end{prop}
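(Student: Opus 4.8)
The strategy is to obtain \eqref{cauchytransferrel} from an $\mathrm{RLL}$-type (or rather $RTT$-type) relation between the transfer matrices of the two models, exactly as the commutation $T(x)T(y)=T(y)T(x)$ and the inversion relations of Section~\ref{dualitybetweenmodels} were proved. Concretely, I would first identify the auxiliary-line $R$-matrix intertwining a fermionic line carrying the $L^*$-weights of $G^{(-\alpha,-\beta)}_\lambda$ with a bosonic line carrying the $l$-weights of $\g$. The relevant local statement is an identity in $\mathrm{End}(F_i\otimes W_j\otimes V_n)$ of the form
\begin{equation*}
\check R_{ij}(x,y)\,L^{*,(-\alpha,-\beta)}_i(x)\,l_j(y)=l_j(y)\,L^{*,(-\alpha,-\beta)}_i(x)\,\check R_{ij}(x,y),
\end{equation*}
where $\check R$ acts on the two auxiliary spaces $F_i\otimes W_j$ (one fermionic $\CC^2$, one bosonic $W$). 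I would look for $\check R$ by solving this fixed-boundary local equation directly on small states; given the appearance of the scalar $\frac1{1-xy}$, I expect $\check R$ to be essentially triangular, with the net effect on the ``vacuum-to-vacuum'' channel being multiplication by $\frac{1}{1-xy}$ (or rather $\frac{1}{1-xy/z}$ before setting the vertical parameter to $1$), and all other channels conserving particle number so that the boundary sum telescopes.

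Granting such a $\check R$, the proof of \eqref{cauchytransferrel} is then the standard train-argument used already in this paper for $T(x)T(y)=T(y)T(x)$: stack the row of $t(y)$ above the row of $T^*(x)$, multiply on the left by $\check R$, push it through the lattice one vertical line at a time using the local relation a finite number of times, and read off what survives at the far-right boundary. The right boundaries here are $0$ for $t$ and $1$ for $T^*$ (from the definitions \eqref{dualtransferG}–\eqref{dualtransferj} and \eqref{rowtransferg}); after commuting the two horizontal lines past all vertices, the remaining ``corner'' $\check R$-weight on these fixed right states must be computed, and it should be exactly $\frac{1}{1-xy}$ (the factor that does not cancel), whereas the column-by-column propagation contributes $1$ at every step by the telescoping/eigenvector property. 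This produces $t(y)T^*(x)=\frac{1}{1-xy}T^*(x)t(y)$ after reinstating the left free boundary $\bra*$ on both lines, exactly as in Propositions~\ref{inversionGrothprop} and~\ref{inversionDualGroth}.

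The main obstacle is producing and verifying the mixed fermionic–bosonic $\check R$-matrix and its $RLL$ relation: unlike the cases treated so far, the two auxiliary spaces are of different types ($F\cong\CC^2$ versus $W$ infinite-dimensional), so the intertwiner is not one of the $R$-matrices already written down, and checking the local identity requires care with the infinitely many bosonic states and with the $\delta_{0,m}$ terms in $L^*$. I would handle this by reducing to the fixed-right-boundary ``partition function equals constant'' formulation used throughout Section~\ref{rowmodels}, which collapses the infinite sums to finite ones; alternatively, one can bypass $\check R$ entirely and prove \eqref{cauchytransferrel} by a direct combinatorial bijection on pairs of lattice configurations, matching a configuration of the stacked $t$-over-$T^*$ rows with one of $T^*$-over-$t$ rows together with a geometric series in $xy$ accounting for the extra boundary freedom — but I expect the $RLL$ route, once $\check R$ is found, to be cleaner and to parallel the rest of the paper. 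A sensible fallback if the appendix already contains this mixed $RLL$ relation is simply to invoke it and run the train argument.
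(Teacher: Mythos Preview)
Your proposal is correct and takes essentially the same approach as the paper: it introduces the explicit mixed fermionic--bosonic intertwiner $\mathfrak{R}\in\mathrm{End}(F\otimes W)$, verifies the $\mathrm{RLL}$ relation with $L^*$ and $l$ (in Appendix~\ref{rllforcauchy}), checks the eigenvector property on the free left boundary, and runs the train argument. One small correction: the surviving $\mathfrak{R}$-entry at the right boundary (states $1$ for $T^*$, $0$ for $t$) is $(1-xy)$, not $\tfrac{1}{1-xy}$, so the relation first comes out as $T^*(x)t(y)=(1-xy)\,t(y)T^*(x)$ and is then rewritten in the form~\eqref{cauchytransferrel}.
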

\begin{proof}
The proof is similar to the way we proved that the transfer matrices commute. 
The $\mathfrak{R}(x,y)$ matrix
\begin{align}
\mathfrak{R}(x,y)\in 
\text{End} (F\otimes W),\quad
\mathfrak{R}^{k,l}_{i,j}=
\begin{gathered}
\begin{tikzpicture}[baseline=-3pt,scale=0.9]
\draw[fermionic,arrow=0.35,Gcol] (-2,0.5) node[left,black] {$k$} -- (-1,-0.5) node[below] {};
\node[text width=1cm] at (-0.4,-0.55){$j$};
\draw[bosonic,arrow=0.35,gcol] (-2,-0.5) node[left,black] {$i$} -- (-1,0.5) node[above] {};
\node[text width=1cm] at (-0.4,0.5){$l$};
\end{tikzpicture}
\end{gathered}=
\begin{cases}
1-x y & j=k=1,i=l=0\\
xy & k=l=0,i=j=1\\
1-x \beta & k=1\\
x\beta &k=0\\
1& i=k=l=j=0\\
\end{cases}
\end{align}
where $k,j\in \{ 0,1\}$ and $i,l \in \ZZ_{\geq 0}$, together with the $L^{*}(x)$ matrix of $G^{(-\alpha,-\beta)}_{\lambda}(x)$ and the $l(y)$ matrix of $\g(y)$ satisfy the \rm{RLL} relation:
(see appendix~\ref{rllforcauchy})
\begin{equation}
\label{rLl}
\mathfrak{R}^{}_{ij}(x,y)L^{*}(x)l(y)=l(y)L^{*}(x)\mathfrak{R}(x,y) \in \text{End}(F\otimes W \otimes V)
\quad
\left(
\begin{tikzpicture}[scale=0.6,baseline=7pt]
\draw[bosonic](1,-1)--(1,2);
\draw[fermionic,Gcol,rounded corners,arrow=0.15] (-1,1) node[left,black]{$\ss x$}--(0,0)--(2,0);
\draw[bosonic,gcol,rounded corners,arrow=0.15] (-1,0)  node[left,black]{$\ss y$}--(0,1)--(2,1);
\end{tikzpicture}
=
\begin{tikzpicture}[scale=0.6,baseline=7pt]
\draw[bosonic] (1,2)--(1,-1);
\draw[bosonic,gcol,arrow=0.15,rounded corners] (0,0) node[left,black]{$\ss y$}--(2,0)--(3,1);
\draw[fermionic,Gcol,arrow=0.15,rounded corners] (0,1)node[left,black]{$\ss x$}--(2,1)--(3,0);
\end{tikzpicture}
\right)
\end{equation}

Observe that $\mathfrak{R}$ satisfies the eigenvector property.
\[
\begin{tikzpicture}[scale=0.8,baseline=5pt]
\draw[fermionic,Gcol] (0,1) node[left,black]{$*$}--(1,0);
\draw[bosonic,gcol] (0,0) node[left,black]{$*$}--(1,1);
\node at (-0.8,0) {$\ss y$};
\node at (-0.8,1) {$\ss x$};
\end{tikzpicture}
=
\begin{tikzpicture}[scale=0.8,baseline=5pt]
\draw[fermionic,Gcol] (0,1) node[left,black]{$*$}--(1,1);
\draw[bosonic,gcol] (0,0) node[left,black]{$*$}--(1,0);
\node at (-0.8,0) {$\ss y$};
\node at (-0.8,1) {$\ss x$};
\end{tikzpicture}
\]
Then after multiplying the $\mathfrak{R}$-matrix to $T^*(x)t(y)$ and repeated application of the $\rm {RLL}$ relation we get the following equation:
\[
\begin{tikzpicture}[scale=0.6,baseline=3pt]
\draw[fermionic,Gcol,rounded corners] (-1,1)node[black,left]{$*$}--(0,0) node[below,black] {}--(7,0) node[black,right]{$\ss 1$};
\draw[bosonic,gcol,rounded corners](-1,0)node[black,left]{$*$}-- (0,1)node[above,black] {}--(7,1)  node[black,right]{$\ss 0$};
\foreach \x in {1,2,3,4,5}{
\draw[bosonic] (\x,-1)--(\x,2);
\node at (\x,2.2) {$\ss i_{\x}$};
\node at (\x,-1.2) {$\ss k_{\x}$};
}
\draw[bosonic] (6,-1)--(6,2);
\node at (6,2.2) {$\ss \dots$};
\node at (6,-1.2) {$\ss \dots$};
\node at (-2,1) {$\ss x$};
\node at (-2,0) {$\ss y$};
\end{tikzpicture}
=
\begin{tikzpicture}[scale=0.6,baseline=3pt]
\draw[bosonic,gcol,rounded corners] (0,0) node[left,black]{$*$}--(7,0) node[below,black]{$\ss 0$}--(8,1) node[right,black]{$\ss 0$};
\draw[fermionic,Gcol,rounded corners] (0,1)node[left,black]{$*$}--(7,1)node[above,black]{$\ss 1$}--(8,0) node[right,black]{$\ss 1$};
\foreach \x in {1,2,3,4,5}{
\draw[bosonic] (\x,-1)--(\x,2);
\node at (\x,2.2)  {$\ss i_{\x}$};
\node at (\x,-1.2) {$\ss k_{\x}$};
}
\draw[bosonic] (6,-1)--(6,2);
\node at (6,2.2)  {$\ss \dots$};
\node at (6,-1.2) {$\ss \dots$};
\node at (-1,0) {$\ss y$};
\node at (-1,1) {$\ss x$};
\end{tikzpicture}
\]
Finally, the entry corresponding to the cross at the end of the right hand  side is $(1-xy)$.
\[
\begin{tikzpicture}[scale=0.8,baseline=5pt]
\draw[fermionic,Gcol] (0,1) node[left,black]{$\ss 1$} --(1,0)node[right,black]{$\ss 1$};
\draw[bosonic,gcol] (0,0) node[left,black]{$\ss 0$} --(1,1)node[right,black]{$\ss 0$};
\node at (-0.8,0) {$\ss y$};
\node at (-0.8,1) {$\ss x$};
\end{tikzpicture}
=(1-xy)
\begin{tikzpicture}[scale=0.8,baseline=5pt]
\draw[fermionic,Gcol] (0,1) node[left,black]{$\ss 1$}  --(1,1) node[right,black]{$\ss 1$};
\draw[bosonic,gcol] (0,0) node[left,black]{$\ss 0$} --(1,0)node[right,black]{$\ss 0$};
\node at (-0.8,0) {$\ss y$};
\node at (-0.8,1) {$\ss x$};
\end{tikzpicture}
\]

This implies the desired commutation relation:
\[
T^{*}(x)t(y)=(1-xy)t(y)T^*(x)
\]
\end{proof}

\begin{thm}
\label{cauchyidentity}
Canonical Grothendieck polynomials and their duals  satisfy the following Cauchy identity:
\begin{align}
\label{cauchyidentityeq}
    \sum_{\lambda}G^{(-\alpha,-\beta)}_{\lambda}(x_1,x_2,\dots,x_m) g^{(\alpha,\beta)}_{\lambda}(y_1,y_2,\dots,y_n)=\prod_{1\le i\le m,1\le j\le n} \dfrac{1}{1-x_i y_j}.
 \end{align}
\end{thm}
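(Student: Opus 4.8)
The plan is to read \eqref{cauchyidentityeq} as an instance of the general ``$\mathrm{RLL}\Rightarrow$ Cauchy'' mechanism advertised in the introduction, deducing it formally from the transfer-matrix formulas of \cref{thm:rowG} and \cref{thm:rowg} together with the commutation relation of \cref{prop:commcauchy}. Concretely, I would start from the two expressions
\[
G^{(-\alpha,-\beta)}_\lambda(x_1,\dots,x_m)=\bra{\lambda}\,T^*(x_m)\cdots T^*(x_1)\,\ket{0},
\qquad
g^{(\alpha,\beta)}_\lambda(y_1,\dots,y_n)=\bra{0}\,t(y_1)\cdots t(y_n)\,\ket{\lambda},
\]
where $T^*$ denotes the dual row transfer matrix attached to the weights with parameters $(-\alpha,-\beta)$ (formula \eqref{dJ}) and $t$ the row transfer matrix of $g^{(\alpha,\beta)}$ (formula \eqref{rowg}); crucially, both act on the same physical space $V^r$ and both use the \emph{row}-multiplicity encoding of partitions, so the vectors $\ket{\lambda}$ appearing on the two sides are literally identical.

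Next I would insert the resolution of the identity $\sum_\lambda \ket{\lambda}\bra{\lambda}=\mathrm{Id}_{V^r}$ (valid because the $\ket{\lambda}$ form a basis of $V^r$), which turns the left-hand side of \eqref{cauchyidentityeq} into a single vacuum-to-vacuum amplitude:
\[
\sum_\lambda G^{(-\alpha,-\beta)}_\lambda(x)\,g^{(\alpha,\beta)}_\lambda(y)
=\bra{0}\,t(y_1)\cdots t(y_n)\,T^*(x_m)\cdots T^*(x_1)\,\ket{0}.
\]
As throughout the paper, the infinite products defining the transfer matrices and the infinite sum over $\lambda$ are entrywise well-defined because the lattice configurations stabilise (to $0$) far to the right, so this manipulation is legitimate in $\widetilde\Lambda$. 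Then I would apply the commutation relation \eqref{cauchytransferrel}, $t(y)T^*(x)=\tfrac{1}{1-xy}T^*(x)t(y)$, a total of $mn$ times to move every $t(y_j)$ past every $T^*(x_i)$; each of the $mn$ elementary transpositions produces the scalar $\tfrac{1}{1-x_iy_j}$, so
\[
\bra{0}\,t(y_1)\cdots t(y_n)\,T^*(x_m)\cdots T^*(x_1)\,\ket{0}
=\prod_{1\le i\le m,\ 1\le j\le n}\frac{1}{1-x_iy_j}\;
\bra{0}\,T^*(x_m)\cdots T^*(x_1)\,t(y_1)\cdots t(y_n)\,\ket{0}.
\]

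Finally I would evaluate the residual amplitude $\bra{0}\,T^*(x_m)\cdots T^*(x_1)\,t(y_1)\cdots t(y_n)\,\ket{0}$. From the branching rule behind \eqref{rowg} one has $t(y)\ket{0}=\ket{0}$ (the empty partition is the only $\mu$ with $\bra{\mu}t(y)\ket{0}\neq 0$, and the corresponding weight is $1$), hence $t(y_1)\cdots t(y_n)\ket{0}=\ket{0}$; and then $\bra{0}\,T^*(x_m)\cdots T^*(x_1)\,\ket{0}=G^{(-\alpha,-\beta)}_\emptyset(x_1,\dots,x_m)=1$ by \eqref{dJ} applied to the empty partition. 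This yields exactly the right-hand side of \eqref{cauchyidentityeq}. I do not expect a genuine obstacle here: the mathematical content sits in \cref{prop:commcauchy} and the RLL relation \eqref{rLl} proved in the appendix, so the theorem is a formal corollary. The only points needing care are the justification of the infinite products and of inserting $\sum_\lambda\ket{\lambda}\bra{\lambda}$ in the completed setting --- handled exactly as for the commutation $T(x)T(y)=T(y)T(x)$ in \cref{rowmodels} --- and keeping the row-multiplicity encoding consistent on both factors so that the resolution of the identity applies. An entirely parallel argument based on the column models and an analogous RLL relation would give the second Cauchy identity \eqref{cauchyidentity2}.
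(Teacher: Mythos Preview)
Your proposal is correct and follows essentially the same route as the paper: define the vacuum amplitude $\bra{0}t(y_1)\cdots t(y_n)T^*(x_m)\cdots T^*(x_1)\ket{0}$, identify it with the left-hand side via \cref{thm:rowG}, \cref{thm:rowg} and insertion of partition states, then commute all $t$'s past all $T^*$'s using \cref{prop:commcauchy} and evaluate the residual amplitude as $1$. Your closing remark that \eqref{cauchyidentity2} would follow from a parallel column-model argument is plausible, but the paper instead derives it by combining the commutation relation just used with the inversion relation of \cref{inversionGrothprop}.
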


\begin{proof}
Let 
\[
\mathcal{G}(x_1,x_2,\dots, x_m,y_1,y_2,\dots,y_n)= \bra{0}{t}(y_1){t}(y_2)\cdots \cdots {T^{*}}(x_2) {T^{*}}(x_1)\ket{0}.
\] Then by \cref{thm:rowG} and \cref{thm:rowg} we get that,
\[
\mathcal{G}(x_1,x_2,\dots,x_m,y_1,y_2,\dots,y_n)=  \sum_{\lambda}G^{(-\alpha,-\beta)}_{\lambda}(x_1,x_2,\dots,x_m) g^{(\alpha,\beta)}_{\lambda}(y_1,y_2,\dots,y_n).
\]

By repeatedly applying the commutation relation of proposition~\ref{prop:commcauchy}, we obtain
\begin{align*}
    \mathcal{G}(x_1,x_2,\dots,x_m,y_1,y_2,\dots,y_n)&= \prod_{1\le i\le n,1\le j\le m} \dfrac{1}{1-x_i y_j} \bra{0}{T^{*}}(x_1) {T^{*}}(x_2)\cdots \cdots {t}(y_2){t}(y_1)\ket{0}\\
&=  \prod_{1\le i\le m,1\le j\le n}\dfrac{1}{1-x_i y_j}.
\end{align*}
\end{proof}

We can derive a skew version of the identity if we choose a different vector and covector. Let
\[
\mathcal{G}(x_1,x_2,\dots, x_m,y_1,y_2,\dots,y_n)= \bra{\mu}{t}(y_1){t}(y_2)\cdots \cdots {T^{*}}(x_2) {T^{*}}(x_1)\ket{\lambda},
\]
then using the same reasoning as in the above theorem we get the following identity:
\begin{multline*}
\sum_{\nu}G_{\nu\sslash \lambda}(x_1,x_2,\dots, x_m)g_{\nu/\lambda}(y_1,y_2,\dots,y_n)=\\
\prod_{1\le i\le n,1\le j\le m} \dfrac{1}{1-x_i y_j}
\sum_{\nu}G_{\mu\sslash \nu}(x_1,x_2,\dots, x_m)g_{\lambda/\nu}(y_1,y_2,\dots,y_n)
\end{multline*}

We can do the same for all the identities in this section but for simplicity we shall stick to the non-skew identities.

\begin{cor}
Generalised weak Grothendieck polynomials and their dual satisfy the following identity:
\begin{align}
\sum_{\lambda} J^{\alpha}_{\lambda}(x_1,\dots,x_m;z_1,z_2,\dots) j^{\alpha}_{\lambda} (y_1,\dots,y_n;\frac{1}{z_1},\frac{1}{z_2},\dots)= \prod_{1\le i \le m,1\le j\leq n} \dfrac{1}{1-x_iy_j}
\end{align}
\end{cor}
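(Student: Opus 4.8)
The plan is to treat this as the generalised, transposed, $\beta=0$ specialisation of \cref{cauchyidentity} and to rerun that proof with the vertical-line variables switched on.

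First I would assemble the transfer-matrix descriptions. By \cref{thm:rowG}, \cref{thm:j} and the generalisation carried out in \cref{generalisedpolynomials} — together with the bijection $\lambda\leftrightarrow\lambda'$, which exchanges the row and column encodings of the physical space — one can write
\begin{align*}
J^{\alpha}_{\lambda}(x_1,\dots,x_m;z)&=\bra{\phi_\lambda}\,\mathsf{T}^{*}(x_m)\cdots\mathsf{T}^{*}(x_1)\,\ket{\phi_\emptyset},\\
j^{\alpha}_{\lambda}(y_1,\dots,y_n;1/z)&=\bra{\phi_\emptyset}\,\mathsf{t}(y_1)\cdots\mathsf{t}(y_n)\,\ket{\phi_\lambda},
\end{align*}
where $\mathsf{T}^{*}$ is the dual transfer matrix of the row model for $G^{(-\alpha,0)}$ in which the $L^{*}$-weight at vertical line $k$ is evaluated at $x/z_k$; $\mathsf{t}$ is the transfer matrix of the weak dual model for $g^{(\alpha,0)}$ (the $\jl$-type model of \cref{vmodelforj}) in which the weight at vertical line $k$ is evaluated at $y\,z_k$; and $\ket{\phi_\lambda}$ denotes the basis vector attached to $\lambda$ in the common physical space, $\ket{\phi_\emptyset}$ the vacuum.

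The crucial input is the generalised analogue of \cref{prop:commcauchy}, namely
\[
\mathsf{t}(y)\,\mathsf{T}^{*}(x)=\frac{1}{1-xy}\,\mathsf{T}^{*}(x)\,\mathsf{t}(y).
\]
To obtain it I would specialise the $\mathrm{RLL}$ relation \eqref{rLl} to $\beta=0$ (taking, for $g^{(\alpha,0)}$, the $\jl$-model): the intertwiner $\mathfrak{R}(x,y)$ then has entries $1-xy$, $xy$, $1$, $0$, $1$, so it is a function of the \emph{product} $xy$ alone. Consequently the site-modulated operators still satisfy, at every vertical line $k$,
\[
\mathfrak{R}(xy)\,L^{*}(x/z_k)\,\jl(y\,z_k)=\jl(y\,z_k)\,L^{*}(x/z_k)\,\mathfrak{R}(xy),
\]
because $(x/z_k)(y\,z_k)=xy$ does not depend on $k$ — this is exactly the compatibility with the difference property discussed in \cref{generalisedpolynomials}, here used to absorb the two mutually inverse rescalings. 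With one and the same $\mathfrak{R}(xy)$ available at every site, the RLL-chasing argument in the proof of \cref{prop:commcauchy} goes through verbatim: multiply $\mathsf{T}^{*}(x)\mathsf{t}(y)$ on the left by $\mathfrak{R}(xy)$, push it through all the vertices, and use that the crossing left behind at the right (where the fixed boundaries are $1$ and $0$) carries the scalar $1-xy$, which is free of the $z_k$ because $\mathfrak{R}$ never meets the vertical lines.

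Finally I would assemble the identity as in the proof of \cref{cauchyidentity}: insert the resolution $\sum_\lambda\ket{\phi_\lambda}\bra{\phi_\lambda}=\mathrm{id}$ (valid on the whole physical space, which is spanned by partition states) to get
\[
\bra{\phi_\emptyset}\,\mathsf{t}(y_1)\cdots\mathsf{t}(y_n)\,\mathsf{T}^{*}(x_m)\cdots\mathsf{T}^{*}(x_1)\,\ket{\phi_\emptyset}=\sum_{\lambda}j^{\alpha}_{\lambda}(y;1/z)\,J^{\alpha}_{\lambda}(x;z);
\]
then commute every $\mathsf{t}(y_j)$ past every $\mathsf{T}^{*}(x_i)$ using the relation above, picking up $\prod_{1\le i\le m,1\le j\le n}\frac{1}{1-x_iy_j}$, and note that the fully reordered vacuum-to-vacuum matrix element equals $1$, since $\bra{\phi_\emptyset}\mathsf{T}^{*}(x)=\bra{\phi_\emptyset}$ and $\mathsf{t}(y)\ket{\phi_\emptyset}=\ket{\phi_\emptyset}$ (the unique admissible configuration is the empty one, of weight $1$). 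This yields the claimed identity for $G^{(-\alpha,0)}_{\lambda}(x;z)$ and $g^{(\alpha,0)}_{\lambda}(y;1/z)$; reindexing the sum by $\lambda\mapsto\lambda'$ and using $G^{(-\alpha,0)}_{\lambda'}=J^{\alpha}_{\lambda}$, $g^{(\alpha,0)}_{\lambda'}=j^{\alpha}_{\lambda}$ finishes the proof. The right-hand side is manifestly $z$-free, in accordance with the cancellation already visible at the level of $\mathfrak{R}$; this is the structural reason for pairing the vertical variables $z$ of one family with $1/z$ of the other.

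I expect the main obstacle to be organisational rather than deep: one must fix the correct $\beta=0$ model for $g^{(\alpha,0)}$ (the $\jl$-model of \cref{vmodelforj}, not the degenerate $\beta\to0$ limit of the $\g$ row model), check that the specialised $\mathrm{RLL}$ relation indeed holds for this pair of $L$-matrices with a product-only intertwiner, and keep the transposition $\lambda\leftrightarrow\lambda'$ together with the sign conventions ($-\alpha$ versus $\alpha$) consistent throughout.
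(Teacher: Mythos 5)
Your proposal is correct and takes essentially the same route as the paper: the paper's own proof likewise observes that at $\beta=0$ the nonconstant entries of $\mathfrak{R}$ are $xy$ and $1-xy$, so pairing the inhomogeneities $z_k$ with $1/z_k$ (i.e.\ weights at $x/z_k$ and $yz_k$) leaves $\mathfrak{R}$, and hence the commutation relation of \cref{prop:commcauchy}, unchanged, after which the argument of \cref{cauchyidentity} applies verbatim with the trivial relabeling $\lambda\mapsto\lambda'$. The only divergence is bookkeeping: the paper simply reuses the $\beta=0$ specialisation of \eqref{rLl} rather than re-verifying an RLL relation for the $\jl$-model pair, a check you rightly flag but which does not alter the method.
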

\begin{proof}
Observe that when $\beta=0$, the non constant entries of $\mathfrak{R}$ are $xy$ and $1-xy$. If we introduce the inhomogeneities as $xz$ and $\dfrac{y}{z}$, then the $\mathfrak{R}$ matrix remains the same and thereby giving the same commutation relation. Then the proof of the identity is same as the \cref{cauchyidentity}.
\end{proof}

We now prove the following Cauchy identity.
\begin{align}
\label{involcauchyeq}
    \sum_{\lambda}G^{(-\beta,-\alpha)}_{\lambda'}(x_1,x_2,\dots,x_m) g^{(\alpha,\beta)}_{\lambda}(y_1,y_2,\dots,y_n)=\prod_{1\le i \le m,1\le j\le n} {(1+x_i y_j)}
\end{align}

We can prove this identity by proving a commutation relation between $T^*$ and $\tilde{t}$. But we shall prove it using the inversion relation from \cref{inversionGroth}.

\begin{thm}
\label{involcauchy}
Canonical Grothendieck polynomials and their dual  satisfy the following Cauchy identity:
\begin{align*}
    \sum_{\lambda}G^{(-\beta,-\alpha)}_{\lambda'}(x_1,x_2,\dots,x_m) g^{(\alpha,\beta)}_{\lambda}(y_1,y_2,\dots,y_n)=\prod_{1\le i\le m,1\le j\le n} {(1+x_i y_j)}
\end{align*}
\end{thm}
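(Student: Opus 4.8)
The plan is to imitate the proof of Theorem~\ref{cauchyidentity}, replacing the row model for $G^{(-\alpha,-\beta)}$ by the column model for $G^{(-\beta,-\alpha)}$, and to let the inversion relation of Proposition~\ref{inversionGrothprop} do the reparametrization bookkeeping. First I would set
\[
\mathcal{H}(x,y)=\bra{0}\,t(y_1)\cdots t(y_n)\,\widetilde{T}^{*}(x_m)\cdots\widetilde{T}^{*}(x_1)\,\ket{0},
\]
where $t=t^{(\alpha,\beta)}$ is the transfer matrix of the row model for $g^{(\alpha,\beta)}_{\lambda}$ (Theorem~\ref{thm:rowg}) and $\widetilde{T}^{*}$ is the adjoint of the transfer matrix of the column model for $G^{(-\beta,-\alpha)}_{\lambda}$ (Theorem~\ref{thm:colG} at parameters $(-\beta,-\alpha)$). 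Inserting $\sum_{\lambda}\ket{\lambda}\bra{\lambda}=1$ between the two blocks and using that the column encoding $\ket{(\lambda')^{c}}$ of the transpose $\lambda'$ is literally the same basis vector of $V$ as the row encoding $\ket{\lambda}$ of $\lambda$, one reads off $\bra{\lambda}\widetilde{T}^{*}(x_m)\cdots\widetilde{T}^{*}(x_1)\ket{0}=G^{(-\beta,-\alpha)}_{\lambda'}(x)$ and $\bra{0}t(y_1)\cdots t(y_n)\ket{\lambda}=g^{(\alpha,\beta)}_{\lambda}(y)$, whence $\mathcal{H}(x,y)=\sum_{\lambda}G^{(-\beta,-\alpha)}_{\lambda'}(x)\,g^{(\alpha,\beta)}_{\lambda}(y)$ is the left-hand side. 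It remains to show $\mathcal{H}(x,y)=\prod_{i,j}(1+x_iy_j)$.

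The crux is the commutation relation
\[
t^{(\alpha,\beta)}(y)\,\widetilde{T}^{*}(x)=(1+xy)\,\widetilde{T}^{*}(x)\,t^{(\alpha,\beta)}(y),
\]
which I would derive from the inversion relation. At parameters $(-\beta,-\alpha)$, for which $(-\beta)-(-\alpha)=\alpha-\beta$, Proposition~\ref{inversionGrothprop} and its adjoint form~\eqref{inversionGrothT*} give $\widetilde{T}^{*}(x)=\bigl(T^{(-\beta,-\alpha),*}(-w)\bigr)^{-1}$ with $w=x/(1-(\alpha-\beta)x)$, $T^{(-\beta,-\alpha),*}$ being the dual row transfer matrix for $G^{(-\beta,-\alpha)}$; a commutation relation of the type of Proposition~\ref{prop:commcauchy}, but between $t^{(\alpha,\beta)}$ and $T^{(-\beta,-\alpha),*}$ — proved the same way, from an $\mathrm{RLL}$ relation with a mixed fermionic--bosonic $\mathfrak{R}$-matrix — then produces, after the reparametrization $x\mapsto x/(1+(\alpha-\beta)x)$, exactly the factor $1+xy$. (Alternatively, one can establish the displayed relation directly via its own $\mathrm{RLL}$ relation; this is the ``$T^{*}$--$\tilde t$'' route mentioned before the theorem.)

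Granting the commutation relation, I would conclude exactly as in the proof that transfer matrices commute and in Theorem~\ref{cauchyidentity}: pushing each $t^{(\alpha,\beta)}(y_j)$ rightward past each $\widetilde{T}^{*}(x_i)$ collects $\prod_{1\le i\le m,\,1\le j\le n}(1+x_iy_j)$, and the remaining vacuum expectation $\bra{0}\widetilde{T}^{*}(x_m)\cdots\widetilde{T}^{*}(x_1)\,t^{(\alpha,\beta)}(y_1)\cdots t^{(\alpha,\beta)}(y_n)\ket{0}$ equals $\bra{0}\ket{0}=1$, since $\widetilde{T}(x)\ket{0}=\ket{0}$ (hence $\bra{0}\widetilde{T}^{*}(x)=\bra{0}$) and $t^{(\alpha,\beta)}(y)\ket{0}=\ket{0}$. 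This yields $\mathcal{H}(x,y)=\prod_{i,j}(1+x_iy_j)$; specializing $\alpha=0,\beta=1$ recovers the known Cauchy identity of \cite{las2014,cauchyid-groth}. The step I expect to be the genuine obstacle is the one in the second paragraph: verifying that the $\mathrm{RLL}$ relation (equivalently, the $\mathfrak{R}$-matrix) of Proposition~\ref{prop:commcauchy} persists with parameters $(-\beta,-\alpha)$ in place of $(-\alpha,-\beta)$, and that the reparametrization forced by the inversion relation converts the bare commutation factor $1/(1-uy)$ into precisely $1+x_iy_j$, with no leftover $(\alpha-\beta)$-corrections.
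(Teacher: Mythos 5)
Your proposal is correct and is essentially the paper's own argument: both expand the same vacuum matrix element, identify it with the left-hand side via the observation that the row encoding of $\lambda$ is the column encoding of $\lambda'$, and extract $\prod_{i,j}(1+x_iy_j)$ by combining the commutation relation of \cref{prop:commcauchy} with the inversion relation, using $\bra{0}\widetilde{T}^{*}(x)=\bra{0}$ and $t(y)\ket{0}=\ket{0}$ at the end.

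The step you flag as the genuine obstacle is not one, and the paper's ordering of substitutions shows why: it first puts $x\mapsto -x$ in \eqref{cauchytransferrel}, which already yields the factor $\frac{1}{1+xy}$, and only then multiplies by $\widetilde{T}^{*}\br{\frac{x}{1+(\beta-\alpha)x}}$ taken at the \emph{same} parameters $(-\alpha,-\beta)$ as $T^{*}$ and applies \cref{inversionGrothT*}; no $\mathfrak{R}$-matrix or $\mathrm{RLL}$ relation with swapped parameters is ever needed. The swap $(-\alpha,-\beta)\to(-\beta,-\alpha)$ enters only through an identity of Boltzmann weights: at parameters $(-\alpha,-\beta)$ and argument $x/(1+(\beta-\alpha)x)$ the basic quantities become $\frac{x}{1+\beta x}$ and $\frac{1-\alpha x}{1+\beta x}$, i.e.\ exactly the weights at parameters $(-\beta,-\alpha)$ and argument $x$. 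Consequently your $\widetilde{T}^{(-\beta,-\alpha),*}(x)$ is literally the paper's $\widetilde{T}^{(-\alpha,-\beta),*}\br{\frac{x}{1+(\beta-\alpha)x}}$, your commutation relation $t(y)\widetilde{T}^{*}(x)=(1+xy)\widetilde{T}^{*}(x)t(y)$ follows from \cref{prop:commcauchy} by a mere change of argument (with $w=\frac{x}{1+(\beta-\alpha)x}$ one checks that $-w$ reparametrizes to $-x$, so the factor is exactly $1+xy$ with no $(\alpha-\beta)$ remainder), and the identification $\bra{\lambda}\widetilde{T}^{*}\br{\tfrac{x_1}{1+(\beta-\alpha)x_1}}\cdots\ket{0}=G^{(-\beta,-\alpha)}_{\lambda'}(x_1,\dots,x_m)$ holds by \cref{thm:colG}. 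So the proof closes along exactly the lines you describe, with no new $\mathrm{RLL}$ relation required.
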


\begin{proof}
By substituting $-x$ for $x$ in \cref{cauchytransferrel}, we get the following relation.
\begin{align*}
{t}(y) {T^{*}}(-x)&=\text{ } \frac{1}{1+xy} {T^{*}}(-x) {t}(y)\\
\end{align*}
By multiplying $\widetilde{T}^{*}\br{\dfrac{x}{1+x(\beta-\alpha)}}$ on both sides and applying the inversion relation \cref{inversionGrothT*}, we get the following relation:
\begin{align*}
\widetilde{T}^{*} \br{\dfrac{x}{1+x(\beta-\alpha)}}{t}(y)&=\text{ } \frac{1}{1+xy}  {t}(y)\widetilde{T}^{*} \br{\dfrac{x}{1+x(\beta-\alpha)}}\\
\end{align*}

Let 
\begin{multline*}
\mathcal{G}(x_1,x_2,\dots, x_n,y_1,y_2,\dots,y_m)=\\
\bra{0}{t}(y_1){t}(y_2)\cdots {\widetilde{T}^{*}}\br{\dfrac{x_1}{1+(\beta-\alpha)x_1}}\cdots {T^{*}}\br{\dfrac{x_n}{1+(\beta-\alpha)x_n}}\ket{0}.
\end{multline*}

Then by \cref{thm:rowg} and the definition of $\widetilde{T}^{*}$ we obtain

\begin{align*}
\mathcal{G}(x_1,x_2,&\dots, x_n,y_1,y_2,\dots,y_m)
=\text{ }\sum_{\lambda}G_{\lambda'}^{(-\beta,-\alpha)}(x_1,\dots,x_n)\g(y_1,\dots,y_m)
 \end{align*}

On the other hand by repeatedly applying the commutation relation we get that
\begin{align*}
\mathcal{G}(x_1,x_2,&\dots, x_n,y_1,y_2,\dots,y_m)\\
=&\text{ }\prod_{1\leq n,1\le m}(1+x_iy_j)\bra{0} {\widetilde{T}^{*}}\br{\dfrac{x_1}{1+(\beta-\alpha)x_1}}\cdots {T^{*}}\br{\dfrac{x_n}{1+(\beta-\alpha)x_n}} {t}(y_1){t}\cdots(y_m)\ket{0}\\
=&\text{ }\prod_{1\le i \le n,1\le j \le m}(1+x_iy_j)
 \end{align*}
\end{proof}

\begin{cor}
Generalised Grothendieck polynomials and generalised weak dual Grothendieck polynomials satisfy the following identity:
\begin{align}
\sum_{\lambda} G_{\lambda}(x_1,\dots,x_m;z_1,z_2,\dots) j_{\lambda} (y_1,\dots,y_n;\frac{1}{z_1},\frac{1}{z_2},\dots)= \prod_{1\le i\le m,1\le j\leq n} \left({1+x_iy_j} \right)
\end{align}
\end{cor}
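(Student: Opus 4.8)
The plan is to obtain this identity as the inhomogeneous, $(\alpha,\beta)=(1,0)$ specialisation of \cref{involcauchy}. First I would observe that putting $(\alpha,\beta)=(1,0)$ in \eqref{involcauchyeq} and relabelling $\lambda\mapsto\lambda'$ gives $\sum_\lambda G^{(0,-1)}_\lambda(x)\,g^{(1,0)}_{\lambda'}(y)=\prod_{i,j}(1+x_iy_j)$; by the definitions of \cref{generalisedpolynomials} the two families occurring here are exactly $G_\lambda$ and $j_\lambda$, so the claim holds when all $z_j=1$, and the only extra content is to switch on the vertical inhomogeneities $z_j$.

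Next I would set up the generalised transfer matrices. On the $G$-side, $G_\lambda(x;z)=G^{(0,-1)}_\lambda(x;z)$ is the partition function of the column model of \cref{cGColumnmodel} for $G^{(0,-1)}$ with $z_j$ attached to the $j$-th vertical line; since $\alpha=0$ its $\widetilde{R}$-matrix has the difference property, so the RLL relation \eqref{colGrll} survives the inhomogenisation and the adjoint transfer matrix $\widetilde{T}^{*}(x;z)$ still satisfies the inhomogeneous form of \cref{thm:colG}, namely $\bra{\lambda^c}\widetilde{T}^{*}(x_1;z)\cdots\widetilde{T}^{*}(x_m;z)\ket 0=G_\lambda(x;z)$. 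On the $j$-side, $j_\lambda(y;1/z)$ is the partition function of the $\jt$-model of \cref{vmodelforj} with $1/z_j$ attached to the $j$-th vertical line; its $R$-matrix $R(y/x)$ is already difference-type, so likewise $\bra 0\jt(y_1;1/z)\cdots\jt(y_n;1/z)\ket{\lambda^c}=j_\lambda(y;1/z)$, and moreover $\jt(y;1/z)\ket 0=\ket 0$ and $\bra 0\widetilde{T}^{*}(x;z)=\bra 0$, since the empty partition is the unique state compatible with the frozen boundaries (exactly as in the $z_j\equiv1$ case).

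The heart of the proof is the commutation relation
\[
\jt(y;1/z)\,\widetilde{T}^{*}(x;z)=(1+xy)\,\widetilde{T}^{*}(x;z)\,\jt(y;1/z),
\]
which I would obtain, as in \cref{involcauchy}, by combining \eqref{cauchytransferrel} (taken at $(\alpha,\beta)=(1,0)$, where the mixed matrix $\mathfrak R$ of \eqref{rLl} has non-constant entries $xy$ and $1-xy$) with the inversion relation \eqref{inversionGrothT*} (at parameters $(-1,0)$, where it identifies $\widetilde{T}^{*}(x;z)$ for $G^{(0,-1)}$ with the inverse of $T^{*}(-x;z)$ for $G^{(-1,0)}$). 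Inhomogenisation is harmless for two independent reasons. First, at $\beta=0$ the entries of $\mathfrak R$ depend only on the product of the two spectral parameters: when $\mathfrak R$ is dragged across the $j$-th vertical line the $G$-side sees $-x/z_j$ and the $j$-side sees $yz_j$, and $(-x/z_j)(yz_j)=-xy$ is independent of $z_j$, so $\mathfrak R$ — hence \eqref{cauchytransferrel} — is unchanged; this is precisely the mechanism used for the first Corollary. Second, the inversion relation of \cref{inversionGrothprop} is proved column by column with a vertical spectral parameter already present, so it holds verbatim with $z_j$ on the $j$-th column, provided one attaches $z_j$ to the original variable $x$, before the reparameterisation $x\mapsto x/(1-x)$, so that the reparameterised $\widetilde{T}$ for $G^{(-1,0)}$ agrees column by column with the $\widetilde{T}$ for $G^{(0,-1)}$ normalised as in \cref{generalisedpolynomials}.

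Granting the commutation relation, the identity follows exactly as in \cref{involcauchy}: set $\mathcal G(x;y;z)=\bra 0\jt(y_1;1/z)\cdots\jt(y_n;1/z)\,\widetilde{T}^{*}(x_1;z)\cdots\widetilde{T}^{*}(x_m;z)\ket 0$, which by the previous paragraphs equals $\sum_\lambda G_\lambda(x;z)\,j_\lambda(y;1/z)$; moving each $\widetilde{T}^{*}(x_i;z)$ to the left past all the $\jt(y_k;1/z)$ contributes the factor $\prod_{i,k}(1+x_iy_k)$ and leaves $\bra 0\widetilde{T}^{*}(x_1;z)\cdots\widetilde{T}^{*}(x_m;z)\,\jt(y_1;1/z)\cdots\jt(y_n;1/z)\ket 0=1$, giving the result. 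I expect the one point that needs genuine care, rather than pure transcription, to be the bookkeeping in the third paragraph: keeping the spectral reparameterisation consistent with the attachment of the $z_j$ when passing between the $G^{(-1,0)}$ and $G^{(0,-1)}$ normalisations, and confirming that the inhomogeneous $\mathfrak R$ is indeed the matrix governing the commutation of $\jt$ and $\widetilde{T}^{*}$.
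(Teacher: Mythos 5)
Your proposal is correct and is essentially the paper's own argument: the paper proves this corollary precisely by setting $(\alpha,\beta)=(1,0)$ in the second Cauchy identity and rerunning its proof with inhomogeneous transfer matrices, which is what you do, including the two key observations that at $\beta=0$ the mixed $\mathfrak R$-matrix of \eqref{RforTandt} depends only on the product of spectral parameters (so $x/z_j$ and $yz_j$ leave it unchanged) and that the inversion relation of \cref{inversionGrothprop} is established column by column with a vertical parameter already present. Your extra care with the reparameterisation $x\mapsto x/(1-x)$ matching the $G^{(-1,0)}$ and $G^{(0,-1)}$ normalisations is exactly the bookkeeping the paper leaves implicit, and it checks out.
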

\begin{proof}
Plug in $\beta=0$ and $\alpha=1$ in \cref{involcauchy} and use the inhomogeneous transfer matrices.
\end{proof}

\begin{thm}
Generalised Grothendieck polynomials and their dual satisfy the following identity:
\begin{align}
\label{grothvsdual}
\sum_{\lambda} G_{\lambda}(x_1,\dots,x_m;z_1,\dots,z_m) g_{\lambda} (y_1,\dots,y_n;\frac{1}{z_1},\dots,\frac{1}{z_m})= \prod_{1\le i\le m,1\le j\leq n} \dfrac{1}{1-x_iy_j}
\end{align}
\end{thm}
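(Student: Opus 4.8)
The plan is to transport the proof of \cref{cauchyidentity} for the specialisation $(\alpha,\beta)=(0,1)$ (which, upon setting $z_j\equiv 1$, is precisely the present identity) into the \emph{column} models, where the vertical inhomogeneities survive. As recorded in \cref{generalisedpolynomials}, for $\alpha=0$ both the column $\widetilde R$-matrix of $G^{(0,\beta)}$ and the column $\widetilde r$-matrix of $g^{(0,\beta)}$ satisfy the difference property, so in the generalised models the vertex at the crossing of horizontal line $i$ with vertical line $j$ may carry the argument $x_i/z_j$. First I would rewrite the left side of \eqref{grothvsdual} as a single partition function. By \cref{thm:colG} at $(\alpha,\beta)=(0,-1)$ and the adjoint transfer matrix $\widetilde T^{*}$ defined in \cref{dualitybetweenmodels}, $G_\lambda(x_1,\dots,x_m;z)=\bra{\lambda^{c}}\widetilde T^{*}(x_m;z)\cdots\widetilde T^{*}(x_1;z)\ket{0}$; by \cref{thm:colg} at $(\alpha,\beta)=(0,1)$, $g_\lambda(y_1,\dots,y_n;1/z_1,\dots,1/z_m)=\bra{0}\tilde t(y_1;1/z)\cdots\tilde t(y_n;1/z)\ket{\lambda^{c}}$, the vertex of the $g$-lattice at vertical line $j$ now carrying $y_iz_j$. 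Inserting $\sum_\lambda\ket{\lambda^{c}}\bra{\lambda^{c}}=1$ then gives
\[
\sum_\lambda G_\lambda(x;z)\,g_\lambda(y;1/z)=\bra{0}\tilde t(y_1;1/z)\cdots\tilde t(y_n;1/z)\,\widetilde T^{*}(x_m;z)\cdots\widetilde T^{*}(x_1;z)\ket{0}\;=:\;\mathcal{G}.
\]

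The heart of the argument is the commutation relation
\[
\tilde t(y;1/z)\,\widetilde T^{*}(x;z)=\frac{1}{1-xy}\,\widetilde T^{*}(x;z)\,\tilde t(y;1/z),
\]
established by the mechanism of \cref{prop:commcauchy}. One needs an intertwiner $\widetilde{\mathfrak R}(x,y)\in\mathrm{End}(W\otimes W)$ of five-vertex type, satisfying an $\mathrm{RLL}$ relation with the column $\widetilde L^{*}$-matrix of $G^{(0,-1)}$ and the column $\widetilde l$-matrix of $g^{(0,1)}$, together with the eigenvector property that the partition function of a single $\widetilde{\mathfrak R}$-vertex with free inputs and fixed right boundary is identically $1$. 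Multiplying $\widetilde{\mathfrak R}$ onto $\widetilde T^{*}(x;z)\tilde t(y;1/z)$, absorbing the free left boundary with the eigenvector property, sliding $\widetilde{\mathfrak R}$ through the finitely many nontrivial vertical lines, and reading off the distinguished weight $1-xy$ generated at the right boundary then yields the relation. The feature that makes the inhomogeneities harmless is that the non-constant entries of $\widetilde{\mathfrak R}$ should involve $x$ and $y$ only through the product $xy$: since the $G$-lines carry $x_i/z_j$ and the $g$-lines carry $y_iz_j$ at vertical line $j$, the product $(x_i/z_j)(y_iz_j)=x_iy_j$ is $z$-independent, so the same $\widetilde{\mathfrak R}$ intertwines at every vertical line — the same phenomenon exploited for the two corollaries above, where the $\mathfrak R$-matrix of \cref{prop:commcauchy} at $\beta=0$ already had this shape. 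The verification of this $\mathrm{RLL}$ relation I would relegate to the appendix with the others; I do not see a route that avoids exhibiting $\widetilde{\mathfrak R}$ by invoking the inversion relations of \cref{dualitybetweenmodels} instead, since the only row/column inversion available on the dual side pairs $g^{(1,0)}$ with $g^{(0,1)}$ rather than relating $g^{(0,1)}$ to itself.

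Finally I would iterate the commutation relation $mn$ times inside $\mathcal{G}$ to bring every $\tilde t$ to the right of every $\widetilde T^{*}$, which factors out $\prod_{1\le i\le m,\,1\le j\le n}\frac1{1-x_iy_j}$; the boundary identities $\tilde t(y;1/z)\ket{0}=\ket{0}$ (direct) and $\bra{0}\widetilde T^{*}(x;z)=\bra{0}$ (from $\widetilde T(x;z)\ket{0}=\ket{0}$ by adjointness), both expressing the fact that the branching formulas degenerate at the empty partition, then collapse the remaining matrix element to $1$, giving $\mathcal{G}=\prod_{1\le i\le m,\,1\le j\le n}\frac1{1-x_iy_j}$. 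The main obstacle is therefore the construction and $\mathrm{RLL}$-verification of the column cross-intertwiner $\widetilde{\mathfrak R}$ — in particular, confirming that its entries depend on the spectral parameters only through $xy$, which is exactly what lets the vertical inhomogeneities slide through — after which the remainder of the argument runs strictly parallel to \cref{cauchyidentity}.
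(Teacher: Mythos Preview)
Your overall architecture is right and matches the paper: write the left side as $\bra 0\,\tilde t\cdots\tilde t\,\widetilde T^{*}\cdots\widetilde T^{*}\ket 0$, establish the column commutation $\tilde t\,\widetilde T^{*}=\tfrac{1}{1-xy}\,\widetilde T^{*}\,\tilde t$ with the $z_j$'s harmless because only the product $xy$ survives, iterate, and collapse with $\tilde t\ket 0=\ket 0$ and $\bra 0\widetilde T^{*}=\bra 0$.

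Where you diverge is in how you obtain that commutation relation. You propose to build a new column cross-intertwiner $\widetilde{\mathfrak R}$ between $\widetilde L^{*}$ of $G^{(0,-1)}$ and $\widetilde l$ of $g^{(0,1)}$ and to verify an $\mathrm{RLL}$ for it from scratch, deferring the check and only conjecturing that its entries depend on the spectral parameters through $xy$ alone. The paper avoids this, taking exactly the route you dismissed. From the row commutation \eqref{cauchytransferrel} of \cref{prop:commcauchy} one first passes $T^{*}\to\widetilde T^{*}$ via the Grothendieck inversion \eqref{inversionGrothT*} (this step is already carried out in the proof of \cref{involcauchy}); one then specialises to $(\alpha,\beta)=(1,0)$, where the row transfer matrix $t$ of $g^{(1,0)}$ is literally $\jt$, and passes $t=\jt\to\tilde t$ via \cref{inversionDualGroth}. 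This yields $\tilde t(yz)\,\widetilde T^{*}\!\bigl(\tfrac{x/z}{1-x/z}\bigr)=\tfrac{1}{1-xy}\,\widetilde T^{*}\!\bigl(\tfrac{x/z}{1-x/z}\bigr)\,\tilde t(yz)$, and the reparametrisation $u=\tfrac{x/z}{1-x/z}$ turns the column weights of $G^{(-1,0)}$ at $u$ into those of $G^{(0,-1)}$ at $x/z$, so the resulting partition function is precisely $\sum_\lambda G_\lambda(x;z)\,g_\lambda(y;1/z)$.

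Your stated obstruction---that the only dual-side inversion ``pairs $g^{(1,0)}$ with $g^{(0,1)}$ rather than relating $g^{(0,1)}$ to itself''---is the mechanism, not the problem: the entire manoeuvre is to start in the row world, where the Cauchy $\mathfrak R$ of \cref{prop:commcauchy} is already available (and at $\beta=0$ depends only on $xy$, so the inhomogeneities pass through), and then use the two inversion relations of \cref{dualitybetweenmodels} as bridges into the column world on each side. Nothing beyond \cref{prop:commcauchy}, \eqref{inversionGrothT*} and \cref{inversionDualGroth} needs to be checked. Your direct approach may well succeed, but it trades a two-line deduction from existing results for another appendix-length $\mathrm{RLL}$ verification whose key hypothesis (the $xy$-only shape of $\widetilde{\mathfrak R}$) you have not established.
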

\begin{proof}
Recall the commutation relation from \cref{involcauchy}:
\begin{align*}
\widetilde{T}^{*} \br{\dfrac{x}{1+x(\beta-\alpha)}}{t}(y)&=\text{ } \frac{1}{1+xy}  {t}(y)\widetilde{T}^{*} \br{\dfrac{x}{1+x(\beta-\alpha)}}\\
\end{align*}

In order to apply the inversion relation among the transfer matrices of the dual Grothendieck polynomials, we need to specialize the above commutation relation with $\alpha=1$ and $\beta=0$. We then get the following relation:
\begin{align*}
\widetilde{T}^{*} \br{\dfrac{\frac{x}{z}}{1-{\frac{x}{z}}}}{t}\br{-yz}&=\, \frac{1}{1-xy}  {t}\br{-yz}\widetilde{T}^{*} \br{\dfrac{\frac{x}{z}}{1-\frac{x}{z}}}\\
\end{align*}
 We now multiply the above equation by $\tilde{t}(yz)$ and apply the inversion relation (\cref{inversionDualGroth}).
 \begin{align*}
\tilde{t}(yz)\widetilde{T}^{*} \br{\dfrac{\frac{x}{z}}{1-\frac{x}{z}}}&=\, \frac{1}{1-xy}  \widetilde{T}^{*} \br{\dfrac{\frac{x}{z}}{1-\frac{x}{z}}} \tilde{t}(yz)\\
\end{align*}
Then the result follows immediately from the definition of $\widetilde{T}^{*}$, and \cref{thm:colg}.
\end{proof}

\begin{prop}
 %Let $G_{\lambda}(x_1,\dots,x_m;z_1,\dots,z_m)$ be the
Generalised  Grothendieck polynomials satisfy 
 \begin{align*}
 G_{\lambda}(z_1,\dots,z_m;z_1,\dots,z_m)=1
 \end{align*}
\end{prop}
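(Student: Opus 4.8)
\emph{Proof strategy.}
Recall from \cref{generalisedpolynomials} that the generalised Grothendieck polynomial $G_\lambda(x_1,\dots,x_m;z_1,\dots,z_m)$ is the one attached to the column model for $G^{(0,-1)}_\lambda$ with the inhomogeneity $z_j$ placed on the $j$-th vertical line; thus, as in \cref{thm:colG},
\[
G_\lambda(x_1,\dots,x_m;z_1,\dots,z_m)=\bra{0}\widetilde T(x_1)\cdots\widetilde T(x_m)\ket{\lambda^c},
\]
where I write $\widetilde T(x)$ for the column transfer matrix whose vertex on the $j$-th vertical line carries spectral parameter $x/z_j$, and $\ket{\lambda^c}=\bigotimes_i\ket{m^c_i(\lambda)}$. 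Let $\ell(\lambda)$ be the number of parts of $\lambda$, equivalently the largest size of a column of $\lambda$. Since each row of the column model adds a horizontal strip, $\ket{\lambda^c}$ is unreachable and $G_\lambda\equiv 0$ once $\ell(\lambda)>m$, so it suffices to treat $\ell(\lambda)\le m$. The plan is to establish, by induction on $k$, the operator identity
\[
\bra{0}\widetilde T(z_1)\widetilde T(z_2)\cdots\widetilde T(z_k)=\sum_{\nu\,:\,\ell(\nu)\le k}\bra{\nu^c},
\]
and then to evaluate it at $k=m$ against $\ket{\lambda^c}$, which at once yields $G_\lambda(z_1,\dots,z_m;z_1,\dots,z_m)=1$.

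Two elementary facts about the Boltzmann weights $w^{(0,-1)}_u$ --- which equal $u^a$, $u^a(1-u)$, or $0$ according as $b=c$, $b>c$, or $b<c$, subject to $a+b=c+d$ --- carry the argument. The first is an \emph{eigenvector property} of $\widetilde T$: $\bra{*}\widetilde T(x)=\bra{*}$, where $\bra{*}=\sum_{\mu}\bra{\mu^c}$. This is proved exactly like the eigenvector computations for the $R$-matrices in \cref{rowmodels,columnmodels}: after fixing the top boundary of a single row and the right end ($=0$) of its auxiliary line, summing over the bottom boundary --- first over each bottom edge $b_j$ --- makes the weights decouple column by column, leaving at column $j$ a factor equal to $u_j^{a_j}$ if $a_j=d_j$ and to $u_j^{a_j}(1-u_j)$ if $0\le a_j<d_j$, whose sum over $a_j$ telescopes: $\sum_{a_j=0}^{d_j-1}u_j^{a_j}(1-u_j)+u_j^{d_j}=1$ for every $j$; this uses $\beta=-\alpha=-1$ through $(1+\beta u)/(1-\alpha u)=1-u$. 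The second is a \emph{regularity} property: at $u=1$ one has $w^{(0,-1)}_1(a,b;c,d)=\delta_{b,c}\delta_{a,d}$, i.e.\ the $L$-operator attached to a vertical line whose inhomogeneity equals the horizontal spectral parameter is the permutation $\ket{a}\otimes\ket{b}\mapsto\ket{b}\otimes\ket{a}$ of the two spaces (the horizontal value moves up, the vertical value moves right). Both facts require precisely $\alpha=0$ and $\beta=-1$, which is exactly why the statement singles out the generalised Grothendieck polynomials.

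For the induction step (the base case $k=0$ being $\bra{0}=\bra{\emptyset^c}$), the inductive hypothesis gives $\bra{0}\widetilde T(z_1)\cdots\widetilde T(z_k)=\bigl(\sum_{\ell(\mu)\le k-1}\bra{\mu^c}\bigr)\widetilde T(z_k)$. In $\widetilde T(z_k)$ the vertex on the $k$-th vertical line has parameter $z_k/z_k=1$, hence is a permutation vertex. Because each $\mu$ with $\ell(\mu)\le k-1$ has no column of size $\ge k$, that vertex sees bottom edge $0$ and so forces the auxiliary edge on its right to be $0$; since all vertical edges on lines $>k$ are $0$ as well, the auxiliary line stays $0$ thereafter and the top edges on lines $>k$ must vanish, so only $\bra{\nu^c}$ with $\ell(\nu)\le k$ can occur. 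Conversely, for a fixed $\nu$ with $\ell(\nu)\le k$, any $\mu$ contributing to $\bra{\mu^c}\widetilde T(z_k)\ket{\nu^c}$ satisfies $\mu\subseteq\nu$, and were $\ell(\mu)=k$ the permutation vertex on line $k$ would force its nonzero bottom edge $m^c_k(\mu)$ to equal the right auxiliary edge, namely $0$ --- impossible. Hence automatically $\ell(\mu)\le k-1$, so
\[
\sum_{\ell(\mu)\le k-1}\bra{\mu^c}\widetilde T(z_k)\ket{\nu^c}=\sum_{\text{all }\mu}\bra{\mu^c}\widetilde T(z_k)\ket{\nu^c}=\bra{*}\widetilde T(z_k)\ket{\nu^c}=1
\]
by the eigenvector property, and $\bigl(\sum_{\ell(\mu)\le k-1}\bra{\mu^c}\bigr)\widetilde T(z_k)=\sum_{\ell(\nu)\le k}\bra{\nu^c}$, completing the induction.

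The delicate point is the third paragraph: correctly tracing the auxiliary-line states through the diagonal permutation vertex, both to pin down which top states $\nu^c$ survive and to check that truncating the sum at $\ell(\mu)\le k-1$ discards only vanishing terms, so that the eigenvector property applies verbatim. The remainder is the routine geometric-series identity, and the inhomogeneities cause no trouble there since that telescoping holds for arbitrary $u_j$.
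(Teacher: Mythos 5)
Your argument is correct, but it proves the proposition by a genuinely different route than the paper. The paper's proof is graphical: it uses the coincidence $\widetilde L=\widetilde R$ at $(\alpha,\beta)=(0,-1)$, the unitarity relation for $\widetilde R$ established in the appendix, an argument that all vertices below the anti-diagonal are frozen, and the observation that at $x_i=z_i$ the factor $1-x/z$ kills all crossings on the diagonal, after which the lines are unwound by repeated unitarity. You instead work at the level of the transfer matrix itself: your two ingredients are the ``sum-to-one'' left-eigenvector property $\bra{*}\widetilde T(x)=\bra{*}$ (which, as you note, is a column-by-column telescoping $\sum_{a=0}^{d-1}u^a(1-u)+u^d=1$, valid for arbitrary spectral parameter and inhomogeneities once $(\alpha,\beta)=(0,-1)$) and the degeneration of the vertex weight at spectral parameter $1$ into the Kronecker delta $\delta_{b,c}\delta_{a,d}$, which lets you prove by induction the stronger closed form $\bra{0}\widetilde T(z_1)\cdots\widetilde T(z_k)=\sum_{\ell(\nu)\le k}\bra{\nu^c}$; pairing with $\ket{\lambda^c}$ gives the proposition. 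What each approach buys: the paper's proof displays the integrable structure ($\widetilde L=\widetilde R$ and unitarity, which are reused elsewhere), while yours is more elementary and self-contained, avoids the appendix computation, and produces the explicit expansion of the product of transfer matrices at $x_i=z_i$, from which the vanishing for $\ell(\lambda)>m$ and the value $1$ otherwise are read off simultaneously. Two small expository slips, neither affecting correctness: the phrase ``$\beta=-\alpha=-1$'' should read $\alpha=0$, $\beta=-1$ (as you correctly state two sentences later; the telescoping in fact only needs $\alpha+\beta=-1$, but the inhomogeneous column model and the permutation vertex pin down $(0,-1)$); and in the eigenvector computation the independent summation variables are really the left edges $a_j\in\{0,\dots,d_j\}$ (equivalently the horizontal edges), the bottom edges being determined by conservation --- which is exactly the factorized sum you then evaluate.
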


\begin{proof}
When $\alpha=0$ and $\beta=-1$, $\widetilde{R}$ and $\widetilde{L}$ are same. The $\widetilde{R}$ matrix satisfies the unitary relation (for proof refer to \cref{unitary}).
\[
\begin{tikzpicture}[scale=1,baseline=7pt]
\draw[bosonic,Gcol,rounded corners] (0,0) node[left,black]{$\ss y $} --(1,1)--(2,0);
\draw[bosonic,Gcol,rounded corners] (0,1)node[left,black]{$\ss x $} --(1,0)--(2,1);
\end{tikzpicture}=
\begin{tikzpicture}[scale=1,baseline=7pt]
\draw[bosonic,Gcol,rounded corners] (0,0) node[left,black]{$\ss y $} --(2,0);
\draw[bosonic,Gcol,rounded corners] (0,1)node[left,black]{$\ss x $} --(2,1);
\end{tikzpicture}
\]

Recall that there are two types of vertices in the column model for Grothendieck polynomials.
\[
\begin{tikzpicture}[scale=0.6]
\draw[bosonic,Gcol](0,0)--(2,0) node[right,black]{$c$};
\draw[bosonic,Gcol](1,-1)node[below,black]{$b$}--(1,1);
\node at (1,-2.5) {$ b>c$};
\end{tikzpicture}
\qquad
\begin{tikzpicture}[scale=0.6]
\draw[bosonic,Gcol,rounded corners](0,0)--(1,0)--(1,1);
\draw[bosonic,Gcol, rounded corners](1,-1)node[below,black]{$b$}--(1,0)--(2,0) node[below,black]{$c$};
\node at (1,-2.5) {$ b=c$};
\end{tikzpicture}
\]
We shall now argue that the contribution from the vertices that are below the anti-diagonal is trivial. To see that, let us consider the model with $m$ rows i.e., Grothendieck polynomial in $m$ variables. We know that the number of variables restricts us to consider the partitions with highest column being less than or equal to $m$. So, the only  inhomgeneities are $z_1,\dots,z_m$.

Consider the first row from the bottom in the vertex model. At the first site, we can only have an elbow as the bottom label is $0$. Then we get that the nodes around every other vertex are just $0's$. 

\[
\begin{tikzpicture}[scale=0.6]
\draw[bosonic,Gcol,rounded corners](0,0) node[left,black]{$*$}--(1,0)--(1,1)node[above,black]{$*$};
\draw[bosonic,Gcol, rounded corners](1,-1)node[below,black]{$\ss 0$}--(1,0)--(2,0) node[below,black]{$\ss 0$}--(3,0)--(3,1)node[above,black]{$\ss 0$};
\draw[bosonic,Gcol,rounded corners](3,-1) node[below,black]{$\ss 0$}--(3,0)--(4,0)node[right,black]{$\ss 0$};
\node at (5.2,0) {$\dots$};
\end{tikzpicture}
\]

Observe that the top labels of the are all $0's$ except the first label. By the same argument, we get that all the nodes of the vertices below the anti-diagonal are just $0's$, and the contribution of such vertices is $1$.

 Recall that the Botlzmann weight of a crossing has a $\br{1-\dfrac{x}{z}}$ factor. When we plug in $x_i=z_i$, then the contribution of a configuration to partition function is non zero only when the diagonals in a configuration are elbows.

\[
\begin{tikzpicture}[scale=0.6,baseline=10pt]
\foreach\x in {0}{
\draw[bosonic,Gcol,rounded corners](0,4.5+\x) node[left,black]{$\ss z_1$}--(0.5,4.5+\x)--(0.5,5) node[above,black]{$\ss z_1$};
\draw[bosonic,Gcol,rounded corners](0,0.5) node[left,black]{$\ss z_5$}--(0.5,0.5)--(0.5,4.5)--(4.5,4.5)--(4.5,5) node[above,black]{$\ss z_5$};
\draw[bosonic,Gcol,rounded corners](0,3.5+\x) node[left,black]{$\ss z_2$}--(1.5,3.5+\x)--(1.5,5) node[above,black]{$\ss z_2$};
\draw[bosonic,Gcol,rounded corners](0,2.5) node[left,black]{$\ss z_3$}--(2.5,2.5)--(2.5,5) node[above,black]{$\ss z_3$};
\draw[bosonic,Gcol,rounded corners](0,1.5) node[left,black] {$\ss z_4$}--(1.5,1.5)--(1.5,3.5)--(3.5,3.5)--(3.5,5) node[above,black]{$\ss z_4$};
};
\node at (6,2.5) {$=$};
\end{tikzpicture}
\begin{tikzpicture}[scale=0.6,baseline=10pt]
\foreach\x in {0}{
\draw[bosonic,Gcol,rounded corners](0,4.5+\x) node[left,black]{$\ss z_1$}--(0.5,4.5+\x)--(0.5,5) node[above,black]{$\ss z_1$};
\draw[bosonic,Gcol,rounded corners](0,0.5) node[left,black]{$\ss z_5$}--(0.5,0.5)--(0.5,3.5)--(1.5,3.5)--(1.5,4.5)--(4.5,4.5)--(4.5,5) node[above,black]{$\ss z_5$};
\draw[bosonic,Gcol,rounded corners](0,3.5+\x) node[left,black]{$\ss z_2$}--(0.5,3.5+\x)--(0.5,4.5)--(1.5,4.5)--(1.5,5) node[above,black]{$\ss z_2$};
\draw[bosonic,Gcol,rounded corners](0,2.5) node[left,black]{$\ss z_3$}--(2.5,2.5)--(2.5,5) node[above,black]{$\ss z_3$};
\draw[bosonic,Gcol,rounded corners](0,1.5) node[left,black] {$\ss z_4$}--(1.5,1.5)--(1.5,3.5)--(3.5,3.5)--(3.5,5) node[above,black]{$\ss z_4$};
};
\node at (7,2.5){$= \cdots=$};
\end{tikzpicture}
\begin{tikzpicture}[scale=0.6,baseline=10pt]
\draw[bosonic,Gcol,rounded corners](0,4.5) node[left,black]{$\ss z_1$}--(0.5,4.5)--(0.5,5) node[above,black]{$\ss z_1$};
\draw[bosonic,Gcol,rounded corners](0,3.5) node[left,black]{$\ss z_2$}--(0.5,3.5)--(0.5,4.5)--(1.5,4.5)--(1.5,5) node[above,black]{$\ss z_2$};
\draw[bosonic,Gcol,rounded corners] (0,2.5) node[left,black]{$\ss z_3$}--(0.5,2.5)--(0.5,3.5)--(1.5,3.5)--(1.5,4.5)--(2.5,4.5)--(2.5,5) node[above,black]{$\ss z_3$};
\draw[bosonic,Gcol,rounded corners] (0,1.5) node[left,black]{$\ss z_4$}--(0.5,1.5)--(0.5,2.5)--(1.5,2.5)--(1.5,3.5)--(2.5,3.5)--(2.5,4.5)--(3.5,4.5)--(3.5,5) node[above,black]{$\ss z_4$};
\draw[bosonic,Gcol,rounded corners] (0,0.5) node[left,black]{$\ss Z_5$}--(0.5,0.5)--(0.5,1.5)--(0.5+1,1.5)--(0.5+1,2.5)--(1.5+1,2.5)--(1.5+1,3.5)--(2.5+1,3.5)--(2.5+1,4.5)--(3.5+1,4.5)--(3.5+1,5) node[above,black]{$\ss z_5$};
\node at (6,2.5) {$=1$};
\end{tikzpicture}
\]
Then we get the desired result by repeatedly applying the unitary relation.
\end{proof}

As a consequence of the above proposition, we recover an identity for \emph{generalised dual Grothendieck polynomials}, which is proved by Yeliussizov in \cite{DY-dualcauchy}.
\begin{cor}
Dual Grothendieck polynomials satisfy the following identity.
\begin{align*}
\sum_{l(\lambda)\leq m} g_{\lambda}(y_1,\dots,y_n;\frac{1}{z_1},\dots,\frac{1}{z_m})=\prod_{1\le i\le m,1\le j\le n}\dfrac{1}{1-z_i y_j}
\end{align*}
\end{cor}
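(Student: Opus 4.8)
The plan is to read off the identity from \eqref{grothvsdual} by a single specialization of variables, namely $x_i\mapsto z_i$ for $1\le i\le m$. First I would observe that \eqref{grothvsdual} is an identity of formal power series in $z_1,\dots,z_m$ for fixed $y_1,\dots,y_n$: in the generalised polynomial $G_\lambda(x_1,\dots,x_m;z_1,\dots,z_m)$ the inhomogeneities enter only through the ratios $x_i/z_j$ inside the column-model Boltzmann weights \eqref{Gboltz} at $(\alpha,\beta)=(0,-1)$, which are polynomial in those ratios; after collecting an overall monomial, the only denominators that appear are monomials in the $z_j$, so the substitution $x_i=z_i$ crosses no pole and is admissible. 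On the right-hand side this merely replaces $\prod_{i,j}1/(1-x_iy_j)$ by $\prod_{i,j}1/(1-z_iy_j)$.

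After the substitution the $\lambda$-term on the left becomes $G_\lambda(z_1,\dots,z_m;z_1,\dots,z_m)\,g_\lambda(y_1,\dots,y_n;1/z_1,\dots,1/z_m)$. I would then invoke the preceding proposition, which gives $G_\lambda(z_1,\dots,z_m;z_1,\dots,z_m)=1$ for every $\lambda$ with $\ell(\lambda)\le m$; for $\ell(\lambda)>m$ this factor is $0$, since a Grothendieck polynomial in $m$ variables — and hence its generalisation, obtained from a column-model transfer matrix with $m$ horizontal lines, each step enlarging the largest column by at most one box — vanishes unless $\ell(\lambda)\le m$. This vanishing is exactly what truncates the $\lambda$-sum to $l(\lambda)\le m$ in the statement. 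Hence the left side of the specialised \eqref{grothvsdual} collapses to $\sum_{l(\lambda)\le m} g_\lambda(y_1,\dots,y_n;1/z_1,\dots,1/z_m)$, and equating with the right side yields the claim; this recovers Yeliussizov's identity from \cite{DY-dualcauchy}.

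There is no genuinely hard step here, as the substance is already carried by \eqref{grothvsdual} and by the preceding proposition. The only two points I would spell out with care are the admissibility of the specialization $x_i=z_i$ (no pole is met, by the polynomiality of the $(\alpha,\beta)=(0,-1)$ weights noted above), and the vanishing $G_\lambda=0$ for $\ell(\lambda)>m$ that produces the length restriction on the sum.
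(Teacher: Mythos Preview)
Your proposal is correct and takes exactly the same route as the paper's proof, which consists of the single line ``Set $x_i=z_i$ in \eqref{grothvsdual}.'' You have simply made explicit the two ingredients the paper leaves implicit: the admissibility of the specialization and the vanishing of $G_\lambda$ in $m$ variables when $\ell(\lambda)>m$, which is what restricts the sum.
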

\begin{proof}
Set $x_i=z_i$ in \cref{grothvsdual}.
\end{proof}

\newcommand\subsubsubsection[1]{\par\noindent\textit{#1.}\par}

\appendix
\section{\texorpdfstring{$\mathrm{RLL}$}{RLL} relations}
\subsection{\texorpdfstring{$\mathrm{RLL}$}{RLL} for column model of  \texorpdfstring{$\G$}{G}.}
For convenience, let us recall the Boltzmann weights of the model and the entries of the $\widetilde{R}$ matrix.
\begin{align}
w^{}_{x}
\left(
\begin{gathered}
\begin{tikzpicture}[scale=0.4,baseline=-2pt]
\draw[bosonic,Gcol,arrow=0.25] (-1,0) node[left,black] {$a$} -- (1,0) node[right,black] {$c$};
\draw[bosonic,arrow=0.25] (0,-1) node[below] {$b$} -- (0,1) node[above] {$d$};
\end{tikzpicture}
\end{gathered}
\right)
\equiv
w^{}_{x}(a,b;c,d)= \delta_{a+b,c+d}
\begin{cases}
\left(\frac{x} {1-\alpha x} \right)^{a}&  b=c\\
\left(\frac{x} {1-\alpha x} \right)^{a}\left(\frac{1+\beta x} {1-\alpha x} \right) & b>c\\
0& b<c\\
\end{cases}
\quad
a,b,c,d \in \mathbb{Z}_{\geq 0},
\quad
\end{align}

\begin{align*}
\widetilde{R}^{a\,d}_{b\,c}\left(y,x\right) =\begin{gathered}
\begin{tikzpicture}[baseline=-3pt,scale=0.9]
\draw[bosonic,arrow=0.35,Gcol] (-2,0.5) node[left,black] {$a$} -- (-1,-0.5) node[below] {};
\node[text width=1cm] at (-0.4,-0.55){$c$};
\draw[bosonic,arrow=0.35,Gcol] (-2,-0.5) node[left,black] {$b$} -- (-1,0.5) node[above] {};
\node[text width=1cm] at (-0.4,0.5){$d$};
\end{tikzpicture}
\end{gathered}= \left({\dfrac{\dfrac{x}{1-\alpha x}}{\dfrac{y}{1-\alpha y}}}\right)^{a}
  \begin{cases}
    0 & \text{when } b<c\\
   1 & \text{when } b=c \\
  \dfrac{1}{1-\alpha x}-\dfrac{x}{(1-\alpha x)y} & \text{otherwise } 
  \end{cases}
\end{align*}

Let us try to understand the range of $g$. First observe that whenever $a'<g$, the summation is $0$ because of the $\widetilde{R}$-matrix. Based on the Boltzmann weights, the contribution of the top vertex is non zero if and only if $g+b-c\geq c'$. Therefore, $g$ on $LHS$ can at most be $a'$, and it has to be at least $c'+c-b$. Similarly, on  the $RHS$ we have $a+a'-d\leq c'$.
\[
\sum^{a'}_{g={c+c'-b}}
\begin{tikzpicture}[scale=1,baseline=7pt]
\draw[bosonic](1,-1)--(1,2);
\draw[bosonic,arrow=0.35,rounded corners,Gcol] (-1,1)--(0,0)--(2,0);
\draw[bosonic,arrow=0.35,rounded corners,Gcol] (-1,0)--(0,1)--(2,1);
\node at (-1.2,0) {$ a'$};
\node at (-1.2,1) {$ a$};
\node at (1,-1.3) {$b$};
\node at (1,2.3) {$d$};
\node at (2.2,1) {$c'$};
\node at (2.2,0) {$c$};
\node at (1.6,0.5) {$\ss g+b-c$};
\node at (0,-0.3) {$ g$};
\node at (0,1.3) {$ \ss a+a'-g$};
\end{tikzpicture}
=
\sum^{c'}_{g=a+a'-d}
\begin{tikzpicture}[scale=1,baseline=7pt]
\draw[bosonic] (1,2)--(1,-1);
\draw[bosonic,arrow=0.7,Gcol,rounded corners] (0,0)--(2,0)--(3,1);
\draw[bosonic,arrow=0.7,Gcol,rounded corners] (0,1)--(2,1)--(3,0);
\node at (-0.2,0) {$a'$};
\node at (-0.2,1) {$a$};
\node at (1,-1.3) {$b$};
\node at (1,2.3) {$d$};
\node at (3.2,0) {$c$};
\node at (3.2,1) {$c'$};
\node at (1.6,0.5) {$\ss g+d-a$};
\node at (2,-0.3) {$\ss c+c'-g$};
\node at (2,1.3) {$g $};
\end{tikzpicture}
\]

\subsubsection{Assume \texorpdfstring{$b>c$ and $d>a$}{b>c and d>a}.}
\label{colGcase1}
Let us now compute the $LHS$.
{\allowdisplaybreaks
\begin{align*}
   LHS=&\text{ } \br{\dfrac{y-x}{y(1-\alpha x)}} \br{\dfrac{x(1-\alpha y)}{y(1-\alpha x)}}^{a} \br{\dfrac{y}{1-\alpha y}}^{d} \br{\dfrac{x}{1-\alpha x}}^{c+c'-b}\br{\dfrac{1+\beta x}{1-\alpha x}}+\\
    &\br{\dfrac{y-x}{y(1-\alpha x)}} \br{\dfrac{x(1-\alpha y)}{y(1-\alpha x)}}^{a} \br{\dfrac{y}{1-\alpha y}}^{a+a'}\br{\dfrac{1+\beta y}{1-\alpha y}}\br{\dfrac{1+\beta x}{1-\alpha x}}\\
   & \br{\sum^{a'-1}_{g=c+c'-b+1} \br{\dfrac{x(1-\alpha y)}{x(1-\alpha x)}}^{g}}+\\
   &\br{\dfrac{x(1-\alpha y)}{y(1-\alpha x)}}^{a} \br{\dfrac{x}{1-\alpha x}}^{a'}\br{\dfrac{1+\beta x}{1-\alpha x}} \br{\dfrac{y}{1-\alpha y}}^{a}\br{\dfrac{1+\beta y}{1-\alpha y}}\\
   =& \text{ }\br{\dfrac{1+\beta x}{1-\alpha x}}^2 \br{\dfrac{x}{1-\alpha x}}^{a+c+c'-b}\br{\dfrac{y}{1-\alpha y}}^{d-a} \\
\end{align*}
}

We compute the right hand side of the equation:
{
\allowdisplaybreaks
\begin{align*}
RHS=    &\text{ } \br{\dfrac{y-x}{y(1-\alpha x)}} \br{\dfrac{x(1-\alpha   y)}{y(1-\alpha x)}}^{a+a'-d} \br{\dfrac{x}{1-\alpha x}}^{a}\br{\dfrac{1+\beta x}{1-\alpha x}} \br{\dfrac{y}{1-\alpha y}}^{a'}+\\
        &\left( \sum^{c'-1}_{g=a+a'-d+1}  \br{\dfrac{y-x}{y(1-\alpha x)}} \br{\dfrac{x(1-\alpha y)}{y(1-\alpha x)}}^{g} \br{\dfrac{x}{1-\alpha x}}^{a}\right.\\
        &\left.\br{\dfrac{1+\beta x}{1-\alpha x}} \br{\dfrac{y}{1-\alpha y}}^{a'} \br{\dfrac{1+\beta y}{1-\beta y}}\right)+\\
        &    \br{\dfrac{x(1-\alpha y)}{y(1-\alpha x)}}^{c'} \br{\dfrac{x}{1-\alpha x}}^{a}\br{\dfrac{1+\beta x}{1-\alpha x}} \br{\dfrac{y}{1-\alpha y}}^{a'} \br{\dfrac{1+\beta y}{1-\beta y}}\\
       =& \text{ } \br{\dfrac{1+\beta x}{1-\alpha x}}^2 \br{\dfrac{x}{1-\alpha x}}^{a+c+c'-b} \br{\dfrac{y}{1-\alpha y}}^{d-a}\\
\end{align*}}
\subsubsection{Assume \texorpdfstring{$a<d$ and $b=c$}{a<d and b=c}.}
\label{colGcase2}

From the computation of the previous case, we can get the $LHS$ by multiplying $\dfrac{1-\alpha x}{1+\beta x}$ to the $LHS$ of previous computation.
\[
LHS= \text{ }\br{\dfrac{1+\beta x}{1-\alpha x}} \br{\dfrac{x}{1-\alpha x}}^{a+c'}\br{\dfrac{y}{1-\alpha y}}^{d-a} \\
\]

On the right hand side, there is only one case because of the global condition, $a+a'+b=c+c'+d$.
\begin{align*}
RHS=\text{ }&\br{\dfrac{x}{1-\alpha x}}^{a+c'}\br{\dfrac{1+\beta x}{1-\alpha x}}\br{\dfrac{y}{1-\alpha y}}^{a'-c'}\\
    =\text{ }&\br{\dfrac{x}{1-\alpha x}}^{a+c'}\br{\dfrac{1+\beta x}{1-\alpha x}}\br{\dfrac{y}{1-\alpha y}}^{d-a}\\
\end{align*}

\subsubsection{Assume \texorpdfstring{$a=d$ and $b>c$}{a=d and b>c}.}
\label{colGcase3}
 $RHS$ of the present case is a $\dfrac{1-\alpha x}{1+\beta x}$ multiple of the $RHS$ of the \cref{colGcase1}.

\begin{align*}
RHS=&  \text{ } \br{\dfrac{1+\beta x}{1-\alpha x}} \br{\dfrac{x}{1-\alpha x}}^{a+c+c'-b} \br{\dfrac{y}{1-\alpha y}}^{d-a}\\
 &  \br{\dfrac{1+\beta x}{1-\alpha x}} \br{\dfrac{x}{1-\alpha x}}^{a+c+c'-b}\\
\end{align*}

For the $LHS$, there is just one valid configuration:

\begin{align*}
LHS=\text{ }& \br{\dfrac{x}{1-\alpha x}}^{a} \br{\dfrac{y}{1-\alpha y}}^{-a} \br{\dfrac{y}{1-\alpha y}}^{a} \br{\dfrac{x}{1-\alpha x}}^{a'}\br{\dfrac{1+\beta x}{1-\alpha x}}\\
 =\text{ }& \br{\dfrac{x}{1-\alpha x}}^{a+a'} \br{\dfrac{1+\beta x}{1-\alpha x}}\\
 =\text{ }& \br{\dfrac{x}{1-\alpha x}}^{a+c+c'-b} \br{\dfrac{1+\beta x}{1-\alpha x}}\\
\end{align*}
In the final step, we substitute $a'=c+c'-b$, which follows from the global condition.

\subsubsection{Assume \texorpdfstring{$a=d$ and $b=c$}{a=d and b=c}.}

Recall from \cref{colGcase3} that when $a=d$, there is a unique configuration on $LHS$. Similarly, recall from \cref{colGcase2} that there is a unique configuration on $RHS$ when $b=c$. We have already computed these two configurations and they are equal.

\subsubsection{Unitary relation for the $\widetilde{R}$ matrix.}
\label{unitary}
The $\widetilde{R}$ stisfies the unitary relation.
\[
\begin{tikzpicture}[scale=1]
\draw[bosonic,Gcol,rounded corners] (0,0) node[left,black]{$y$ \text{ }}--(1,1)--(2,0)node[right,black]{};
\draw[bosonic,Gcol,rounded corners] (0,1)node[left,black]{$x$ \text{ }}--(1,0)--(2,1)node[right,black]{};
\node at (2.7,0.5) {$=$};
\draw[bosonic,Gcol,rounded corners] (4,0) node[left,black]{$y$ \text{ }}--(6,0)node[right,black]{};
\draw[bosonic,Gcol,rounded corners] (4,1)node[left,black]{$x$ \text{ }}--(6,1)node[right,black]{};
\end{tikzpicture}
\]
Consider the following configuration:
\begin{align*}
\begin{tikzpicture}[scale=1]
\draw[bosonic,Gcol,rounded corners] (0,0) node[left,black]{$a'$}--(1,1)--(2,0)node[right,black]{$b'$};
\draw[bosonic,Gcol,rounded corners] (0,1)node[left,black]{$a$}--(1,0)--(2,1)node[right,black]{$b$};
\end{tikzpicture}
\end{align*}

When $a=b$ and $a'=b'$, there is a unique configuration with weight $1$.
Now let us assume $a\neq b$ and $a'\neq b'$.

\begin{align*}
=&\text{ }\br{\dfrac{x}{y}}^{a}\br{1-\dfrac{x}{y}}\br{\dfrac{y}{x}}^{a+a'-b'}+\sum^{a'-1}_{g=b'+1} \br{\dfrac{x}{y}}^{a}\br{1-\dfrac{x}{y}}\br{\dfrac{y}{x}}^{a+a'-g}\br{1-\dfrac{y}{x}}+\\ &\br{\dfrac{x}{y}}^{a}\br{\dfrac{y}{x}}^{a}\br{1-\dfrac{y}{x}}\\
=&\text{ }\br{1-\dfrac{x}{y}}\br{\dfrac{y}{x}}^{a'-b'}+\br{\dfrac{y}{x}}^{a'-b'-1}\br{1-\dfrac{y}{x}}\br{1-\br{\dfrac{x}{y}}^{a'-b'-1}}+\br{1-\dfrac{y}{x}}\\
=&\text{ }0
\end{align*}

\subsection{\texorpdfstring{$\mathrm{RLL}$}{RLL} relation for row model of \texorpdfstring{$\g$}{g}.}

We recall the Boltzmann weights and $r$ matrix of the row model of $\g$.

\begin{align}
w^{}_{x}
\left(
\begin{gathered}
\begin{tikzpicture}[scale=0.4,baseline=-2pt]
\draw[bosonic,gcol,arrow=0.25] (-1,0) node[left,black] {$a$} -- (1,0) node[right,black] {$c$};
\draw[bosonic,arrow=0.25] (0,-1) node[below] {$b$} -- (0,1) node[above] {$d$};
\end{tikzpicture}
\end{gathered}
\right)
\equiv
w^{}_{x}(a,b;c,d)
= \delta_{a+b,c+d} 
\begin{cases}
(\alpha+\beta)^{a-d-1} (x+\alpha) \beta^{d}& a>d\\
\beta^{a-1} x & 0<a\leq d\\
1& a=0,
\end{cases}
\end{align}
where 
$a,b,c,d$ $\in \mathbb{Z}_{\geq 0}$.

The entries of the $r$-matrix are the following:
\begin{align}
r^{a,d}_{b,c} (x,y)=
\begin{gathered}
\begin{tikzpicture}[baseline=-3pt,scale=0.9]
\draw[bosonic,arrow=0.35,gcol] (-2,0.5) node[left,black] {$a$} -- (-1,-0.5) node[below] {};
\node[text width=1cm] at (-0.4,-0.55){$c$};
\draw[bosonic,arrow=0.35,gcol] (-2,-0.5) node[left,black] {$b$} -- (-1,0.5) node[above] {};
\node[text width=1cm] at (-0.4,0.5){$d$};
\end{tikzpicture}
\end{gathered}=
\begin{cases}
0 & b>c\\
1 & b=c=0\\
\dfrac{y}{x} & b=c>0\\
\left(1- \dfrac{y}{x}\right) \left(1-\dfrac{y}{\beta} \right)^{a-d-1} & b=0\\
\left(1- \dfrac{y}{x} \right) \left(1-\dfrac{y}{\beta} \right)^{a-d-1} \left(\dfrac{y}{\beta}\right) & b>0\\
\end{cases}
\end{align}

\[
\sum^{a}_{g={0}}
\begin{tikzpicture}[scale=1,baseline=7pt]
\draw[bosonic](1,-1)--(1,2);
\draw[bosonic,arrow=0.35,gcol,rounded corners] (-1,1)--(0,0)--(2,0);
\draw[bosonic,arrow=0.35,gcol,rounded corners] (-1,0)--(0,1)--(2,1);
\node at (-1.2,0) {$ a'$};
\node at (-1.2,1) {$ a$};
\node at (1,-1.3) {$b$};
\node at (1,2.3) {$d$};
\node at (2.2,1) {$c'$};
\node at (2.2,0) {$c$};
\node at (1.6,0.5) {$\ss d+c'-g$};
\node at (0,-0.3) {$\ss a+a'-g$};
\node at (0,1.3) {$ g$};
\end{tikzpicture}
=
\sum^{c}_{g=0}
\begin{tikzpicture}[scale=1,baseline=7pt]
\draw[bosonic] (1,2)--(1,-1);
\draw[bosonic,arrow=0.7,gcol,rounded corners] (0,0)--(2,0)--(3,1);
\draw[bosonic,arrow=0.7,gcol,rounded corners] (0,1)--(2,1)--(3,0);
\node at (-0.2,0) {$a'$};
\node at (-0.2,1) {$a$};
\node at (1,-1.3) {$b$};
\node at (1,2.3) {$d$};
\node at (3.2,0) {$c$};
\node at (3.2,1) {$c'$};
\node at (1.6,0.5) {$\ss a'+b-g$};
\node at (2,-0.3) {$ g$};
\node at (2,1.3) {$\ss c+c'-g$};
\end{tikzpicture}
\]

Before we start proving the relation, let us analyze the cases we need to consider. Firstly, from the $LHS$, the weight of bottom vertex is fixed based on whether $b\leq c$ or $b>c$. Similarly, on the $RHS$, the weight of the top vertex is fixed based on the relation between $a$ and $d$.

We now look at the cases that arise from considering the entries of the $r$ matrix. Firstly, the entries of the $r$ matrix on $LHS$ depends on whether $a'>0$ or $a'=0$. Similarly, the entry of $r$ matrix on $RHS$ depends on whether $c>0$ or $c=0$.

In total, there are sixteen cases to consider. We shall divide these cases into four categories based on the conditions on $b,c$ and $a,d$. Then for each such case, we shall consider four sub-cases by the conditions on $a'$ and $c$.

\subsubsection{Assume \texorpdfstring{$b<c$ and $d< a$}{b<c and d<a}.}
\label{growcase1}
\subsubsubsection{Assume \texorpdfstring{$a'>0$ and $c>0$}{a'>0 and c>0}.}

 To ease up the computation, we break up the summation into two parts, $0\leq g \leq d$ and then ${d<g\leq a}$.
 
 Assume that $b<c$
\[
\sum^{d}_{g=0}
\begin{tikzpicture}[scale=0.8,baseline=6pt]
\draw[bosonic,gcol] (-1,1)--(0,0);
\draw[bosonic,gcol] (-1,0)--(0,1);
\node at (-1.3,0) {$ a'$};
\node at (-1.3,1) {$ a$};
\node at (0.2,-0.2) {$\ss a'+a-g$};
\node at (0.2,1) {$ g$};
\draw[bosonic] (2,0) node[below] {$\ss d+c'-g$}--(2,1) node[above] {$d$};
\draw[bosonic,gcol] (1.5,0.5) node[left,black] {$g$}--(2.5,0.5) node[right,black] {$c'$};
\draw[bosonic] (5.5,0) node[below] {$\ss b$}--(5.5,1) node[above] {$\ss {d+c'-g}$};
\draw[bosonic,gcol] (5,0.5) node[left,black] {$\ss {a+a'-g}$}--(6,0.5) node[right,black] {$c$};
\end{tikzpicture}+
\sum^{a}_{g=d+1}
\begin{tikzpicture}[scale=0.8,baseline=6pt]
\draw[bosonic,gcol] (-1,1)--(0,0);
\draw[bosonic,gcol] (-1,0)--(0,1);
\node at (-1.3,0) {$ a'$};
\node at (-1.3,1) {$ a$};
\node at (0.2,-0.2) {$\ss a'+a-g$};
\node at (0.2,1) {$ g$};
\draw[bosonic] (2,0) node[below] {$\ss d+c'-g$}--(2,1) node[above] {$d$};
\draw[bosonic,gcol] (1.5,0.5) node[left,black] {$g$}--(2.5,0.5) node[right,black] {$c'$};
\draw[bosonic] (5.5,0) node[below] {$ b$}--(5.5,1) node[above] {$\ss {d+c'-g}$};
\draw[bosonic,gcol] (5,0.5) node[left,black] {$\ss {a+a'-g}$}--(6,0.5) node[right,black] {$c$};
\end{tikzpicture}
\]
{\allowdisplaybreaks
\begin{align*}
  LHS   =\text{ }& \br{1-\dfrac{y}{x}} \br{1-\dfrac {y}{\beta}}^{a-1} \br{\dfrac{y}{\beta}} \br{\alpha+\beta}^{c-b-1}\br{x+\alpha}\beta^{d+c'}+\\
              & \sum^{d}_{g=1} \br{1-\dfrac{y}{x}} \br{1-\dfrac {y}{\beta}}^{a-g-1} \br{\dfrac{y}{\beta}} \br{\beta^{g-1}y} \br{\br{\alpha+\beta}^{c-b-1}\br{x+\alpha}\beta^{d+c'-g}}+\\
              &\sum^{a-1}_{g=d+1} \br{1-\dfrac{y}{x}} \br{1-\dfrac {y}{\beta}}^{a-g-1} \br{\dfrac{y}{\beta}} \br{\br{\alpha+\beta}^{g-d-1}\beta^{d}\br{y+\alpha}}\\
        &\br{\br{\alpha+\beta}^{c-b-1}\br{x+\alpha}\beta^{d+c'-g}}\\
        &+\br{\dfrac{y}{x}}\br{\br{\alpha+\beta}^{a-d-1}\beta^{d}\br{y+\alpha}} \br{\br{\alpha+\beta}^{c-b-1}\br{x+\alpha}\beta^{d+c'-a}}\\
     =\text{ }& (\alpha+\beta)^{c+a-b-d-2} \beta^{2d+c'-a} (x+\alpha) \br{y+ \alpha \dfrac{y}{x}}
\end{align*}
}
   
We compute the right hand side:
 Assume that $b<c$
\[
\sum^{c}_{g=0}
\begin{tikzpicture}[scale=0.8,baseline=3pt,yshift=0.3cm]
\draw[bosonic,gcol] (-1,0) node[left,black] {$a$}--(0,0) node[right,black] {$\ss {c+c'-g}$};
\draw[bosonic] (-0.5,-0.5) node[below] {$\ss {a'+b-g}$}--(-0.5,0.5) node[above] {$d$};
\draw[bosonic,gcol] (-1+3.5,0) node[left,black] {$a'$}--(0+3.5,0) node[right,black] {$g$};
\draw[bosonic] (-0.5+3.5,-0.5) node[below] {$b$}--(-0.5+3.5,0.5) node[above] {$\ss {a'+b-g}$};
\draw[bosonic,gcol] (4+1,0.5) node[above,black]{$\ss {c+c'-g}$} --(5+1,-0.5) node[right,black] {$c$};
\draw[bosonic,gcol] (4+1,-0.5) node[left,black] {$g$}--(5+1,0.5) node[right,black] {$c'$};
\end{tikzpicture}
\]

{\allowdisplaybreaks
\begin{align*}
         \dfrac{RHS}{(\alpha+\beta)^{a-d-1}\beta^{d}(x+\alpha)} =\text{ }&\br{1-\dfrac{y}{x}}
                    \br{1-\dfrac{y}{\beta}}^{c-1} \beta^{a'-1}y+\\
                   &\sum^{b}_{g=1}\br{1-\dfrac{y}{x}}\br{1-\dfrac{y}{\beta}}^{c-g-1}\dfrac{y}{\beta} \beta^{a'-1}y+\\
                   &\sum^{c-1}_{g=b+1} \br{1-\dfrac{y}{x}}\br{1-\dfrac{y}{\beta}}^{c-g-1}\dfrac{y}{\beta} \br{(\alpha+\beta)^{g-b-1}(y+\alpha)\beta^{a'+b-g}}+\\
                   &\dfrac{y}{x} \br{(\alpha+\beta)^{c-b-1}(y+\alpha)\beta^{a'+b-c}}\\
          =\text{ }&  \br{1-\dfrac{y}{x}}(\alpha+\beta)^{c-b-1}\beta^{a'+b-c}y+\\          &\dfrac{y}{x} \br{(\alpha+\beta)^{c-b-1}(y+\alpha)\beta^{a'+b-c}}\\
      RHS=\text{ }& (\alpha+\beta)^{a+c-d-b-2} \beta^{d+a'+b-c}\br{y+ \alpha \dfrac{y}{x}}(x+\alpha)\\ 
      =\text{ }& (\alpha+\beta)^{a+c-d-b-2} \beta^{2d+c'-a}\br{y+ \alpha \dfrac{y}{x}}(x+\alpha)\\ 
\end{align*}
}

\subsubsubsection{Assume \texorpdfstring{$a'=0$ and $c>0$}{a'=0 and c>0}.}
{\allowdisplaybreaks
\begin{align*}
LHS   =\text{ }& \br{1-\dfrac{y}{x}} \br{1-\dfrac {y}{\beta}}^{a-1}  \br{\alpha+\beta}^{c-b-1}\br{x+\alpha}\beta^{d+c'}+\\
              & \sum^{d}_{g=1} \br{1-\dfrac{y}{x}} \br{1-\dfrac {y}{\beta}}^{a-g-1}  \br{\beta^{g-1}y} \br{\br{\alpha+\beta}^{c-b-1}\br{x+\alpha}\beta^{d+c'-g}}+\\
              &\sum^{a+b-c}_{g=d+1} \br{1-\dfrac{y}{x}} \br{1-\dfrac {y}{\beta}}^{a-g-1}  \br{\br{\alpha+\beta}^{g-d-1}\beta^{d}\br{y+\alpha}}\\
              &\br{\br{\alpha+\beta}^{c-b-1}\br{x+\alpha}\beta^{d+c'-g}}\\
        =\text{ }& \br{1-\dfrac{y}{x}}\br{1-\dfrac{y}{\beta}}^{c-b-1}(\alpha+\beta)^{a-d-1}(x+\alpha)\beta^{d}
\end{align*}
}

We compute the $RHS$:
{\allowdisplaybreaks
\begin{align*}
         RHS =\text{ }&(\alpha+\beta)^{a-d-1}\beta^{d}(x+\alpha)\left(\br{1-\dfrac{y}{x}}
                     \br{1-\dfrac{y}{\beta}}^{c-1} +\right.\\ &\left.\br{\sum^{b}_{g=1}\br{1-\dfrac{y}{x}}\br{1-\dfrac{y}{\beta}}^{c-g-1}\dfrac{y}{\beta}} \right)\\
        =\text{ }&(\alpha+\beta)^{a-d-1}\beta^{d}(x+\alpha)\br{1-\dfrac{y}{x}}\br{1-\dfrac{y}{\beta}}^{c-b-1}        \\
\end{align*}
}

Since we assumed $b>c$, we dont need to consider the cases where $c=0$.

\subsubsection{Assume \texorpdfstring{$b\geq c$ and $d<a$}{b>=c and d<a}.}
\label{growcase2}
\subsubsubsection{Assume  \texorpdfstring{$a'>0$ and $c>0$}{a'>0 and c>0}. }
{\allowdisplaybreaks
\begin{align*}
  LHS   =\text{ }& \br{1-\dfrac{y}{x}} \br{1-\dfrac {y}{\beta}}^{a-1} \br{\dfrac{y}{\beta}} x\beta^{a+a'-1}+\\
              & \sum^{d}_{g=1} \br{1-\dfrac{y}{x}} \br{1-\dfrac {y}{\beta}}^{a-g-1} \br{\dfrac{y}{\beta}} \br{\beta^{g-1}y}  x\beta^{a+a'-g-1}+\\
              &\sum^{a-1}_{g=d+1} \br{1-\dfrac{y}{x}} \br{1-\dfrac {y}{\beta}}^{a-g-1} \br{\dfrac{y}{\beta}} \br{\br{\alpha+\beta}^{g-d-1}\beta^{d}\br{y+\alpha}}  x\beta^{a+a'-g-1}\\
              &+\br{\dfrac{y}{x}}\br{\br{\alpha+\beta}^{a-d-1}\beta^{d}\br{y+\alpha}} x\beta^{a'-1}\\
    =\text{ }& \br{\dfrac{y}{\beta}}(\alpha+\beta)^{a-d-1}\beta^{a'+d}(x+\alpha)\\          
\end{align*}
}

Since $a<d$ and $b\geq c$, the Boltzmann weights from the vertices are fixed. When we factor out the contribution from the overall sum, we are left with entries of the $r$ matrix with fixed right boundary. Then from the eigenvector property of the $r$ matrix, we get that the overall sum of $RHS$ is just the product of the fixed Boltzmann weights.
\[
RHS=\br{(\alpha+\beta)^{a-d-1}(x+\alpha)\beta^{d}}\br{\beta^{a'-1}y}
\]

\subsubsubsection{Assume \texorpdfstring{$a'=0$}{a'=0}}

{\allowdisplaybreaks
\begin{align*}
  LHS   =\text{ }& \br{1-\dfrac{y}{x}} \br{1-\dfrac {y}{\beta}}^{a-1}  x\beta^{a-1}+\\
              & \sum^{d}_{g=1} \br{1-\dfrac{y}{x}} \br{1-\dfrac {y}{\beta}}^{a-g-1}  \br{\beta^{g-1}y}  x\beta^{a-g-1}+\\
              &\sum^{a-1}_{g=d+1} \br{1-\dfrac{y}{x}} \br{1-\dfrac {y}{\beta}}^{a-g-1}  \br{\br{\alpha+\beta}^{g-d-1}\beta^{d}\br{y+\alpha}}  x\beta^{a+a'-g-1}\\
              &+\br{\br{\alpha+\beta}^{a-d-1}\beta^{d}\br{y+\alpha}}\\
    =\text{ }& (\alpha+\beta)^{a-d-1}\beta^{d}(x+\alpha)\\          
\end{align*}
}
For $RHS$, we use the eigenvector property of the $r$ matrix to conclude that $RHS$ is just the product the Boltzmann weights. Observe that the contribution from the bottom vertex is $1$. So, the overall $RHS$ is just the Boltzmann weight of the top vertex.
\[
RHS=(\alpha+\beta)^{a-d-1}(x+\alpha)\beta^{d}
\]

\subsubsection{Assume \texorpdfstring{$a\leq d$ and $b<c$}{a<=d and b<c}.}
Since we are assuming $b<c$, its follows that $c>0$.
\subsubsubsection{Assume \texorpdfstring{$a'>0$}{a'>0}}
\label{growcase3}
{\allowdisplaybreaks
\begin{align*}
  LHS   =\text{ }& \br{1-\dfrac{y}{x}} \br{1-\dfrac {y}{\beta}}^{a-1} \br{\dfrac{y}{\beta}} \br{\alpha+\beta}^{c-b-1}\br{x+\alpha}\beta^{d+c'}+\\
              & \sum^{a-1}_{g=1} \br{1-\dfrac{y}{x}} \br{1-\dfrac {y}{\beta}}^{a-g-1} \br{\dfrac{y}{\beta}} \br{\beta^{g-1}y} \br{\br{\alpha+\beta}^{b-c-1}\br{x+\alpha}\beta^{d+c'-g}}+\\
              &+\br{\dfrac{y}{x}}\br{\beta^{a-1}y} \br{\br{\alpha+\beta}^{c-b-1}\br{x+\alpha}\beta^{d+c'-a}}\\
     =\text{ }&(\alpha+\beta)^{c-b-1}(x+\alpha)\beta^{d+c'}\br{\dfrac{y}{\beta}}\\        
\end{align*}
}

Given that $b<c$, we can get the $RHS$ computation from \cref{growcase1}. The only difference being the Boltzmann weight corresponding to the top vertex.
\begin{align*}
RHS =\text{ }& x\beta^{a-1} (\alpha+\beta)^{c-b-1}\beta^{a'+b-c} \br{\br{1-\dfrac{y}{x}}y+ \dfrac{y}{x}(y+\alpha)}\\ 
 =\text{ }& (\alpha+\beta)^{c-b-1}\beta^{a'+b-c+a}\br{\dfrac{y}{\beta}}(x+\alpha)\\
 =\text{ }& (\alpha+\beta)^{c-b-1}\beta^{d+c'}\br{\dfrac{y}{\beta}}(x+\alpha)
\end{align*}

We do not need to consider the case where $a'=0$ as the global condition forces $c'$ to negative.
\begin{align*}
c'=&(a-d)+(b-c)<0\\
\end{align*}

\subsubsection{Assume \texorpdfstring{$a\leq d$ and $b\geq c$}{a<=d and b>=c}.}

\subsubsubsection{Assume \texorpdfstring{$a'>0$}{a'>0}}

{\allowdisplaybreaks
\begin{align*}
LHS=\text{ }& \br{1-\dfrac{y}{x}}\br{1-\dfrac{y}{\beta}}^{a-1}\br{\dfrac{y}{\beta            }}\beta^{a+a'-1}x+\\
           &\sum^{a-1}_{g=1}\br{1-\dfrac{y}{x}}\br{1-\dfrac{y}{\beta}}^{a-g-1}\br{\dfrac{y}{\beta}} \beta^{g-1}y \beta^{a+a'-g-1}x+\\
           &\dfrac{y}{x}\beta^{a-1}y\beta^{a'-1}x\\
           =\text{ }& \br{y\beta^{a'-1}}\br{x\beta^{a-1}}\\
\end{align*}
}

As a result of the assumptions, the Boltzmann weights are fixed. Using the eigenvector property, $RHS$ is just the product of the two fixed Boltzmann weights.
\[
RHS=\br{x\beta^{a-1}} \br{y\beta^{a'-1}}
\]

\subsubsubsection{Assume \texorpdfstring{$a'=0$}{a'=0}}

{
\allowdisplaybreaks
\begin{align*}
LHS=\text{ }& \br{1-\dfrac{y}{x}}\br{1-\dfrac{y}{\beta}}^{a-1}\beta^{a-1}x+\\
            &\sum^{a-1}_{g=1}\br{1-\dfrac{y}{x}}\br{1-\dfrac{y}{\beta}}^{a-g-1} \beta^{g-1}y \beta^{a-g-1}x+\beta^{a-1}x\\
   =\text{ }&  \beta^{a-1}x\\       
\end{align*}
}

On the $RHS$, its just the Boltzmann weight of the top vertex.
\[
RHS=\beta^{a-1}x
\]

\subsection{\texorpdfstring{$\mathrm{RLL}$}{RLL} for column model of \texorpdfstring{$\g$}{g}.}
We recall the Boltzmann weights, and entries of $\tilde{r}$ matrix.
\begin{align}
w^{}_{x}
\left(
\begin{gathered}
\begin{tikzpicture}[scale=0.4,baseline=-2pt]
\draw[bosonic,gccol,arrow=0.25] (-1,0) node[left,black] {$a$} -- (1,0) node[right,black] {$c$};
\draw[bosonic,arrow=0.25] (0,-1) node[below] {$b$} -- (0,1) node[above] {$d$};
\end{tikzpicture}
\end{gathered}
\right)
\equiv
w^{}_{x}(a,b;c,d)
= \delta_{a+b,c+d}
\begin{cases}
\beta (\alpha + \beta)^{a-d-1} (x+\alpha)^{d} & 0<a>d\\
 x (x+\alpha)^{a-1} & 0<a\leq d\\
1& a=0.
\end{cases}
\end{align}

\begin{align}
\widetilde{r}^{k,l}_{i,j} (x,y)=\begin{gathered}
\begin{tikzpicture}[baseline=-3pt,scale=0.9]
\draw[bosonic,arrow=0.35,gccol] (-2,0.5) node[left,black] {$k$} -- (-1,-0.5) node[below] {};
\node[text width=1cm] at (-0.4,-0.55){$j$};
\draw[bosonic,arrow=0.35,gccol] (-2,-0.5) node[left,black] {$i$} -- (-1,0.5) node[above] {};
\node[text width=1cm] at (-0.4,0.5){$l$};
\end{tikzpicture}
\end{gathered}=
\begin{cases}
0 & i<j\\
1& k=l=0\\
\dfrac{x}{y} \left(\dfrac{y+\alpha}{x+\alpha}\right)^{1-k} & k=l>0\\
 \left(1-\dfrac{x}{y}\right)  & k=0\\
\dfrac{x}{y}  \left(\dfrac{y+\alpha}{x+\alpha} -1\right) \left(\dfrac{y+\alpha}{x+\alpha}\right)^{-k} & k>0\\
\end{cases}
\end{align}

\begin{align*}
\sum^{a+a'}_{g={a}}
\begin{tikzpicture}[scale=1,baseline=7pt]
\draw[bosonic](1,-1)--(1,2);
\draw[bosonic,arrow=0.35,gccol,rounded corners] (-1,1)--(0,0)--(2,0);
\draw[bosonic,arrow=0.35,gccol,rounded corners] (-1,0)--(0,1)--(2,1);
\node at (-1.2,0) {$ a'$};
\node at (-1.2,1) {$ a$};
\node at (1,-1.3) {$b$};
\node at (1,2.3) {$d$};
\node at (2.2,1) {$c'$};
\node at (2.2,0) {$c$};
\node at (1.2,0.5) {$\ss d+c'-g$};
\node at (0,-0.3) {$\ss a+a'-g$};
\node at (0,1.3) {$g$};
\end{tikzpicture}
&=
\sum^{c+c'}_{g=c}
\begin{tikzpicture}[scale=1,baseline=7pt]
\draw[bosonic] (1,2)--(1,-1);
\draw[bosonic,arrow=0.7,gccol,rounded corners] (0,0)--(2,0)--(3,1);
\draw[bosonic,arrow=0.7,gccol,rounded corners] (0,1)--(2,1)--(3,0);
\node at (-0.2,0) {$a'$};
\node at (-0.2,1) {$a$};
\node at (1,-1.3) {$b$};
\node at (1,2.3) {$d$};
\node at (3.2,0) {$c$};
\node at (3.2,1) {$c'$};
\node at (1.2,0.5) {$\ss a'+b-g$};
\node at (2,-0.3) {$g$};
\node at (2,1.3) {$\ss c+c'-g$};
\end{tikzpicture}
\end{align*}

 Firstly, on $LHS$ the Boltzmann weight of the bottom vertex is fixed based on the relation between $b$ and $c$. Similarly, from the $RHS$, we see that the weight of the top vertex is determined by the relation between $a$ and $d$. So, we need to assume certain relations between $b$ and $c$, and $a$ and $d$. Furthermore, observe that entries of the $\widetilde{r}$ matrix depends on whether the top left node is equal to or greater than $0$.

Therefore, in each subsection we assume some combination of relations between $b$ and $c$, and $a$ and $d$, and a condition on $c'$.

\subsubsection{Assume \texorpdfstring{$a>d$ and $b\geq c$}{a>d and b>=c}.}

As $a>d$, we get that $a>0$. Similarly, from the bottom node of the top vertex on $LHS$, we get that $c'\neq 0$. Therefore, we only need to consider the case where $a>0$ and $c'>0$.

{\allowdisplaybreaks
\begin{align*}
LHS=\text{ }& \br{\dfrac{x}{y}}\br{\dfrac{y+\alpha}{x+\alpha}}^{1-a}\beta(\alpha+
              \beta)^{a-d-1}(y+\alpha)^d x (x+\alpha)^{a'-1}+\\
            & \sum^{a+a'-1}_{g=a+1} \br{\dfrac{x}{y}}\br{\dfrac{y-x}{y+\alpha}}\br{\dfrac{y+\alpha}{x+\alpha}}^{1-a} \beta(\alpha+\beta)^{g-d-1}(y+\alpha)^d x (x+\alpha)^{a'+a-g-1}\\
            &\br{\dfrac{x}{y}}\br{\dfrac{y-x}{y+\alpha}}\br{\dfrac{y+\alpha}{x+\alpha}}^{1-a} \beta(\alpha+\beta)^{a+a'-d-1}(y+\alpha)^d\\
     =\text{ }& \br{\dfrac{x}{y}} \beta (\alpha+\beta)^{a-d-1} (y+\alpha)^{d-a} x (x+\alpha)^{a'+a-1}\br{ \dfrac{y-\beta}{x-\beta}}-\\
             & \br{\dfrac{x}{y}} \br{\dfrac{y-x}{y+\alpha}}\beta (\alpha+\beta)^{a+a'-d-1}(y+\alpha)^{d-a+1} (x+\alpha)^{a-1} \br{\dfrac{\beta}{x-\beta}}\\         
\end{align*}
}

We compute the $RHS$. Observe that we need assume a condition on $c'$. First, let us assume that $c'>0$.

Observe that the node $a'+b-g$ has to be positive. Based on our assumptions and the global condition, we conclude that,
\[
a'+b-(c+c')=d-a<0.
\]
Therefore, the range of $g$ is from $a$ to $a'+b$.

{\allowdisplaybreaks
\begin{align*}
\dfrac{RHS}{\beta (\alpha+\beta)^{a-d-1}(x+\alpha)^{d}}
           =\text{ }& \br{\dfrac{x}{y}}\br{\dfrac{y+\alpha}{x+\alpha}}^{1-c'}y (y+\alpha)^{a'-1}+\\
           &\sum^{b}_{g=c+1} \br{\dfrac{x}{y}}\br{\dfrac{y+\alpha}{x+\alpha}}^{1+g-c'-c} \br{\dfrac{y-x}{y+\alpha}}y (y+\alpha)^{a'-1}+\\
           &\sum^{a'+b}_{g=b+1} \br{\dfrac{x}{y}}\br{\dfrac{y+\alpha}{x+\alpha}}^{1+g-c'-c} \br{\dfrac{y-x}{y+\alpha}}\beta (\alpha+\beta)^{g-b-1} (y+\alpha)^{a'+b-g}\\
    RHS     =\text{ }&\br{\dfrac{x}{y}} \br{\dfrac{y-\beta}{x-\beta}} x(x+\alpha)^{a+a'-1}(y+\alpha)^{d-a}\beta(\alpha+\beta)^{a-d-1}-\\
           &\br{\dfrac{x}{y}}(y-x)\br{\dfrac{\beta^2}{x-\beta}}
           (y+\alpha)^{d-a}(x+\alpha)^{a-1}(\alpha+\beta)^{a+a'-d-1}\\
\end{align*}
}
\subsubsection{Assume \texorpdfstring{$a\leq d$ and $b\geq c$}{a<=d and b>=c}.}

We now assume that $a>0$ and $c'>0$.
{\allowdisplaybreaks
\begin{align*}
LHS=\text{ }& \br{\dfrac{x}{y}}\br{\dfrac{y+\alpha}{x+\alpha}}^{1-a} y (y+\alpha)^{a-1} x (x+\alpha)^{a'-1}+\\
            & \sum^{d}_{g=a+1} \br{\dfrac{x}{y}}\br{\dfrac{y-x}{y+\alpha}}\br{\dfrac{y+\alpha}{x+\alpha}}^{1-a} y(y+\alpha)^{g-1} x (x+\alpha)^{a'+a-g-1}+\\
            & \sum^{a+a'-1}_{g=d+1} \br{\dfrac{x}{y}}\br{\dfrac{y-x}{y+\alpha}}\br{\dfrac{y+\alpha}{x+\alpha}}^{1-a} \beta(\alpha+\beta)^{g-d-1}(y+\alpha)^d x (x+\alpha)^{a'+a-g-1}+\\
            &\br{\dfrac{x}{y}}\br{\dfrac{y-x}{y+\alpha}}\br{\dfrac{y+\alpha}{x+\alpha}}^{1-a} \beta(\alpha+\beta)^{a+a'-d-1}(y+\alpha)^d\\
            =\text{ }& \br{\dfrac{x^3}{y}}\br{\dfrac{y-\beta}{x-\beta}}(y+\alpha)^{d-a}(x+\alpha)^{2a+a'-d-2}-\\
            & \br{\dfrac{x}{y}} \br{\dfrac{y-x}{x-\beta}}\beta (\alpha+\beta)^{a+a'-d-1}(y+\alpha)^{d-a}(x+\alpha)^{a-1}
\end{align*}            
}

{\allowdisplaybreaks
\begin{align*}
\dfrac{RHS}{x(x+\alpha)^{a-1}}
           =\text{ }& \br{\dfrac{x}{y}}\br{\dfrac{y+\alpha}{x+\alpha}}^{1-c'}y (y+\alpha)^{a'-1}+\\
           &\sum^{b}_{g=c+1} \br{\dfrac{x}{y}}\br{\dfrac{y+\alpha}{x+\alpha}}^{1+g-c'-c} \br{\dfrac{y-x}{y+\alpha}}y (y+\alpha)^{a'-1}+\\
           &\sum^{c+c'-1}_{g=b+1} \br{\dfrac{x}{y}}\br{\dfrac{y+\alpha}{x+\alpha}}^{1+g-c'-c} \br{\dfrac{y-x}{y+\alpha}}\beta (\alpha+\beta)^{g-b-1} (y+\alpha)^{a'+b-g}+\\
           &\br{1-\dfrac{x}{y}}\beta (\alpha+\beta)^{c+c'-b-1} (y+\alpha)^{a'+b-c-c'}\\
    RHS   =\text{ }& \br{\dfrac{x^3}{y}}\br{\dfrac{y-\beta}{x-\beta}}(y+\alpha)^{d-a}(x+\alpha)^{2a+a'-d-2}-\\
          & \br{\dfrac{x}{y}}\br{\dfrac{y-x}{x-\beta}}\beta(\alpha+\beta)^{a+a'-d-1}(y+\alpha)^{d-a}(x+\alpha)^{a-1}\\
\end{align*}
}

\subsubsubsection{Assume \texorpdfstring{$a=0$ and $c'>0$}.}
{\allowdisplaybreaks
\begin{align*}
LHS=\text{ }& x(x+\alpha)^{a'-1}+\sum^{d}_{g=1} \br{1-\dfrac{x}{y}} y(y+\alpha)^{g-1}x(x+\alpha)^{a'-g-1}+\\
            &\sum^{a'-1}_{g=d+1}\br{1-\dfrac{x}{y}}\beta(\alpha+\beta)^{g-d-1}(y+\alpha)^{d}x(x+\alpha)^{a'-g-1}+\\
            &\br{1-\dfrac{x}{y}}\beta(\alpha+\beta)^{a'-d-1}(y+\alpha)^{d}\\
            =\text{ }& x(x+\alpha)^{a'-d-1}(y+\alpha)^{d}\br{\dfrac{x}{y}}\br{\dfrac{y-\beta}{x-\beta}}-\\
            &\br{1-\dfrac{x}{y}}\beta (y+\alpha)^{d}(\alpha+\beta)^{a'-d-1}\br{\dfrac{\beta}{x-\beta}}
\end{align*}
}
 Observe that, while computing $RHS$ in the case where $a\leq d$, its only in the final step we multiply the Boltzmann weight of the top vertex. Here, the Boltzmann weight if the top vertex is $1$. Therefore,
{\allowdisplaybreaks
\begin{align*}
RHS=\text{ }&  \br{\dfrac{x}{y}}\br{\dfrac{y+\alpha}{x+\alpha}}^{1+b-c-c'}(y+\alpha)^{a'-1}\br{\dfrac{(y-\beta)x}{x-\beta}}-\\
           & \beta(\alpha+\beta)^{c+c'-b-1}(y+\alpha)^{d}\br{\dfrac{\beta(y-x)}{y(x-\beta)}}\\
           =\text{ }& \br{\dfrac{x^2}{y}}\br{\dfrac{y-\beta}{x-\beta}}(y+\alpha)^{d}(x+\alpha)^{a'-d-1}-\\
           & \br{1-\dfrac{x}{y}}\br{\dfrac{\beta}{x-\beta}}\beta(y+\alpha)^{d}(\alpha+\beta)^{a'-d-1}
\end{align*}
}

In the above computation, we have assumed $a'>d$. We now consider the case where $a'\leq d$.

{\allowdisplaybreaks
\begin{align*}
LHS=\text{ }& x(x+\alpha)^{a'-1}+\sum^{a'-1}_{g=1} \br{1-\dfrac{x}{y}} y(y+\alpha)^{g-1}x(x+\alpha)^{a'-g-1}+\\
            &\br{1-\dfrac{x}{y}}y(y+\alpha)^{a'-1}\\
            =\text{ }&y(y+\alpha)^{a'-1}\\
\end{align*}
}

On the $RHS$, the weights of the vertices are fixed for all $g$. Since the right boundary of the cross is fixed, because of the eigenvector property, we have  $RHS=y(y+\alpha)^{a'-1}$.

\subsubsection{Assume \texorpdfstring{$a>d$ and $b<c$}{a>d and b<c}.}

As $a>d$, we will have $a>0$. Also $c'>0$, otherwise we get contradiction on the range of $g$.
{\allowdisplaybreaks
\begin{align*}
LHS=\text{ }&\br{\dfrac{x}{y}}\br{\dfrac{y+\alpha}{x+\alpha}}^{1-a}
            \beta(\alpha+\beta)^{a-d-1}(y+\alpha)^{d}\beta(\alpha+
            \beta)^{a'+a-d-c'-1}(x+\alpha)^{d+c'-a}+\\
            &\sum^{d+c'}_{g=a+1}\br{\dfrac{x}{y}}\br{\dfrac{y+\alpha}{x+\alpha}}^{1-a}\br{\dfrac{y-x}{y+\alpha}} \beta (\alpha+\beta)^{g-d-1}(y+\alpha)^{d}\beta (\alpha+\beta)^{a+a'-d-c'-1}(x+\alpha)^{d+c'-g}\\
            =\text{  }&\br{\dfrac{x}{y}}\beta^{2}(\alpha+\beta)^{a+c-b-d-2}(y+\alpha)^{d+1-a}(x+\alpha)^{d+c'-1}+\\
            &\br{\dfrac{x}{y}}\br{\dfrac{y-x}{y+\alpha}}\beta^2 (\alpha+\beta)^{a+b-c-d-1}(y+\alpha)^{d+1-a}(x+\alpha)^{d+c'-1}\br{\dfrac{1-\br{\dfrac{\alpha+\beta}{x+\alpha}}^{d+c'-a}}{x-\beta}}\\
\end{align*}}
We compute $RHS$:
{\allowdisplaybreaks
\begin{align*}
RHS=\text{ }& \br{\dfrac{x}{y}}\br{\dfrac{y+\alpha}{x+\alpha}}^{1-c'} \beta(\alpha+\beta)^{a-d-1}(x+\alpha)^{d}\beta (\alpha+\beta)^{c-b-1}(y+\alpha)^{a'+b-c}+\\
             &\sum^{a'+b}_{g=c+1} \br{\dfrac{x}{y}}\br{\dfrac{y-x}{y+\alpha}}\br{\dfrac{y+\alpha}{x+\alpha}}^{1+g-c-c'}\br{\beta(\alpha+\beta)^{a-d-1}(x+\alpha)^{d}}\\
             &\br{\beta (\alpha+\beta)^{g-b-1}(y+\alpha)^{a'+b-g}}\\
             =\text{ }&\br{\dfrac{x}{y}}\beta^2 (\alpha+\beta)^{a+c-b-d-2}(y+\alpha)^{d-a+1}(x+\alpha)^{d+c'-1}+\\
             & \br{\dfrac{x}{y}}\br{\dfrac{y-x}{y+\alpha}}\beta^2 (\alpha+\beta)^{a+c-b-d-1}(y+\alpha)^{d-a+1}(x+\alpha)^{d+c'-1}
             \br{\dfrac{1-\br{\dfrac{\alpha+\beta}{x+\alpha}}^{d+c'-a}}{x-\beta}}\\
\end{align*}
}

\subsubsection{Assume \texorpdfstring{$a\leq d$ and $b<c$}{a<=d and b<c}.}

\subsubsubsection{Assume \texorpdfstring{$a>0$ and $c'>0$}{a>0 and c'>0}}

 Recall that we have the global condition $a+a'+b=c+c'+d$. Observe that, because of the assumptions, the range of $g$ on the $LHS$ is $a$ to $d+c'$.

On the $RHS$, we have $a'+b-c-c'=d-a\geq 0$. Therefore, the range of $g$ is from $c$ to $c+c'$

{\allowdisplaybreaks
\begin{align*}
LHS=\text{ }& \br{\dfrac{x}{y}}\br{\dfrac{y+\alpha}{x+\alpha}}^{1-a}y(y+\alpha)^{a-1}\beta (\alpha+\beta)^{c-b-1}(x+\alpha)^{d+c'-a}+\\
            & \sum^{d}_{g=a+1}\br{\dfrac{x}{y}}\br{\dfrac{y-x}{y+\alpha}}\br{\dfrac{y+\alpha}{x+\alpha}}^{1-a}y(y+\alpha)^{g-1}\beta (\alpha+\beta)^{c-b-1}(x+\alpha)^{d+c'-g}+\\
            &\sum^{d+c'}_{g=d+1} \br{\dfrac{x}{y}}\br{\dfrac{y-x}{y+\alpha}}\br{\dfrac{y+\alpha}{x+\alpha}}^{1-a} \beta (\alpha+\beta)^{g-d-1}(y+\alpha)^{d}\beta (\alpha+\beta)^{c-b-1}(x+\alpha)^{d+c'-g}\\
             =\text{ }& \br{\dfrac{x^2}{y}}\beta (\alpha+\beta)^{c-b-1}(y+\alpha)^{d-a}(x+\alpha)^{a+c'-1}\br{\dfrac{y-\beta}{x-\beta}}-\\
            &\br{\dfrac{x}{y}}\beta^2 \br{\dfrac{y-x}{x-\beta}} (\alpha+\beta)^{c+c'-b-1}(y+\alpha)^{d-a}(x+\alpha)^{a-1}\\
\end{align*}
}

{\allowdisplaybreaks
\begin{align*}
\dfrac{RHS}{x(x+\alpha)^{a-1}}
           =\text{ }& \br{\dfrac{x}{y}}\br{\dfrac{y+\alpha}{x+\alpha}}^{1-c'} \beta (\alpha+\beta)^{c-b-1} (y+\alpha)^{a'+b-c}+\\
           &\sum^{c+c'-1}_{g=c+1} \br{\dfrac{x}{y}}\br{\dfrac{y+\alpha}{x+\alpha}}^{1+g-c'-c} \br{\dfrac{y-x}{y+\alpha}}\beta (\alpha+\beta)^{g-b-1} (y+\alpha)^{a'+b-g}+\\
           &\br{1-\dfrac{x}{y}}\beta (\alpha+\beta)^{c+c'-b-1} (y+\alpha)^{a'+b-c-c'}\\
         RHS  =\text{ }&\br{\dfrac{x^2}{y}}\beta (\alpha+\beta)^{c-b-1}(x+\alpha)^{a+c'-1}(y+\alpha)^{a'+b-c-c'}\br{\dfrac{y-\beta}{x-\beta}}-\\
           &\br{\dfrac{x}{y}}\beta^2(\alpha+\beta)^{c+c'-b-1}(y+\alpha)^{a'+b-c-c'}(x+\alpha)^{a-1}\br{\dfrac{y-x}{x-\beta}}
\end{align*}
}
           
\subsubsubsection{Assume $a=0$ and $c'>0$}

{\allowdisplaybreaks
\begin{align*}
LHS=\text{ }& \beta (\alpha+\beta)^{c-b-1}(x+\alpha)^{d+c'}+\\
            & \sum^{d}_{g=1} \br{1-\dfrac{x}{y}} y(y+\alpha)^{g-1}\beta (\alpha+\beta)^{c-b-1}(x+\alpha)^{d+c'-g}+\\
            & \sum^{d+c'}_{g=d+1} \br{1-\dfrac{x}{y}} \beta (\alpha+\beta)^{g-d-1}(y+\alpha)^{d}\beta (\alpha+\beta)^{c-b-1}(x+\alpha)^{d+c'-g}\\
             =\text{ }& \br{\dfrac{x}{y}}\beta (\alpha+\beta)^{c-b-1}(y+\alpha)^{d}(x+\alpha)^{c'}\br{\dfrac{y-\beta}{x-\beta}}-\\
             &\br{1-\dfrac{x}{y}}\br{\dfrac{\beta^2}{x-\beta}} (\alpha+\beta)^{c'+c-b-1}(y+\alpha)^{d}\\
\end{align*}}

Observe when $a=0$, the only difference in the $RHS$ from the earlier case is the weight of the Boltzmann weight of the top vertex.
{\allowdisplaybreaks
\begin{align*}
RHS= \text{ }&\br{\dfrac{x}{y}}\beta (\alpha+\beta)^{c-b-1}(y+\alpha)^{d}(x+\alpha)^{c'}\br{\dfrac{y-\beta}{x-\beta}}-\\
             &\br{1-\dfrac{x}{y}}\br{\dfrac{\beta^{2}}{x-\beta}}(\alpha+\beta)^{c+c'-b-1}(y+\alpha)^{d}\\
\end{align*}
}

\subsubsubsection{Assume $a=0$ and $c'=0$}

{\allowdisplaybreaks
\begin{align*}
LHS=\text{ }& \beta (\alpha+\beta)^{c-b-1}(x+\alpha)^{d+c'}+\\
            & \sum^{d}_{g=1} \br{1-\dfrac{x}{y}} y(y+\alpha)^{g-1}\beta (\alpha+\beta)^{c-b-1}(x+\alpha)^{d-g}+\\
            =\text{ }& \beta(\alpha+\beta)^{c-b-1}(y+\alpha)^{d}
\end{align*}
}

On the $RHS$, there is a unique configuration.
\[
\begin{aligned}
RHS=\text{ }& \beta (\alpha+\beta)^{c-b-1}(y+\alpha)^{a'+b-c}\\
=\text{ }& \beta (\alpha+\beta)^{c-b-1}(y+\alpha)^{d}\\
\end{aligned}
\qquad
\br{
\begin{tikzpicture}[scale=0.8,baseline=7pt]
\draw[bosonic] (1,2)--(1,-1);
\draw[bosonic,arrow=0.7,gccol,rounded corners] (0,0)--(2,0)--(3,1);
\draw[bosonic,arrow=0.7,gccol,rounded corners] (0,1)--(2,1)--(3,0);
\node at (-0.2,0) {$\ss a'$};
\node at (-0.2,1) {$\ss 0$};
\node at (1,-1.3) {$\ss b$};
\node at (1,2.3) {$\ss d$};
\node at (3.2,0) {$\ss c$};
\node at (3.2,1) {$\ss 0$};
\node at (1.6,0.5) {$\ss a'+b-c$};
\node at (2,-0.3) {$\ss c$};
\node at (2,1.3) {$\ss 0$};
\end{tikzpicture}}\\
\]

\subsubsubsection{Assume $a>0$ and $c'=0$}

{\allowdisplaybreaks
\begin{align*}
LHS=\text{ }& \br{\dfrac{x}{y}}\br{\dfrac{y+\alpha}{x+\alpha}}^{1-a}y(y+\alpha)^{a-1}\beta (\alpha+\beta)^{c-b-1}(x+\alpha)^{d+c'-a}+\\
            & \sum^{d}_{g=a+1}\br{\dfrac{x}{y}}\br{\dfrac{y-x}{y+\alpha}}\br{\dfrac{y+\alpha}{x+\alpha}}^{1-a}y(y+\alpha)^{g-1}\beta (\alpha+\beta)^{c-b-1}(x+\alpha)^{d+c'-g}+\\
            =\text{ }&\beta (\alpha+\beta)^{c-b-1}(y+\alpha)^{d-a}x(x+\alpha)^{a-1}\\
\end{align*}
}

Just like in the previous case, we have a unique configuration.
\begin{align*}
RHS=\text{ }& x(x+\alpha)^{a-1} \beta (\alpha+\beta)^{c-b-1}(y+\alpha)^{a'+b-c}\\
  =\text{ }& x(x+\alpha)^{a-1} \beta (\alpha+\beta)^{c-b-1}(y+\alpha)^{d-a}\\
\end{align*}

\subsection{\texorpdfstring{$\mathrm{RLL}$}{RLL} for the Cauchy Identity.}
\label{rllforcauchy}
We prove a relation between $L^*$, the dual $L$ matrix of row model of $G^{(-\alpha,-\beta)}_{\lambda}$, and $l$, from the row model of $\g$.

For convenience let us recall all the characters of the play.

\begin{align*}
\begin{tabular}{ccccc}
\begin{tikzpicture}[scale=0.4,baseline=-2pt]
\draw[fermionic,Gcol,arrow=0.25]  (-1,0) node[left,black] {$\ss {1}$}--(1,0) node[right,black] {$\ss {1}$};
\draw[bosonic,arrow=0.25] (0,-1) node[below] {$\ss {0}$}--(0,1) node[above] {$\ss {0}$};
\node[text width=1cm] at (1,-3.5){1};
\end{tikzpicture}
\qquad
\begin{tikzpicture}[scale=0.4,baseline=-2pt]
\draw[fermionic,Gcol,arrow=0.25] (-1,0) node[left,black] {$\ss {1}$}--(1,0) node[right,black] {$\ss {1}$};
\draw[bosonic,arrow=0.25] (0,-1) node[below] {$\ss {m}$}--(0,1) node[above] {$\ss {m}$};
\node[text width=1cm] at (0.4,-3.5){$ {\frac{1-\beta x}{1+\alpha x}}$};
\end{tikzpicture}
\qquad
\begin{tikzpicture}[scale=0.4,baseline=-2pt]
\draw[fermionic,Gcol,arrow=0.25] (-1,0) node[left,black] {$\ss {1}$} -- (1,0) node[right,black] {$\ss {0}$};
\draw[bosonic,arrow=0.25]  (0,-1) node[below] {$\ss {m-1}$}--(0,1) node[above] {$\ss {m}$};
\node[text width=1cm] at (0.4,-3.5){$ {\frac{1-\beta x}{1+\alpha x}}$};
\end{tikzpicture}
\qquad
\begin{tikzpicture}[scale=0.4,baseline=-2pt]
\draw[fermionic,Gcol,arrow=0.25](-1,0) node[left,black] {$\ss {0}$}-- (1,0) node[right,black] {$\ss {1}$};
\draw[bosonic,arrow=0.25] (0,-1) node[below] {$\ss {m+1}$}--(0,1) node[above] {$\ss {m}$};
\node[text width=1cm] at (0.4,-3.5){$\frac{x}{1+\alpha x}$};
\end{tikzpicture}
\qquad
\begin{tikzpicture}[scale=0.4,baseline=-2pt]
\draw[fermionic,Gcol,arrow=0.25]  (-1,0) node[left,black] {$\ss {0}$}--(1,0) node[right,black] {$\ss {0}$} ;
\draw[bosonic,arrow=0.25]  (0,-1) node[below] {$\ss {m}$}--(0,1) node[above] {$\ss {m}$};
\node[text width=1cm] at (0.4,-3.5){$\frac{x}{1+\alpha x}$};
\end{tikzpicture}\\
\end{tabular}
\end{align*}

\begin{align}
w^{}_{x}
\left(
\begin{gathered}
\begin{tikzpicture}[scale=0.4,baseline=-2pt]
\draw[bosonic,gcol,arrow=0.25] (-1,0) node[left,black] {$a$} -- (1,0) node[right,black] {$c$};
\draw[bosonic,arrow=0.25] (0,-1) node[below] {$b$} -- (0,1) node[above] {$d$};
\end{tikzpicture}
\end{gathered}
\right)
\equiv
w^{}_{x}(a,b;c,d)
= \delta_{a+b,c+d} 
\begin{cases}
(\alpha+\beta)^{a-d-1} (x+\alpha) \beta^{d}& a>d\\
\beta^{a-1} x & 0<a\leq d\\
1& a=0,
\end{cases}
\end{align}
where 
$a,b,c,d$ $\in \mathbb{Z}_{\geq 0}$.

The $\mathfrak {R}$ matrix $\in$ {\rm End}$(F \otimes W)$, 

\begin{align}
\label{RforTandt}
\mathfrak{R}^{k,l}_{i,j}=
\begin{gathered}
\begin{tikzpicture}[baseline=-3pt,scale=0.9]
\draw[fermionic,arrow=0.35,Gcol] (-2,0.5) node[left,black] {$k$} -- (-1,-0.5) node[below] {};
\node[text width=1cm] at (-0.4,-0.55){$j$};
\draw[bosonic,arrow=0.35,gcol] (-2,-0.5) node[left,black] {$i$} -- (-1,0.5) node[above] {};
\node[text width=1cm] at (-0.4,0.5){$l$};
\end{tikzpicture}
\end{gathered}=
\begin{cases}
1-x y & j=k=1,i=l=0\\
xy & k=l=0,i=j=1\\
1-x \beta & k=1\\
x\beta &k=0\\
1& i=k=l=j=0\\
\end{cases}
\end{align}
where $k,j\in \{ 0,1\}$ and $i,l \in \ZZ_{\geq 0}$,
\begin{align}
\begin{tabular}{cccccc}
\begin{tikzpicture}[baseline=-3pt,scale=0.9]
\draw[fermionic,arrow=0.35,Gcol] (-2,0.5) node[left,black] {$\ss 0$} -- (-1,-0.5) node[below] {};
\node[text width=1cm] at (-0.4,-0.55){$\ss 0$};
\draw[bosonic,arrow=0.35,gcol] (-2,-0.5) node[left,black] {$\ss 0$} -- (-1,0.5) node[above] {};
\node[text width=1cm] at (-0.4,0.5){$\ss 0$};
\node at (-1.5,-1.5) {$ 1$};
\end{tikzpicture}
\quad
\begin{tikzpicture}[baseline=-3pt,scale=0.9]
\draw[fermionic,arrow=0.35,Gcol] (-2,0.5) node[left,black] {$\ss 1$} -- (-1,-0.5) node[below] {};
\node[text width=1cm] at (-0.4,-0.55){$\ss 1$};
\draw[bosonic,arrow=0.35,gcol] (-2,-0.5) node[left,black] {$\ss 0$} -- (-1,0.5) node[above] {};
\node[text width=1cm] at (-0.4,0.5){$\ss 0$};
\node at (-1.5,-1.5) {$ 1-xy$};
\end{tikzpicture}
\quad
\begin{tikzpicture}[baseline=-3pt,scale=0.9]
\draw[fermionic,arrow=0.35,Gcol] (-2,0.5) node[left,black] {$\ss 0$} -- (-1,-0.5) node[below] {};
\node[text width=1cm] at (-0.4,-0.55){$\ss 1$};
\draw[bosonic,arrow=0.35,gcol] (-2,-0.5) node[left,black] {$\ss 1$} -- (-1,0.5) node[above] {};
\node[text width=1cm] at (-0.4,0.5){$\ss 0$};
\node at (-1.5,-1.5) {$ xy$};
\end{tikzpicture}
\quad
\begin{tikzpicture}[baseline=-3pt,scale=0.9]
\draw[fermionic,arrow=0.35,Gcol] (-2,0.5) node[left,black] {$1$} -- (-1,-0.5) node[below] {};
\node[text width=1cm] at (-0.4,-0.55){$j$};
\draw[bosonic,arrow=0.35,gcol] (-2,-0.5) node[left,black] {$i$} -- (-1,0.5) node[above] {};
\node[text width=1cm] at (-0.4,0.5){$l$};
\node at (-1.5,-1.5) {$1- x\beta$};
\end{tikzpicture}
\quad
\begin{tikzpicture}[baseline=-3pt,scale=0.9]
\draw[fermionic,arrow=0.35,Gcol] (-2,0.5) node[left,black] {$0$} -- (-1,-0.5) node[below] {};
\node[text width=1cm] at (-0.4,-0.55){$j$};
\draw[bosonic,arrow=0.35,gcol] (-2,-0.5) node[left,black] {$i$} -- (-1,0.5) node[above] {};
\node[text width=1cm] at (-0.4,0.5){$l$};
\node at (-1.5,-1.5) {$x\beta$};
\end{tikzpicture}
\end{tabular}
\end{align}

together with $L^*$ and $l$ satisfy $\rm RLL$ relation. We shall prove the following equation:
\begin{align*}
&\begin{tikzpicture}[scale=0.8,baseline=7pt]
\draw[bosonic](1,-1)--(1,2);
\draw[fermionic,arrow=0.35,Gcol,rounded corners] (-1,1)--(0,0)--(2,0);
\draw[bosonic,arrow=0.35,gcol,rounded corners] (-1,0)--(0,1)--(2,1);
\node at (-1.2,0) {$\ss a'$};
\node at (-1.2,1) {$\ss a$};
\node at (1,-1.3) {$\ss b$};
\node at (1,2.3) {$\ss d$};
\node at (2.2,1) {$\ss c'$};
\node at (2.2,0) {$\ss c$};
\node at (1.4,0.5) {$\ss b-c$};
\node at (0,-0.3) {$\ss 0$};
\node at (0,1.3) {$\ss a+a'$};
\end{tikzpicture}+
\begin{tikzpicture}[scale=0.8,baseline=7pt]
\draw[bosonic](1,-1)--(1,2);
\draw[fermionic,arrow=0.35,Gcol,rounded corners] (-1,1)--(0,0)--(2,0);
\draw[bosonic,arrow=0.35,gcol,rounded corners] (-1,0)--(0,1)--(2,1);
\node at (-1.2,0) {$\ss a'$};
\node at (-1.2,1) {$\ss a$};
\node at (1,-1.3) {$\ss b$};
\node at (1,2.3) {$\ss d$};
\node at (2.2,1) {$\ss c'$};
\node at (2.2,0) {$\ss c$};
\node at (1.6,0.5) {$\ss b+1-c$};
\node at (0,-0.3) {$\ss 1$};
\node at (0,1.3) {$\ss a+a'-1$};
\end{tikzpicture}=
\begin{tikzpicture}[scale=0.8,baseline=7pt]
\draw[bosonic] (1,2)--(1,-1);
\draw[bosonic,arrow=0.7,gcol,rounded corners] (0,0)--(2,0)--(3,1);
\draw[fermionic,arrow=0.7,Gcol,rounded corners] (0,1)--(2,1)--(3,0);
\node at (-0.2,0) {$\ss a'$};
\node at (-0.2,1) {$\ss a$};
\node at (1,-1.3) {$\ss b$};
\node at (1,2.3) {$\ss d$};
\node at (3.2,0) {$\ss c$};
\node at (3.2,1) {$\ss c'$};
\node at (1.6,0.5) {$\ss d-a$};
\node at (2,-0.3) {$\ss c+c'$};
\node at (2,1.3) {$\ss 0$};
\end{tikzpicture}+
\begin{tikzpicture}[scale=0.8,baseline=7pt]
\draw[bosonic] (1,2)--(1,-1);
\draw[bosonic,arrow=0.7,gcol,rounded corners] (0,0)--(2,0)--(3,1);
\draw[fermionic,arrow=0.7,Gcol,rounded corners] (0,1)--(2,1)--(3,0);
\node at (-0.2,0) {$\ss a'$};
\node at (-0.2,1) {$\ss a$};
\node at (1,-1.3) {$\ss b$};
\node at (1,2.3) {$\ss d$};
\node at (3.2,0) {$\ss c$};
\node at (3.2,1) {$\ss c'$};
\node at (1.6,0.5) {$\ss d+1-a$};
\node at (2,-0.3) {$\ss c+c'-1$};
\node at (2,1.3) {$\ss 1$};
\end{tikzpicture}
\end{align*}

Based on the entries of the $R$- matrix, we shall assume certain condition on $a,a'$ and $c,c'$. We shall divide the conditions of $a,a'$ into three cases, $a+a'=0$,$a+a'=1$ and $a+a'>1$. We do the same with the conditions on $c,c'$. Therefore, we shall have a total of $9$ cases to consider.

\subsubsection{Assume $a+a'=0$ and $c+c'=0$.}
Observe that, because of the global condition, $b$ is equal to $d$. So, each side will have a unique configuration with identical weight.

\subsubsection{Assume $a+a'=0$.}

When $a+a'=0$, there is a unique configuration on $LHS$. On the RHS, because of the eigenvector property of the $\mathfrak{R}$ matrix, the weight is just the product of the Boltzmann weights.

\subsubsection{Assume $a=0$ and $a'=1$ and $c+c'=0$.}
\[
\begin{aligned}
LHS=\text{ }& \br{x \beta} y \br{\dfrac{ x}{1+\alpha x}}+\br{x y}\br{\dfrac{1-\beta x}{1+\alpha x}}\\ 
=\text{ }& \dfrac{xy}{1+\alpha x}
\end{aligned}
\qquad
\br{
\begin{tikzpicture}[scale=0.7,baseline=7pt]
\draw[bosonic](1,-1) node[below,black] {$\ss b$}--(1,0.5) node[right,black] {$\ss b$}--(1,2) node[black,above] {$\ss b+1$};
%Groth-bottom
\draw[fermionic,arrow=0.35,Gcol,rounded corners] (-1,1) node[left,black] {$\ss 0$}--(0,0) node[below,black] {$\ss 0$}--(2,0) node[right,black]{$\ss 0$};
%DualGroth-top
\draw[bosonic,arrow=0.35,gcol,rounded corners] (-1,0) node[left,black] {$\ss 1$}--(0,1) node[above,black] {$\ss 1$}--(2,1)node[right,black]{$\ss 0$};
\end{tikzpicture}+
\begin{tikzpicture}[scale=0.7,baseline=7pt]
\draw[bosonic](1,-1) node[below,black] {$\ss b$}--(1,0.5) node[right,black] {$\ss b+1$}--(1,2) node[black,above] {$\ss b+1$};
%Groth-bottom
\draw[fermionic,arrow=0.35,Gcol,rounded corners] (-1,1) node[left,black] {$\ss 0$}--(0,0) node[below,black] {$\ss 1$}--(2,0) node[right,black]{$\ss 0$};
%DualGroth-top
\draw[bosonic,arrow=0.35,gcol,rounded corners] (-1,0) node[left,black] {$\ss 1$}--(0,1) node[above,black] {$\ss 0$}--(2,1)node[right,black]{$\ss 0$};
\end{tikzpicture}}
\]

\[
RHS=\br{\dfrac{x}{1+\alpha x}} \br{y}
\qquad 
\br{
\begin{tikzpicture}[scale=0.7,baseline=7pt]
\draw[bosonic] (1,2) node[above,black]{$\ss  {b+1}$}--(1,0.5)node[right,black]{$\ss b+1$}--(1,-1) node[below,black]{$\ss b$};
\draw[bosonic,arrow=0.7,gcol,rounded corners] (0,0) node[left,black] {$\ss 1$}--(2,0) node[below,black] {$\ss 0$}--(3,1)node[right,black] {$\ss 0$};
\draw[fermionic,arrow=0.7,Gcol,rounded corners] (0,1) node[left,black]{$\ss 0$}--(2,1) node[above,black] {$\ss 0$}--(3,0) node[right,black] {$\ss 0$};
\end{tikzpicture}
}
\]

\subsubsection{Assume $a=0$ and $a'=1$ and $c'=0$ and $c=1$.}

When $b>0$,
\[
\begin{aligned}
LHS=\text{ }& \br{x \beta} y \br{\dfrac{x}{1+\alpha x}}+ x y\br{\dfrac{1-\beta x}{1+\alpha x}}\\
=\text{ }&\dfrac{xy}{1+\alpha x}
\end{aligned}
\qquad
\br{
\begin{tikzpicture}[scale=0.7,baseline=7pt]
\draw[bosonic](1,-1) node[below,black] {$\ss b$}--(1,0.5) node[right,black] {$\ss b-1$}--(1,2) node[black,above] {$\ss b$};
%Groth
\draw[fermionic,arrow=0.35,Gcol,rounded corners] (-1,1) node[left,black] {$\ss 0$}--(0,0) node[below,black] {$\ss 0$}--(2,0) node[right,black]{$\ss 1$};
%DualGroth
\draw[bosonic,arrow=0.35,gcol,rounded corners] (-1,0) node[left,black] {$\ss 1$}--(0,1) node[above,black] {$\ss 1$}--(2,1)node[right,black]{$\ss 0$};
\end{tikzpicture}+
\begin{tikzpicture}[scale=0.7,baseline=7pt]
\draw[bosonic](1,-1) node[below,black] {$\ss b$}--(1,0.5) node[right,black] {$\ss b$}--(1,2) node[black,above] {$\ss b$};
%Groth-bottom
\draw[fermionic,arrow=0.35,Gcol,rounded corners] (-1,1) node[left,black] {$\ss 0$}--(0,0) node[below,black] {$\ss 1$}--(2,0) node[right,black]{$\ss 1$};
%DualGroth-top
\draw[bosonic,arrow=0.35,gcol,rounded corners] (-1,0) node[left,black] {$\ss 1$}--(0,1) node[above,black] {$\ss 0$}--(2,1)node[right,black]{$\ss 0$};
\end{tikzpicture}}
\]

When $b=0$, only the second configuration from the left is a valid configuration.
\begin{align*}
LHS=\text{ }& x y\\
\end{align*}

When $b>0$, we have:
\[
\begin{aligned}
RHS=\text{ }& \br{\dfrac{x}{1+\alpha x}}y \br{1-xy}+ \br{\dfrac{x}{1+\alpha x}}y\br{xy}\\
=\text{ }\dfrac{xy}{1+\alpha x}
\end{aligned}
\br{
\begin{tikzpicture}[scale=0.7,baseline=7pt]
\draw[bosonic] (1,2) node[above,black]{$\ss  {b}$}--(1,0.5)node[right,black]{$\ss b+1$}--(1,-1) node[below,black]{$\ss b$};
\draw[bosonic,arrow=0.7,gcol,rounded corners] (0,0) node[left,black] {$\ss 1$}--(2,0) node[below,black] {$\ss 0$}--(3,1)node[right,black] {$\ss 0$};
\draw[fermionic,arrow=0.7,Gcol,rounded corners] (0,1) node[left,black]{$\ss 0$}--(2,1) node[above,black] {$\ss 1$}--(3,0) node[right,black] {$\ss 1$};
\end{tikzpicture}+
\begin{tikzpicture}[scale=0.7,baseline=7pt]
\draw[bosonic] (1,2) node[above,black]{$\ss   {b}$}--(1,0.5)node[right,black]{$\ss b$}--(1,-1) node[below,black]{$\ss b$};
\draw[bosonic,arrow=0.7,gcol,rounded corners] (0,0) node[left,black] {$\ss 1$}--(2,0) node[below,black] {$\ss 1$}--(3,1)node[right,black] {$\ss 0$};
\draw[fermionic,arrow=0.7,Gcol,rounded corners] (0,1) node[left,black]{$\ss 0$}--(2,1) node[above,black] {$\ss 0$}--(3,0) node[right,black] {$\ss 1$};
\end{tikzpicture}
}
\]

When $b=0$,
\begin{align*}
RHS=\text{ }& \br{\dfrac{x}{1+\alpha x}}y \br{1-xy}+\dfrac{x}{1+\alpha x}\br{y+\alpha}(xy)\\
=\text{ }& \br{\dfrac{xy}{1+\alpha x}}\br{1-xy +xy+\alpha x}\\
=\text{ }& xy\\
\end{align*}

\subsubsection{Assume $a=0$ and $a'=1$ and $c+c'\geq 1$ and $c'\neq 0$.}

\[
\begin{aligned}
LHS=\text{ }& \br{x \beta} y \br{\dfrac{x}{1+\alpha x}}+ xy\br{\dfrac{1-\beta x}{1+\alpha x}}\\
=\text{ }& \dfrac{xy}{1+\alpha x}
\end{aligned}
\qquad \br{
\begin{tikzpicture}[scale=0.7,baseline=7pt]
\draw[bosonic](1,-1) node[below,black] {$\ss b$}--(1,0.5) node[right,black] {$\ss b-c$}--(1,2) node[black,above] {$\ss b-c-c'+1$};
\draw[fermionic,arrow=0.35,Gcol,rounded corners] (-1,1) node[left,black] {$\ss 0$}--(0,0) node[below,black] {$\ss 0$}--(2,0) node[right,black]{$\ss c$};
\draw[bosonic,arrow=0.35,gcol,rounded corners] (-1,0) node[left,black] {$\ss 1$}--(0,1) node[above,black] {$\ss 1$}--(2,1)node[right,black]{$\ss c'$};
\end{tikzpicture}+
\begin{tikzpicture}[scale=0.7,baseline=7pt]
\draw[bosonic](1,-1) node[below,black] {$\ss b$}--(1,0.5) node[right,black] {$\ss b-c+1$}--(1,2) node[black,above] {$\ss b-c-c'+1$};
\draw[fermionic,arrow=0.35,Gcol,rounded corners] (-1,1) node[left,black] {$\ss 0$}--(0,0) node[below,black] {$\ss 1$}--(2,0) node[right,black]{$\ss c$};
\draw[bosonic,arrow=0.35,gcol,rounded corners] (-1,0) node[left,black] {$\ss 1$}--(0,1) node[above,black] {$\ss 0$}--(2,1)node[right,black]{$\ss c'$};
\end{tikzpicture}}
\]

\[
\begin{aligned}
RHS=\text{ }& \br{\dfrac{x}{1+\alpha x}}y\br{x \beta}+\br{\dfrac{x}{1+\alpha x}}y\br{1-x\beta}\\
=\text{ }& \dfrac{xy}{1+\alpha x}\\
\end{aligned}
\br{
\begin{tikzpicture}[scale=0.7,baseline=7pt]
\draw[bosonic] (1,2) node[above,black]{$\ss   {b-c-c'+1}$}--(1,0.5)node[right,black]{$\ss b-c-c'+1$}--(1,-1) node[below,black]{$\ss b$};
\draw[bosonic,arrow=0.7,gcol,rounded corners] (0,0) node[left,black] {$\ss 1$}--(2,0) node[below,black] {$\ss c+c'$}--(3,1)node[right,black] {$\ss c'$};
\draw[fermionic,arrow=0.7,Gcol,rounded corners] (0,1) node[left,black]{$\ss 0$}--(2,1) node[above,black] {$\ss 0$}--(3,0) node[right,black] {$\ss c$};
\end{tikzpicture}+
\begin{tikzpicture}[scale=0.7,baseline=7pt]
\draw[bosonic] (1,2) node[above,black]{$\ss  {b-c-c'+1}$}--(1,0.5)node[right,black]{$\ss b-c-c'+2$}--(1,-1) node[below,black]{$\ss b$};
\draw[bosonic,arrow=0.7,gcol,rounded corners] (0,0) node[left,black] {$\ss 1$}--(2,0) node[below,black] {$\ss c+c'-1$}--(3,1)node[right,black] {$\ss c'$};
\draw[fermionic,arrow=0.7,Gcol,rounded corners] (0,1) node[left,black]{$\ss 0$}--(2,1) node[above,black] {$\ss 1$}--(3,0) node[right,black] {$\ss c$};
\end{tikzpicture}
}
\]

\subsubsection{Assume $a=1$ and $a'=0$ and $c+c'=0$.}
\[
\begin{aligned}
LHS=\text{ }&(1-x \beta)y\br{\dfrac{x}{1+\alpha x}}+ \br{1-xy}\br{\dfrac{1-\beta x}{1+\alpha x}}\\
=\text{ }& \dfrac{1-\beta x}{1+\alpha x}
\end{aligned}
\br{
\begin{tikzpicture}[scale=0.7,baseline=7pt]
\draw[bosonic](1,-1) node[below,black] {$\ss b$}--(1,0.5) node[right,black] {$\ss b$}--(1,2) node[black,above] {$\ss b+1$};
%Groth-bottom
\draw[fermionic,arrow=0.35,Gcol,rounded corners] (-1,1) node[left,black] {$\ss 1$}--(0,0) node[below,black] {$\ss 0$}--(2,0) node[right,black]{$\ss 0$};
%DualGroth-top
\draw[bosonic,arrow=0.35,gcol,rounded corners] (-1,0) node[left,black] {$\ss 0$}--(0,1) node[above,black] {$\ss 1$}--(2,1)node[right,black]{$\ss 0$};
\end{tikzpicture}+
\begin{tikzpicture}[scale=0.7,baseline=7pt]
\draw[bosonic](1,-1) node[below,black] {$\ss b$}--(1,0.5) node[right,black] {$\ss b+1$}--(1,2) node[black,above] {$\ss b+1$};
%Groth-bottom
\draw[fermionic,arrow=0.35,Gcol,rounded corners] (-1,1) node[left,black] {$\ss 1$}--(0,0) node[below,black] {$\ss 1$}--(2,0) node[right,black]{$\ss 0$};
%DualGroth-top
\draw[bosonic,arrow=0.35,gcol,rounded corners] (-1,0) node[left,black] {$\ss 0$}--(0,1) node[above,black] {$\ss 0$}--(2,1)node[right,black]{$\ss 0$};
\end{tikzpicture}
}
\]

\[
RHS= \br{\dfrac{1-\beta x}{1+\alpha x}}
\qquad
\br{
\begin{tikzpicture}[scale=0.7,baseline=7pt]
\draw[bosonic] (1,2) node[above,black]{$\ss  {b+1}$}--(1,0.5)node[right,black]{$\ss b$}--(1,-1) node[below,black]{$\ss b$};
\draw[bosonic,arrow=0.7,gcol,rounded corners] (0,0) node[left,black] {$\ss 0$}--(2,0) node[below,black] {$\ss 0$}--(3,1)node[right,black] {$\ss 0$};
\draw[fermionic,arrow=0.7,Gcol,rounded corners] (0,1) node[left,black]{$\ss 1$}--(2,1) node[above,black] {$\ss 0$}--(3,0) node[right,black] {$\ss 0$};
\end{tikzpicture}
}
\]

\subsubsection{Assume $a=1$ and $a'=0$ and $c=1$ and $c'=0$.}

When $b>0$, we have
\[
\begin{aligned}
LHS=\text{ }& (1-x\beta)y\br{\dfrac{x}{1+\alpha x}}+(1-xy)\br{\dfrac{1-\beta x}{1+\alpha x}}\\
=\text{ }& \dfrac{1-\beta x}{1+\alpha x}
\end{aligned}
\br{
\begin{tikzpicture}[scale=0.7,baseline=7pt]
\draw[bosonic](1,-1) node[below,black] {$\ss b$}--(1,0.5) node[right,black] {$\ss b-1$}--(1,2) node[black,above] {$\ss b$};
%Groth-bottom
\draw[fermionic,arrow=0.35,Gcol,rounded corners] (-1,1) node[left,black] {$\ss 1$}--(0,0) node[below,black] {$\ss 0$}--(2,0) node[right,black]{$\ss 1$};
%DualGroth-top
\draw[bosonic,arrow=0.35,gcol,rounded corners] (-1,0) node[left,black] {$\ss 0$}--(0,1) node[above,black] {$\ss 1$}--(2,1)node[right,black]{$\ss 0$};
\end{tikzpicture}+
\begin{tikzpicture}[scale=0.7,baseline=7pt]
\draw[bosonic](1,-1) node[below,black] {$\ss b$}--(1,0.5) node[right,black] {$\ss b$}--(1,2) node[black,above] {$\ss b$};
%Groth-bottom
\draw[fermionic,arrow=0.35,Gcol,rounded corners] (-1,1) node[left,black] {$\ss 1$}--(0,0) node[below,black] {$\ss 1$}--(2,0) node[right,black]{$\ss 1$};
%DualGroth-top
\draw[bosonic,arrow=0.35,gcol,rounded corners] (-1,0) node[left,black] {$\ss 0$}--(0,1) node[above,black] {$\ss 0$}--(2,1)node[right,black]{$\ss 0$};
\end{tikzpicture}
}
\]

When $b=0$, only the second configuration from the left is valid.
\[
LHS=(1-xy)
\]

When $b>0$, we have
\[
\begin{aligned}
RHS=\text{ }& \br{xy} \br{\dfrac{1-\beta x}{1+\alpha x}}+\br{1-xy}\br{\dfrac{1-\beta x}{1+\alpha x}}\\
=\text{ }&\dfrac{1-\beta x}{1+\alpha x}\\
\end{aligned}
\qquad
\br{
\begin{tikzpicture}[scale=0.7,baseline=7pt]
\draw[bosonic] (1,2) node[above,black]{$ \ss  {b}$}--(1,0.5)node[right,black]{$\ss b-1$}--(1,-1) node[below,black]{$\ss b$};
\draw[bosonic,arrow=0.7,gcol,rounded corners] (0,0) node[left,black] {$\ss 0$}--(2,0) node[below,black] {$\ss 1$}--(3,1)node[right,black] {$\ss 0$};
\draw[fermionic,arrow=0.7,Gcol,rounded corners] (0,1) node[left,black]{$\ss 1$}--(2,1) node[above,black] {$\ss 0$}--(3,0) node[right,black] {$\ss 1$};
\end{tikzpicture}+
\begin{tikzpicture}[scale=0.7,baseline=7pt]
\draw[bosonic] (1,2) node[above,black]{$\ss  {b}$}--(1,0.5)node[right,black]{$\ss b$}--(1,-1) node[below,black]{$\ss b$};
\draw[bosonic,arrow=0.7,gcol,rounded corners] (0,0) node[left,black] {$\ss 0$}--(2,0) node[below,black] {$\ss 0$}--(3,1)node[right,black] {$\ss 0$};
\draw[fermionic,arrow=0.7,Gcol,rounded corners] (0,1) node[left,black]{$\ss 1$}--(2,1) node[above,black] {$\ss 1$}--(3,0) node[right,black] {$\ss 1$};
\end{tikzpicture}
}
\]

When $b=0$, only the second configuration is valid.
\[
RHS=1-xy
\]

\subsubsection{Assume $a=1$ and $a'=0$ and $c+c'\geq 1$ and $c'\neq 0$.}

When $b-c-c'+1\geq 1$, we have
\[
\begin{aligned}
LHS=\text{ }&(1-x\beta)y\br{\dfrac{x}{1+\alpha x}}+\\
&(1-xy)\br{\dfrac{1-\beta x}{1+\alpha x}}\\
=\text{ }&\dfrac{1-\beta x}{1+\alpha x}\\
\end{aligned}
\br{
\begin{tikzpicture}[scale=0.7,baseline=7pt]
\draw[bosonic](1,-1) node[below,black] {$\ss b$}--(1,0.5) node[right,black] {$\ss b-c$}--(1,2) node[black,above] {$\ss b-c-c'+1$};
\draw[fermionic,arrow=0.35,Gcol,rounded corners] (-1,1) node[left,black] {$\ss 1$}--(0,0) node[below,black] {$\ss 0$}--(2,0) node[right,black]{$\ss c$};
\draw[bosonic,arrow=0.35,gcol,rounded corners] (-1,0) node[left,black] {$\ss 0$}--(0,1) node[above,black] {$\ss 1$}--(2,1)node[right,black]{$\ss c'$};
\end{tikzpicture}+
\begin{tikzpicture}[scale=0.7,baseline=7pt]
\draw[bosonic](1,-1) node[below,black] {$\ss b$}--(1,0.5) node[right,black] {$\ss b-c+1$}--(1,2) node[black,above] {$\ss b-c-c'+1$};
\draw[fermionic,arrow=0.35,Gcol,rounded corners] (-1,1) node[left,black] {$\ss 1$}--(0,0) node[below,black] {$\ss 1$}--(2,0) node[right,black]{$\ss c$};
\draw[bosonic,arrow=0.35,gcol,rounded corners] (-1,0) node[left,black] {$\ss 0$}--(0,1) node[above,black] {$\ss 0$}--(2,1)node[right,black]{$\ss c'$};
\end{tikzpicture}
}
\]

When $b-c-c'+1=0$, we have
\begin{align*}
LHS=\text{ }& (1-x\beta)(y+\alpha)\br{\dfrac{x}{1-\alpha x}}+(1-xy)\br{\dfrac{1-\beta x}{1+\alpha x}}\\
=\text{ }&(1-x \beta)\\
\end{align*}

When $b-c-c'+1 \geq 1$,
\[
\begin{aligned}
RHS=\text{ }&\br{x\beta }\br{\dfrac{1-\beta x}{1+\alpha x}}+(1-x\beta)\br{\dfrac{1-\beta x}{1+\alpha x}}\\
=\text{ }&\br{\dfrac{1-\beta x}{1+\alpha x}}\\
\end{aligned}
\br{
\begin{tikzpicture}[scale=0.7,baseline=7pt]
\draw[bosonic] (1,2) node[above,black]{$  {\ss b-c-c'+1}$}--(1,0.5)node[right,black]{$\ss b-c-c'$}--(1,-1) node[below,black]{$\ss b$};
\draw[bosonic,arrow=0.7,gcol,rounded corners] (0,0) node[left,black] {$\ss 0$}--(2,0) node[below,black] {$\ss c+c'$}--(3,1)node[right,black] {$\ss c'$};
\draw[fermionic,arrow=0.7,Gcol,rounded corners] (0,1) node[left,black]{$\ss 1$}--(2,1) node[above,black] {$\ss 0$}--(3,0) node[right,black] {$\ss c$};
\end{tikzpicture}+
\begin{tikzpicture}[scale=0.7,baseline=7pt]
\draw[bosonic] (1,2) node[above,black]{$\ss   {b-c-c'+1}$}--(1,0.5)node[right,black]{$\ss b-c-c'+1$}--(1,-1) node[below,black]{$\ss b$};
\draw[bosonic,arrow=0.7,gcol,rounded corners] (0,0) node[left,black] {$\ss 0$}--(2,0) node[below,black] {$\ss c+c'-1$}--(3,1)node[right,black] {$\ss c'$};
\draw[fermionic,arrow=0.7,Gcol,rounded corners] (0,1) node[left,black]{$\ss 1$}--(2,1) node[above,black] {$\ss 1$}--(3,0) node[right,black] {$\ss c$};
\end{tikzpicture}
}
\]

When $b-c-c'+1=0$, only the second configuration survives.
\begin{align*}
RHS=\text{ }(1-x \beta )
\end{align*}

\subsubsection{Assume $a+a'>1$ and $c+c'=0$.}

\begin{align*}
\begin{tikzpicture}[scale=0.7,baseline=7pt]
\draw[bosonic](1,-1) node[below,black] {$\ss b$}--(1,0.5) node[right,black] {$\ss b$}--(1,2) node[black,above] {$\ss a+a'+b$};
\draw[fermionic,arrow=0.35,Gcol,rounded corners] (-1,1) node[left,black] {$\ss a$}--(0,0) node[below,black] {$\ss 0$}--(2,0) node[right,black]{$\ss 0$};
\draw[bosonic,arrow=0.35,gcol,rounded corners] (-1,0) node[left,black] {$\ss a'$}--(0,1) node[above,black] {$\ss a+a'$}--(2,1)node[right,black]{$\ss 0$};
\end{tikzpicture}+
\begin{tikzpicture}[scale=0.7,baseline=7pt]
\draw[bosonic](1,-1) node[below,black] {$\ss b$}--(1,0.5) node[right,black] {$\ss b+1$}--(1,2) node[black,above] {$\ss a+a'+b$};
\draw[fermionic,arrow=0.35,Gcol,rounded corners] (-1,1) node[left,black] {$\ss a$}--(0,0) node[below,black] {$\ss 1$}--(2,0) node[right,black]{$\ss 0$};
\draw[bosonic,arrow=0.35,gcol,rounded corners] (-1,0) node[left,black] {$\ss a'$}--(0,1) node[above,black] {$\ss a+a'-1$}--(2,1)node[right,black]{$\ss 0$};
\end{tikzpicture}
\end{align*}

When $a=0$,
\begin{align*}
LHS=\text{ }& (x\beta)y\beta^{a'-1}\br{\dfrac{x}{1+\alpha x}}+(x\beta)y\beta^{a'-2}\br{\dfrac{1-\beta x}{1+\alpha x}}\\
=\text{ }& (xy)\beta^{a'}\br{\dfrac{x}{1+\alpha x}}+\dfrac{xy\beta^{a'-1}}{1+\alpha x}-(xy)\beta^{a'}\br{\dfrac{x}{1+\alpha x}}\\
=\text{ }& \dfrac{xy\beta^{a'-1}}{1+\alpha x}\\
\end{align*}

When $a=1$,
\begin{align*}
LHS=\text{ }& (1-x\beta)y\beta^{a'}\br{\dfrac{x}{1+\alpha x}}+(1-x\beta)y\beta^{a'-1}\br{\dfrac{1-\beta x}{1+\alpha x}}\\
=\text{ }& (1-x\beta)y\beta^{a'-1}\br{\dfrac{x\beta}{1+\alpha x}+\dfrac{1-\beta x}{1+\alpha x}}\\
=\text{ }& (1-x\beta)y \br{\dfrac{\beta^{a'-1}}{1+\alpha x}}
\end{align*}

When $a=0$,
\[
\begin{aligned}
RHS=\text{ }&\br{\dfrac{x}{1+\alpha x}}y\beta^{a'-1}
\end{aligned}
\br{
\begin{tikzpicture}[scale=0.7,baseline=7pt]
\draw[bosonic] (1,2) node[above,black]{$\ss  a+a'+b$}--(1,0.5)node[right,black]{$\ss a'+b$}--(1,-1) node[below,black]{$\ss b$};
\draw[bosonic,arrow=0.7,gcol,rounded corners] (0,0) node[left,black] {$\ss a'$}--(2,0) node[below,black] {$\ss 0$}--(3,1)node[right,black] {$\ss 0$};
\draw[fermionic,arrow=0.7,Gcol,rounded corners] (0,1) node[left,black]{$\ss a$}--(2,1) node[above,black] {$\ss 0$}--(3,0) node[right,black] {$\ss 0$};
\end{tikzpicture}
}\]

When $a=1$,
\begin{align*}
RHS=\text{ }& \br{\dfrac{1-\beta x}{1+\alpha x}}y\beta^{a'-1}\\
\end{align*}

\subsubsection{Assume $a+a'>1$ and $c=1$ and $c'=0$.}

\begin{align*}
\begin{tikzpicture}[scale=0.7,baseline=7pt]
\draw[bosonic](1,-1) node[below,black] {$\ss b$}--(1,0.5) node[right,black] {$\ss b-1$}--(1,2) node[black,above] {$\ss a+a'+b-1$};
\draw[fermionic,arrow=0.35,Gcol,rounded corners] (-1,1) node[left,black] {$\ss a$}--(0,0) node[below,black] {$\ss 0$}--(2,0) node[right,black]{$\ss 1$};
\draw[bosonic,arrow=0.35,gcol,rounded corners] (-1,0) node[left,black] {$\ss a'$}--(0,1) node[above,black] {$\ss a+a'$}--(2,1)node[right,black]{$\ss 0$};
\end{tikzpicture}+
\begin{tikzpicture}[scale=0.7,baseline=7pt]
\draw[bosonic](1,-1) node[below,black] {$\ss b$}--(1,0.5) node[right,black] {$\ss b$}--(1,2) node[black,above] {$\ss a+a'+b-1$};
\draw[fermionic,arrow=0.35,Gcol,rounded corners] (-1,1) node[left,black] {$\ss a$}--(0,0) node[below,black] {$\ss 1$}--(2,0) node[right,black]{$\ss 1$};
\draw[bosonic,arrow=0.35,gcol,rounded corners] (-1,0) node[left,black] {$\ss a'$}--(0,1) node[above,black] {$\ss a+a'-1$}--(2,1)node[right,black]{$\ss 0$};
\end{tikzpicture}
\end{align*}

When $a=0$ and $b>0$
\begin{align*}
LHS=\text{ }& (x\beta)\br{\dfrac{x}{1+\alpha x}}y\beta^{a'-1}+(x\beta)\br{\dfrac{1-\beta x}{1+\alpha x}}y\beta^{a'-2}\\
=\text{ }& \dfrac{xy\beta^{a'-1}}{1+\alpha x}
\end{align*}

When $a=0$ and $b=0$
\begin{align*}
LHS=\text{ }& x\beta (y\beta^{a'-2})\\
=\text{ }&xy\beta^{a'-1}\\
\end{align*}

When $a=1$ and $b>0$
\begin{align*}
LHS=\text{ }&(1-x\beta)\br{\dfrac{x}{1+\alpha x}}y\beta^{a'}+(1-x\beta)\br{\dfrac{1-\beta x}{1+\alpha x}}y\beta^{a'-1}\\
=\text{ }& \br{\dfrac{1-\beta x}{1+\alpha x}}y\beta^{a'-1}
\end{align*}

When $a=1$ and $b=0$
\begin{align*}
LHS=\text{ }&(1-x\beta)y\beta^{a'-1}
\end{align*}

\begin{align*}
\begin{tikzpicture}[scale=1,baseline=7pt]
\draw[bosonic] (1,2) node[above,black]{$  {a+a'+b-1}$}--(1,0.5)node[right,black]{$\ss a'+b-1$}--(1,-1) node[below,black]{$b$};
\draw[bosonic,arrow=0.7,gcol,rounded corners] (0,0) node[left,black] {$a'$}--(2,0) node[below,black] {$1$}--(3,1)node[right,black] {$0$};
\draw[fermionic,arrow=0.7,Gcol,rounded corners] (0,1) node[left,black]{$a$}--(2,1) node[above,black] {$0$}--(3,0) node[right,black] {$1$};
\end{tikzpicture}+
\begin{tikzpicture}[scale=1,baseline=7pt]
\draw[bosonic] (1,2) node[above,black]{$  {a+a'+b-1}$}--(1,0.5)node[right,black]{$\ss a'+b$}--(1,-1) node[below,black]{$b$};
\draw[bosonic,arrow=0.7,gcol,rounded corners] (0,0) node[left,black] {$a'$}--(2,0) node[below,black] {$0$}--(3,1)node[right,black] {$0$};
\draw[fermionic,arrow=0.7,Gcol,rounded corners] (0,1) node[left,black]{$a$}--(2,1) node[above,black] {$1$}--(3,0) node[right,black] {$1$};
\end{tikzpicture}
\end{align*}
When $a=0$ and $b>0$
\begin{align*}
RHS=\text{ }& \br{xy}\br{\dfrac{x}{1+\alpha x}}y\beta^{a'-1}+\br{1-xy}\br{\dfrac{x}{1+\alpha x}}y\beta^{a'-1}\\
=\text{ }& \br{\dfrac{xy\beta^{a'-1}}{1+\alpha x}}
\end{align*}

When $a=0$ and $b=0$
\begin{align*}
RHS=\text{ }& \br{xy}\br{\dfrac{x}{1+\alpha x}}(y+\alpha)\beta^{a'-1}+\br{1-xy}\br{\dfrac{x}{1+\alpha x}}y\beta^{a'-1}\\
=\text{ }& xy\beta^{a'-1}
\end{align*}

When $a=1$ and $b>0$
\begin{align*}
RHS=\text{ }& \br{x y}\br{\dfrac{1-\beta x}{1+\alpha x}}y \beta^{a'-1}+\br{1-xy}\br{\dfrac{1-\beta x}{1+\alpha x}}y\beta^{a'-1}\\
=\text{ }& \br{\dfrac{1-\beta x}{1+\alpha x}}y\beta^{a'-1}
\end{align*}

When $a=1$ and $b=0$
\begin{align*}
RHS=\text{ }& \br{x y}\br{\dfrac{1-\beta x}{1+\alpha x}}(y+\alpha) \beta^{a'-1}+\br{1-xy}\br{\dfrac{1-\beta x}{1+\alpha x}}y\beta^{a'-1}\\
=\text{ }& \br{1-\beta x}y\beta^{a'-1}
\end{align*}

\subsubsection{Assume $a+a'>1$ and $c+c'\geq 1$ and $c'\neq 0$.}

\begin{align*}
&\begin{tikzpicture}[scale=0.8,baseline=7pt]
\draw[bosonic](1,-1) node[below,black] {$\ss b$}--(1,0.5) node[right,black] {$\ss b-c$}--(1,2) node[black,above] {$\ss a+a'+b-c-c'$};
\draw[fermionic,arrow=0.35,Gcol,rounded corners] (-1,1) node[left,black] {$\ss a$}--(0,0) node[below,black] {$\ss 0$}--(2,0) node[right,black]{$\ss c$};
\draw[bosonic,arrow=0.35,gcol,rounded corners] (-1,0) node[left,black] {$\ss a'$}--(0,1) node[above,black] {$\ss a+a'$}--(2,1)node[right,black]{$\ss c'$};
\end{tikzpicture}+
\begin{tikzpicture}[scale=0.8,baseline=7pt]
\draw[bosonic](1,-1) node[below,black] {$\ss b$}--(1,0.5) node[right,black] {$\ss b-c+1$}--(1,2) node[black,above] {$\ss a+a'+b-c-c'$};
\draw[fermionic,arrow=0.35,Gcol,rounded corners] (-1,1) node[left,black] {$\ss a$}--(0,0) node[below,black] {$\ss 1$}--(2,0) node[right,black]{$\ss c$};
\draw[bosonic,arrow=0.35,gcol,rounded corners] (-1,0) node[left,black] {$\ss a'$}--(0,1) node[above,black] {$\ss a+a'-1$}--(2,1)node[right,black]{$\ss c'$};
\end{tikzpicture}=
\begin{tikzpicture}[scale=0.8,baseline=7pt]
\draw[bosonic] (1,2) node[above,black]{$  {\ss a+a'+b-c-c'}$}--(1,0.5)node[right,black]{$\ss a'+b-c-c'$}--(1,-1) node[below,black]{$\ss b$};
\draw[bosonic,arrow=0.7,gcol,rounded corners] (0,0) node[left,black] {$\ss a'$}--(2,0) node[below,black] {$\ss c+c'$}--(3,1)node[right,black] {$\ss c'$};
\draw[fermionic,arrow=0.7,Gcol,rounded corners] (0,1) node[left,black]{$\ss a$}--(2,1) node[above,black] {$\ss 0$}--(3,0) node[right,black] {$\ss c$};
\end{tikzpicture}+
\begin{tikzpicture}[scale=0.8,baseline=7pt]
\draw[bosonic] (1,2) node[above,black]{$ \ss  {a+a'+b-c-c'}$}--(1,0.5)node[right,black]{$\ss a'+b-c-c'+1$}--(1,-1) node[below,black]{$\ss b$};
\draw[bosonic,arrow=0.7,gcol,rounded corners] (0,0) node[left,black] {$\ss a'$}--(2,0) node[below,black] {$\ss c+c'-1$}--(3,1)node[right,black] {$\ss c'$};
\draw[fermionic,arrow=0.7,Gcol,rounded corners] (0,1) node[left,black]{$\ss a$}--(2,1) node[above,black] {$\ss 1$}--(3,0) node[right,black] {$\ss c$};
\end{tikzpicture}
\end{align*}

When $a=0$ and $b-c-c'\geq 0$,
\[
LHS=(x\beta)\br{\dfrac{x}{1+\alpha x}}y\beta^{a'-1}+(x\beta)\br{\dfrac{1-\beta x}{1+\alpha}}y\beta^{a'-2}
\]

\[
RHS=(x\beta)\br{\dfrac{x}{1+\alpha x}}y\beta^{a'-1}+(1-x\beta)\br{\dfrac{1-\beta x}{1+\alpha x}}y\beta^{a'-1}
\]

When $a=1$ and $b-c-c'\geq 0$,

\[
LHS=(1-x\beta)\br{\dfrac{x}{1+\alpha x}}y\beta^{a'}+(1-x\beta)\br{\dfrac{1-\beta x}{1+\alpha}}y\beta^{a'-1}
\]

\[
RHS=(x\beta)\br{\dfrac{1-\beta x}{1+\alpha x}}y\beta^{a'-1}+(1-x\beta)\br{\dfrac{1-\beta x}{1+\alpha x}}y\beta^{a'-1}
\]

When $a=0$ and $b-c-c'=-1$,
\begin{align*}
LHS=&(x\beta)\br{\dfrac{x}{1+\alpha x}} (y+\alpha)\beta^{a'+b-c-c'} +
 (x\beta)\br{\dfrac{1-\beta x}{1+\alpha x}}(y\beta^{a'-2})
\end{align*}

\begin{align*}
RHS=&(x\beta) \br{\dfrac{x}{1+\alpha x}}(y+\alpha)\beta^{a'+b-c-c'}+
(1-\beta x)\br{\dfrac{x}{1+\alpha x}}y\beta^{a'-1}
\end{align*}

When $a=1$ and $b-c-c'=-1$,
\begin{align*}
LHS=&(1-x\beta)\br{\dfrac{x}{1+\alpha x}} (y+\alpha)\beta^{a'} +
 (1-x\beta)\br{\dfrac{1-\beta x}{1+\alpha x}}(y\beta^{a'-1})
\end{align*}

\begin{align*}
RHS=&(x\beta) \br{\dfrac{1-\beta x}{1+\alpha x}}(y+\alpha)\beta^{a'-1}+
(1-\beta x)\br{\dfrac{1-\beta x}{1+\alpha x}}y\beta^{a'-1}
\end{align*}

When $a=0$ and $a'-1 > a'+b-c-c'$,

\begin{align*}
LHS=&(x\beta)\br{\dfrac{x}{1+\alpha x}}(\alpha+\beta)^{c'+c-b-1}(y+\alpha)\beta^{a'+b-c-c'}+\\&(x\beta)\br{\dfrac{1-\beta x }{1+\alpha x}}(\alpha+\beta)^{c'+c-b-2}(y+\alpha)\beta^{a'+b-c-c'}
\end{align*}

\begin{align*}
RHS=&(x\beta)\br{\dfrac{x}{1+\alpha x}}(\alpha+\beta)^{c'+c-b-1}(y+\alpha)\beta^{a'+b-c-c'}+\\ &(1-x\beta)\br{\dfrac{x}{1+\alpha x}}(\alpha+\beta)^{c'+c-b-2}(y+\alpha)\beta^{a'+b-c-c'+1}
\end{align*}

When $a=1$ and $b-c-c'<-1$,
\begin{align*}
LHS=&(1-x\beta)\br{\dfrac{x}{1+\alpha x}}(\alpha+\beta)^{c'+c-b-1}(y+\alpha)\beta^{1+a'+b-c-c'}+\\&(1-x\beta)\br{\dfrac{1-\beta x }{1+\alpha x}}(\alpha+\beta)^{c'+c-b-2}(y+\alpha)\beta^{a'+b-c-c'+1}
\end{align*}

\begin{align*}
RHS=&(x\beta)\br{\dfrac{1-\beta x}{1+\alpha x}}(\alpha+\beta)^{c'+c-b-1}(y+\alpha)\beta^{a'+b-c-c'}+\\ &(1-x\beta)\br{\dfrac{1-\beta x}{1+\alpha x}}(\alpha+\beta)^{c'+c-b-2}(y+\alpha)\beta^{a'+b-c-c'+1}
\end{align*}

In all the cases, we have assumed that $\begin{tikzpicture}[scale=0.4,baseline=-2pt]
\draw[fermionic,Gcol,arrow=0.25]  (-1,0) node[left,black] {$\ss {1}$}--(1,0) node[right,black] {$\ss {1}$};
\draw[bosonic,arrow=0.25] (0,-1) node[below] {$\ss {0}$}--(0,1) node[above] {$\ss {0}$};
\end{tikzpicture}$ vertex does not appear. Let us now study the conditions on the nodes where such a vertex can occur.

Observe that it can appear in the second configuration of $LHS$ when $b=0$ and $c=1$. Similarly, it can appear on second configuration of $RHS$, when $a+a'+b-c-c'=0$ and $a'+b-c-c'+1=0$ which reduces to the conditions $a=1$ and $a'+b-c-c'=-1$.

When $a=0$, $b=0$ and $c=1$,

\[
LHS=(x\beta)(\alpha+\beta)^{c'-1}(y+\alpha)\beta^{a'-1-c'}
\qquad
\br{
\begin{tikzpicture}[scale=0.7,baseline=7pt]
\draw[bosonic](1,-1) node[below,black] {$\ss0$}--(1,0.5) node[right,black] {$\ss 0$}--(1,2) node[black,above] {$\ss a'-1-c'$};
\draw[fermionic,arrow=0.35,Gcol,rounded corners] (-1,1) node[left,black] {$\ss 0$}--(0,0) node[below,black] {$\ss 1$}--(2,0) node[right,black]{$\ss 1$};
\draw[bosonic,arrow=0.35,gcol,rounded corners] (-1,0) node[left,black] {$\ss a'$}--(0,1) node[above,black] {$\ss a'-1$}--(2,1)node[right,black]{$\ss c'$};
\end{tikzpicture}
}
\]

\[
\begin{aligned}
RHS=&(x\beta)\br{\dfrac{x}{1+\alpha x}}(\alpha+\beta)^{c'}(y+\alpha)\beta^{a'-c'-1}+\\
&(1-x \beta)\br{\dfrac{x}{1+\alpha x}}(\alpha+\beta)^{c'-1}(y+\alpha)\beta^{a'-c'}\\
=& x (\alpha+\beta)^{c'-1}(y+\alpha)\beta^{a'-c'}
\end{aligned}
\br{
\begin{tikzpicture}[scale=0.7,baseline=7pt]
\draw[bosonic] (1,2) node[above,black]{$  \ss  {a'-1-c'}$}--(1,0.5)node[right,black]{$\ss a'-c'-1$}--(1,-1) node[below,black]{$\ss 0$};
\draw[bosonic,arrow=0.7,gcol,rounded corners] (0,0) node[left,black] {$\ss a'$}--(2,0) node[below,black] {$\ss c'+1$}--(3,1)node[right,black] {$\ss c'$};
\draw[fermionic,arrow=0.7,Gcol,rounded corners] (0,1) node[left,black]{$\ss 0$}--(2,1) node[above,black] {$\ss 0$}--(3,0) node[right,black] {$\ss 1$};
\end{tikzpicture}+
\begin{tikzpicture}[scale=0.7,baseline=7pt]
\draw[bosonic] (1,2) node[above,black]{$\ss  {a'-1-c'}$}--(1,0.5)node[right,black]{$\ss a'-c'$}--(1,-1) node[below,black]{$\ss 0$};
\draw[bosonic,arrow=0.7,gcol,rounded corners] (0,0) node[left,black] {$\ss a'$}--(2,0) node[below,black] {$\ss c'$}--(3,1)node[right,black] {$\ss c'$};
\draw[fermionic,arrow=0.7,Gcol,rounded corners] (0,1) node[left,black]{$\ss 0$}--(2,1) node[above,black] {$\ss 1$}--(3,0) node[right,black] {$\ss 1$};
\end{tikzpicture}}
\]

When $a=1$ and $b=0$ and $c=1$,

\[
LHS=(1-x\beta)(\alpha+\beta)^{c'-1}(y+\alpha)\beta^{a'-c'}
\qquad
\br{
\begin{tikzpicture}[scale=0.7,baseline=7pt]
\draw[bosonic](1,-1) node[below,black] {$\ss 0$}--(1,0.5) node[right,black] {$\ss 0$}--(1,2) node[black,above] {$\ss a'-c'$};
\draw[fermionic,arrow=0.35,Gcol,rounded corners] (-1,1) node[left,black] {$\ss 1$}--(0,0) node[below,black] {$\ss 1$}--(2,0) node[right,black]{$\ss 1$};
\draw[bosonic,arrow=0.35,gcol,rounded corners] (-1,0) node[left,black] {$\ss a'$}--(0,1) node[above,black] {$\ss a'$}--(2,1)node[right,black]{$\ss c'$};
\end{tikzpicture}
}
\]

\[
\br{
\begin{tikzpicture}[scale=0.7,baseline=7pt]
\draw[bosonic] (1,2) node[above,black]{$\ss  {a'-c'}$}--(1,0.5)node[right,black]{$\ss a'-c'-1$}--(1,-1) node[below,black]{$\ss 0$};
\draw[bosonic,arrow=0.7,gcol,rounded corners] (0,0) node[left,black] {$\ss a'$}--(2,0) node[below,black] {$\ss c'+1$}--(3,1)node[right,black] {$\ss c'$};
\draw[fermionic,arrow=0.7,Gcol,rounded corners] (0,1) node[left,black]{$\ss 1$}--(2,1) node[above,black] {$\ss 0$}--(3,0) node[right,black] {$\ss 1$};
\end{tikzpicture}+
\begin{tikzpicture}[scale=0.7,baseline=7pt]
\draw[bosonic] (1,2) node[above,black]{$\ss  {a'-c'}$}--(1,0.5)node[right,black]{$\ss a'-c'$}--(1,-1) node[below,black]{$\ss 0$};
\draw[bosonic,arrow=0.7,gcol,rounded corners] (0,0) node[left,black] {$\ss a'$}--(2,0) node[below,black] {$\ss c'$}--(3,1)node[right,black] {$\ss c'$};
\draw[fermionic,arrow=0.7,Gcol,rounded corners] (0,1) node[left,black]{$\ss 1$}--(2,1) node[above,black] {$\ss 1$}--(3,0) node[right,black] {$\ss 1$};
\end{tikzpicture}
}
\]
\begin{align*}
RHS= &(x\beta)\br{\dfrac{1-\beta x}{1+\alpha x}}(\alpha+\beta)^{c'}(y+\alpha)\beta^{a'-c'-1}+\\
&(1-\beta x)\br{\dfrac{1-\beta x}{1+\alpha x}}(\alpha+\beta)^{c'-1}(y+\alpha)\beta^{a'-c'}\\
=&(1-\beta x)(\alpha+\beta)^{c'-1}(y+\alpha)\beta^{a'-c'}
\end{align*}
\vspace{2mm}

When $a=1$ and $a'+b-c-c'=-1$,
\[
\begin{aligned}
LHS=& (1-x\beta)\br{\dfrac{x}{1+\alpha x}}(\alpha+\beta)^{a'}(y+\alpha)+\\
&(1-x\beta) \br{\dfrac{1-\beta x}{1+\alpha x}} (\alpha+\beta)^{a'-1}(y+\alpha)\\
=& (1-x\beta)(y+\alpha)(\alpha+\beta)^{a'-1}\\
\end{aligned}
\br{
\begin{tikzpicture}[scale=0.7,baseline=7pt]
\draw[bosonic](1,-1) node[below,black] {$\ss b$}--(1,0.5) node[right,black] {$\ss b-c$}--(1,2) node[black,above] {$\ss 0$};
\draw[fermionic,arrow=0.35,Gcol,rounded corners] (-1,1) node[left,black] {$\ss 1$}--(0,0) node[below,black] {$\ss 0$}--(2,0) node[right,black]{$\ss c$};
\draw[bosonic,arrow=0.35,gcol,rounded corners] (-1,0) node[left,black] {$\ss a'$}--(0,1) node[above,black] {$\ss a'+1$}--(2,1)node[right,black]{$\ss c'$};
\end{tikzpicture}+
\begin{tikzpicture}[scale=0.7,baseline=7pt]
\draw[bosonic](1,-1) node[below,black] {$\ss b$}--(1,0.5) node[right,black] {$\ss b-c+1$}--(1,2) node[black,above] {$\ss 0$};
\draw[fermionic,arrow=0.35,Gcol,rounded corners] (-1,1) node[left,black] {$\ss 1$}--(0,0) node[below,black] {$\ss 1$}--(2,0) node[right,black]{$\ss c$};
\draw[bosonic,arrow=0.35,gcol,rounded corners] (-1,0) node[left,black] {$\ss a'$}--(0,1) node[above,black] {$\ss a'$}--(2,1)node[right,black]{$\ss c'$};
\end{tikzpicture}
}
\]

\[
RHS= (1-x \beta)(\alpha+\beta)^{a'-1}(y+\alpha)
\qquad
\br{
\begin{tikzpicture}[scale=0.7,baseline=7pt]
\draw[bosonic] (1,2) node[above,black]{$\ss  {0}$}--(1,0.5)node[right,black]{$\ss 0$}--(1,-1) node[below,black]{$\ss b$};
\draw[bosonic,arrow=0.7,gcol,rounded corners] (0,0) node[left,black] {$\ss a'$}--(2,0) node[below,black] {$\ss c+c'-1$}--(3,1)node[right,black] {$\ss c'$};
\draw[fermionic,arrow=0.7,Gcol,rounded corners] (0,1) node[left,black]{$\ss 1$}--(2,1) node[above,black] {$\ss 1$}--(3,0) node[right,black] {$\ss c$};
\end{tikzpicture}
}
\]

When we combine both the conditions, $b=0$ and $c=1$, and $a=1$ and $a+b-c-c'=-1$, we get $a=1$ and $a'=c'$ for which we have already computed $LHS$ and $RHS$.

% to fix MR issues
\gdef\MRshorten#1 #2MRend{#1}%
\gdef\MRfirsttwo#1#2{\if#1M%
MR\else MR#1#2\fi}
\def\MRfix#1{\MRshorten\MRfirsttwo#1 MRend}
\renewcommand\MR[1]{\relax\ifhmode\unskip\spacefactor3000 \space\fi
\MRhref{\MRfix{#1}}{{\scriptsize \MRfix{#1}}}}
\renewcommand{\MRhref}[2]{%
\href{http://www.ams.org/mathscinet-getitem?mr=#1}{#2}}
\bibliographystyle{amsalphahyper}
\bibliography{biblio}
\end{document}